\documentclass[12pt,a4paper]{article}
\usepackage{amssymb,amsmath,amsthm}
\usepackage{latexsym}
\usepackage{color}

\newtheorem{theorem}{Theorem}
\newtheorem{lemma}{Lemma}
\newtheorem{proposition}{Proposition}
\newtheorem{corollary}{Corollary}
\newtheorem{remark}{Remark}
\numberwithin{equation}{section}
\numberwithin{theorem}{section}
\numberwithin{lemma}{section}
\numberwithin{proposition}{section}
\numberwithin{corollary}{section}
\numberwithin{remark}{section}

\setlength{\textwidth}{7in}
\setlength{\textheight}{9.5in}
\setlength{\oddsidemargin}{-0.25in}
\setlength{\evensidemargin}{0.3in}
\setlength{\topmargin}{-0.5in}

\begin{document}
\title{Stability of time-dependent motions for
fluid-rigid ball interaction}
\author{Toshiaki Hishida \\
Graduate School of Mathematics \\
Nagoya University \\
Nagoya 464-8602, Japan \\
\texttt{hishida@math.nagoya-u.ac.jp} \\
}
\date{}
\maketitle
\begin{abstract}
We aim at the stability of time-dependent motions, such as 
time-periodic ones, of a rigid body in a viscous fluid filling 
the exterior to it in 3D. The fluid motion obeys the incompressible
Navier-Stokes system, whereas the motion of the body is governed 
by the balance for linear and angular momentum.
Both motions are affected by each other at the boundary.
Assuming that the rigid body is a ball, we adopt a monolithic approach 
to deduce $L^q$-$L^r$ decay estimates of solutions to a non-autonomous
linearized system. 
We then apply those estimates to the full nonlinear initial value 
problem to find temporal decay properties of the disturbance. 
Although the shape of the body is not allowed to be arbitrary,
the present contribution is the first attempt at analysis of the 
large time behavior of solutions around nontrivial basic states, 
that can be time-dependent, for the fluid-structure interaction problem 
and provides us with a stability theorem which is indeed new even for 
steady motions under the self-propelling condition or with wake structure.

\noindent
MSC: 35Q35, 76D05

\noindent
Keywords: fluid-structure interaction, time-dependent motion, stability, decay estimate
\end{abstract}

\section{Introduction}
\label{intro}

We study the stability of nontrivial basic motions
for the fluid-rigid ball interaction in 3D within the framework
of $L^q$-theory.
This work is inspired by Ervedoza,
Maity and Tucsnak \cite{EMT}, who have successfully established the $L^q$-stability
of the rest state.
The novelty of our study is to develop analysis even for
time-dependent basic motions such as time-periodic (and almost periodic) ones.
We emphasize that what is done here is already new when the basic motion
is a nontrivial steady state.
On the other hand, the shape of a single rigid body
is not allowed to be arbitrary; in fact, it is assumed to be a ball
in the present paper.
Later, we will explain the reason why and mention the existing literature,
in particular, \cite{Ga-new,MT-new} by Galdi and by Maity and Tucsnak,
about the related issue for the case of arbitrary shape.
To fix the idea, in what follows, we are going to discuss
the self-propelled regime as a typical case, nevertheless,
the approach itself can be adapted to other situations
of the fluid-rigid ball interaction, in which physically relevant nontrivial
basic motions could be available, as long as the system \eqref{eq-perturb} below for disturbance is the same.

When the rigid body is a ball, it is reasonable to take the frame 
attached to the center of mass of the ball just by translation to reduce the problem to the one
in a time-independent domain.
Let $B$ be the open ball centered at the origin with radius 1, that is
identified with a rigid body whose density $\rho>0$ is assumed to be
a constant.
In the resultant problem after the change of variable,
a viscous incompressible fluid
occupies the exterior domain 
$\Omega=\mathbb R^3\setminus \overline{B}$ and the motion
obeys the Navier-Stokes system with an additional nonlinear term
(due to the change of variable),
where the fluid velocity and pressure are denoted by
$u(x,t)\in \mathbb R^3$ and $p(x,t)\in\mathbb R$,
whereas $\eta(t)\in\mathbb R^3$ and $\omega(t)\in\mathbb R^3$ respectively
stand for the translational and angular velocities of the rigid ball
and are governed by the balance for linear and angular momentum.
The fluid velocity $u$ meets the rigid motion $\eta+\omega\times x$
(with $\times$ being the vector product)
at the boundary $\partial\Omega$, where an extra velocity
$u_*(x,t)\in \mathbb R^3$ generated by the body is allowed to be involved.
Indeed, the velocity $u_*$ 
plays an important role within the self-propelled regime,
in which there are no external force and torque so that the body moves
due to an internal mechanism described by $u_*$ through interaction of
the fluid-rigid body, see Galdi \cite{Ga99, Ga02} for the details.
We assume that $u_*$ is tangential to the boundary
$\partial\Omega$, that is, $\nu\cdot u_*|_{\partial\Omega}=0$.
Here, 
$\nu$ stands for the unit normal to
$\partial\Omega$ directed toward the interior of the rigid body;
indeed, $\nu=-x$ at $\partial\Omega$ 
being the unit sphere.
Consequently, the unknowns $u,\,p,\,\eta$ and $\omega$ obey
(\cite{EMT, Ga02})
\begin{equation} 
\begin{split}
&\partial_tu+(u-\eta)\cdot\nabla u=\Delta u-\nabla p, \qquad 
\mbox{div $u$}=0 \quad\mbox{in $\Omega\times I$},  \\
&u|_{\partial\Omega}=\eta+\omega\times x+u_*, \qquad 
u\to 0 \quad\mbox{as $|x|\to\infty$},  \\
&m\frac{d\eta}{dt}+\int_{\partial\Omega}\mathbb S(u,p)\nu\,d\sigma=0, \\ 
&J\frac{d\omega}{dt}+\int_{\partial\Omega}x\times \mathbb S(u,p)\nu\,d\sigma=0,
\end{split}
\label{eq-fixed}
\end{equation}
where $I$ is the whole or half time axis in $\mathbb R$,
$\mathbb S(u,p)$ denotes the Cauchy stress tensor, that is,
\begin{equation}
\mathbb S(u,p)=2Du-p\,\mathbb I, \qquad Du=\frac{\nabla u+(\nabla u)^\top}{2}
\label{stress}
\end{equation}
with $\mathbb I$ being $3\times 3$ unity matrix.
Here and hereafter, $(\cdot)^\top$ stands for the transpose.
The mass and tensor of inertia of the rigid ball are given by
\begin{equation}
m=
\frac{4\pi\rho}{3}, \qquad 
J=
\frac{2m}{5}\,\mathbb I. 
\label{mJ}
\end{equation}
The model with several physical parameters can be reduced to the problem above in which both the kinematic viscosity of the fluid
and the radius of the rigid ball are normalized.
Then $\rho$ is regarded as the ratio of both densities of the fluid-structure,
see subsection \ref{nondim}.

In \cite{EMT} Ervedoza, Maity and Tucsnak considered the case when 
the extra velocity $u_*$ 
is absent and proved that
the initial value problem for \eqref{eq-fixed} admits a unique solution globally in time with decay properties such as
\begin{equation*}
\|u(t)\|_{\infty,\Omega}+|\eta(t)|+|\omega(t)|=O(t^{-1/2})
\end{equation*}
as $t\to\infty$ under
the smallness of the $L^3$-norm of the initial velocity as well as the initial rigid motion.
This is a desired development of the local well-posedness in the space $L^q$ due to Wang and Xin \cite{WX}
and may be also regarded as the stability of the rest state. 
The stability of that state in the 2D case was already studied by Ervedoza, Hillairet and Lacave \cite{EHL14}
when the rigid body is a disk.
The present paper is aiming at the study of stability criterion for nontrivial basic motions
$\{u_b,p_b,\eta_b,\omega_b\}$ to \eqref{eq-fixed}, 
that is, we intend to deduce the large time behavior
\begin{equation}
\|u(t)-u_b(t)\|_{\infty,\Omega}+\|\nabla u(t)-\nabla u_b(t)\|_{3,\Omega}
+|\eta(t)-\eta_b(t)|+|\omega(t)-\omega_b(t)|=o(t^{-1/2})
\label{result-sta}
\end{equation}
of the solution to the initial value problem for \eqref{eq-fixed}
as $t\to \infty$ provided that the initial disturbance is
small enough in the same sense as in \cite{EMT} and that 
the basic motion is also small in a sense uniformly in $t$ as well as
globally H\"older continuous in $t$. 
We will discuss briefly in subsection \ref{basic-mo} possible basic motions within the self-propelled regime
\eqref{eq-fixed}.
Let us also mention that the asymptotic rate of $\nabla u(t)$ in \eqref{result-sta} is an improvement of the
corresponding result due to \cite{EMT}, in which less rate is deduced when $u_b=0,\, \eta_b=\omega_b=0$.
Let $\{u_b,\,p_b,\,\eta_b,\,\omega_b\}$ be a solution to \eqref{eq-fixed} on the whole time axis $I=\mathbb R$.
We intend to find a solution to the initial value problem for \eqref{eq-fixed} in the form
\[
u=u_b+\widetilde u, \quad
p=p_b+\widetilde p, \quad
\eta=\eta_b+\widetilde\eta, \quad
\omega=\omega_b+\widetilde\omega.
\]
Omitting the tildes $\;\widetilde{(\cdot)}$, the disturbance 
should obey
\begin{equation}
\begin{split}
&\left.
\begin{array}{rl}
\partial_tu+(u-\eta)\cdot\nabla u+(u_b-\eta_b)\cdot\nabla u+(u-\eta)\cdot\nabla u_b
=&\Delta u
-\nabla p  \\ 
\mbox{div $u$}=&0
\end{array}
\right\}
\mbox{in $\Omega\times (s,\infty)$}, \\
&\quad u|_{\partial\Omega}=\eta+\omega\times x, \qquad 
u\to 0\quad\mbox{as $|x|\to\infty$},  \\
&\quad m\frac{d\eta}{dt}+\int_{\partial\Omega}\mathbb S(u,p)\nu\,d\sigma=0, \\
&\quad J\frac{d\omega}{dt}+\int_{\partial\Omega}x\times \mathbb S(u,p)\nu\,d\sigma=0,
\end{split}
\label{eq-perturb}
\end{equation}
endowed with the initial conditions at the initial time $s\in\mathbb R$.

Besides \cite{EMT}, 
we will mention the existing literature,
that is particularly related to this study,
about the original
problem in the inertial frame in 3D. 
Serre \cite{Se87} constructed a global weak solution to the initial value problem under the action of gravity.
This solution is of the Leray-Hopf class and the argument is based on the
energy relation as in the standard Navier-Stokes theory.
Silvestre \cite{Sil02-a} also studied a weak solution 
under the self-propelling condition and moreover,
in a specific case that the body is axisymmetric, she constructed
a global strong solution with some parity conditions 
and discussed the attainability 
of the translational self-propelled steady motion found by
Galdi \cite{Ga97},
answering to celebrated Finn's starting problem in her context.
When the external force and torque act on
the rigid body whose shape is arbitrary,
a strong solution locally in time is 
constructed first by Galdi and Silvestre \cite{GaSil02}.
All of those literature \cite{Se87, Sil02-a, GaSil02} developed the $L^2$-theory
and the problem was studied in a frame attached to the
body of arbitrary shape (except the latter half of \cite{Sil02-a}), where, unlike the derivation of \eqref{eq-fixed}, 
the change of variable must involve the rotation matrix
as well as translation unless the body is a ball, see 
Galdi \cite{Ga02} for the details.
Then the resultant equation reads
\begin{equation}
\partial_t u+(u-\eta-\omega\times x)\cdot\nabla u
+\omega\times u=\Delta u-\nabla p
\label{shape-arbit}
\end{equation}
in place of the first equation of \eqref{eq-fixed}.
The equations of the rigid body in \eqref{eq-fixed} are also replaced by
\begin{equation}
\begin{split}
&m\frac{d\eta}{dt}+\omega\times (m\eta)
+\int_{\partial\Omega}\mathbb S(u,p)\nu\,d\sigma=0,  \\
&J\frac{d\omega}{dt}+\omega\times (J\omega)
+\int_{\partial\Omega}x\times \mathbb S(u,p)\nu\,d\sigma=0.
\end{split}
\label{shape-arbit-s}
\end{equation}
If we considered the exterior problem with a prescribed rigid motion,
the difficulty caused by the drift term $(\omega\times x)\cdot\nabla u$
with spatially unbounded (possibly even time-dependent) coefficient would be more or less overcome due to 
efforts by several authors, see \cite{GaNe, Hi13, Hi18-h, Hi18, Hi20} and the references therein.
But this term is indeed a nonlinear term for the fluid-structure interaction 
under consideration and still prevents us from carrying out
analysis in a successful way.
This is 
why we are forced to restrict ourselves to the case of the ball
in the present paper as well as in 
\cite{EMT}.
The linear theory of large time behavior 
is well established for the case of arbitrary shape in \cite{EMT},
while we have already a difficulty at the level of linear analysis
in this paper, see Remark \ref{rem-arbit} in subsection \ref{adjoint-backward}. 
Recently, Galdi \cite{Ga-new} has succeeded in
continuation of the solution obtained in \cite{GaSil02} globally in time by means of energy estimates
and, moreover, he has proved the large time decay
\begin{equation}
\lim_{t\to\infty}\Big(\|u(t)\|_{6,\Omega}+\|\nabla u(t)\|_{2,\Omega}+|\eta(t)|+|\omega(t)|\Big)=0
\label{Ga-large}
\end{equation}
even if the shape of the body is arbitrary when the $H^1$-norm of the initial velocity, the initial rigid motion,
and the $L^2$-norm in $t\in (0,\infty)$ of the external force and torque acting on the body are small enough.
Although there is no definite decay rate 
in general, one can derive the rate $O(t^{-1/2})$ 
in \eqref{Ga-large} if in particular the body
is a ball, see \cite[Remark 4.1]{Ga-new}.

There is the other way of transformation to reduce the original problem 
to the one in a time-independent domain.
This is a local transformation that keeps the situation far from the
body as it is in order to avoid the term $(\omega\times x)\cdot\nabla u$
in \eqref{shape-arbit} even if the shape of the body is arbitrary.
Cumsille and Takahashi \cite{CT08} adopted this transformation to 
construct a solution locally in time and then
successfully derived a priori estimates in the inertial frame so that the global existence of a strong 
solution for small data was first established 
within the $L^2$-theory, however, without any information about the large time behavior.
By the same transformation, Geissert, G\"otze and Hieber \cite{GGH13}
developed the $L^q$-theory for the local well-posedness
in the maximal regularity class;
further, in the recent work \cite{MT-new}, Maity and Tucsnak have proved even large time decay with definite rate
as well as global well-posedness for small data by adapting the time-shifted method proposed by Shibata \cite{Shi22}
in the framework of maximal regularity with time-weighted norms.
Since there are several complicated terms arising from 
this latter transformation,
it seems difficult to take a reasonable nontrivial basic motion in the resultant
system and to discuss its stability, that is indeed the issue here, unless one considers the problem around the rest state.
Thus the latter transformation is not preferred in the 
present paper. 

Let us turn to our problem \eqref{eq-perturb} around a nontrivial motion.
Towards the large time behavior of the disturbance, 
the essential step is to develop the temporal decay estimates of solutions to the linearized system.
In fact, this was successfully done by 
\cite{EMT} when $u_b=0$ and $\eta_b=0$.
They adopted 
the monolithic approach which is traced back to Takahashi and Tucsnak \cite{TT04}, see also Silvestre \cite{Sil02-a},
and then derived the $L^q$-$L^r$ decay estimates of the semigroup,
that they call the fluid-structure semigroup (or Takahashi-Tucsnak semigroup).
Look at \eqref{eq-perturb}, then the term which is never subordinate to the fluid-structure semigroup 
is $\eta_b\cdot\nabla u$.
This term is well known as the Oseen term in studies of the exterior 
Navier-Stokes problem, see \cite{Ga-b, KShi}.
For the fluid-structure interaction problem, however,
fine temporal behavior of the (purely) Oseen operator 
$-\Delta u-\eta_b\cdot\nabla u+\nabla p$
is hopeless because the term 
$\eta_b\cdot\nabla u$ 
is no longer skew-symmetric 
on account of the boundary condition.
The idea is to combine the Oseen term with $u_b\cdot\nabla u$ since 
the skew-symmetry is recovered for 
$(u_b-\eta_b)\cdot\nabla u$, 
see \eqref{B-skew}--\eqref{B-om}.
The first step is therefore to derive some decay properties
of solutions to the linearized system with the term $(U_b-\eta_b)\cdot\nabla u$ in the whole space
without the rigid body, see subsection \ref{evo-whole},
where $U_b$ is the monolithic velocity \eqref{background}.
Here, the linear term $U_b\cdot\nabla u$ 
can be treated 
as perturbation from
the purely Oseen evolution operator
provided that $u_b$ 
is of sub-critical class such as $u_b\in L^q(\Omega)$
with some $q<3$ even though $u_b$ is time-dependent. 
In the scale-critical case such as $u_b\in L^{3,\infty}(\Omega)$ (weak-$L^3$ space), 
decay estimate \eqref{decay-wh} with $r<\infty$ is still available, however, 
it is difficult to deduce gradient estimate \eqref{decay-grad-wh} 
of the solution when $u_b$ is time-dependent.
Although there is a device to construct the Navier-Stokes flow even
in this situation as proposed in \cite[Section 5]{Hi18},
it does not work for the other nonlinear term $\eta\cdot\nabla u$.
Thus the critical case is out of reach in this paper, and let us concentrate
ourselves on the sub-critical case, which covers the self-propelled motion
and the motion with wake structure (due to translation).
In the subcritical case, the linear term $u\cdot\nabla u_b$
can be discussed together with the nonlinear term as perturbation from
the principal part of the linearized system.
Moreover, 
the other term $\eta\cdot\nabla u_b$ is comparable to
$u\cdot\nabla u_b$ since $\eta(t)$ behaves like $\|u(t)\|_{\infty,\Omega}$ 
as $t\to\infty$.
As for an alternative way in which the term $(u-\eta)\cdot\nabla u_b$ is also involved into the linearization, see 
Remark \ref{rem-perturb} in subsection \ref{adjoint-backward}.
The right choice of the principal part of the linearized system would be the following 
non-autonomous system, which we call the Oseen-structure system:
\begin{equation}
\begin{split}
&\partial_tu=\Delta u+(\eta_b-u_b)\cdot\nabla u-\nabla p, \qquad 
\mbox{div $u$}=0 \quad \mbox{in $\Omega\times (s,\infty)$},  \\
&u|_{\partial\Omega}=\eta+\omega\times x, \qquad 
u\to 0 \quad\mbox{as $|x|\to\infty$},   \\
&m\frac{d\eta}{dt}+\int_{\partial\Omega}\mathbb S(u,p)\nu\,d\sigma=0, \\
&J\frac{d\omega}{dt}+\int_{\partial\Omega}x\times \mathbb S(u,p)\nu\,d\sigma=0,  \\
&u(\cdot,s)=u_0, \quad \eta(s)=\eta_0, \quad \omega(s)=\omega_0
\end{split}
\label{eq-linear}
\end{equation}
with $s\in\mathbb R$ being the initial time.
In fact, in this paper, we develop the $L^q$-$L^r$ estimates 
of solutions to 
\eqref{eq-linear} and apply them to the nonlinear problem \eqref{eq-perturb}.
We do need the specific shape already in linear analysis, 
otherwise new term $(\omega\times x)\cdot\nabla u_b$ 
appears in \eqref{eq-perturb}, 
which arises from 
$(\omega\times x)\cdot\nabla u$ in \eqref{shape-arbit}, see Remark \ref{rem-arbit} for further discussion,
although the other term $(\eta_b+\omega_b\times x-u_b)\cdot\nabla u$ 
can be handled as in \cite{Hi18, Hi20} by the present author.
Since the difficult term above is absent around the rest state
even if the shape of the rigid body is arbitrary, 
our method developed in this paper works well
to provide an alternative proof of the $L^q$-$L^r$ estimates of the fluid-structure
semigroup established by \cite{EMT}. 

The difficulty to analyze \eqref{eq-linear} is the non-autonomous character.
For the linearized problem relating to the Navier-Stokes system in exterior domains,
the only results that study the temporal decay for the non-autonomous system seem to be due to the present author
\cite{Hi18, Hi20} on the $L^q$-$L^r$ estimates of the Oseen evolution operator 
arising from (prescribed) time-dependent rigid motions.
In this paper we adapt the method developed in those papers to the Oseen-structure system \eqref{eq-linear}, where the crucial step is to deduce
the uniformly boundedness of the evolution operator for large time,
see \eqref{unif}--\eqref{unif-adj} in subsection \ref{decay-0th},
with the aid of \eqref{key-0}--\eqref{key-1} which follow from
the energy relations \eqref{energy}--\eqref{adj-energy}.
In this stage one needs to discuss the issue above 
simultaneously with the one of the adjoint evolution operator,
that is the solution operator to the backward problem for the adjoint system,
see subsection \ref{adjoint-backward}.
We then proceed to the next stage in which gradient estimates of the 
evolution operator are derived near the rigid body 
(Proposition \ref{led-prop}) and near spatial
infinity (Proposition \ref{prop-near-inf}).
In this latter stage, asymptotic behavior (for $t-s\to 0$ and $t-s\to\infty$) of the
associated pressure together with the time derivative of
the evolution operator plays an important role.
We first deduce
the bahavior of the pressure near the initial time 
in subsection \ref{sm-pressure}, where analysis is developed with the aid of fractional powers of the Stokes-structure operator 
and very different from the study of the same issue in \cite{Hi20} for the case
of prescribed time-dependent rigid motions.
In doing so, we make use the representation 
of the Stokes-structure resolvent,
which is given in subsection \ref{stokes-structure}.
To employ the monolithic approach, in this paper unlike \cite{EMT}, we do not rely on the
decomposition shown by Wang and Xin \cite{WX} 
because the associated projection
is not symmetric with respect to the duality pairing which involves
the constant density $\rho$ of the rigid body, see \eqref{mJ} and \eqref{pairing},
unless $\rho=1$.
This situation is not consistent with our argument particularly when making use of the adjoint evolution operator.
For this reason, our analysis is based on the similar but different decomposion
\eqref{decompo} below, then the associated projection possesses the aforementioned desired
symmetry. 
The decomposition \eqref{decompo} was established by Silvestre 
\cite{Sil02-a} when $q=2$.
Since the same result for the $L^q$-space does not seem to be found in the
existing literature, the proof is given in subsection \ref{new-decompo}.

The paper consists of five sections.
In the next section we 
introduce the Oseen-structure operator as well as the Stokes-structure one and then give our main results:
Theorem \ref{evo-op} on $L^q$-$L^r$ estimates of the evolution operator and Theorem \ref{nonlinear}
on the nonlinear stability.
Section \ref{oseen-structure-evo} is concerned with some preparatory
results: 
the decomposition of the $L^q$-space mentioned above, reformulation of the Stokes-structure operator,
smoothing estimate as well as generation of the Oseen-structure evolution
operator, analysis of the pressure and, finally, the adjoint
evolution operator.
In section 4 we study in detail the large time decay of the evolution operator to complete the proof of Theorem \ref{evo-op}.
Final section is devoted to the proof of Theorem \ref{nonlinear} for the nonlinear problem \eqref{eq-perturb}.

\section{Main results}
\label{result}

\subsection{Nondimensional variables}
\label{nondim}

If all physical parameters are taken into account, the system \eqref{eq-fixed}
should read
\begin{equation}
\begin{split}
&\rho_{_L}\left\{
\partial_tu+(u-\eta)\cdot\nabla u\right\}=\mu\Delta u-\nabla p,  \qquad 
\mbox{div $u$}=0 \quad\mbox{in $\Omega^R\times I$},  \\
&u|_{\partial\Omega^R}=\eta+\omega\times x+u_*, \qquad 
u\to 0 \quad \mbox{as $|x|\to\infty$},  \\
&m_{_R}\frac{d\eta}{dt}+\int_{\partial\Omega^R}\mathbb S_\mu(u,p)\nu\,d\sigma=0, \\
&J_{_R}\frac{d\omega}{dt}+\int_{\partial\Omega^R}x\times \mathbb S_\mu(u,p)\nu\,d\sigma=0,
\end{split}
\label{eq-para}
\end{equation}
with
\begin{equation*}
\begin{split}
&\Omega^R=\mathbb R^3\setminus \overline{B_R}, \qquad
\mathbb S_\mu(u,p)=2\mu Du-p\,\mathbb I, \\ 
&m_{_R}=\int_{B_R}\rho_{_S}\,dx
=\frac{4\pi R^3\rho_{_S}}{3}, \qquad
J_{_R}=\int_{B_R}\big(|x|^2\,\mathbb I-x\otimes x\big)\rho_{_S}\,dx
=\frac{8\pi R^5\rho_{_S}}{15}\,\mathbb I,
\end{split}
\end{equation*}
where the rigid body is a ball $B_R$ centered at the origin with radius $R$,
$\rho_{_L}$ and $\rho_{_S}$ are respectively the densities of the liquid and solid,
and $\mu$ stands for the viscosity coefficient.
All of those parameters are assumed to be positive constants.
Setting
\begin{equation*}
\begin{split}
&\widetilde x=\frac{1}{R}\,x, \qquad
\widetilde t=\frac{\mu}{R^2\rho_{_L}}\,t, \\ 
&\widetilde u(\widetilde x,\widetilde t)=\frac{R\rho_{_L}}{\mu}\,u(x,t), \qquad
\widetilde p(\widetilde x,\widetilde t)=\frac{R^2\rho_{_L}}{\mu^2}\,p(x,t), \\ 
&\widetilde\eta(\widetilde t)=\frac{R\rho_{_L}}{\mu}\,\eta(t), \qquad
\widetilde\omega(\widetilde t)=\frac{R^2\rho_{_L}}{\mu}\,\omega(t), \qquad
\widetilde u_*(\widetilde x,\widetilde t)=\frac{R\rho_{_L}}{\mu}\,u_*(x,t)
\end{split}
\end{equation*}
and omitting the tildes $\;\widetilde{(\cdot)}$, we are led to \eqref{eq-fixed} with
\begin{equation}
\rho=\frac{\rho_{_S}}{\rho_{_L}}, \quad m=\int_{B_1}\rho\,dx=\frac{4\pi\rho}{3}, \quad
J=\int_{B_1}\big(|x|^2\mathbb I-x\otimes x\big)\rho\,dx=\frac{2m}{5}\,\mathbb I,
\label{new-para}
\end{equation}
where $J$ is called the tensor of inertia.
It is worth while mentioning
\begin{equation}
x\times (a\times x)=\big(|x|^2\mathbb I-x\otimes x\big)a 
\label{inertia}
\end{equation}
for all $a\in\mathbb R^3$, that implies
\begin{equation}
\int_{B_1}(a\times x)\cdot (b\times x)\rho\,dx=(Ja)\cdot b=a\cdot (Jb) 
\label{angular}
\end{equation}
for all $a,\, b\in\mathbb R^3$.
Even if the shape of the rigid body is not a ball, the symmetric matrix $J$ 
defined by the latter integral of \eqref{new-para} satisfies 
\eqref{angular} and thus $J$ is positive definite.

\subsection{Stokes-structure operator}
\label{reduced}

Let us begin with introducing fundamental notation.
Throughout this paper, we set
\[
B:=B_1, \qquad \Omega:=\mathbb R^3\setminus \overline{B}, \qquad
\Omega_R:=\Omega\cap B_R\quad (R>1),
\]
where $B_R$ denotes the open ball centered at the origin with
radius $R$.
Given a domain $D\subset\mathbb R^3$, $q\in [1,\infty]$ and integer $k\geq 0$,
the standard Lebesgue and Sobolev spaces are denoted by
$L^q(D)$ and $W^{k,q}(D)$.
We abbreviate the norm $\|\cdot\|_{q,D}=\|\cdot\|_{L^q(D)}$.
By $\langle\cdot,\cdot\rangle_D$ 
(resp. $\langle \cdot,\cdot\rangle_{\partial D}$)
we denote standard duality pairings
over the domain $D$ (resp. $\partial D$ being the boundary of $D$) in each context.
As the space for the pressure, one needs also the homogeneous
Sobolev space
\[
\widehat W^{1,q}(D)=\{p\in L^q_{\rm loc}(\overline{D});\;\nabla p\in L^q(D)\}
\]
with seminorm $\|\nabla (\cdot)\|_{q,D}$ for $1<q<\infty$.
When $D=\Omega$, 
let us introduce
\[
\widehat W^{1,q}_{(0)}(\Omega)=\left\{p\in \widehat W^{1,q}(\Omega);\; \int_{\Omega_3}p\,dx=0\right\},
\]
that is a Banach space with norm $\|\nabla (\cdot)\|_{q,\Omega}$.
The class $C_0^\infty(D)$ consists of all $C^\infty$-functions with
compact support in $D$, then $W^{k,q}_0(D)$ denotes the completion
of $C_0^\infty(D)$ in $W^{k,q}(D)$, where $k>0$ is an integer.
Given $q\in [1,\infty]$, let $q^\prime\in [1,\infty]$ be the conjugate number
defined by $1/q^\prime+1/q=1$.
For $q\in (1,\infty)$, we define the Sobolev space of order $(-1)$ by
$W^{-1,q}(D)=W^{1,q^\prime}_0(D)^*$.
In what follows we adopt the same symbols for denoting scalar and vector (even tensor)
functions as long as no confusion occurs.
By $C^\infty_{0,\sigma}(D)$ we denote the class of all 
vector fields $u$
which are in $C_0^\infty(D)$ and satisfy $\mbox{div $u$}=0$ in $D$. 
Let $X$ be a Banach space.
Then ${\mathcal L}(X)$ stands for the Banach space consisting of
all bounded linear operators from $X$ into itself.
Finally, we denote several positive constants by $C$,
which may change from line to line.

Before stating our main results, let us also introduce
the underlying space and the Stokes-structure operator to formulate \eqref{eq-perturb} and \eqref{eq-linear}
within the monolithic framework as in \cite{EMT, TT04, Sil02-a}.
By ${\rm RM}$
we denote the space of all rigid motions, that is,
\[
{\rm RM}:=\{\eta+\omega\times x;\; \eta,\,\omega\in\mathbb R^3\}.
\]
For the resolvent problem, see subsection \ref{stokes-structure}, we have to
consider the complex rigid motions $\eta+\omega\times x$ with $\eta,\, \omega\in \mathbb C^3$. 
For $1<q<\infty$, we set
\begin{equation}
L^q_R(\mathbb R^3):=\{U\in L^q(\mathbb R^3)^3;\; U|_B\in {\rm RM}\}
\label{leb-rigid}
\end{equation}
that is closed in $L^q(\mathbb R^3)^3$.
The underlying space we adopt is
\begin{equation}
X_q(\mathbb R^3):=\{U\in L^q_R(\mathbb R^3);\; \mbox{div $U=0$\; in $\mathbb R^3$}\}. 
\label{sp-velo}
\end{equation}
It is convenient to define the map 
\[
i: L^q_R(\mathbb R^3)\to L^q(\Omega)^3\times \mathbb R^3\times \mathbb R^3
\]
or
\[
i: L^q_R(\mathbb R^3)\to L^q(\Omega)^3\times\mathbb C^3\times \mathbb C^3
\]
for the associated resolvent problem, where the scalar field of the Lebesgue space is $\mathbb C$ for the latter case, by
\begin{equation}
\begin{split}
&i: U\mapsto i(U):=(u,\eta,\omega) \;\;\mbox{with} \\
&u=U|_\Omega, \qquad \eta=\frac{1}{m}\int_B U(x)\rho\,dx, \qquad \omega=J^{-1}\int_B x\times U(x)\rho\,dx
\end{split}
\label{X1}
\end{equation}
with $\rho,\, m$ and $J$ being given by \eqref{new-para}, then
we see from \eqref{inertia} that
\[
U|_B=\eta+\omega\times x.
\]
If in particular $U\in X_q(\mathbb R^3)$, we find
\begin{equation}
\nu\cdot(u-\eta-\omega\times x)|_{\partial\Omega}=0
\label{hidden-bc1}
\end{equation}
with $\nu$ being the unit normal to $\partial\Omega$ directed toward $B$ 
(indeed, $\nu=-x$ since $\partial\Omega$ is the unit sphere) on account of
$\mbox{div $U$}=0$ in $\mathbb R^3$. 
Conversely,
if $u\in L^q(\Omega)$ satisfies $\mbox{div $u$}=0$ in $\Omega$ (so that the normal trace $\nu\cdot u|_{\partial\Omega}$ makes sense)
and
$\nu\cdot (u-\eta-\omega\times x)|_{\partial\Omega}=0$
for some pair of $\eta,\, \omega\in\mathbb R^3$,
then
\begin{equation}
U:=u\chi_\Omega+(\eta+\omega\times x)\chi_B\in X_q(\mathbb R^3)
\label{X2}
\end{equation}
with $i(U)=(u,\eta,\omega)$,
where $\chi_\Omega$ and $\chi_B$ denote the characteristic functions.
In this way, elements of $X_q(\mathbb R^3)$ are understood through
\eqref{X1}--\eqref{X2}.

The space $X_q(\mathbb R^3)$, $1<q<\infty$, is a Banach space endowed with norm
\begin{equation}
\|U\|_{X_q(\mathbb R^3)}
:=\Big(\|u\|_{q,\Omega}^q+\|(\eta_u+\omega_u\times x)\rho^{1/q}\|_{q,B}^q\Big)^{1/q}
\label{X-norm0}
\end{equation}
for $U\in X_q(\mathbb R^3)$ and we have the duality relation $X_q(\mathbb R^3)^*=X_{q^\prime}(\mathbb R^3)$,
see \eqref{dual-an} in Proposition \ref{prop-decom} below,
with the pairing
\begin{equation}
\begin{split}
\langle U,V\rangle_{\mathbb R^3,\rho}
&:=\int_\Omega u\cdot v\,dx
+\int_B (\eta_u+\omega_u\times x)\cdot (\eta_v+\omega_v\times x)\rho\,dx  \\
&=\langle u,v\rangle_\Omega+m\eta_u\cdot\eta_v+(J\omega_u)\cdot \omega_v
\end{split}
\label{pairing}
\end{equation}
(with obvious change if one should consider the complex rigid motions for the resolvent problem)
for $U\in X_q(\mathbb R^3)$ and $V\in X_{q^\prime}(\mathbb R^3)$,
where $i(U)=(u,\eta_u,\omega_u),\, i(V)=(v,\eta_v,\omega_v)$,
see \eqref{new-para}, \eqref{angular} and \eqref{X1}.
It is clear that the pairing \eqref{pairing} is defined for $U\in L^q_R(\mathbb R^3)$ and $V\in L^{q^\prime}_R(\mathbb R^3)$ as well.
Notice that the constant weight $\rho$ is involved in the integral over the rigid body $B$,
see \eqref{X-norm0}--\eqref{pairing}, nonetheless,
it is obvious from \eqref{X1} to see that the following three quantities are equivalent for $U\in X_q(\mathbb R^3),\; i(U)=(u,\eta_u,\omega_u)$:
\begin{equation}
\|U\|_{X_q(\mathbb R^3)}\sim
\|U\|_{q,\mathbb R^3}\sim \|u\|_{q,\Omega}+|\eta_u|+|\omega_u|,
\label{X-norm}
\end{equation}
where the symbol $\sim$ means that inequalities in both directions hold with some constants.
Thus the norm \eqref{X-norm0} does not play any role to discuss the asymptotic behavior of solutions to
\eqref{eq-perturb} and \eqref{eq-linear}.
Nevertheless, the reason for introducing \eqref{X-norm0} is to describe the energy
\begin{equation}
\|U\|_{X_2(\mathbb R^3)}^2=\langle U,U\rangle_{\mathbb R^3,\rho}
=\|u\|_{2,\Omega}^2+m|\eta_u|^2+(J\omega_u)\cdot \omega_u
\label{energy-rep}
\end{equation}
with 
$(J\omega_u)\cdot\omega_u=\frac{2m}{5}|\omega_u|^2$ when $B$ is a ball.
In fact, the energy \eqref{energy-rep} fulfills
the identity in the desired form along time-evolution of the linearized system \eqref{eq-linear},
see \eqref{energy}--\eqref{adj-energy} in subsection \ref{energy-relation}.
The other reason is to ensure several duality relations, see
\eqref{dual}, \eqref{stokes-sym} and \eqref{dual-evo-sense}.
Even for general $U,\, V\in L^q(\mathbb R^3)$, $1<q<\infty$,
it is sometimes (see subsectrion \ref{proof-1}) convenient to introduce the other norm and pairing
\begin{equation}
\|U\|_{q,(\mathbb R^3,\rho)}
:=\left(\int_{\mathbb R^3}|U(x)|^q\,(\rho\chi_B+\chi_\Omega)\,dx\right)^{1/q}, \quad
\langle U,V\rangle_{\mathbb R^3,\rho}
:=\int_{\mathbb R^3}(U\cdot V) (\rho\chi_B+\chi_\Omega)\,dx,
\label{other-pair}
\end{equation}
which are consistent with \eqref{X-norm0}--\eqref{pairing}, 
in order that  
$L^q_R(\mathbb R^3)$ is regarded as a subspace of $L^q(\mathbb R^3)^3$.
By \cite[Proposition 3.3]{EMT} it is shown that the class
\begin{equation}
\mathcal{E}(\mathbb R^3):=\{U\in C_{0,\sigma}^\infty(\mathbb R^3);\; DU=O\;\mbox{in $B$}\}
=C_0^\infty(\mathbb R^3)^3\cap X_q(\mathbb R^3)
\label{dense-sub}
\end{equation}
is dense in $X_q(\mathbb R^3)$, $1<q<\infty$.
An alternative proof will be given
in Proposition \ref{prop-decom} as well. 

Let $U\in X_q(\mathbb R^3)$ satisfy, in particular, $u\in W^{1,q}(\Omega)$, where $i(U)=(u,\eta,\omega)$, see \eqref{X1}.
Then, $U\in W^{1,q}(\mathbb R^3)$ if and only if
\begin{equation}
u|_{\partial\Omega}=\eta+\omega\times x.
\label{hidden-bc2}
\end{equation}
Moreover, under the condition \eqref{hidden-bc2} it holds that
\begin{equation}
\nabla U=(\nabla u)\chi_\Omega+M_\omega\chi_B, \qquad
M_\omega:=\left(
\begin{array}{ccc}
0 & -\omega_3 & \omega_2 \\
\omega_3 & 0 & -\omega_1 \\
-\omega_2 & \omega_1 & 0
\end{array}
\right).
\label{grad-form}
\end{equation}

Let $1<q<\infty$.
With the space $X_q(\mathbb R^3)$ at hand as the one in which we are going to look for the monolithic velocity \eqref{X2},
there are two ways to eliminate the pressure.
They are similar but slightly different.
One is the approach within the space
$L^q_R(\mathbb R^3)$ 
by Silvestre \cite{Sil02-a, Sil02-b}, who developed the case $q=2$.
In Proposition \ref{prop-decom} below we will establish the decomposition
\begin{equation}
L^q_R(\mathbb R^3)=X_q(\mathbb R^3)\oplus Z_q(\mathbb R^3)  
\label{decompo}
\end{equation}
with
\begin{equation}
\begin{split}
&Z_q(\mathbb R^3)=\Big\{V\in L^q_R(\mathbb R^3);\; V|_\Omega=\nabla p,\; 
p\in \widehat W^{1,q}(\Omega), \;
V|_B=\eta+\omega\times x,  \\
& \qquad\qquad\qquad\qquad\qquad
\eta=\frac{-1}{m}\int_{\partial\Omega}p\nu\,d\sigma,\;
\omega=-J^{-1}\int_{\partial\Omega}x\times (p\nu)\,d\sigma\Big\} \\
\end{split}
\label{sp-pressure}
\end{equation}
as well as
$X_q(\mathbb R^3)^\perp=Z_{q^\prime}(\mathbb R^3)$.
The latter relation means that 
$Z_{q^\prime}(\mathbb R^3)$ is the annihilator of $X_q(\mathbb R^3)$
with respect to $\langle\cdot,\cdot\rangle_{\mathbb R^3,\rho}$, see \eqref{pairing}. 
Note that, for the element of the space $Z_q(\mathbb R^3)$,
$p$ is determined uniquely up to constant which, however, does not change $\eta$ and $\omega$ since
$\int_{\partial\Omega}\nu\,d\sigma=\int_{\partial\Omega}x\times \nu\,d\sigma=0$. 
When $q=2$, 
\eqref{decompo} was already proved by Silvestre \cite{Sil02-a}, 
who studied her problem within, instead of $L^q_R(\mathbb R^3)$, the class $L^2(\Omega)+{\rm RM}$
in which the flow behaves like a rigid motion at infinity
in the exterior domain $\Omega$.
By
\begin{equation}
\mathbb P=\mathbb P_q: L^q_R(\mathbb R^3)\to X_q(\mathbb R^3)
\label{new-proj}
\end{equation}
we denote the bounded projection associated with \eqref{decompo}.
Then we have the relation $\mathbb P_q^*=\mathbb P_{q^\prime}$ 
with respect to $\langle\cdot,\cdot\rangle_{\mathbb R^3,\rho}$ in 
which the constant weight $\rho$ is involved, that is, in
the sense of \eqref{proj-sym-0}, see Proposition \ref{prop-decom}.

The other way is to use the following decomposition developed by Wang and Xin \cite[Theorem 2.2]{WX}, see also Dashti and Robinson \cite{DR} for the
case $q=2$:
\begin{equation} 
L^q(\mathbb R^3)=G_q^{(1)}(\mathbb R^3) 
\oplus L^q_\sigma(\mathbb R^3)  
=G_q^{(1)}(\mathbb R^3)
\oplus G_{q}^{(2)}(\mathbb R^3)\oplus X_q(\mathbb R^3)
\label{helm}
\end{equation}
where
\begin{equation*}
\begin{split} 
&L^q_\sigma(\mathbb R^3)=\{V\in L^q(\mathbb R^3);\;
\mbox{div $V$}=0\;\mbox{in $\mathbb R^3$}\}, \\ 
&G_q^{(1)}(\mathbb R^3)=\{\nabla p_1\in L^q(\mathbb R^3);\; p_1\in\widehat W^{1,q}(\mathbb R^3)\}, \\
&G_{q}^{(2)}(\mathbb R^3)=\Big\{ V\in L^q(\mathbb R^3);\;
\mbox{div $V$}=0\;\mbox{in $\mathbb R^3$},\; 
V|_\Omega=\nabla p_2,\;
p_2\in \widehat W^{1,q}(\Omega), \\  
& \qquad\qquad\qquad
\int_B V\,dx=-\int_{\partial\Omega}p_2\nu\,d\sigma,\;
\int_B x\times V\,dx=-\int_{\partial\Omega}x\times (p_2\nu)\,d\sigma\Big\}.
\end{split}
\end{equation*}
Here, 
the density $\rho$ is not involved in the conditions for 
$V\in G_{q}^{(2)}(\mathbb R^3)$, so that $\langle U,V\rangle_{\mathbb R^3}=0$ for all
$U\in X_q(\mathbb R^3)$ and $V\in G_{q^\prime}^{(2)}(\mathbb R^3)$.
One cannot avoid this situation because $L^q_\sigma(\mathbb R^3)^\perp=G_{q^\prime}^{(1)}(\mathbb R^3)$ holds with respect to
the standard pairing $\langle\cdot,\cdot\rangle_{\mathbb R^3}$.
As a consequence, the bounded projection
\[
\mathbb Q=\mathbb Q_q: L^q(\mathbb R^3)\to X_q(\mathbb R^3)
\]
associated with \eqref{helm} fulfills the relation 
$\mathbb Q_q^*=\mathbb Q_{q^\prime}$ in the sense that
\begin{equation*}
\langle \mathbb Q_qU,V\rangle_{\mathbb R^3}=\langle U,\mathbb Q_{q^\prime}V\rangle_{\mathbb R^3}
\end{equation*}
for all  $U\in L^q(\mathbb R^3)$ and $V\in L^{q^\prime}(\mathbb R^3)$
with respect to the standard pairing 
$\langle\cdot,\cdot\rangle_{\mathbb R^3}$,
which should be compared with \eqref{proj-sym-0} in Proposition \ref{prop-decom}.
The fact that $\mathbb Q$ is not symmetric with respect to $\langle\cdot,\cdot\rangle_{\mathbb R^3,\rho}$ unless $\rho=1$
is not consistent with 
duality arguments especially in subsections \ref{sm-pressure}, \ref{adjoint-backward}, \ref{decay-0th} and \ref{proof-1}.
For this reason, the latter way is not convenient for us
and thus the decomposition \eqref{decompo} is preferred in this paper.

The classical Fujita-Kato projection
\begin{equation}
\mathbb P_0=\mathbb P_{0,q}: L^q(\mathbb R^3)\to L^q_\sigma(\mathbb R^3)
\label{FK-proj}
\end{equation}
is well-known and it is described in terms of the Riesz transform
${\mathcal R}=(-\Delta)^{-1/2}\nabla$ as
$\mathbb P_0={\mathcal I}+{\mathcal R}\otimes {\mathcal R}$ with ${\mathcal I}$
being the identity operator.
Notice the relation
$\mathbb P_{0,q}^*=\mathbb P_{0,q^\prime}$
with respect to $\langle\cdot,\cdot\rangle_{\mathbb R^3}$.
The projection $\mathbb P_0$ is used in subsections
\ref{evo-whole} and \ref{near-infinity}.

Let us call \eqref{eq-linear} with $\{u_b,\eta_b\}=\{0,0\}$
the Stokes-structure system (although it is called the fluid-structure system in the existing literature); that is, it is written as
\begin{equation}
\begin{split}
&\partial_tu=\Delta u-\nabla p, \qquad 
\mbox{div $u$}=0 \quad \mbox{in $\Omega\times (0,\infty)$},  \\
&u|_{\partial\Omega}=\eta+\omega\times x, \qquad 
u\to 0 \quad \mbox{as $|x|\to\infty$}, \\
&m\frac{d\eta}{dt}+\int_{\partial\Omega}\mathbb S(u,p)\nu\,d\sigma=0, \\
&J\frac{d\omega}{dt}+\int_{\partial\Omega}x\times \mathbb S(u,p)\nu\,d\sigma=0,
\end{split}
\label{stokes-str}
\end{equation}
subject to the initial conditions at $s=0$ (since \eqref{stokes-str} is autonomous).
This problem can be formulated as the evolution equation of the form
\begin{equation}
\frac{dU}{dt}+AU=0
\label{abst-evo-eq}
\end{equation}
in the space $X_q(\mathbb R^3)$, where the velocities of the fluid and the rigid body are unified as a velocity $U$ in the whole space 
$\mathbb R^3$ through \eqref{X2}.
Here, the operator $A$, to which we refer as the Stokes-structure operator
in this paper, is defined by
\begin{equation}
\begin{split}
D_q(A)
&=\big\{U\in X_q(\mathbb R^3)\cap W^{1,q}(\mathbb R^3);\;
u=U|_\Omega\in W^{2,q}(\Omega)\big\},  \\
AU
&=\mathbb P{\mathcal A}U, \\
{\mathcal A}U
&=
\left\{
\begin{aligned}
&-\mbox{div $(2Du)$}=-\Delta u, \qquad x\in\Omega,  \\ 
&\frac{1}{m}\int_{\partial\Omega}(2Du)\nu\,d\sigma+
\left(J^{-1}\int_{\partial\Omega}y\times (2Du)\nu\,d\sigma_y\right)\times x,  \qquad x\in B, 
\end{aligned}
\right.  \\
\end{split}
\label{stokes-op}
\end{equation}
where $\mathbb P$ is the projection \eqref{new-proj}.
Notice that the domain $D_q(A)$ is dense since so is ${\mathcal E}(\mathbb R^3)$ and that
the boundary condition \eqref{hidden-bc2} 
is hidden for $U\in D_q(A)$,
where $(u,\eta,\omega)=i(U)$, see \eqref{X1}.

In \cite{EMT} the other operator $\widetilde AU=\mathbb Q{\mathcal A}U$ 
with the same domain $D_q(\widetilde A)=D_q(A)$ acting
on the same space $X_q(\mathbb R^3)$ is 
defined by use of the other projection $\mathbb Q$ associated with \eqref{helm}. Due to \cite[Proposition 3.4]{EMT}, given $F\in X_q(\mathbb R^3)$, 
the resolvent problem 
\begin{equation}
(\lambda +\widetilde A)U=F \qquad \mbox{in $X_q(\mathbb R^3)$}
\label{resol11}
\end{equation}
is equivalent to the Stokes-structure resolvent system
\begin{equation}
\begin{split} 
&\lambda u-\Delta u+\nabla p=f, \qquad 
\mbox{div $u$}=0 \quad\mbox{in $\Omega$}, \\ 
&u|_{\partial\Omega}=\eta+\omega\times x,  \qquad 
u\to 0\quad \mbox{as $|x|\to\infty$},  \\ 
&\lambda \eta+\frac{1}{m}\int_{\partial\Omega}\mathbb S(u,p)\nu\,d\sigma=\kappa, \\ 
&\lambda \omega+J^{-1}\int_{\partial\Omega}x\times \mathbb S(u,p)\nu\,d\sigma=\mu,  
\end{split} 
\label{st-str-resol}
\end{equation}
where $(f,\kappa,\mu)=i(F)$, see \eqref{X1}, 
and the associated pressure $p$ is appropriately determined.
Likewise, as we will show in Proposition \ref{prop-equi},
our resolvent problem 
\begin{equation}
(\lambda+A)U=F \qquad\mbox{in $X_q(\mathbb R^3)$}
\label{resol1}
\end{equation}
is also equivalent to \eqref{st-str-resol}.
Moreover, via uniqueness of solutions to the problem
\eqref{st-str-resol} with $\lambda =1$,
we find that $A=\widetilde A$ in Proposition \ref{two-st-str}
and, thereby, that several results for $\widetilde A$
established by \cite{EMT} continue to hold for $A$ as well.
We note that, as observed first by Takahashi and Tucsnak \cite{TT04},
the evolution equation \eqref{abst-evo-eq} is also shown to be
equivalent to the system \eqref{stokes-str} in the similar
fashion to Proposition \ref{prop-equi}.

If we denote by $A_q$ the operator $A$ with $D_q(A)$ acting on 
$X_q(\mathbb R^3)$, we have the duality relation
\begin{equation}
A_q^*=A_{q^\prime}
\label{op-sym}
\end{equation}
with respect to $\langle\cdot,\cdot\rangle_{\mathbb R^3,\rho}$ 
for every $q\in (1,\infty)$ and thus $A$ is closed.
The duality \eqref{op-sym} is observed from \eqref{stokes-sym} below combined with the fact 
that $\lambda +A$ with 
$\lambda >0$ is surjective.
This surjectivity follows from analysis of the resolvent mentioned just below.
In particular, it is a positive self-adjoint 
operator on $X_2(\mathbb R^3)$ as shown essentially by Silvestre \cite[Theorem 4.1]{Sil02-b} and by Takahashi and Tucsnak
\cite[Proposition 4.2]{TT04}. 
Given $F\in X_q(\mathbb R^3)$, 
consider the resolvent problem \eqref{resol1}.
Then, for every $\varepsilon\in (0,\pi/2)$, there is a constant $C_\varepsilon>0$ such that
$\mathbb C\setminus (-\infty,0]\subset \rho(-A)$
being the resolvent set of the operator $-A$
subject to
\begin{equation}
\|(\lambda +A)^{-1}F\|_{q,\mathbb R^3}
\leq\frac{C_\varepsilon}{|\lambda|}\|F\|_{q,\mathbb R^3}
\label{para-resol}
\end{equation}
for all $\lambda\in\Sigma_\varepsilon$ 
and $F\in X_q(\mathbb R^3)$, 
which is due to Ervedoza, Maity and Tucsnak \cite[Theorem 6.1]{EMT} and 
implies that the operator $-A$ generates a bounded analytic semigroup
(which is called the fluid-structure semigroup in the existing literature)
$\{e^{-tA};\, t\geq 0\}$ on $X_q(\mathbb R^3)$ for every $q\in (1,\infty)$,
that is an improvement of Wang and Xin \cite{WX},
where
\begin{equation}
\Sigma_\varepsilon:=
\{\lambda\in\mathbb C\setminus\{0\};\; |\arg \lambda|\leq \pi-\varepsilon\}
\label{para-sector}.
\end{equation}
Hence, the fractional powers $A^\alpha$ with $\alpha>0$ are well-defined as closed operators on $X_q(\mathbb R^3)$.
Estimate \eqref{para-resol} especially for large $|\lambda|$ will be revisited in subsection \ref{stokes-structure}.

\subsection{Main results}
\label{main}

As the basic motion, we fix a solution $\{u_b,\eta_b,\omega_b\}$ to
the problem \eqref{eq-fixed} on the whole time axis $\mathbb R$
together with the associated pressure $p_b$ and set
\begin{equation}
U_b(x,t)=u_b(x,t)\chi_\Omega(x)+\big(\eta_b(t)+\omega_b(t)\times x\big)\chi_B(x). 
\label{background}
\end{equation}
Let us assume that
\begin{equation}
\left\{
\begin{array}{l}
u_b\in L^\infty(\mathbb R;\, L^{q_0}(\Omega)\cap L^\infty(\Omega))\; 
\mbox{with some}\; q_0\in (1,3), \quad 
\mbox{div $u_b$}=0\;\mbox{in $\Omega$},  \\
\eta_b,\,\omega_b\in L^\infty(\mathbb R;\, \mathbb R^3), \\
\nu\cdot (u_b-\eta_b-\omega_b\times x)|_{\partial\Omega}=0, \quad 
\end{array}
\right.
\label{ass-1}
\end{equation}
then we see that $U_b\in L^\infty(\mathbb R;\, X_q(\mathbb R^3))$ for every $q\in [q_0,\infty)$. 
Let us also mention that
the assumption on $\omega_b$ is used merely in the proof of Proposition \ref{est-wh} 
for the whole space problem without body.
We further make the assumption
\begin{equation}
u_b\in C^\theta(\mathbb R;\,L^\infty(\Omega)), \quad
\eta_b\in C^\theta(\mathbb R;\, \mathbb R^3)\;\;
\mbox{with some}\; \theta\in (0,1).
\label{ass-2}
\end{equation}
Set
\begin{equation}
\begin{split}
&\|U_b\|:=\sup_{t\in\mathbb R}\big(\|u_b(t)\|_{q_0,\Omega}+\|u_b(t)\|_{\infty,\Omega}+|\eta_b(t)|+|\omega_b(t)|\big), \\
&[U_b]_\theta:=\sup_{t>s}\frac{\|u_b(t)-u_b(s)\|_{\infty,\Omega}+|\eta_b(t)-\eta_b(s)|}{(t-s)^\theta}.  
\end{split}
\label{quan}
\end{equation}
\begin{remark}
The condition at the boundary $\partial\Omega$ in \eqref{ass-1} may be rewritten as $\nu\cdot (u_b-\eta_b)|_{\partial\Omega}=0$
since $\nu\cdot (\omega_b\times x)=0$ 
at the sphere always holds.
In view of the boundary condition in \eqref{eq-fixed}, one can deal with the case of tangential velocity $u_*$, that is,
$\nu\cdot u_*|_{\partial\Omega}=0$.
\label{rem-ass}
\end{remark}

Examples of the basic motion for which the assumptions \eqref{ass-1}--\eqref{ass-2} could be met will be discussed briefly
in subsection \ref{basic-mo}.

To study the Oseen-structure system \eqref{eq-linear}, let us introduce the family 
$\{L_\pm(t);\, t\in\mathbb R\}$ of the Oseen-structure operators 
on $X_q(\mathbb R^3)$, $1<q<\infty$, by
\begin{equation}
D_q(L_\pm(t))=D_q(A), \qquad
L_\pm(t)U=AU\pm B(t)U,
\label{oseen-op}
\end{equation}
where $A$ is the Stokes-structure operator \eqref{stokes-op} and
\begin{equation}
B(t)U
=\mathbb P\big[\{(u_b(t)-\eta_b(t))\cdot\nabla u\}\chi_\Omega\big]
\label{oseen-term}
\end{equation}
for $u=U|_\Omega$ with $\mathbb P$ being the projection
given by \eqref{new-proj}.
As mentioned in the preceding subsection and as we will show in 
Proposition \ref{two-st-str}, our Stokes-structure operator $A$ 
coincides with the operator $\widetilde A$ studied
in \cite{EMT} by Ervedoza, Maity and Tucsnak.
With the aid of the elliptic estimate due to \cite[Proposition 7.3]{EMT}
\begin{equation}
\|u\|_{W^{2,q}(\Omega)}\leq C\big(\|AU\|_{q,\Omega}+\|U\|_{q,\mathbb R^3}\big), \qquad U\in D_q(A),\; u=U|_\Omega,
\label{ADN-0}
\end{equation}
we find
\begin{equation}
\begin{split}
\|\nabla u\|_{q,\Omega}
&\leq C\|u\|_{W^{2,q}(\Omega)}^{1/2}\|u\|_{q,\Omega}^{1/2}  \\
&\leq C\big(\|AU\|_{q,\Omega}+\|U\|_{q,\mathbb R^3}\big)^{1/2}\|u\|_{q,\Omega}^{1/2}  \\
&\leq C
\left(\|AU\|_{q,\mathbb R^3}^{1/2}\|U\|_{q,\mathbb R^3}^{1/2}+\|U\|_{q,\mathbb R^3}\right)
\end{split}
\label{grad}
\end{equation}
which together with \eqref{ass-1}--\eqref{quan} leads to
\begin{equation}
\|B(t)U\|_{q,\mathbb R^3}\leq C\|(u_b(t)-\eta_b(t))\cdot\nabla u\|_{q,\Omega}
\leq C\|U_b\|
\left(\|AU\|_{q,\mathbb R^3}^{1/2}\|U\|_{q,\mathbb R^3}^{1/2}+\|U\|_{q,\mathbb R^3}\right)
\label{perturb-est}
\end{equation}
for all $U\in D_q(A)$.
Thus, one justifies the relation $D_q(L_\pm(t))=D_q(A)$, see \eqref{oseen-op},
on which we have
\begin{equation}
\begin{split}
\|L_\pm(t)U\|_{q,\mathbb R^3}
&\leq (1+C\|U_b\|)\|AU\|_{q,\mathbb R^3}+C\|U_b\|\|U\|_{q,\mathbb R^3},  \\
\|AU\|_{q,\mathbb R^3}
&\leq 2\|L_\pm(t)U\|_{q,\mathbb R^3}+C(\|U_b\|^2+\|U_b\|)\|U\|_{q,\mathbb R^3}.
\end{split}
\label{equi-LA}
\end{equation}
Moreover, 
it follows immediately from \eqref{stokes-op} that
\begin{equation*}
\|AU\|_{q,\mathbb R^3}
\leq C\|u\|_{W^{2,q}(\Omega)}, \qquad  
U\in D_q(A),\, u=U|_\Omega,
\end{equation*}
which combined with \eqref{ADN-0} tells us that 
$\|\cdot\|_{W^{2,q}(\Omega)}+\|\cdot\|_{q,\mathbb R^3}$ is equivalent
to the graph norm of $A$ and, therefore, also to that of $L_\pm(t)$
uniformly in $t\in\mathbb R$ on account of \eqref{equi-LA}; in particular,
\begin{equation}
\|u\|_{W^{2,q}(\Omega)}
\leq C\big(\|L_\pm(t)U\|_{q,\mathbb R^3}+\|U\|_{q,\mathbb R^3}\big), \qquad
u=U|_\Omega,
\label{ADN-1}
\end{equation}
for all $U\in D(L_\pm(t))$ with some $C>0$ (involving $\|U_b\|$) independent
of $t\in\mathbb R$.
 
The initial value problems for \eqref{eq-perturb} as well as \eqref{eq-linear} are formulated as
\begin{equation}
\frac{dU}{dt}+L_+(t)U=0,\quad t\in (s,\infty); \qquad U(s)=U_0
\label{linearized}
\end{equation}
and
\begin{equation}
\frac{dU}{dt}+L_+(t)U=H(U), \quad t\in (s,\infty); \qquad U(s)=U_0
\label{perturbed}
\end{equation}
where
\begin{equation}
U_0=u_0\chi_\Omega+(\eta_0+\omega_0\times x)\chi_B
\label{IC-mono}
\end{equation}
and
\begin{equation}
H(U)=
\mathbb P\left[
\big\{(\eta-u)\cdot\nabla(u_b+u)\big\}\chi_\Omega\right]
\label{rhs}
\end{equation}
with $(u,\eta,\omega)=i(U)$, see\eqref{X1}.
Recall that the assumption $U_0\in X_q(\mathbb R^3)$ involves
the compatibility condition
$\nu\cdot (u_0-\eta_0-\omega_0\times x)|_{\partial\Omega}=0$.
The first main result of this paper is the following theorem on $L^q$-$L^r$ estimates of the evolution operator
generated by $L_+(t)$.
\begin{theorem}
Suppose \eqref{ass-1} and \eqref{ass-2}, then
the operator family $\{L_+(t);\, t\in\mathbb R\}$ generates an evolution operator $\{T(t,s);\, -\infty<s\leq t<\infty\}$
on $X_q(\mathbb R^3)$ for every $q\in (1,\infty)$ with the following properties:
\begin{equation}
T(t,\tau)T(\tau,s)=T(t,s)\quad (s\leq \tau\leq t), \quad
T(t,t)={\mathcal I} \quad
\mbox{in ${\mathcal L}(X_q(\mathbb R^3))$},
\label{semi}
\end{equation}
\begin{equation}
(t,s)\mapsto T(t,s)F\in X_q(\mathbb R^3)\;
\mbox{is continuous for $F\in X_q(\mathbb R^3)$},
\label{evo-conti}
\end{equation}
\begin{equation}
\left\{
\begin{split}
&T(\cdot,s)F\in C^1((s,\infty);\, X_q(\mathbb R^3))
\cap C((s,\infty);\,D_q(A)),  \\
&\partial_tT(t,s)F+L_+(t)T(t,s)F=0
\quad\mbox{for $F\in X_q(\mathbb R^3)$},\,t\in (s,\infty),
\end{split}
\right.
\label{evo-eq1}
\end{equation}
\begin{equation}
\left\{
\begin{split}
&T(t,\cdot)F\in C^1((-\infty,t);\, X_q(\mathbb R^3)), \\
&\partial_sT(t,s)F=T(t,s)L_+(s)F \quad 
\mbox{for $F\in D_q(A)$},\, s\in (-\infty,t).
\end{split}
\right.
\label{evo-eq2}
\end{equation}
Furthermore, 
if $\|U_b\|\leq \alpha_j$ 
being small enough, 
to be precise, see below about how small it is for each item $j=1,2,3,4$,
then the evolution operator
$T(t,s)$ enjoys the following estimates with some constant 
$C=C(q,r,\alpha_j,\beta_0,\theta)>0$
whenever $[U_b]_\theta\leq \beta_0$, where $\|U_b\|$ and $[U_b]_\theta$ are given by \eqref{quan} and
$\beta_0>0$ is arbitrary.

\begin{enumerate}
\item
Let $q\in (1,\infty)$ and $r\in [q,\infty]$, then
\begin{equation}
\|T(t,s)F\|_{r,\mathbb R^3} \leq C(t-s)^{-(3/q-3/r)/2}\|F\|_{q,\mathbb R^3}
\label{decay-1}
\end{equation}
for all $(t,s)$ with $t>s$ and $F\in X_q(\mathbb R^3)$.
To be precise, there is a constant
$\alpha_1=\alpha_1(q_0)>0$ such that if
$\|U_b\|\leq\alpha_1$, then 
the assertion above holds
for every $q\in (1,\infty)$ and $r\in [q,\infty]$.

Estimate \eqref{decay-1} holds true for the adjoint evolution operator $T(t,s)^*$ as well under
the same smallness of $\|U_b\|$ as above. 

\item
Let $1<q\leq r<\infty$, then
\begin{equation}
\|\nabla T(t,s)F\|_{r,\mathbb R^3}\leq
C(t-s)^{-1/2-(3/q-3/r)/2}(1+t-s)^{\max\{(1-3/r)/2,\, 0\}}
\|F\|_{q,\mathbb R^3}
\label{grad-new-form}
\end{equation}
for all $(t,s)$ with $t>s$ and $F\in X_q(\mathbb R^3)$.
To be precise, given $r_1\in (1,4/3]$,
there is a constant $\alpha_2=\alpha_2(r_1,q_0)\in (0,\alpha_1]$ such that if
$\|U_b\|\leq \alpha_2$, then 
the assertion above holds for every $r\in [r_1,\infty)$ and
$q\in (1,r]$, where $\alpha_1=\alpha_1(q_0)$ is the constant given in
the previous item.

Estimate \eqref{grad-new-form} holds true for the adjoint evolution
operator $T(t,s)^*$ as well under the same smallness of $\|U_b\|$ as above.

\item
Let $q\in (1,\infty)$ and $r\in [q,\infty]$, then
\begin{equation}
\|T(t,s)\mathbb P\mbox{\rm div $F$}\|_{r,\mathbb R^3}
\leq C(t-s)^{-1/2-(3/q-3/r)/2}(1+t-s)^{\max\{(3/q-2)/2,\, 0\}}
\|F\|_{q,\mathbb R^3}
\label{compo}
\end{equation}
for all $(t,s)$ with $t>s$ and $F\in L^q(\mathbb R^3)^{3\times 3}$ with
$(F\nu)|_{\partial\Omega}=0$ as well as
$\mbox{\rm div $F$}\in L^p_R(\mathbb R^3)$ with some $p\in (1,\infty)$.
To be precise, given $r_0\in [4,\infty)$,
if $\|U_b\|\leq \alpha_3$ with
$\alpha_3(r_0,q_0):=\alpha_2(r_0^\prime,q_0)$,
where $\alpha_2$ is the constant given in the previous item, then
the assertion above holds for every $q\in (1,r_0]$ and
$r\in [q,\infty]$.

\item
Let $1< q\leq r<\infty$, then
\begin{equation}
\begin{split}
&\quad \|\nabla T(t,s)\mathbb P\mbox{\rm div $F$}\|_{r,\mathbb R^3}  \\
&\leq C(t-s)^{-1-(3/q-3/r)/2}(1+t-s)^{\max\{(1-3/r)/2,\, 0\}+\max\{(3/q-2)/2,\, 0\}}
\|F\|_{q,\mathbb R^3}
\end{split}
\label{compo-grad}
\end{equation}
for all $(t,s)$ with $t>s$ and $F\in L^q(\mathbb R^3)^{3\times 3}$ with $(F\nu)|_{\partial\Omega}=0$ as well as
$\mbox{\rm div $F$}\in L^p_R(\mathbb R^3)$ with some $p\in (1,\infty)$.
To be  precise, given $r_0\in [4,\infty)$ and $r_1\in (1,4/3]$,
if $\|U_b\|\leq \alpha_4$ with
$\alpha_4(r_0,r_1,q_0):=\alpha_2\big(\min\{r_0^\prime,r_1\},q_0\big)$,
where $\alpha_2$ is the constant given in the item 2, then
the assertion above holds for $1<q\leq r<\infty$
with $q\in (1,r_0]$ as well as $r\in [r_1,\infty)$.

\end{enumerate}
\label{evo-op}
\end{theorem}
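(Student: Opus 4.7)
The plan is to construct $T(t,s)$ via the standard Tanabe--Sobolevski theory for parabolic evolution equations with Hölder-continuous perturbations, and then to deduce the four decay estimates from the known $L^q$--$L^r$ bounds for the fluid-structure semigroup $e^{-tA}$ by a Duhamel-based perturbation argument in which smallness of $\|U_b\|$ controls the otherwise critical drift term. The construction part goes as follows. The resolvent estimate \eqref{para-resol} says that $-A$ generates a bounded analytic semigroup on $X_q(\mathbb R^3)$ for every $q\in(1,\infty)$. The estimate \eqref{perturb-est} together with Young's inequality shows that, once $\|U_b\|$ is sufficiently small, $B(t)$ is relatively bounded with respect to $A$ with relative bound strictly less than one; since $D_q(L_\pm(t))=D_q(A)$ by \eqref{equi-LA}, $-L_+(t)$ is also the generator of an analytic semigroup on $X_q(\mathbb R^3)$ for each fixed $t\in\mathbb R$. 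The Hölder continuity hypothesis \eqref{ass-2}, combined with \eqref{ADN-1} for the domain norm, yields that $t\mapsto B(t)$ is globally $\theta$-Hölder continuous as a map from $D_q(A)$ into $X_q(\mathbb R^3)$ with modulus controlled by $[U_b]_\theta$. A standard parametrix construction then produces a two-parameter family $T(t,s)$ with the semigroup identity \eqref{semi}, the continuity \eqref{evo-conti} and the forward/backward evolution equations \eqref{evo-eq1}--\eqref{evo-eq2}.

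The four decay estimates are obtained by the Duhamel identity
\begin{equation*}
T(t,s)F = e^{-(t-s)A}F - \int_s^t e^{-(t-\tau)A}\,B(\tau)T(\tau,s)F\,d\tau,
\end{equation*}
using as input the $L^q$--$L^r$ and gradient bounds for $e^{-tA}$ already established in \cite{EMT}. I would first prove the uniform bound $\|T(t,s)F\|_{q,\mathbb R^3}\le C\|F\|_{q,\mathbb R^3}$ by estimating the integrand through Hölder's inequality $\|(u_b-\eta_b)\cdot\nabla T(\tau,s)F\|_{q}\le \|U_b\|\,\|\nabla T(\tau,s)F\|_{q}$ together with the smoothing estimate $\|\nabla e^{-tA}\|_{\mathcal L(X_q)}\le Ct^{-1/2}$ and an absorbing argument based on smallness of $\|U_b\|$; the subcritical decay of $u_b\in L^{q_0}$ with $q_0<3$ is used to handle the large-$\tau$ range. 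Once this is in place, estimate \eqref{decay-1} for $q<r$ is obtained by iterating the Duhamel identity with the $L^q$--$L^r$ decay of $e^{-tA}$ and, crucially, the preliminary gradient bound \eqref{grad} to close the bootstrap. The gradient estimate \eqref{grad-new-form} follows by differentiating the Duhamel identity and using gradient bounds of $e^{-tA}$ near the body (Proposition \ref{led-prop}) and near spatial infinity (Proposition \ref{prop-near-inf}); the estimates \eqref{compo} and \eqref{compo-grad} for the composition with $\mathbb P\,\mbox{div}$ are then dualized through $\langle T(t,s)\mathbb P\,\mbox{div}\,F,G\rangle_{\mathbb R^3,\rho}=-\langle F,\nabla T(t,s)^*G\rangle_{\mathbb R^3,\rho}$ and therefore rely on the parallel development of the adjoint evolution operator $T(t,s)^*$ solving the backward problem for $L_-(t)$, as prepared in subsection \ref{adjoint-backward}. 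The restriction of the exponents to $r\ge r_1$ or $q\le r_0$ in items 2--4 originates precisely from the range of $q$ in which the perturbation iterations converge, and this in turn dictates the admissible smallness bounds $\alpha_j$.

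The main obstacle is the uniform boundedness step for large $t-s$, because the perturbation $B(\tau)T(\tau,s)F$ is of critical type: $(u_b-\eta_b)\cdot\nabla u$ does not gain any time decay when $u_b$ is merely assumed bounded in $L^\infty$ uniformly in $t$, so a naive Gronwall argument diverges logarithmically. This is where the skew-symmetry $\langle(u_b-\eta_b)\cdot\nabla u,u\rangle_{\mathbb R^3,\rho}=0$ intervenes: testing the equation with $U$ and using the hidden boundary condition \eqref{hidden-bc2} produces the energy identity $\frac{d}{dt}\|U\|_{X_2(\mathbb R^3)}^2+2\|\nabla u\|_{2,\Omega}^2=0$, which gives uniform $L^2$-boundedness unconditionally. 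One then propagates this uniform bound to all $q\in(1,\infty)$ by combining $L^2$--$L^q$ smoothing of $e^{-tA}$ with the subcritical factor $\|u_b\|_{q_0,\Omega}$ in the Duhamel term, following the non-autonomous arguments of \cite{Hi18,Hi20}. The smallness of $\|U_b\|$ is needed solely to absorb the critical part of the iteration back to the left-hand side, and the explicit dependence $\alpha_j=\alpha_j(r_0,r_1,q_0)$ is dictated by the range of $L^p$--$L^q$ smoothing available from $e^{-tA}$ when Hölder's inequality is applied to the Oseen-structure perturbation.
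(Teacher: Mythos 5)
Your construction of $T(t,s)$ (relative boundedness of $B(t)$ via \eqref{perturb-est}, Hölder continuity in $t$ as in \eqref{hoelder}, Tanabe's parametrix) is exactly the paper's argument and is fine. The gap is in the large-time part. The Duhamel identity against $e^{-tA}$ that you propose as the engine for \eqref{decay-1} does not close, and smallness of $\|U_b\|$ cannot rescue it: the perturbation $B(\tau)U=\mathbb P[(u_b-\eta_b)\cdot\nabla u\,\chi_\Omega]$ contains the Oseen part $\eta_b\cdot\nabla u$, whose coefficient $\eta_b(\tau)$ is merely bounded and has no spatial decay whatsoever. Even writing it as $\mathrm{div}(u\otimes\eta_b)$ and using the $(t-\tau)^{-1/2}$ gain of $e^{-(t-\tau)A}\mathbb P\,\mathrm{div}$ on $L^q$, the Duhamel integral is bounded below by $c\,|\eta_b|\int_s^t(t-\tau)^{-1/2}\,d\tau\sim(t-s)^{1/2}$ times the quantity you are trying to bound; the divergence is polynomial, not logarithmic, and multiplying by a small constant does not remove it. This is precisely the point the paper makes in the introduction: the Oseen term is \emph{never} subordinate to the fluid--structure semigroup, and the subcritical factor $\|u_b\|_{q_0,\Omega}$ from \eqref{ass-1} helps only with the $U_b\cdot\nabla u$ part, not with $\eta_b\cdot\nabla u$. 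Your appeal to the energy identity gives the unconditional $L^2$ bound, but you then need a mechanism to propagate it to all $q$, and returning to a Duhamel formula against $e^{-tA}$ reintroduces the same divergence.

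What the paper actually does to prove the key uniform bound \eqref{unif} is structurally different and consists of three steps that are absent from your proposal. First, the drift $(\eta_b-U_b)\cdot\nabla$ is kept in the principal part and the \emph{whole-space} evolution operator $T_0(t,s)$ is estimated by perturbing the genuine Oseen evolution operator $E(t,s)$ of \eqref{os-repre0}, whose kernel is an explicitly translated Gaussian, so the $\eta_b\cdot\nabla$ term costs nothing there; only $U_b\cdot\nabla u$ is treated perturbatively, and this is where $q_0<3$ and the smallness $\alpha_2(r_1,q_0)$ enter. Second, $T(t,s)F$ is glued to $T_0(t,s)F$ by a cut-off with a Bogovskii correction (\eqref{appro-funct}), so that the error forcing $H(t)$ of \eqref{cutoff-remain} is supported in $\Omega_3$ and decays in time. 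Third, the Duhamel term for the error is estimated \emph{by duality against the adjoint evolution operator} $T(t,\tau)^*$, using the time-integrated local energy inequality \eqref{key-1} (a consequence of the skew-symmetry you correctly identified) together with a dyadic splitting and a bootstrap over the exponent $r$ (first $2<r<3$, then $3<r<6$, then $r>6$). Without these three ingredients — comparison with the shifted Oseen kernel rather than with $e^{-tA}$, localization of the error, and the duality/energy/dyadic argument — the uniform boundedness step, and hence \eqref{decay-1} and everything downstream of it, does not follow. Your treatment of items 3 and 4 by duality through the weighted pairing is correct in spirit (and matches the paper), but it presupposes item 2 for $T(t,s)^*$, which again rests on the machinery above.
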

\begin{remark}
According to
\eqref{grad-new-form}, the rate of decay is given by
\begin{equation}
\|\nabla T(t,s)F\|_{r,\mathbb R^3}\leq
\left\{
\begin{array}{ll}
C(t-s)^{-1/2-(3/q-3/r)/2}\|F\|_{q,\mathbb R^3} \quad & (r\leq 3),  \\
C(t-s)^{-3/2q}\|F\|_{q,\mathbb R^3} & (r>3),
\end{array}
\right.
\label{decay-2}
\end{equation}
for all $(t,s)$ with $t-s>2$ and $F\in X_q(\mathbb R^3)$.
This rate is the same as the one for the Stokes and Oseen semigroups in exterior domains due to Iwashita \cite{I},
Kobayashi and Shibata \cite{KShi},
Maremonti and Solonnikov \cite{MaSol}, see also \cite{Hi20} for the generalized Oseen evolution operator even with rotating effect
in which the time-dependent motion of a rigid body is prescribed.
It is known (\cite{MaSol, Hi11}) that the rate \eqref{decay-2} is sharp for the 
Stokes semigroup in exterior domains, whereas the optimality is not clear
for the problem under consideration.
In fact, one of the points of their argument 
is that the steady Stokes flow in exterior domains does not possess
fine summability such as $L^3(\Omega)$ at infinity even for the external force $f\in C_0^\infty(\Omega)$ unless
$\int_{\partial\Omega}\mathbb S(u,p)\nu\,d\sigma=0$.
Compared with that, if
$f\chi_\Omega+(\kappa+\mu\times x)\chi_B=\mbox{\rm div $F$}$ with $F$ satisfying the conditions 
in the item 3 of Theorem \ref{evo-op} for $T(t,s)\mathbb P\mbox{\rm div}$ 
and if $U$ is the steady Stokes-structure solution
to $AU=\mathbb P\mbox{\rm div $F$}$, then we have
\[
\int_{\partial\Omega}\mathbb S(u,p)\nu\,d\sigma=m\kappa=\int_B (\kappa+\mu\times x)\rho\,dx
=-\int_{\partial\Omega}(F\nu)\rho\,dx=0,
\]
yielding better decay of $u=U|_\Omega$, where $p$ is the associated pressure, and thus the desired rate
$\mbox{\eqref{decay-2}}_1$ even with $r>3$ for $e^{-(t-s)A}$ (case $U_b=0$) does not lead us to any contradiction
unlike the Stokes semigroup in exterior domains.
\label{rem-grad1}
\end{remark}
\begin{remark}
Since we know from \eqref{semi}, \eqref{decay-1} and \eqref{evo-eq1} that
$T(t,s)F\in D_r(A)\subset W^{1,r}(\mathbb R^3)$ 
for all $(t,s)$
with $t>s$ and $F\in X_q(\mathbb R^3)$, estimate of
$\|\nabla T(t,s)F\|_{r,\mathbb R^3}$ makes sense.
Of course, $\nabla T(t,s)F$ of the form \eqref{grad-form} never belongs to $X_r(\mathbb R^3)$. 
By \eqref{X1} with $U=T(t,s)F$, we have
\begin{equation*}
\begin{split}
\|\nabla T(t,s)F\|_{r,B}
&\leq C|\omega(t)|
\leq C\|T(t,s)F\|_{1,B}  \\
&\leq\left\{
\begin{array}{ll}
C\|F\|_{q,\mathbb R^3} & (t-s\leq 2), \\
C(t-s)^{-3/2q}\|F\|_{q,\mathbb R^3}\quad & (t-s>2),
\end{array}
\right.
\end{split}
\end{equation*}
for all $F\in X_q(\mathbb R^3)$ on account of \eqref{decay-1}.
Hence, estimate of $\|\nabla T(t,s)F\|_{r,\Omega}$ over the fluid region
is dominant in the sense that it determines \eqref{grad-new-form}.
\label{rem-grad2}
\end{remark}
\begin{remark}
The adjoint evolution operator $T(t,s)^*$ is studied
in subsection \ref{adjoint-backward}.
The duality argument shows 
\eqref{compo} of the operator $T(t,s)\mathbb P\mbox{\rm div}$, 
that plays a role for the proof of Theorem \ref{nonlinear} below and that 
corresponds to Lemma 8.1 of \cite{EMT} in which 
the additional condition $F|_B=0$ is imposed.
The reason why this condition is removed in Theorem \ref{evo-op}
is that the same estimate of $\nabla T(t,s)^*$ as in
\eqref{grad-new-form} is deduced over the whole space
$\mathbb R^3$, as mentioned above, rather than the exterior domain $\Omega$ solely.
Nevertheless, as pointed out by 
Ervedoza, Hillairet and Lacave \cite[remark after Corollary 3.10]{EHL14} as well as \cite{EMT},
$\nabla T(t,s)^*$ is not exactly adjoint of the operator $T(t,s)\mathbb P\mbox{\rm div}$ (unless $\rho =1$).
This is because several duality relations hold true with respect to the pairing \eqref{pairing} involving
the constant weight $\rho$, and this is why one needs the vanishing normal trace $(F\nu)|_{\partial\Omega}=0$
for \eqref{compo} in order that the boundary integral disappears by integration by parts even though $\rho\neq 1$.
Notice that $(F\nu)|_{\partial\Omega}$ from both directions coincide with each other since $\mbox{\rm div $F$}\in L^p(\mathbb R^3)$
for some $p\in (1,\infty)$.
\label{rem-grad3}
\end{remark}


To proceed to the nonlinear problem \eqref{eq-perturb},
we do need the latter estimates \eqref{compo}--\eqref{compo-grad} in Theorem \ref{evo-op}
to deal with the nonlinear term $\eta\cdot\nabla u$ among four terms in \eqref{rhs}
as in \cite{EHL14, EMT}.
For the other terms one can discuss them by use of \eqref{decay-1} and \eqref{grad-new-form}
under a bit more assumptions on $\nabla u_b$ than \eqref{ass-3} below, however, if one applies \eqref{compo}--\eqref{compo-grad} partly
to the linear term $(\eta-u)\cdot\nabla u_b$ in \eqref{rhs},
then one needs less further assumption:
\begin{equation}
\nabla u_b
\in L^\infty(\mathbb R;\, L^3(\Omega))
\cap C^{\tilde\theta}_{\rm loc}(\mathbb R;\, L^3(\Omega)) \quad
\mbox{with some}\; 
\tilde\theta\in (0,1). 
\label{ass-3}
\end{equation}
Accordingly, in addition to \eqref{quan}, let us introduce the quantity
\begin{equation}
\|U_b\|^\prime:=\|U_b\|+\sup_{t\in\mathbb R}\|\nabla u_b(t)\|_{3,\Omega}
\label{quan2}
\end{equation}
but the H\"older seminorm of $\nabla u_b(t)$ is not needed since the 
H\"older condition in \eqref{ass-3} is used merely to show that a mild solution (solution to \eqref{int-NS} below)
becomes a strong solution to the initial value problem \eqref{perturbed}.
The second main result reads
\begin{theorem}
Suppose \eqref{ass-1}--\eqref{ass-2} and \eqref{ass-3}.  
There exists a constant $\alpha=\alpha(q_0,\beta_0,\theta)>0$
such that if $\|U_b\|^\prime\leq \alpha$ as well as $[U_b]_\theta\leq\beta_0$, then the following statement holds,
where 
$\|U_b\|^\prime$ is given by \eqref{quan2} and
$\beta_0>0$ is arbitrary: 
There is a constant $\delta=\delta(\alpha,\beta_0,\theta)>0$ 
such that if $U_0\in X_3(\mathbb R^3)$
satisfies $\|U_0\|_{3,\mathbb R^3}<\delta$, where $U_0$ is given by \eqref{IC-mono},
then problem \eqref{perturbed} admits a unique strong solution
\[
U\in C([s,\infty);\,X_3(\mathbb R^3))\cap C((s,\infty);\,D_3(A))\cap C^1((s,\infty);\,X_3(\mathbb R^3))
\]
which enjoys
\begin{equation}
\begin{split}
&\|u(t)\|_{q,\Omega}=o\big((t-s)^{-1/2+3/2q}\big), \\
&\|\nabla u(t)\|_{r,\Omega}
+|\eta(t)|+|\omega(t)|=o\big((t-s)^{-1/2}\big)
\end{split}
\label{main-decay}
\end{equation}
as $(t-s)\to\infty$ for every $q\in [3,\infty]$ and $r\in [3,\infty)$,
where $(u,\eta,\omega)=i(U)$, see \eqref{X1}. 
\label{nonlinear}
\end{theorem}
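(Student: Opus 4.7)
The plan is to convert \eqref{perturbed} into the Duhamel integral equation
\begin{equation*}
U(t)=T(t,s)U_0+\int_s^t T(t,\tau)H(U(\tau))\,d\tau
\end{equation*}
and solve it by Banach contraction in a time-weighted space of $X_3(\mathbb{R}^3)$-valued continuous functions, then promote the resulting mild solution to a strong one via the H\"older hypotheses \eqref{ass-2}, \eqref{ass-3}. A preliminary observation is that the nonlinearity \eqref{rhs} can be cast in divergence form, $H(U)=\mathbb{P}\,\mathrm{div}(F_1+F_2)$ with $F_1=[u_b\otimes(\eta-u)]\chi_\Omega$ and $F_2=[u\otimes(\eta-u)]\chi_\Omega$, thanks to $\mathrm{div}\,u_b=\mathrm{div}\,u=0$ in $\Omega$. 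Because $u|_{\partial\Omega}=\eta+\omega\times x$ and $\nu\cdot(\omega\times x)=0$ on the unit sphere, one has $(\eta-u)\cdot\nu|_{\partial\Omega}=0$, whence $(F_j\nu)|_{\partial\Omega}=0$. This is the condition required in order to apply the composition-type estimates \eqref{compo}--\eqref{compo-grad}.

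I would then work in the Banach space of continuous $U:[s,\infty)\to X_3(\mathbb{R}^3)$ with norm
\begin{equation*}
[U]:=\sup_{t>s}\|U(t)\|_{3,\mathbb{R}^3}+\sup_{t>s}(t-s)^{1/2-3/(2q)}\|u(t)\|_{q,\Omega}+\sup_{t>s}(t-s)^{1/2}\bigl(\|\nabla u(t)\|_{r,\Omega}+|\eta(t)|+|\omega(t)|\bigr),
\end{equation*}
for one judiciously chosen pair $(q,r)$ lying in the admissible parameter ranges of items 2--4 of Theorem~\ref{evo-op}. By \eqref{decay-1} and \eqref{grad-new-form} the free term satisfies $[T(\cdot,s)U_0]\leq C\|U_0\|_{3,\mathbb{R}^3}$. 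For the Duhamel integral, a Kato-type scheme yields $\|F_2(\tau)\|_p\lesssim(|\eta(\tau)|+\|u(\tau)\|_{2p})\|u(\tau)\|_{2p}$ and $\|F_1(\tau)\|_p\lesssim\|u_b\|_{p_1}(|\eta(\tau)|+\|u(\tau)\|_{p_2})$; inserting these bounds into \eqref{compo}--\eqref{compo-grad} and evaluating the ensuing beta-function integrals $\int_s^t(t-\tau)^{-a}(\tau-s)^{-b}d\tau$ produces, via the bilinear part, a contribution $\leq C[U]^2$ absorbed by smallness of $\delta$, and via the linear part a contribution $\leq C\|U_b\|'[U]\leq C\alpha[U]$ absorbed by smallness of $\alpha$. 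Banach's contraction theorem in the ball $\{[U-T(\cdot,s)U_0]\leq 2C\delta\}$ then yields a unique mild solution with $[U]\leq C\delta$.

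Next, \eqref{ass-2}--\eqref{ass-3} together with the time-weighted bounds just established show that $\tau\mapsto H(U(\tau))$ is locally H\"older continuous from $(s,\infty)$ into $X_3(\mathbb{R}^3)$; the evolution-operator properties \eqref{evo-eq1}--\eqref{evo-eq2} then promote $U$ to a classical solution lying in $C^1((s,\infty);X_3(\mathbb{R}^3))\cap C((s,\infty);D_3(A))$. The $O$-rates corresponding to \eqref{main-decay} for every $q\in[3,\infty]$ and $r\in[3,\infty)$ follow from $[U]\leq C\delta$ by interpolation between the weighted bounds defining $[U]$. To upgrade $O$ to $o$ I would use a density argument: approximate $U_0$ in $X_3$ by elements of $\mathcal{E}(\mathbb{R}^3)$, which by \eqref{dense-sub} also belong to $X_{q_1}$ for any $q_1<3$, and invoke \eqref{decay-1}--\eqref{grad-new-form} to see that their propagated free parts decay strictly faster than the critical rate; for the Duhamel term, split the integral at $(s+t)/2$, restart the fixed-point construction at time $(s+t)/2$ with initial norm $\|U((s+t)/2)\|_{3,\mathbb{R}^3}\to 0$ (a consequence of the same density argument applied once to $T(\cdot,s)U_0$), and iterate.

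The main obstacle I foresee is the linear-in-$U$ piece $\mathbb{P}\,\mathrm{div}\,F_1$: because $\nabla u_b$ is merely uniformly bounded in $L^3(\Omega)$ with no time decay, it cannot be handled by short-time contraction, and the smallness $\|U_b\|'\leq\alpha$ must be used together with exact exponent matching between the norm $[U]$ and the composition estimates. In particular, to avoid triggering the unbounded time factor $(1+t-s)^{\max\{(1-3/r)/2,\,0\}+\max\{(3/q-2)/2,\,0\}}$ present in \eqref{compo}--\eqref{compo-grad}, one is forced to keep $q\geq 3/2$ and $r\leq 3$ whenever those estimates are invoked, which considerably narrows the admissible pairs $(q,r)$ in $[U]$. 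Calibrating these choices so that both the contraction for $H_2$ and the smallness-based absorption for $H_1$ succeed simultaneously within the ranges of Theorem~\ref{evo-op}, while also preserving enough room to close the gradient and $L^\infty$ estimates needed for \eqref{main-decay}, is the technical heart of the argument.
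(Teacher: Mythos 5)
Your proposal is correct and follows essentially the same route as the paper: the same Duhamel reformulation with the divergence-form rewriting of $H(U)$ (justified by $(\eta-u)\cdot\nu|_{\partial\Omega}=0$), a contraction argument in a time-weighted space where the $u_b$-linear part is absorbed by smallness of $\|U_b\|'$ and the bilinear part by smallness of $\delta$, H\"older continuity of $\tau\mapsto H(U(\tau))$ to upgrade to a strong solution, and a density-plus-restart argument for the $o$-rates. The exponent constraints you flag ($q\geq 3/2$, $r\leq 3$ when invoking \eqref{compo}--\eqref{compo-grad}) are exactly the ones the paper respects, e.g.\ via the choice $q_0\in(3/2,3)$ and the splitting of the Duhamel integral at $(s+t)/2$.
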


\begin{remark}
The decay rate $(t-s)^{-1/2}$ for $\|\nabla u(t)\|_{3,\Omega}$ in \eqref{main-decay} is new even when $U_b=0$,
see \cite{EMT} in which less rate is deduced.
This improvement is due to \eqref{nonlinear-est}.
\label{rem-non1}
\end{remark}
\begin{remark}
In view of the proof, we see that the
large time decay \eqref{main-decay} with $r\in [3,\infty)$ replaced by $r\in [3,\sigma_{0*})$
of the mild solution can be obtained even if 
$\nabla u_b\in L^\infty(\mathbb R;\,L^{\sigma_0}(\Omega))$
with some $\sigma_0\in (3/2,3]$ instead of \eqref{ass-3}, where
$1/\sigma_{0*}=1/\sigma_0-1/3$.
This solution becomes a strong one with values in $D_3(A)+D_{\sigma_0}(A)$
under the additional condition
$\nabla u_b\in C^{\tilde\theta}_{\rm loc}(\mathbb R;\, L^{\sigma_0}(\Omega))$ with some
$\tilde\theta\in (3/2\sigma_0-1/2,1)$ as well as
$\theta\in (3/2\sigma_0-1/2,1)$, where $\theta$ is given in \eqref{ass-2}.
\label{rem-non2}
\end{remark}

\subsection{Basic motions}
\label{basic-mo}

In this subsection we briefly discuss the basic motions, specifically the self-propelled motions, in the literature.
Those motions are stable as long as they are small enough when applying Theorem \ref{nonlinear}, but
the details should be discussed elsewhere.

By following the compactness argument due to Galdi \cite[Theorem 5.1]{Ga99} who studied 
the steady problem 
attached to the body of arbitrary shape,
it is possible to show the existence
of a solution of the Leray class $\nabla u_b\in L^2(\Omega)$ 
to the steady problem associated with
\eqref{eq-fixed} when $u_*$ is independent of $t$ and small
in $H^{1/2}(\partial\Omega)$ as well as vanishing flux condition.
If we assume further $u_*\in H^{3/2}(\partial\Omega)$, then we have 
$\nabla u_b\in H^1(\Omega)\subset L^3(\Omega)$, see \eqref{ass-3},
$u_b\in W^{1,6}(\Omega)\subset L^\infty(\Omega)\cap C^{1/2}(\overline{\Omega})$.
The uniqueness of the solution in the small is also available when
the body is a ball, while this issue remains still open for the case
of arbitrary shape.
If, for instance, the steady rigid motion $\eta_b+\omega_b\times x$ obtained in this way fulfills 
$\eta_b\cdot\omega_b\neq 0$, 
then the result due to Galdi and Kyed \cite{GaKy} 
concludes that the solution $u_b$ 
enjoys a wake structure, 
yielding \eqref{ass-1} with $q_0\in (2,3)$ provided
$u_*$ is tangential to $\partial\Omega$.
Actually, we are able to deduce even more; indeed,
according to Theorems 5.1 and 5.2 of the same paper \cite{GaKy}, we have
pointwise estimates
\begin{equation}
|u_b(x)|\leq C|x|^{-1}, \qquad |\nabla u_b(x)|\leq C|x|^{-3/2}
\label{pointwise-NS}
\end{equation}
for large $|x|$ with further wake behavior so that $u_b$ and $\nabla u_b$ decays even faster outside the wake region.
With $\mbox{\eqref{pointwise-NS}}_1$ for $u_b$ at hand, one can use Theorem 1.2 of Silvestre, Takahashi and the 
present author \cite{HST17} to find even better summability \eqref{ass-1} with $q_0\in (4/3,3)$ on account of the 
self-propelling condition, that is related to the asymptotic structure of the Navier-Stokes flow in exterior domains,
see \cite{GaNe, Hi18-h}.

Another 
observation is that the steady solution, denoted by $\{u_s,\eta_s,\omega_s\}$, constructed
by Galdi \cite[Theorem 5.1]{Ga99} 
to the problem \eqref{eq-fixed} in which the equations of motions are replaced by \eqref{shape-arbit}--\eqref{shape-arbit-s} 
becomes a time-periodic solution to \eqref{eq-fixed} by the change of variable
\[
u_b(x,t)=Q(t)^\top u_s(Q(t)x), \quad
\eta_b(t)=Q(t)^\top\eta_s, \quad
\omega_b(t)=Q(t)^\top\omega_s
\]
with a suitable orthogonal matrix $Q(t)$ 
when the body is a ball.
This solution $u_b$ enjoys the desired summability properties mentioned above if $u_*\in H^{3/2}(\partial\Omega)$,
$\nu\cdot u_*|_{\partial\Omega}=0$ and $\eta_s\cdot\omega_s\neq 0$. 
Moreover, it is seen that
\[
u_b(x,t)-u_b(x,s)
=\int_s^t 
\big[(\omega_b(\tau)\times x)\cdot\nabla u_b(x,\tau)-\omega_b(\tau)\times u_b(x,\tau)\big]\,d\tau.
\]
Then the pointwise decay $\mbox{\eqref{pointwise-NS}}_2$ for $\nabla u_s$ implies that
$u_b$ is globally Lipschitz continuous with values in $L^\infty(\Omega)$,
which combined with $u_b\in L^\infty(\mathbb R;\,L^\infty(\Omega))$ leads to
\eqref{ass-2} for every $\theta\in (0,1)$.
One can also verify \eqref{ass-3} from
the globally Lipschitz continuity of $\nabla u_b$ with values in $L^3(\Omega)$ if
$|x|\nabla^2u_s\in L^3(\Omega)$ (that needs further discussion about the steady motion $u_s$ when $\eta_s\cdot\omega_s\neq 0$).
Even though the H\"older condition in \eqref{ass-3} is not available,
the proof of Theorem \ref{nonlinear} tells us that we have still the asymptotic decay \eqref{main-decay}
for the mild solution, that is, solution to the integral equation \eqref{int-NS}.

Galdi \cite{Ga99} discussed the direct problem, while
a steady control problem 
was studied in \cite{HST17, HST20}
by Silvestre, Takahashi and the present author; in fact,
we found, among others,
a tangential control $u_*$ (together with $u_b$ and $p_b$) which attains
a target rigid motion $\eta_b+\omega_b\times x$ 
to be small, where the shape of the body is arbitrary.
The solution obtained there also becomes a time-periodic one to \eqref{eq-fixed} with \eqref{ass-2} 
by the same reasoning as described in the previous paragraph.
Fine decay structure caused by the self-propelling condition was already deduced in \cite[Theorem 1.2]{HST17},
which implies \eqref{ass-1} with $q_0\in (3/2,3)$, no matter what $\{\eta_b,\omega_b\}$ would be.

\section{Oseen-structure evolution operator} 
\label{oseen-structure-evo}

Before analyzing the Oseen-structure operator \eqref{oseen-op}--\eqref{oseen-term}, 
we begin with some preparatory results in the first three subsections:
the decomposition \eqref{decompo} to eliminate the pressure, 
justification of the monolithic formulation (of the resolvent problem),
and the other formulation of the Stokes-structure operator \eqref{stokes-op} to derive the resolvent estimate \eqref{para-resol}.
In those subsections, the shape of a rigid body is allowed to be arbitrary.
Let $B$ be a bounded domain with 
connected boundary of class $C^{1,1}$, that is assumed to be 
\[
B\subset B_1, \qquad
\int_B x\,dx=0,
\]
and set
\[
\Omega=\mathbb R^3\setminus \overline{B}, \qquad
m=\int_B\rho\,dx, \qquad
J=\int_B\big(|x|^2\mathbb I-x\otimes x\big)\rho\,dx.
\]

In subsection \ref{generation} we show that the Oseen-structure operator generates an evolution operator on the space
$X_q(\mathbb R^3)$ and, successively in subsection \ref{smoothing-estimate}, we deduce smoothing rates of the evolution operator
near the initial time.
Such rates of the associated pressure is studied in
subsection \ref{sm-pressure}.
Analysis of the pressure is rather technical part of this paper.
Subsection \ref{adjoint-backward} is devoted to investigation of the backward problem for the adjoint system.

\subsection{Decomposition of $L^q_R(\mathbb R^3)$} 
\label{new-decompo}

In this subsection 
we establish the decomposition \eqref{decompo}.
To this end, let us start with the following preparatory lemma on the space $L^q_R(\mathbb R^3)$, see \eqref{leb-rigid}.
\begin{lemma}
Let $1<q<\infty$.
Then the Banach space $L^q_R(\mathbb R^3)$ is reflexive.
%
Furthermore, the class
\begin{equation}
{\mathcal E}_R(\mathbb R^3):=\{U\in C_0^\infty(\mathbb R^3)^3;\; DU=O\;\mbox{\rm in $B$}\}
\label{dense-B}
\end{equation}
is dense in $L^q_R(\mathbb R^3)$.
\label{lem-reflex}
\end{lemma}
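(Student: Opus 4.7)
The plan splits into two independent parts. For reflexivity I would simply observe that $L^q_R(\mathbb R^3)$ is a closed linear subspace of the reflexive Banach space $L^q(\mathbb R^3)^3$: the restriction map $U\mapsto U|_B$ is continuous from $L^q(\mathbb R^3)^3$ into $L^q(B)^3$, and the requirement $U|_B\in {\rm RM}$ places $U|_B$ in the six-dimensional (hence closed) subspace of $L^q(B)^3$ spanned by the three constant vectors and the three rotations $x\mapsto e_i\times x$. Since any closed subspace of a reflexive Banach space is itself reflexive, this settles the first claim.

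For density, the strategy is to peel off a smooth extension of the rigid-motion part of $U$ and then invoke the classical density of $C_0^\infty(\Omega)$ in $L^q(\Omega)$ on the remainder. Given $U\in L^q_R(\mathbb R^3)$, write $U=u\chi_\Omega+(\eta+\omega\times x)\chi_B$ with $u\in L^q(\Omega)^3$ and $\eta,\omega\in\mathbb R^3$. I would fix once and for all a cut-off $\psi\in C_0^\infty(\mathbb R^3)$ with $\psi\equiv 1$ on an open neighborhood of $\overline{B}$, and set
\[
V(x):=(\eta+\omega\times x)\,\psi(x).
\]
Then $V\in C_0^\infty(\mathbb R^3)^3$ coincides with the affine rigid motion $\eta+\omega\times x$ throughout a neighborhood of $\overline{B}$, and in particular $DV=O$ on $B$.

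Next, because $(U-V)|_\Omega\in L^q(\Omega)^3$ and $C_0^\infty(\Omega)$ is dense in $L^q(\Omega)$ for $1<q<\infty$, I can pick $W_\varepsilon\in C_0^\infty(\Omega)^3$ with $W_\varepsilon\to(U-V)|_\Omega$ in $L^q(\Omega)^3$. Extending $W_\varepsilon$ by zero to $\mathbb R^3$ and putting
\[
U_\varepsilon:=V+W_\varepsilon\in C_0^\infty(\mathbb R^3)^3,
\]
the support of $W_\varepsilon$ stays uniformly away from $\overline{B}$, hence $DU_\varepsilon=DV=O$ on $B$, which gives $U_\varepsilon\in\mathcal{E}_R(\mathbb R^3)$. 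Since $U$ and $U_\varepsilon$ agree on $B$ by construction, the difference is supported in $\Omega$ and
\[
\|U-U_\varepsilon\|_{L^q(\mathbb R^3)}=\|(U-V)|_\Omega-W_\varepsilon\|_{L^q(\Omega)}\longrightarrow 0.
\]

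The whole argument is essentially bookkeeping; the only step deserving real attention is the choice of the smooth extension $V$, which must be engineered so that $DV=O$ on $B$ while still being compactly supported. This is automatic once $\psi\equiv 1$ in a neighborhood of $\overline{B}$, because every affine rigid motion satisfies $D(\eta+\omega\times x)=O$ identically, so in the end no genuine obstacle appears.
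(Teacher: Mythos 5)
Your proposal is correct. The density half is essentially the paper's own argument: fix a smooth, compactly supported lift of the rigid motion $\eta+\omega\times x=U|_B$ that agrees with it on a neighborhood of $\overline{B}$, subtract it, approximate the remainder in $L^q(\Omega)$ by $C_0^\infty(\Omega)$ fields extended by zero, and add the lift back; the only cosmetic difference is that the paper uses the divergence-free lift $\ell(\eta,\omega)=\frac{1}{2}\,\mathrm{rot}\,(\phi(\eta\times x-|x|^2\omega))$ rather than your plain cutoff $\psi(x)(\eta+\omega\times x)$, because that same lift is recycled later for the solenoidal space $X_q(\mathbb R^3)$, where divergence-freeness matters — for $L^q_R(\mathbb R^3)$ it does not, and your choice is fine. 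The reflexivity half is where you genuinely diverge: you invoke the standard fact that a closed subspace of a reflexive space is reflexive (and your closedness argument via the restriction to $B$ landing in the finite-dimensional space ${\rm RM}|_B$ is sound), which is shorter and entirely adequate for the statement of the lemma. The paper instead constructs the complement $J^q(\mathbb R^3)$ with $L^q(\mathbb R^3)^3=L^q_R(\mathbb R^3)\oplus J^q(\mathbb R^3)$ and computes the annihilator $J^q(\mathbb R^3)^\perp=L^{q^\prime}_R(\mathbb R^3)$, which yields the explicit identification $L^q_R(\mathbb R^3)^*=L^{q^\prime}_R(\mathbb R^3)$ with respect to the weighted pairing $\langle\cdot,\cdot\rangle_{\mathbb R^3,\rho}$; that extra dual identification (not just reflexivity) is what gets used downstream in Proposition \ref{prop-decom} to derive \eqref{dual-an}, so the paper's longer route buys information your argument does not provide, but none of it is required for the lemma as stated.
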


\begin{proof}
Set 
\[
J^q(\mathbb R^3):=\Big\{U\in L^q(\mathbb R^3)^3;\; U|_\Omega=0,\;\int_B U\rho\,dx=0,\; \int_B x\times U\rho\,dx=0\Big\}.
\]
We first show that
\begin{equation}
L^q(\mathbb R^3)^3=L^q_R(\mathbb R^3)\oplus J^q(\mathbb R^3).
\label{auxi-decompo}
\end{equation}
It is readily seen that $L^q_R(\mathbb R^3)\cap J^q(\mathbb R^3)=\{0\}$.
Given $U\in L^q(\mathbb R^3)$, we set
\[
V=U\chi_\Omega+(\eta+\omega\times x)\chi_B
\]
with
\[
\eta=\frac{1}{m}\int_BU\rho\,dx, \qquad \omega=J^{-1}\int_B x\times U\rho\,dx.
\]
Then we observe
\[
V\in L^q_R(\mathbb R^3), \qquad U-V\in J^q(\mathbb R^3),
\]
yielding the decomposition \eqref{auxi-decompo}.

We next show that $J^q(\mathbb R^3)^\perp\subset L^{q^\prime}_R(\mathbb R^3)$,
where the annihilator is considered with respect to $\langle\cdot,\cdot\rangle_{\mathbb R^3,\rho}$ (but the constant weight $\rho$
does not play any role here).
Suppose that $U\in L^{q^\prime}(\mathbb R^3)^3$ satisfies
\begin{equation}
\langle U,\Psi\rangle_{\mathbb R^3,\rho}=0, \qquad \forall\Psi\in J^q(\mathbb R^3).
\label{auxi-an}
\end{equation}
Let $\Phi\in C_0^\infty(B)^{3\times 3}$, then we find
\[
\int_B\mbox{div $(\Phi+\Phi^\top)$}\,dx=0, \qquad
\int_B x\times \mbox{div $(\Phi+\Phi^\top)$}\,dx=0,
\]
so that $\mbox{div $(\Phi+\Phi^\top)$}\in J^q(\mathbb R^3)$ by setting zero outside $B$.
By \eqref{auxi-an} we are led to
\[
2\rho\langle DU,\Phi\rangle_B
=-\int_B U\cdot\mbox{div $(\Phi+\Phi^\top)$}\rho\,dx
=-\langle U, \mbox{div $(\Phi+\Phi^\top)$}\rangle_{\mathbb R^3,\rho}=0
\]
for all $\Phi\in C_0^\infty(B)^{3\times 3}$, yielding $U|_B\in {\rm RM}$. 
We thus obtain the desired inclusion relation.
Since the opposite inclusion is obvious, we infer 
$J^q(\mathbb R^3)^\perp=L^{q^\prime}_R(\mathbb R^3)$.
This combined with \eqref{auxi-decompo} leads us to
\[
L^q_R(\mathbb R^3)^*=\big[L^q(\mathbb R^3)^3/J^q(\mathbb R^3)\big]^*=J^q(\mathbb R^3)^\perp=L^{q^\prime}_R(\mathbb R^3),
\]
which implies that $L^q_R(\mathbb R^3)$ is reflexive.

Finally, let us show that the class \eqref{dense-B} is dense in $L^q_R(\mathbb R^3)$.
Given $U\in L^q_R(\mathbb R^3)$, we set $\eta+\omega\times x=U|_B$. 
Let us take the following lift of the rigid motion:
\begin{equation}
\begin{split}
\ell(\eta,\omega)(x)&:=
\frac{1}{2}\,\mbox{rot $\Big(\phi(x)\big(\eta\times x-|x|^2\omega\big)\Big)$}
 \\
&=\phi(x)(\eta+\omega\times x)+\nabla\phi(x)\times (\eta\times x-|x|^2\omega),
\end{split}
\label{lift0}
\end{equation}
where $\phi$ is a cut-off function satisfying
\begin{equation}
\phi\in C_0^\infty(B_3), \qquad
0\leq \phi\leq 1, \qquad 
\phi=1\;\; \mbox{on $B_2$}.
\label{cut0}
\end{equation}
Then we have
\[
\ell(\eta,\omega)\in C_0^\infty(B_3), \qquad
\mbox{div $\ell(\eta,\omega)$}=0, \qquad
\ell(\eta,\omega)|_{\overline{B}}=\eta+\omega\times x. 
\]
Since the lift \eqref{lift0} will be also used later,
it is convenient to introduce the lifting operator
\begin{equation}
\ell: (\eta,\omega)\mapsto \ell(\eta,\omega).
\label{lift-op0}
\end{equation}
Set $U_0=\ell(\eta,\omega)$.
One can take $u_j\in C_0^\infty(\Omega)$ satisfying
$\|u_j-(U-U_0)\|_{q,\Omega}\to 0$ as $j\to\infty$, and let us denote by $u_j$ again by setting zero outside $\Omega$.
We then put $V_j:=u_j+U_0\in C_0^\infty(\mathbb R^3)$ that attains $\eta+\omega\times x$ on $B$ 
(thus $V_j\in {\mathcal E}_R(\mathbb R^3)$) and 
satisfies $\|V_j-U\|_{q,\mathbb R^3}\to 0$ as $j\to\infty$, leading to the desired denseness.
The proof is complete.
\end{proof}
The decomposition \eqref{decompo-0} below was proved by Silvestre \cite[Theorem 3.2]{Sil02-a} when $q=2$.
Proposition \ref{prop-decom} may be regarded as generalization of her result by following the idea due to Wang and Xin \cite[Theorem 2.2]{WX}, who established the other decomposition \eqref{helm} for general $q\in (1,\infty)$.
It should be emphasized that the projection $\mathbb P$ associated with the decomposition \eqref{decompo-0} is symmetric
with respect to $\langle\cdot,\cdot\rangle_{\mathbb R^3,\rho}$ with constant weight $\rho$ 
as in \eqref{proj-sym-0} below unlike the projection $\mathbb Q$
associated with the other one \eqref{helm}.
In fact, this is the reason why we do need the following proposition in the present study.
\begin{proposition}
Let $1<q<\infty$.
Let $X_q(\mathbb R^3)$ and $Z_q(\mathbb R^3)$ be the spaces given respectively
by \eqref{sp-velo} and \eqref{sp-pressure}.
Then the class ${\mathcal E}(\mathbb R^n)$, see \eqref{dense-sub}, is dense in $X_q(\mathbb R^n)$ and 
\begin{equation}
L^q_R(\mathbb R^3)=X_q(\mathbb R^3)\oplus Z_q(\mathbb R^3),
\label{decompo-0}
\end{equation}
\begin{equation}
X_q(\mathbb R^3)^*=Z_q(\mathbb R^3)^\perp=X_{q^\prime}(\mathbb R^3), \qquad
Z_q(\mathbb R^3)^*=X_q(\mathbb R^3)^\perp=Z_{q^\prime}(\mathbb R^3),
\label{dual-an}
\end{equation}
where $Z_q(\mathbb R^3)^\perp$ and $X_q(\mathbb R^3)^\perp$
stand for the annihilators with respect to $\langle\cdot,\cdot\rangle_{\mathbb R^3,\rho}$, see \eqref{pairing}.

Let $\mathbb P=\mathbb P_q$ be the projection from $L^q_R(\mathbb R^3)$ onto $X_q(\mathbb R^3)$, then 
\begin{equation}
\|\mathbb PU\|_{q,\mathbb R^3}\leq C\|U\|_{q,\mathbb R^3}
\label{proj-bdd}
\end{equation}
for all $U\in L^q_R(\mathbb R^3)$ with some constant $C>0$
as well as the relation $\mathbb P_q^*=\mathbb P_{q^\prime}$ in the sense that
\begin{equation}
\langle \mathbb P_qU,V\rangle_{\mathbb R^3,\rho}=\langle U,\mathbb P_{q^\prime}V\rangle_{\mathbb R^3,\rho}
\label{proj-sym-0}
\end{equation}
for all $U\in L^q_R(\mathbb R^3)$ and $V\in L^{q^\prime}_R(\mathbb R^3)$.
If in particular $U\in L^q_R(\mathbb R^3)$ satisfies $u=U|_\Omega\in W^{1,q}(\Omega)$, then we have
$(\mathbb PU)|_\Omega\in W^{1,q}(\Omega)$ and
\begin{equation}
\|\nabla \mathbb PU\|_{q,\Omega}\leq C\big(\|U\|_{q,\mathbb R^3}+\|\nabla u\|_{q,\Omega}\big)
\label{grad-proj-bdd}
\end{equation} 
with some constant $C>0$ independent of $U$.
\label{prop-decom}
\end{proposition}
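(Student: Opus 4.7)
The plan is to construct, for each $U \in L^q_R(\mathbb R^3)$ with $i(U) = (u,\eta,\omega)$, a pressure $p \in \widehat W^{1,q}(\Omega)$ such that the induced $V \in Z_q(\mathbb R^3)$ defined by \eqref{sp-pressure} satisfies $U - V \in X_q(\mathbb R^3)$. Testing $\mbox{div $(U-V)$}=0$ on $\mathbb R^3$ against $\psi \in C_0^\infty(\mathbb R^3)$ and exploiting the sphere identity $\nu\cdot(a\times x)=0$ (so that $\tilde\omega$ in \eqref{sp-pressure} drops out of the divergence condition) reduces the task to the single non-local weak Neumann-type identity
\begin{equation*}
\int_\Omega \nabla p\cdot\nabla\psi\, dx - \frac{1}{m}\Bigl(\int_{\partial\Omega} p\nu\,d\sigma\Bigr)\cdot\int_{\partial\Omega}\psi\nu\,d\sigma
= \int_\Omega u\cdot\nabla\psi\,dx + \eta\cdot\int_{\partial\Omega}\psi\nu\,d\sigma
\end{equation*}
for all $\psi \in \widehat W^{1,q^\prime}(\Omega)$. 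The non-local term is a rank-three compact perturbation of the standard Neumann bilinear form, which is an isomorphism on $\widehat W^{1,q}_{(0)}(\Omega)$ by the classical $L^q$-theory of the Neumann Laplacian in exterior domains (Simader--Sohr). Existence of $p$ with $\|\nabla p\|_{q,\Omega} \leq C\|U\|_{q,\mathbb R^3}$ will then follow from Fredholm theory once injectivity is established; injectivity, in turn, is the uniqueness statement $X_q \cap Z_q = \{0\}$, which I would verify by a test-against-$p$ argument in $q=2$ and then extend to general $q$ by duality. The boundedness \eqref{proj-bdd} is immediate from this \textit{a priori} bound.

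For the duality relations \eqref{dual-an} and the projection symmetry \eqref{proj-sym-0}, the key calculation is the orthogonality $\langle V, W\rangle_{\mathbb R^3,\rho} = 0$ for $V \in Z_q$ and $W \in X_{q^\prime}$. This reduces to integration by parts: writing $i(W) = (w,\eta_w,\omega_w)$ and using $\mbox{div $w$} = 0$ in $\Omega$ together with the compatibility $\nu\cdot w = \nu\cdot(\eta_w+\omega_w\times x)$ on $\partial\Omega$ from \eqref{hidden-bc1},
\begin{equation*}
\int_\Omega\nabla p\cdot w\,dx = \int_{\partial\Omega} p(w\cdot\nu)\,d\sigma = \eta_w\cdot\int_{\partial\Omega}p\nu\,d\sigma + \omega_w\cdot\int_{\partial\Omega} x\times (p\nu)\,d\sigma,
\end{equation*}
which is exactly cancelled by $\int_B V|_B\cdot(\eta_w+\omega_w\times x)\rho\,dx = m\tilde\eta\cdot\eta_w + (J\tilde\omega)\cdot\omega_w$ thanks to the $\rho$-weighted identifications in \eqref{sp-pressure} together with \eqref{pairing}. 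Combined with \eqref{decompo-0} and the reflexivity of $L^q_R(\mathbb R^3)$ from Lemma \ref{lem-reflex}, a standard annihilator argument yields \eqref{dual-an}, and the symmetry \eqref{proj-sym-0} follows by expanding $U = \mathbb P_q U + (U-\mathbb P_q U)$ and $V = \mathbb P_{q^\prime}V + (V-\mathbb P_{q^\prime}V)$ and killing the cross terms via this orthogonality.

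The density of $\mathcal E(\mathbb R^3)$ in $X_q(\mathbb R^3)$ is obtained by subtracting the lift $\ell(\eta,\omega) \in C_{0,\sigma}^\infty(\mathbb R^3)$ from \eqref{lift0}, which reduces the problem to approximating a solenoidal $L^q(\mathbb R^3)$-field vanishing on $B$ by divergence-free $C_0^\infty(\mathbb R^3)$-fields supported away from $\overline B$; this is routine via a small dilation $x\mapsto \lambda x$ ($\lambda>1$) followed by Friedrichs mollification, and adding $\ell(\eta,\omega)$ back produces an element of $\mathcal E(\mathbb R^3)$. For the gradient estimate \eqref{grad-proj-bdd}, when $u \in W^{1,q}(\Omega)$ the Neumann datum $u\cdot\nu$ lies in the trace space $W^{1-1/q,q}(\partial\Omega)$, so the higher $L^q$-regularity of the (perturbed) Neumann problem on the exterior domain with $C^{1,1}$ boundary delivers $\nabla^2 p \in L^q(\Omega)$ with $\|\nabla^2 p\|_{q,\Omega} \leq C(\|\nabla u\|_{q,\Omega} + \|U\|_{q,\mathbb R^3})$; since $(\mathbb P U)|_\Omega = u - \nabla p$, this yields \eqref{grad-proj-bdd}.

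The main obstacle is the non-local Neumann problem in the first step: the rank-three coupling $\tilde\eta \leftrightarrow p$ obstructs a direct appeal to the Simader--Sohr $L^q$-theory and forces either a Fredholm/perturbation argument with a separate injectivity verification, or an explicit Bogovskii-type correction. The precise $\rho$-weighted normalisations in \eqref{sp-pressure} are indispensable throughout — they produce the exact cancellation in the orthogonality calculation above and are what distinguishes \eqref{decompo-0} from the Wang--Xin decomposition \eqref{helm}, making $\mathbb P$ symmetric with respect to $\langle\cdot,\cdot\rangle_{\mathbb R^3,\rho}$ rather than the standard pairing.
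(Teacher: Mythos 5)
Your proposal is correct in substance but reaches the crucial a priori bound by a genuinely different route from the paper. You solve, for each $q$ separately, a nonlocal Neumann problem whose bilinear form is the standard exterior Neumann form plus a bounded finite-rank coupling of $p$ to the boundary integrals $\int_{\partial\Omega}p\nu\,d\sigma$ and $\int_{\partial\Omega}x\times(p\nu)\,d\sigma$; since the unperturbed form is an isomorphism on $\widehat W^{1,q}_{(0)}(\Omega)$ by the Simader--Sohr theory, the perturbed operator is Fredholm of index zero and existence together with \eqref{proj-bdd} follows from injectivity. The paper instead first proves \eqref{decompo-0} for $q=2$ via the annihilator identity ${\mathcal E}(\mathbb R^3)^\perp=Z_{q^\prime}(\mathbb R^3)$ in the reflexive space $L^q_R(\mathbb R^3)$, and then transfers to general $q$ by applying the $L^2$ decomposition to the dense class ${\mathcal E}_R(\mathbb R^3)$ and establishing the key bound $|\eta|+|\omega|\leq C\|U\|_{q,\mathbb R^3}$ (estimate \eqref{auxi-neumann}) by a compactness/contradiction argument whose limit problem is the homogeneous system \eqref{laplace-p}--\eqref{eta-om}. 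Both routes ultimately rest on the same uniqueness computation \eqref{p-energy}; yours replaces the contradiction argument by a Fredholm alternative, which is more quantitative, at the cost of having to formulate the nonlocal weak problem correctly. The remaining items (orthogonality of $Z_q$ and $X_{q^\prime}$, the annihilator identities \eqref{dual-an}, the symmetry \eqref{proj-sym-0}, density via the lift $\ell(\eta,\omega)$, and $W^{2,q}$ Neumann regularity for \eqref{grad-proj-bdd}) match the paper's treatment.

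Three points need repair. First, the proposition is proved in the paper for an arbitrary bounded body $B$ with $C^{1,1}$ boundary (see the preamble of Section \ref{oseen-structure-evo}), so you may not invoke $\nu\cdot(a\times x)=0$ to drop the $\tilde\omega$-coupling; for a general body the perturbation has rank six rather than three, which changes nothing structurally but must be retained. Second, injectivity of the perturbed form for general $q$ does not follow ``by duality'' from the case $q=2$: the correct argument, as in Step 1 of the paper's proof, is that any kernel element $p\in\widehat W^{1,q}(\Omega)$ is harmonic with $\int_{\partial\Omega}\partial_\nu p\,d\sigma=0$, hence decays at infinity fast enough that the identity \eqref{p-energy} is justified for every $q\in(1,\infty)$, forcing $\nabla p=0$ and $\tilde\eta=\tilde\omega=0$. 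Third, in the orthogonality computation $\langle V,W\rangle_{\mathbb R^3,\rho}=0$ the integration by parts $\int_\Omega\nabla p\cdot w\,dx=\int_{\partial\Omega}p(w\cdot\nu)\,d\sigma$ requires control of $p$ and $w$ at infinity; the paper avoids this by carrying out the computation first for $W$ in the dense class ${\mathcal E}(\mathbb R^3)$, whose elements have compact support, and then passing to the limit.
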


\begin{proof}
Step 1.
We first verify that $X_q(\mathbb R^3)\cap Z_q(\mathbb R^3)=\{0\}$.
Suppose $U\in X_q(\mathbb R^3)\cap Z_q(\mathbb R^3)$ and set
\[
U|_\Omega=\nabla p, \qquad
U|_B=\eta+\omega\times x,
\]
then we have
\begin{equation}
\Delta p=0\quad \mbox{in $\Omega$}, \qquad
\partial_\nu p=\nu\cdot (\eta+\omega\times x)\quad \mbox{on $\partial\Omega$}
\label{laplace-p}
\end{equation}
with
\begin{equation}
\eta=\frac{-1}{m}\int_{\partial\Omega}p\nu\,d\sigma, \qquad
\omega=-J^{-1}\int_{\partial\Omega}x\times (p\nu)\,d\sigma.
\label{eta-om}
\end{equation}
It is observed that 
$\nabla p\in L^2_{\rm loc}(\overline{\Omega})$ 
even though $q$ is close to $1$ and 
that $p-p_\infty$ with some constant $p_\infty\in\mathbb R$
(resp. $\nabla p$) behaves like the fundamental solution $\frac{-1}{4\pi |x|}$ (resp. its gradient) 
of the Laplacian 
at inifinity since $\nabla p\in L^q(\Omega)$; in fact, they go to zero even faster because
$\int_{\partial\Omega}\partial_\nu p\,d\sigma\, (=0)$ is the coefficient of the leading term of the asymptotic expansion at infinity.
This 
together with \eqref{laplace-p}--\eqref{eta-om} justifies the equality \eqref{p-energy} below
when multiplying the Laplace equation in \eqref{laplace-p} by $(p-p_\infty)\phi_R$
with
\begin{equation}
\phi_R(x):=\phi(x/R), 
\label{para-cut}
\end{equation}
where $\phi$ is fixed as in \eqref{cut0},
and then letting $R\to\infty$.
Indeed, since
\[
\lim_{R\to\infty}\int_{2R<|x|<3R}|\nabla p||p-p_\infty||\nabla \phi_R|\,dx=0,
\]
we obtain
\begin{equation}
\int_\Omega |\nabla p|^2\,dx
=\int_{\partial\Omega}(\partial_\nu p)(p-p_\infty)\,d\sigma
=\int_{\partial\Omega}\nu\cdot (\eta+\omega\times x)p\,d\sigma
=-m|\eta|^2-(J\omega)\cdot\omega.
\label{p-energy}
\end{equation}
This implies that $\nabla p=0$ as well as $\eta=\omega=0$, leading to $U=0$.

Step 2.
Let us show that the class ${\mathcal E}(\mathbb R^3)$ is dense in $X_q(\mathbb R^3)$.
Given $U\in X_q(\mathbb R^3)$, we set $\eta+\omega\times x=U|_B$ and
proceed as in the latter half of Lemma \ref{lem-reflex}.
Using the same lifting function $U_0=\ell (\eta,\omega)\in C^\infty_{0,\sigma}(B_3)$ given by \eqref{lift0},
we see that $(U-U_0)|_\Omega$ belongs to the space
\begin{equation}
L^q_\sigma(\Omega)=\{u\in L^q(\Omega);\; \mbox{div $u$}=0\;\mbox{in $\Omega$},\;\nu\cdot u|_{\partial\Omega}=0\}.
\label{ext-underly}
\end{equation}
Since $C^\infty_{0,\sigma}(\Omega)$ is dense in $L^q_\sigma(\Omega)$ (\cite{Ga-b, Mi82, SiSo}),
the proof of the desired denseness is complete in the similar manner to 
descriptions in the last paragraph of the proof of Lemma \ref{lem-reflex}.
This was already shown in \cite[Appendix A.2]{EMT} within the context of the other decomposition \eqref{helm}.

We next prove that 
${\mathcal E}(\mathbb R^3)^\perp=X_q(\mathbb R^3)^\perp=Z_{q^\prime}(\mathbb R^3)$
with respect to $\langle\cdot,\cdot\rangle_{\mathbb R^3,\rho}$
by following the argument in \cite{Sil02-a} (in which
the case $q=2$ is discussed).
Let $U\in {\mathcal E}(\mathbb R^3)$ and $V\in Z_{q^\prime}(\mathbb R^3)$, with
$i(U)=(u,\eta_u,\omega_u)$ and $i(V)=(\nabla p,\eta,\omega)$, where $\eta$ and $\omega$ are specified as \eqref{eta-om}.
Then it is readily seen that
\[
\langle U,V\rangle_{\mathbb R^3,\rho}
=\int_\Omega\mbox{div $(up)$}\,dx
-\eta_u\cdot\int_{\partial\Omega}p\nu\,d\sigma-\omega_u\cdot\int_{\partial\Omega} x\times (p\nu)\,d\sigma=0.
\]
This relation holds even for all $U\in X_q(\mathbb R^3)$ and $V\in Z_{q^\prime}(\mathbb R^3)$
thanks to the denseness observed above.
We are thus led to
$Z_{q^\prime}(\mathbb R^3)\subset X_q(\mathbb R^3)^\perp$.

Conversely, let $V\in L^{q^\prime}_R(\mathbb R^3)$ 
satisfy
$\langle\Phi,V\rangle_{\mathbb R^3,\rho}=0$ for all $\Phi\in {\mathcal E}(\mathbb R^3)$.
Set $(v,\eta,\omega)=i(V)$ and $(\phi,\eta_\phi,\omega_\phi)=i(\Phi)$.
If, in particular,  
\begin{equation}
\eta_\phi=\omega_\phi=0, \qquad
\phi\in C_{0,\sigma}^\infty(\Omega), \; 
\label{spe-test}
\end{equation}
then it follows that $v=\nabla p$ for some $p\in L^{q^\prime}_{\rm loc}(\overline{\Omega})$.
This implies that 
\begin{equation*}
\begin{split}
m\eta_\phi\cdot\eta+\omega_\phi\cdot (J\omega)
&=-\int_{\Omega}\mbox{div $(\phi p)$}\,dx  \\
&=-\eta_\phi\cdot\int_{\partial\Omega}p\nu\,d\sigma
-\omega_\phi\cdot\int_{\partial\Omega}x\times (p\nu)\,d\sigma
\end{split}
\end{equation*}
for all $\Phi\in {\mathcal E}(\mathbb R^3)$.
Since $\eta_\phi$ and $\omega_\phi$ are arbitrary, we conclude
\[
\eta=\frac{-1}{m}\int_{\partial\Omega}p\nu\,d\sigma, \qquad
\omega=-J^{-1}\int_{\partial\Omega}x\times (p\nu)\,d\sigma,
\]
that is, $V\in Z_{q^\prime}(\mathbb R^3)$.
This proves $\mathcal{E}(\mathbb R^3)^\perp\subset Z_{q^\prime}(\mathbb R^3)$ and, therefore,
\begin{equation}
{\mathcal E}(\mathbb R^3)^\perp=
X_q(\mathbb R^3)^\perp=Z_{q^\prime}(\mathbb R^3), \qquad
Z_{q^\prime}(\mathbb R^3)^\perp=X_q(\mathbb R^3).  
\label{an}
\end{equation}
In fact, the latter follows from the former with $X_q(\mathbb R^3)^{\perp\perp}=X_q(\mathbb R^3)$
since $X_q(\mathbb R^3)$ is closed in the reflexive space $L^q_R(\mathbb R^3)$, see Lemma \ref{lem-reflex}.
Now, \eqref{an} immediately leads to \eqref{decompo-0} when $q=2$ (\cite[Theorem 3.2]{Sil02-a}).

Step 3.
To complete the proof of \eqref{decompo-0} for the case $q\neq 2$, 
we use it 
for the case $q=2$. 
Given $U\in {\mathcal E}_R(\mathbb R^3)$, see \eqref{dense-B},
there is a unique pair of $V\in X_2(\mathbb R^3)$ and $W\in Z_2(\mathbb R^3)$ such that $U=V+W$.
By following the argument developed by Wang and Xin \cite[Theorem 2.2]{WX},
we will prove that $V\in X_q(\mathbb R^3)$ and $W\in Z_q(\mathbb R^3)$ along with estimate \eqref{decom-est} below,
where $(\nabla p,\eta,\omega)=i(W)$, see \eqref{X1}.
Set also
$(u,\eta_u,\omega_u)=i(U)$ and
$(v,\eta_v,\omega_v)=i(V)$.
Since 
$\nu\cdot v|_{\partial\Omega}=\nu\cdot(\eta_v+\omega_v\times x)$, see \eqref{hidden-bc1},
we have
\begin{equation*}
\begin{split}
&\Delta p=\mbox{div $u$}\quad\mbox{in $\Omega$}, \\
&\nu\cdot (\nabla p-u)=-\nu\cdot (\eta_v+\omega_v\times x)\quad\mbox{on $\partial\Omega$}.
\end{split}
\end{equation*}
Let $\ell$ be the lifting operator given by \eqref{lift0} and
\eqref{lift-op0}.
Then the problem above is rewritten as
\begin{equation}
\begin{split}
&\Delta p=\mbox{div $\big(u-\ell(\eta_v,\omega_v)\big)$}
\quad\mbox{in $\Omega$}, \\
&
\nu\cdot (\nabla p-u)=-\nu\cdot \ell(\eta_v,\omega_v)
\quad\mbox{on $\partial\Omega$}.
\end{split}
\label{ext-neumann}
\end{equation}
One is then able to apply the theory of the Neumann problem
in exterior domains, see \cite{Mi82, SiSo}, to \eqref{ext-neumann} to infer
\begin{equation}
\|\nabla p\|_{q,\Omega}\leq C\big(\|u\|_{q,\Omega}+|\eta_v|+|\omega_v|\big)
\leq C\big(\|U\|_{q,\mathbb R^3}+|\eta|+|\omega|\big),
\label{siso-est}
\end{equation}
where \eqref{X-norm} is also taken into account.
Let us 
single out a solution $p$ in such a way that $\int_{\Omega_3}p\,dx=0$ to obtain
\begin{equation}
\|p\|_{q,\Omega_3}\leq C\|\nabla p\|_{q,\Omega}.
\label{mean-single}
\end{equation}

We show that
\begin{equation}
|\eta|+|\omega|\leq C\|U\|_{q,\mathbb R^3}
\label{auxi-neumann}
\end{equation}
for all $U\in {\mathcal E}_R(\mathbb R^3)$.
Suppose the contrary, then one can take a sequence $\{U_k\}\subset{\mathcal E}_R(\mathbb R^3)$ and the corresponding 
%
\begin{equation}
\begin{split}
&(\nabla p_k)\chi_\Omega+(\eta_k+\omega_k\times x)\chi_B\in Z_2(\mathbb R^3), \qquad
\int_{\Omega_3} p_k\,dx=0, \\
&\eta_k=\frac{-1}{m}\int_{\partial\Omega}p_k\nu\,d\sigma, \qquad
\omega_k=-J^{-1}\int_{\partial\Omega}x\times (p_k\nu)\,d\sigma, \\
&v_k\chi_\Omega+(\eta_{v_k}+\omega_{v_k}\times x)\chi_B\in X_2(\mathbb R^3),
\end{split}
\label{seq-corres}
\end{equation}
such that
\begin{equation}
\lim_{k\to\infty}\|U_k\|_{q,\mathbb R^3}=0, \qquad |\eta_k|+|\omega_k|=1.
\label{contra-seq}
\end{equation}
By virtue of \eqref{siso-est}--\eqref{mean-single} together with \eqref{contra-seq},
there are $\eta,\,\omega\in\mathbb R^3$ and 
$p\in \widehat W^{1,q}_{(0)}(\Omega)$
as well as
a subsequence, still denoted by the same symbol, such that, along the subsequence,
\begin{equation*}
\begin{split}
&\mbox{w-$\displaystyle{\lim_{k\to\infty}\nabla p_k=\nabla p}$}\quad \mbox{in $L^q(\Omega)$}, \qquad 
\lim_{k\to\infty}\|p_k-p\|_{q,\Omega_3}=0, \\ 
&\lim_{k\to\infty}\eta_k=\eta, \qquad
\lim_{k\to\infty}\omega_k=\omega.
\end{split}
\end{equation*}
By \eqref{seq-corres} and by the trace estimate
\[
\|p_k-p\|_{q,\partial\Omega}
\leq C\|\nabla p_k-\nabla p\|_{q,\Omega_3}^{1/q}\|p_k-p\|_{q,\Omega_3}^{1-1/q}+C\|p_k-p\|_{q,\Omega_3},
\]
we obtain
\[
\eta=\frac{-1}{m}\int_{\partial\Omega}p\nu\,d\sigma, \qquad
\omega=-J^{-1}\int_{\partial\Omega}x\times (p\nu)\,d\sigma.
\]
Since $p_k$ 
obeys \eqref{ext-neumann} with $u_k=U_k|_\Omega$,
we find $\Delta p=0$ in $\Omega$, so that $\partial_\nu p|_{\partial\Omega}$ makes sense, and
\begin{equation*}
\begin{split}
&\langle \nu\cdot (\nabla p_k-u_k-\nabla p), \psi\rangle_{\partial\Omega}
=\langle\nabla p_k-u_k-\nabla p, \nabla\psi\rangle_{\Omega_3} 
\to 0, \\
&-\nu\cdot (\eta_{v_k}+\omega_{v_k}\times x)=\nu\cdot (\eta_k+\omega_k\times x-\eta_{u_k}-\omega_{u_k}\times x)
\to \nu\cdot (\eta+\omega\times x),
\end{split}
\end{equation*}
as $k\to\infty$ for all $\psi\in C_0^\infty(B_3)$ 
on account of \eqref{contra-seq}, where $\eta_{u_k}+\omega_{u_k}\times x=U_k|_B$.
As a consequence, $p,\,\eta$ and $\omega$ solve \eqref{laplace-p}--\eqref{eta-om}, leading to $\eta=\omega=0$ as explained
in Step 1.
This contradicts $|\eta|=|\omega|=1$ and thus concludes \eqref{auxi-neumann}, which combined with \eqref{siso-est} proves
\begin{equation}
\|V\|_{q,\mathbb R^3}+\|W\|_{q,\mathbb R^3}\sim 
\|V\|_{q,\mathbb R^3}+
\|\nabla p\|_{q,\Omega}+|\eta|+|\omega|
\leq C\|U\|_{q,\mathbb R^3}
\label{decom-est}
\end{equation}
for all $U\in {\mathcal E}_R(\mathbb R^3)$.
With \eqref{decom-est} at hand,
given $U\in L^q_R(\mathbb R^3)$, one can construct $V\in X_q(\mathbb R^3)$ and $W\in Z_q(\mathbb R^3)$ such that
$U=V+W$ along with the same estimate \eqref{decom-est} since ${\mathcal E}_R(\mathbb R^3)$ is dense in $L^q_R(\mathbb R^3)$
by Lemma \ref{lem-reflex}.
This completes the proof of
\eqref{decompo-0} and \eqref{proj-bdd}.
From \eqref{decompo-0} we find
\begin{equation*}
X_q(\mathbb R^3)^*=\big[L^q_R(\mathbb R^3)/Z_q(\mathbb R^3)\big]^*=Z_q(\mathbb R^3)^\perp,  \qquad
Z_q(\mathbb R^3)^*=\big[L^q_R(\mathbb R^3)/X_q(\mathbb R^3)\big]^*=X_q(\mathbb R^3)^\perp,
\end{equation*}
which along with \eqref{an} implies \eqref{dual-an} and \eqref{proj-sym-0} as well.

Step 4.
Finally, let us show \eqref{grad-proj-bdd} for $U\in L^q_R(\mathbb R^3)$ with $(u,\eta_u,\omega_u)=i(U)$, see \eqref{X1},
when additionally assuming $u\in W^{1,q}(\Omega)$.
Since $\mathbb PU=U-W$ with
\[
W=(\nabla p)\chi_\Omega+(\eta+\omega\times x)\chi_B\in Z_q(\mathbb R^3),
\]
it suffices to prove that
\begin{equation}
\|\nabla^2p\|_{q,\Omega}\leq C\big(\|U\|_{q,\mathbb R^3}+\|\nabla u\|_{q,\Omega}\big),
\label{2nd-p}
\end{equation}
where $p$ should obey
\begin{equation*}
\begin{split}
&\Delta p=\mbox{div $u$}\quad \mbox{in $\Omega$}, \\
&\partial_\nu p=\nu\cdot (u-\eta_u-\omega_u\times x+\eta+\omega\times x)\quad\mbox{on $\partial\Omega$},
\end{split}
\end{equation*}
with $\eta$ and $\omega$ given by \eqref{eta-om}.
As in \eqref{ext-neumann}, we utilize the lift 
\[
U_0:=\ell(\eta_u-\eta, \omega_u-\omega)
\]
to rewrite the problem above as
%
\begin{equation*}
\begin{split}
&\Delta p=\mbox{div $\big(u-U_0\big)$} \quad \mbox{in $\Omega$}, \\
&\partial_\nu p=\nu\cdot \big(u-U_0\big) \quad\mbox{on $\partial\Omega$}.
\end{split}
\end{equation*}
By \cite[Lemma 2.3]{GiSo89} we have
\begin{equation*}
\begin{split}
\|\nabla^2p\|_{q,\Omega}
&\leq C\|u-U_0\|_{W^{1,q}(\Omega)}  \\
&\leq C\big(\|u\|_{W^{1,q}(\Omega)}+|\eta_u|+|\omega_u|+|\eta|+|\omega|\big)  \\
&\leq C\big(\|U\|_{q,\mathbb R^3}+\|\nabla u\|_{q,\Omega}+|\eta|+|\omega|\big) 
\end{split}
\end{equation*}
which combined with \eqref{decom-est}
concludes \eqref{2nd-p}.
The proof is complete.
\end{proof}

\subsection{\eqref{resol1} is equivalent to \eqref{st-str-resol}}
\label{resol-equi}

This subsection claims that the resolvent equation \eqref{resol1} in $X_q(\mathbb R^3)$ 
with the Stokes-structure operator $A$ defined by \eqref{stokes-op}
is equivalent to the boundary value problem \eqref{st-str-resol}. 
This can be proved in the same way as in \cite[Proposition 3.4]{EMT} on the same issue for \eqref{resol11}
with the other operator $\widetilde A$, see also \cite{TT04}, however, one has to prove the 
equivalence independently of the existing literature about the operator $\widetilde A$.
In fact, with the following proposition at hand, we will then verify $A=\widetilde A$ afterwards by use of uniqueness of solutions
to the boundary value problem \eqref{st-str-resol}.
\begin{proposition}
Let $1<q<\infty$ and $\lambda\in\mathbb C$.
Suppose that $F\in X_q(\mathbb R^3)$ and $(f,\kappa,\mu)=i(F)$ through \eqref{X1}.
If
\begin{equation}
u\in W^{2,q}(\Omega), \quad p\in \widehat W^{1,q}(\Omega), \;
\quad
(\eta,\omega)\in\mathbb C^3\times \mathbb C^3
\label{sol-cl-resol}
\end{equation}
fulfill \eqref{st-str-resol}, then \eqref{resol1} holds with
$U=u\chi_\Omega+(\eta+\omega\times x)\chi_B$.
Conversely, assume that $U\in D_q(A)$ satisfies \eqref{resol1}.
Then there exists a pressure $p\in \widehat W^{1,q}(\Omega)$ such that $(u,\eta,\omega)=i(U)$ together with $p$ enjoys 
\eqref{st-str-resol}. 
\label{prop-equi}
\end{proposition}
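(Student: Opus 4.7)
The plan is to exploit the fact that by definition $A=\mathbb{P}\mathcal{A}$ on $D_q(A)$, so the resolvent identity $(\lambda+A)U=F$ in $X_q(\mathbb R^3)$ amounts to saying that the residual $\mathcal{A}U+\lambda U-F$ lies in the complementary subspace $Z_q(\mathbb R^3)$ of the decomposition \eqref{decompo}. Since elements of $Z_q(\mathbb R^3)$ are precisely pairs $(\nabla p)\chi_\Omega+(\tilde\eta+\tilde\omega\times x)\chi_B$ with $\tilde\eta,\tilde\omega$ determined by the specific surface integrals of $p\nu$ in \eqref{sp-pressure}, the whole content of the proposition is the algebraic matching of this $Z_q$-structure with the momentum and angular-momentum equations in \eqref{st-str-resol}, up to a sign convention for $p$.

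For the first direction, starting from $(u,p,\eta,\omega)$ solving \eqref{st-str-resol}, I would set $U=u\chi_\Omega+(\eta+\omega\times x)\chi_B$. Membership in $X_q(\mathbb R^3)$ follows from $\mathrm{div}\,u=0$ in $\Omega$ together with $u|_{\partial\Omega}=\eta+\omega\times x$ via \eqref{hidden-bc1}, while this same boundary trace and $u\in W^{2,q}(\Omega)$ give $U\in W^{1,q}(\mathbb R^3)$ by \eqref{hidden-bc2}, hence $U\in D_q(A)$. On $\Omega$ the Stokes equation rewrites as $\mathcal{A}U+\lambda U-F=-\nabla p$. On $B$, I substitute the two balance equations of \eqref{st-str-resol} to express $\int_{\partial\Omega}(2Du)\nu\,d\sigma$ and $\int_{\partial\Omega}y\times(2Du)\nu\,d\sigma_y$ in terms of $\int_{\partial\Omega}p\nu\,d\sigma$, $\int_{\partial\Omega}y\times(p\nu)\,d\sigma_y$, $\kappa,\mu,\eta,\omega$; after cancellation with $\lambda(\eta+\omega\times x)-(\kappa+\mu\times x)$, what remains is exactly $(\eta'+\omega'\times x)$ with $\eta'=\frac{1}{m}\int_{\partial\Omega}p\nu\,d\sigma$ and the analogous $\omega'$. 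Thus $F-\lambda U-\mathcal{A}U=(\nabla p)\chi_\Omega+(-\eta'-\omega'\times x)\chi_B$ fits the description of $Z_q(\mathbb R^3)$, and applying $\mathbb{P}$ annihilates it to give $(\lambda+A)U=F$.

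For the converse, let $U\in D_q(A)$ satisfy $(\lambda+A)U=F$. Note that $\mathcal{A}U\in L^q_R(\mathbb R^3)$, since on $\Omega$ it equals $-\Delta u\in L^q(\Omega)$ (using $u\in W^{2,q}(\Omega)$) and on $B$ it is a rigid motion by definition. Therefore $\mathcal{A}U+\lambda U-F\in L^q_R(\mathbb R^3)$ with vanishing $X_q$-projection, so it belongs to $Z_q(\mathbb R^3)$. Invoking the characterization \eqref{sp-pressure} yields $\tilde p\in\widehat W^{1,q}(\Omega)$ with $(\mathcal{A}U+\lambda U-F)|_\Omega=\nabla\tilde p$, and $(\mathcal{A}U+\lambda U-F)|_B=\tilde\eta+\tilde\omega\times x$ where $\tilde\eta,\tilde\omega$ are the surface integrals of $\tilde p\nu$ and $y\times(\tilde p\nu)$ with the signs in \eqref{sp-pressure}. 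Setting $p:=-\tilde p$ flips those signs and turns the $\Omega$-part into $\lambda u-\Delta u+\nabla p=f$; on the $B$-part, combining the rigid-motion piece of $\mathcal{A}U|_B$ with $-\tilde\eta=\frac{1}{m}\int_{\partial\Omega}p\nu\,d\sigma$ (and its angular analogue) assembles $\int_{\partial\Omega}(2Du)\nu\,d\sigma-\int_{\partial\Omega}p\nu\,d\sigma=\int_{\partial\Omega}\mathbb{S}(u,p)\nu\,d\sigma$, and similarly for the angular balance, so \eqref{st-str-resol} holds. The boundary condition $u|_{\partial\Omega}=\eta+\omega\times x$ and $\mathrm{div}\,u=0$ are already built into $U\in D_q(A)$ via \eqref{hidden-bc1}--\eqref{hidden-bc2}.

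The main obstacle is purely bookkeeping: the form of $\mathcal{A}$ has no pressure term, so a sign flip in $p$ is needed to match \eqref{st-str-resol}, and the rigid-motion piece of $\mathcal{A}U|_B$ must combine cleanly with the $(\tilde\eta+\tilde\omega\times x)$-component produced by the $Z_q$-decomposition to reconstitute the full Cauchy stress $\mathbb{S}(u,p)\nu$. No delicate analytic estimates enter; everything rests on Proposition~\ref{prop-decom} and on the explicit definitions \eqref{stokes-op} and \eqref{sp-pressure}.
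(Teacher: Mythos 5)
Your argument is correct. The forward direction is essentially the paper's: one checks that the residual $F-\lambda U-\mathcal{A}U$ equals $(\nabla p)\chi_\Omega$ on $\Omega$ and, after inserting the two balance laws, a rigid motion on $B$ whose translational and angular parts are exactly $\frac{-1}{m}\int_{\partial\Omega}p\nu\,d\sigma$ and $-J^{-1}\int_{\partial\Omega}x\times(p\nu)\,d\sigma$, so the residual lies in $Z_q(\mathbb R^3)$ and is killed by $\mathbb P$. For the converse, however, you take a genuinely different route. The paper tests $(\lambda+A)U=F$ against all $\Phi\in\mathcal{E}(\mathbb R^3)$, obtains the weak identity \eqref{weak-resol}, specializes to $\phi\in C^\infty_{0,\sigma}(\Omega)$ with $\eta_\phi=\omega_\phi=0$ to produce the pressure by a de Rham--type argument, and then recovers the two rigid-body equations from the surviving boundary terms by varying $\eta_\phi,\omega_\phi$. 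You instead observe that $\mathcal{A}U+\lambda U-F$ belongs to $L^q_R(\mathbb R^3)$ and has vanishing $\mathbb P$-projection, hence lies in $Z_q(\mathbb R^3)$ by the direct sum \eqref{decompo}, and then read off the pressure and the two surface-integral identities directly from the explicit description \eqref{sp-pressure} (after the harmless sign flip $p=-\tilde p$). Your route is shorter and makes the role of Proposition \ref{prop-decom} more transparent; the paper's route re-runs the de Rham step explicitly, which is essentially the same mechanism that was already used to establish $X_q(\mathbb R^3)^\perp=Z_{q'}(\mathbb R^3)$ in Step 2 of that proposition, so the two proofs ultimately rest on the same fact. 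Both are complete; yours simply packages the duality argument inside the already-proven decomposition rather than repeating it.
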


\begin{proof}
Concerning the first half, it is obvious that
\begin{equation*}
\begin{split}
&U=u\chi_\Omega+(\eta+\omega\times x)\chi_B\in D_q(A),  \\
&(\nabla p)\chi_\Omega
+\left[\frac{-1}{m}\int_{\partial\Omega}p\nu\,d\sigma+\left(-J^{-1}\int_{\partial\Omega}y\times (p\nu)\,d\sigma_y\right)\times x\right]\chi_B
\in Z_q(\mathbb R^3)
\end{split}
\end{equation*}
and that,
in view of \eqref{decompo}--\eqref{sp-pressure}, applying the projection $\mathbb P$ to \eqref{st-str-resol} yields \eqref{resol1}.

To show the second half,
suppose that $U\in D_q(A)$ fulfills \eqref{resol1}, 
then $u$ possesses the desired regularity together with the boundary condition $u|_{\partial\Omega}=\eta+\omega\times x$.
We also find
\[
\langle (\lambda+A)U,\Phi\rangle_{\mathbb R^3,\rho}=\langle F,\Phi\rangle_{\mathbb R^3,\rho}
\]
which is rewritten as
\begin{equation}
\begin{split}
&\quad \lambda\langle u,\phi\rangle_\Omega+\lambda m\eta\cdot\eta_\phi+\lambda (J\omega)\cdot\omega_\phi   \\
&\quad -\langle \Delta u,\phi\rangle_\Omega+\int_{\partial\Omega}(2Du)\nu\,d\sigma\cdot\eta_\phi
+\int_{\partial\Omega}y\times (2Du)\nu\,d\sigma_y\cdot\omega_\phi   \\
&=\langle f,\phi\rangle_\Omega+m\kappa\cdot\eta_\phi+(J\mu)\cdot\omega_\phi
\end{split}
\label{weak-resol}
\end{equation}
for all $\Phi\in {\mathcal E}(\mathbb R^3)$ with $(\phi,\eta_\phi,\omega_\phi)=i(\Phi)$
on account of \eqref{pairing} and \eqref{proj-sym-0}.
Let us, in particular, choose \eqref{spe-test},
then we get
\[
\langle\lambda u-\Delta u-f, \phi\rangle_\Omega=0
\]
for all $\phi\in C_{0,\sigma}^\infty(\Omega)$.
Since $\lambda u-\Delta u-f\in L^q(\Omega)$, there is a function $p\in L^q_{\rm loc}(\overline{\Omega})$ with
$\nabla p\in L^q(\Omega)$ such that
\[
\lambda u-\Delta u-f=-\nabla p
\]
in $\Omega$.
By taking into account this equation in \eqref{weak-resol}, we are led to
\begin{equation*}
\begin{split}
&\left(\lambda m\eta+\int_{\partial\Omega}(2Du)\nu\,d\sigma-\int_{\partial\Omega}p\nu\,d\sigma-m\kappa\right)\cdot\eta_\phi  \\
&+\left(\lambda J\omega+\int_{\partial\Omega}y\times (2Du)\nu\,d\sigma_y-\int_{\partial\Omega}y\times (p\nu)\,d\sigma_y-J\mu\right)\cdot\omega_\phi=0
\end{split}
\end{equation*}
for all $\eta_\phi,\,\omega_\phi\in\mathbb C^3$, which yields the equations for the rigid body in \eqref{st-str-resol}.
The proof is complete.
\end{proof}
\begin{proposition}
The operator $A$ defined by \eqref{stokes-op} coincides with $\widetilde A$, where the latter operator is defined by
\eqref{stokes-op} in which $\mathbb P$ is replaced by the other projection $\mathbb Q: L^q(\mathbb R^3)\to X_q(\mathbb R^3)$ associated
with \eqref{helm}.
\label{two-st-str}
\end{proposition}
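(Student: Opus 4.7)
The strategy is to read off the identity $A=\widetilde A$ from the fact that both resolvent equations $(\lambda+A)U=F$ and $(\lambda+\widetilde A)U=F$ have already been characterized as equivalent to one and the same boundary value problem \eqref{st-str-resol}: Proposition \ref{prop-equi} provides this equivalence for $A$, while \cite[Proposition~3.4]{EMT} provides it for $\widetilde A$. The key observation is that these two characterizations funnel through the same PDE system, so a direct diagram chase produces $A=\widetilde A$ without any hard estimates.

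First, the common domain is immediate: $D_q(A)$ in \eqref{stokes-op} is described purely in terms of $U\in X_q(\mathbb R^3)\cap W^{1,q}(\mathbb R^3)$ with $u=U|_\Omega\in W^{2,q}(\Omega)$, the boundary relation $u|_{\partial\Omega}=\eta+\omega\times x$ being hidden in the condition $U\in W^{1,q}(\mathbb R^3)$ by \eqref{hidden-bc2}; this description is indifferent to the choice of projection, so $D_q(A)=D_q(\widetilde A)$ tautologically. Next, fix an arbitrary $U$ in this common domain and set $F:=U+\widetilde A U\in X_q(\mathbb R^3)$. Applying \cite[Proposition~3.4]{EMT} to $(1+\widetilde A)U=F$ yields a pressure $\tilde p\in \widehat W^{1,q}(\Omega)$ such that the quadruple $(u,\tilde p,\eta,\omega)$, with $(u,\eta,\omega)=i(U)$, satisfies \eqref{st-str-resol} with $\lambda=1$ and data $(f,\kappa,\mu)=i(F)$. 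Feeding this same quadruple into the ``if'' half of Proposition \ref{prop-equi} returns $(1+A)U=F$, because the monolithic velocity reconstructed from $(u,\eta,\omega)$ via \eqref{X2} coincides with the original $U$. Subtracting the two identities yields $AU=\widetilde A U$, as desired.

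I do not expect a genuine obstacle; the proof reduces to checking that the two equivalences are compatible at the level of the same pressure $\tilde p$, which is automatic because Proposition \ref{prop-equi}'s ``if'' direction only demands the existence of some pressure realizing \eqref{st-str-resol}. One may alternatively follow the path hinted at in the introduction: combine uniqueness of solutions to \eqref{st-str-resol} with $\lambda=1$ (which comes for free once existence is granted via \cite{EMT}) with the two equivalences to conclude that $(1+A)^{-1}$ and $(1+\widetilde A)^{-1}$ act identically on $X_q(\mathbb R^3)$, hence $A=\widetilde A$. Both routes yield the same conclusion and justify transferring all resolvent information from $\widetilde A$, notably the sectorial estimate \eqref{para-resol}, directly to $A$.
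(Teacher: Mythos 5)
Your primary argument is correct and is in fact slightly more economical than the one in the paper. Both proofs hinge on the same pivot --- Proposition \ref{prop-equi} and \cite[Proposition 3.4]{EMT} identify \eqref{resol1} and \eqref{resol11} with the single boundary value problem \eqref{st-str-resol} --- but you extract $A=\widetilde A$ by chaining the two equivalences in one direction (from $(1+\widetilde A)U=F$ to the quadruple solving \eqref{st-str-resol}, then back up through the ``if'' half of Proposition \ref{prop-equi} to $(1+A)U=F$), which requires no uniqueness statement at all. The paper instead starts from $F=(1+A)U$, produces $\widetilde U$ with $(1+\widetilde A)\widetilde U=F$ via the surjectivity in \cite[Theorem 6.1]{EMT}, and then must invoke uniqueness of solutions to \eqref{st-str-resol} with $\lambda=1$ in the class \eqref{sol-cl-resol} to conclude $\widetilde U=U$; it proves that uniqueness by hand through an energy identity, testing the equation against $u\phi_R$ with the cut-off \eqref{para-cut} and justifying the passage $R\to\infty$ via the decay of the fundamental solution. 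Your route dispenses with that lemma; note, though, that the energy computation is not wasted in the paper --- it is reused later (it yields \eqref{stokes-dissi}, cited in the proof of Lemma \ref{dual-dissi}). One small caveat on your ``alternative route'': uniqueness for \eqref{st-str-resol} does not come for free from \emph{existence} via \cite{EMT}; what makes it free is the \emph{injectivity} of $1+\widetilde A$ combined with the forward direction of \cite[Proposition 3.4]{EMT} (two solutions of the boundary value problem give two solutions of $(1+\widetilde A)U=F$, hence coincide). With that precision your second route is also sound, but your first one is the cleaner of the two.
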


\begin{proof}
By Proposition \ref{prop-equi} and by \cite[Proposition 3.4]{EMT},
both equations \eqref{resol11} and \eqref{resol1} are equivalent to the boundary value problem \eqref{st-str-resol}.
Hence, it suffices to show that the only solution to \eqref{st-str-resol} with $\lambda =1$ and $F=0$
within the class \eqref{sol-cl-resol} is the trivial one.
In fact, given $U\in D_q(A)$, one can see from 
\cite[Theorem 6.1]{EMT} that there is a unique
$\widetilde U\in D_q(\widetilde A)=D_q(A)$ satisfying $(1+\widetilde A)\widetilde U=(1+A)U$ in $X_q(\mathbb R^3)$.
Then the uniqueness for \eqref{st-str-resol} (which we will show below) implies that 
$\widetilde U=U$ and, thereby, $\widetilde AU=\widetilde A\widetilde U=AU$.

Consider the problem \eqref{st-str-resol} with $\lambda=1,\, f=0$ and $\kappa=\mu=0$.
Then we have $u\in W^{1,2}_{\rm loc}(\overline{\Omega})$ even if $q$ is close to $1$.
Moreover, from \eqref{sol-cl-resol} it follows that $\{u,p-p_\infty\}$ with some constant $p_\infty\in\mathbb R$ and $\nabla u$ 
behave like the fundamental solution and its gradient to the Stokes resolvent system
even though $q$ is large.
Let $\phi_R$ be the same cut-off function as in \eqref{para-cut}. Computing
\[
0=\int_\Omega \big(u-\Delta u+\nabla (p-p_\infty)\big)\cdot u\phi_R\, dx
\]
and then 
letting $R\to\infty$, where 
\[
\lim_{R\to\infty}\int_{2R<|x|<3R}|\mathbb S(u,p-p_\infty)||u||\nabla\phi_R|\,dx=0,
\]
we are led to 
\[
\|u\|_{2,\Omega}^2+m|\eta|^2+(J\omega)\cdot\omega+2\|Du\|_{2,\Omega}^2=0,
\]
which concludes that $u=0$ and $\eta=\omega=0$.
The proof is complete.
\end{proof}

\subsection{Stokes-structure resolvent}
\label{stokes-structure}

Estimate \eqref{para-resol} for large $|\lambda|$ is established in \cite[Proposition 5.1]{EMT}, where the authors of \cite{EMT}
however omit the proof since it is a slight variation of the proof given by Maity and Tucsnak \cite[Theorem 3.1]{MT} 
on the same issue for the Stokes-structure system in a bounded container.
In fact, their idea based on the reformulation below rather than \eqref{resol1} or \eqref{resol11} is fine, but the variation needs a couple of
nontrivial modifications since $\Omega$ is unbounded.  
For completeness,
this subsection is devoted to the details of reconstruction of the proof of \eqref{para-resol} for large $|\lambda|$.
Once we have that, a contradiction argument performed in \cite[Section 6]{EMT} leads to \eqref{para-resol}
even for $\lambda\in\Sigma_\varepsilon$ close to the origin $\lambda=0$.
The reason why the details are provided here is that we do need a representation of the resolvent, see \eqref{A-series} together with
\eqref{A0-inv} below,
to deduce a useful estimate of the associated pressure near the initial time in subsection \ref{sm-pressure}.

The underlying space of the other formulation of the resolvent problem \eqref{st-str-resol} is
\[
Y_q:=L^q_\sigma(\Omega)\times \mathbb C^3\times\mathbb C^3,
\]
where $L^q_\sigma(\Omega)$ given by \eqref{ext-underly}
is the standard underlying space when considering
the Stokes resolvent in the exterior domain $\Omega$.
By $P_\Omega: L^q(\Omega)\to L^q_\sigma(\Omega)$ we denote the classical Fujita-Kato projection associated with
the Helmholtz decomposition \cite{Mi82, SiSo}
\[
L^q(\Omega)=L^q_\sigma(\Omega)\oplus \{\nabla p\in L^q(\Omega);\; p\in\widehat W^{1,q}(\Omega)\}.
\]
Then the Stokes operator $A_\Omega$ in exterior domains is defined by
\[
D_q(A_\Omega)=L^q_\sigma(\Omega)\cap W^{1,q}_0(\Omega)\cap W^{2,q}(\Omega), \qquad
A_\Omega=-P_\Omega\Delta.
\]

Let us reformulate the resolvent system \eqref{st-str-resol} by following the procedure due to Maity and Tucsnak \cite{MT}.
Let $F\in X_q(\mathbb R^3),\, 1<q<\infty$, and consider \eqref{st-str-resol} with $(f,\kappa,\mu)=i(F)$, see \eqref{X1}.
Given $(\eta,\omega)\in\mathbb C^3\times \mathbb C^3$, we take
a lifting function 
\[
U_0=\ell(\eta,\omega)
\]
of the rigid motion $\eta+\omega\times x$, where $\ell$ is given by \eqref{lift0} and
\eqref{lift-op0}.
Obviously, we have
\begin{equation}
\|\ell(\eta,\omega)\|_{W^{2,q}(\Omega)}\leq C|(\eta,\omega)|.
\label{lift-est}
\end{equation}
The fluid part of \eqref{st-str-resol} is reduced to finding 
$\widetilde u:=u-U_0\in D(A_\Omega)$ 
that obeys
\begin{equation}
\lambda (\widetilde u+P_\Omega U_0)+A_\Omega\widetilde u-P_\Omega\Delta U_0=P_\Omega f
\label{resol-fluid}
\end{equation}
in $L^q_\sigma(\Omega)$, while the associated pressure consists of
\begin{equation}
\begin{split}
&p=p_1+p_2-\lambda p_3+p_4 \quad\mbox{with} \\
&p_1=N(\Delta\widetilde u), \quad 
p_2=N(\Delta U_0), \quad
p_3=N(U_0), \quad
p_4=N(\ell(\kappa,\mu))
\end{split}
\label{resol-pressure}
\end{equation}
in terms of the Neumann operator
$N: E_q(\Omega)\ni h\mapsto N(h):=\psi\in \widehat W^{1,q}_{(0)}(\Omega)$
which singles out a unique solution to
\begin{equation*}
\Delta\psi=\mbox{div $h$}\quad \mbox{in $\Omega$}, \qquad
\partial_\nu \psi=\nu\cdot h\quad \mbox{on $\partial\Omega$}, \qquad
\int_{\Omega_3}\psi\,dx=0,
\end{equation*}
where
$E_q(\Omega):=\{h\in L^q(\Omega)^3;\; \mbox{div $h$}\in L^q(\Omega)\}$.
Then we have
\begin{equation}
\|N(h)\|_{q,\Omega_3}\leq C\|\nabla N(h)\|_{q,\Omega_3}
\leq C\|\nabla N(h)\|_{q,\Omega}\leq C\|h\|_{q,\Omega}.
\label{neumann-est}
\end{equation}
Note that
\[
\nu\cdot f=\nu\cdot (\kappa+\mu\times x)=\nu\cdot \ell(\kappa,\mu) \qquad
\mbox{on $\partial\Omega$},
\]
where
the function $\ell(\kappa,\mu)$ is introduced for description of
$p_4$ in \eqref{resol-pressure} since $\kappa+\mu\times x\notin E_q(\Omega)$. 
In \eqref{resol-fluid} one can not write $A_\Omega U_0$ because 
$U_0$ never belongs to $D(A_\Omega)$ on account of 
$U_0|_{\partial\Omega}=\eta+\omega\times x$.  

We rewrite
the equation of balance for linear momentum
\[
\lambda m\eta+\int_{\partial\Omega}\mathbb S(\widetilde u+U_0,p_1+p_2-\lambda p_3+p_4)\nu\,d\sigma=m\kappa
\]
in \eqref{st-str-resol} as
\begin{equation}
\lambda\left(m\eta+\int_{\partial\Omega}p_3\nu\,d\sigma\right)
+\int_{\partial\Omega}\mathbb S(\widetilde u,p_1)\nu\,d\sigma
+\int_{\partial\Omega}\mathbb S(U_0,p_2)\nu\,d\sigma
=m\kappa+\int_{\partial\Omega}p_4\nu\,d\sigma.
\label{eq-lm}
\end{equation}
Likewise, the equation of balance for angular momentum is described as
\begin{equation}
\begin{split}
\lambda\left(J\omega+\int_{\partial\Omega}x\times (p_3\nu)\,d\sigma\right)
+\int_{\partial\Omega}x\times \mathbb S(\widetilde u,p_1)\nu\,d\sigma
&+\int_{\partial\Omega}x\times \mathbb S(U_0,p_2)\nu\,d\sigma   \\
&=J\mu+\int_{\partial\Omega}x\times (p_4\nu)\,d\sigma.
\end{split}
\label{eq-am}
\end{equation}
It is convenient to introduce
\begin{equation}  
K  
\left( 
\begin{array}{c}    
\eta \\   
\omega    
\end{array}       
\right)    
=\left(  
\begin{split}   
&m\eta+\int_{\partial\Omega}N(\ell(\eta,\omega))\nu\,d\sigma  \\  
&J\omega+\int_{\partial\Omega}x\times N(\ell(\eta,\omega))\nu\,d\sigma
\end{split}
\right) 
\label{K-mat}
\end{equation}
\begin{equation}  
Q_1\widetilde u=\left(  
\begin{split}    
&\int_{\partial\Omega}\mathbb S(\widetilde u,N(\Delta\widetilde u))\nu\,d\sigma  \\ 
&\int_{\partial\Omega}x\times \mathbb S(\widetilde u,N(\Delta\widetilde u))\nu\,d\sigma
\end{split} 
\right)   
\label{Q1}  
\end{equation}
\begin{equation}  
Q_2(\eta,\omega)=\left(  
\begin{split}  
&\int_{\partial\Omega}\mathbb S(\ell(\eta,\omega),N(\Delta\ell(\eta,\omega))\nu\,d\sigma \\
&\int_{\partial\Omega}x\times \mathbb S(\ell(\eta,\omega),N(\Delta\ell(\eta,\omega))\nu\,d\sigma
\end{split}  
\right).      
\label{Q2}   
\end{equation}
Note that
$K\in\mathbb C^{6\times 6}$ is independent of
the choice of lift of the rigid motion $\eta+\omega\times x$ since $\psi=N(\ell(\eta,\omega))$
solves
\[
\Delta\psi=0\quad\mbox{in $\Omega$}, \qquad
\partial_\nu\psi=\nu\cdot (\eta+\omega\times x)\quad\mbox{on $\partial\Omega$},
\qquad \int_{\Omega_3}\psi\,dx=0,
\]
and that $K$ is invertible, see \cite[Lemma 3.8]{MT}.
Hence, the equations \eqref{eq-lm}--\eqref{eq-am} read
\begin{equation}
\lambda\left(
\begin{array}{c}
\eta \\
\omega
\end{array}
\right)+K^{-1}Q_1\widetilde u+K^{-1}Q_2(\eta,\omega)
=\left(
\begin{array}{c}
\kappa \\
\mu
\end{array}
\right).
\label{resol-solid}
\end{equation}
By \eqref{resol-solid}
one can describe the term $\lambda P_\Omega U_0$ 
of \eqref{resol-fluid} as
\[
\lambda P_\Omega U_0=
P_\Omega\ell(\lambda\eta,\lambda\omega)=P_\Omega\ell
\left((\kappa,\mu)-K^{-1}Q_1\widetilde u-K^{-1}Q_2(\eta,\omega)\right),
\]
from which \eqref{resol-fluid} is reduced to
\begin{equation}
\begin{split}
\lambda\widetilde u+A_\Omega\widetilde u-P_\Omega\Delta U_0
-P_\Omega\ell\left(K^{-1}Q_1\widetilde u+K^{-1}Q_2(\eta,\omega)\right)
&=P_\Omega f-P_\Omega\ell(\kappa,\mu)  \\
&=f-\ell(\kappa,\mu),
\end{split}
\label{resol-fluid2}
\end{equation}
where the last equality above follows from 
$f-\ell(\kappa,\mu)\in L^q_\sigma(\Omega)$.

Let us introduce the other Stokes-structure operator $\mathbb A$ acting on $Y_q$ by
\begin{equation}
\begin{split}
&D_q(\mathbb A)=D_q(A_\Omega)\times \mathbb C^3\times \mathbb C^3, \\ 
&\mathbb A
\left(
\begin{array}{c}
\widetilde u \\
(\eta,\omega)
\end{array}
\right)
=
\left(
\begin{array}{cc}
1 & -P_\Omega \ell \\
0 & 1
\end{array}
\right)
\left(
\begin{array}{cc}
A_\Omega & -P_\Omega\Delta\ell \\
K^{-1}Q_1 & K^{-1}Q_2
\end{array}
\right)
\left(
\begin{array}{c}
\widetilde u \\
(\eta,\omega)
\end{array}
\right),
\end{split}
\label{other-A}
\end{equation}
then, in view of \eqref{resol-solid}--\eqref{resol-fluid2},
the resolvent system \eqref{st-str-resol} is reformulated as
%
\begin{equation}
(\lambda+\mathbb A)\left(
\begin{array}{c}
\widetilde u \\
(\eta,\omega)
\end{array}
\right)=
\left(
\begin{array}{c}
f -\ell(\kappa,\mu) \\
(\kappa,\mu)
\end{array}
\right) \qquad\mbox{in $Y_q$}.
\label{resol2}
\end{equation}

The operator $\mathbb A$ is splitted into
\[
\mathbb A=\mathbb A_0+\mathbb A_1
\]
with
\begin{equation}
\mathbb A_0\left(
\begin{array}{c}
\widetilde u \\
(\eta,\omega)
\end{array}
\right)=\left(
\begin{array}{cc}
1 & -P_\Omega \ell \\
0 & 1 
\end{array}
\right)
\left(
\begin{array}{cc}
A_\Omega & -P_\Omega\Delta \ell \\
0 & 0
\end{array}
\right)
\left(
\begin{array}{c}
\widetilde u \\
(\eta,\omega)
\end{array}
\right),
\label{A0}
\end{equation}
\begin{equation}
\mathbb A_1
\left(
\begin{array}{c}
\widetilde u \\
(\eta,\omega)
\end{array}
\right)
=\left(
\begin{array}{cc}
1 & -P_\Omega \ell \\
0 & 1
\end{array}
\right)
\left(
\begin{array}{cc}
0 & 0 \\
K^{-1}Q_1 & K^{-1}Q_2
\end{array}
\right)
\left(
\begin{array}{c}
\widetilde u \\
(\eta,\omega)
\end{array}
\right).
\label{A1}
\end{equation}
Let $\lambda\in\mathbb C\setminus (-\infty,0]$, then $\lambda +\mathbb A_0$ is invertible and
\begin{equation}
(\lambda +\mathbb A_0)^{-1}=\left(
\begin{array}{cc}
(\lambda +A_\Omega)^{-1} & \lambda^{-1}(\lambda +A_\Omega)^{-1}P_\Omega\Delta
\ell  \\
0 & \lambda^{-1}
\end{array}
\right)
\label{A0-inv}
\end{equation}
in ${\mathcal L}(Y_q)$. Moreover, we find
\begin{equation}
\|(\lambda+\mathbb A_0)^{-1}\|_{{\mathcal L}(Y_q)}\leq C_\varepsilon|\lambda|^{-1}
\label{A0-est}
\end{equation}
for all $\lambda\in\Sigma_\varepsilon$ with $|\lambda|\geq 1$ (say), see \eqref{para-sector},
where $\varepsilon \in (0,\pi/2)$ is fixed arbitrarily,
because of the parabolic resolvent estimate of the Stokes operator $A_\Omega$.

We will show that $\mathbb A_1$ is subordinate to $\mathbb A_0$.
To this end,
let us deduce estimates of $Q_1$ and $Q_2$ given  by \eqref{Q1}--\eqref{Q2}.
By the trace estimate together with \eqref{lift-est} and \eqref{neumann-est}, we have
%
\begin{equation}
|Q_1\widetilde u|_{\mathbb C^3\times \mathbb C^3}
\leq C\big(\|A_\Omega\widetilde u\|_{q,\Omega}+\|\widetilde u\|_{q,\Omega}\big)
\label{Q1-est}
\end{equation}
as well as
\begin{equation*}
|Q_2(\eta,\omega)|_{\mathbb C^3\times \mathbb C^3}\leq
C|(\eta,\omega)|. 
\end{equation*}
Those estimates together with \eqref{lift-est} lead us to
\begin{equation*}
\begin{split}
&\quad \|\ell\left(K^{-1}Q_1\widetilde u+K^{-1}Q_2(\eta,\omega)\right)\|_{W^{1,q}(\Omega)}
+\left|K^{-1}Q_1\widetilde u+K^{-1}Q_2(\eta,\omega)\right|_{\mathbb C^3\times \mathbb C^3}   \\
&\leq C\big(\|A_\Omega\widetilde u\|_{q,\Omega}+\|\widetilde u\|_{q,\Omega}
+|(\eta,\omega)|\big).
\end{split}
\end{equation*}
Since the support of the lifting function \eqref{lift0}
is bounded, the Rellich theorem implies that,
for any sequence $\{\widetilde u_j, (\eta_j,\omega_j)\}$ being bounded in $D_q(A_\Omega)\times \mathbb C^3\times \mathbb C^3$, 
one can subtract a convergent subsequence in $L^q(\Omega)\times \mathbb C^3\times \mathbb C^3$ from 
$\left\{\ell\big(K^{-1}Q_1\widetilde u_j+K^{-1}Q_2(\eta_j,\omega_j)\big),\, K^{-1}Q_1\widetilde u_j+K^{-1}Q_2(\eta_j,\omega_j)\right\}$.
Along the same subsequence,
$\mathbb A_1\left(\begin{array}{c}
\widetilde u_j \\
(\eta_j,\omega_j)
\end{array}
\right)$
is also convergent in $Y_q$ as $P_\Omega$ is bounded on $L^q(\Omega)$.
Hence, $\mathbb A_1$ is a compact operator from $D_q(\mathbb A_0)=D_q(A_\Omega)\times \mathbb C^3\times \mathbb C^3$    
(endowed with the graph norm) into $Y_q$.
One can then employ a perturbation theorem \cite[Chapter III, Lemma 2.16]{EN} to conclude that
$\mathbb A_1$ is $\mathbb A_0$-bounded and its $\mathbb A_0$-bound is zero; that is,
for every small $\delta>0$ there is a constant $C_\delta>0$ satisfying
\begin{equation*}
\quad \left\|\mathbb A_1\left(
\begin{array}{c}
\widetilde u \\
(\eta,\omega)
\end{array}
\right)
\right\|_{Y_q}  \\
\leq \delta\left\|\mathbb A_0\left(
\begin{array}{c}
\widetilde u \\
(\eta,\omega)
\end{array}
\right)
\right\|_{Y_q}+C_\delta\left\|\left(
\begin{array}{c}
\widetilde u \\
(\eta,\omega)
\end{array}
\right)
\right\|_{Y_q}
\end{equation*}
where $\|\cdot\|_{Y_q}$ is obviously given by the sum of $\|\cdot\|_{q,\Omega}$ and $|(\cdot,\cdot)|_{\mathbb C^3\times \mathbb C^3}$.
This combined with \eqref{A0-est} allows us to take 
$\Lambda_\varepsilon>0$ for each $\varepsilon \in (0,\pi/2)$ such that
\begin{equation}
\|(\lambda+\mathbb A)^{-1}\|_{{\mathcal L}(Y_q)}\leq C_\varepsilon|\lambda|^{-1}
\label{A-est}
\end{equation}
for all $\lambda\in \Sigma_\varepsilon$ with 
$|\lambda|\geq \Lambda_\varepsilon$,
where the resolvent is described as the Neumann series in ${\mathcal L}(Y_q)$ for such $\lambda$:
\begin{equation}
\begin{split}
&(\lambda+\mathbb A)^{-1}=(\lambda+\mathbb A_0)^{-1}
\sum_{j=0}^\infty\left[-\mathbb A_1(\lambda+\mathbb A_0)^{-1}\right]^j  \\
&\mbox{with}\quad
\sum_{j=0}^\infty 
\left\|\mathbb A_1(\lambda+\mathbb A_0)^{-1}\right\|_{{\mathcal L}(Y_q)}^j
\leq 2.
\end{split}
\label{A-series}
\end{equation}
With \eqref{A-est} at hand, we 
immediately obtain
\[
\|\widetilde u\|_{q,\Omega}+|(\eta,\omega)|\leq C_\varepsilon|\lambda|^{-1}\big(\|f\|_{q,\Omega}+|(\kappa,\mu)|\big)
\]
for \eqref{resol2},
from which together with \eqref{lift-est} we are led to
\[
\|u\|_{q,\Omega}+|(\eta,\omega)|\leq C_\varepsilon|\lambda|^{-1}\big(\|f\|_{q,\Omega}+|(\kappa,\mu)|\big)
\]
for the solution to \eqref{st-str-resol}.
On account of Proposition \ref{prop-equi}, we conclude
\[
\|U\|_{q,\mathbb R^3}\leq C_\varepsilon|\lambda|^{-1}\|F\|_{q,\mathbb R^3}
\]
for \eqref{resol1} with $C_\varepsilon>0$ independent of
$\lambda\in\Sigma_\varepsilon$ satisfying 
$|\lambda|\geq \Lambda_\varepsilon$, where $\varepsilon\in (0,\pi/2)$
is arbitrary.

\subsection{Generation of the evolution operator}
\label{generation}

In this subsection we show the generation of the evolution operator by 
the family $\{L_+(t);\, t\in\mathbb R\}$
of the Oseen-structure operators \eqref{oseen-op}--\eqref{oseen-term}.
It is of parabolic type (in the sense of Tanabe and Sobolevskii \cite{Ta, Y}) with the properties \eqref{semi}--\eqref{evo-eq2}.

Let $1<q<\infty$.
We first see that, for each fixed $t\in\mathbb R$, the operator $-L_+(t)$ generates an analytic semigroup on the space $X_q(\mathbb R^3)$.
This is readily verified as follows by a simple perturbation argument.
We fix $\varepsilon\in (0,\pi/2)$.
Given $F\in X_q(\mathbb R^3)$, let us take $U=(\lambda+A)^{-1}F$ with $\lambda\in\Sigma_\varepsilon$, see \eqref{para-sector}, in \eqref{perturb-est}
and employ \eqref{para-resol} to obtain
\begin{equation}
\|B(t)(\lambda+A)^{-1}F\|_{q,\mathbb R^3}
\leq C\big(|\lambda|^{-1/2}+|\lambda|^{-1}\big)
\|U_b\|\|F\|_{q,\mathbb R^3}.
\label{neumann}
\end{equation}
Hence, there is a constant $c_0>0$ dependent on $\|U_b\|$ but independent of $t$
such that if 
$\lambda\in\Sigma_\varepsilon$ fulfills $|\lambda|\geq c_0$, then the 
right-hand side of \eqref{neumann}
is bounded from above by $\frac{1}{2}\|F\|_{q,\mathbb R^3}$, yielding
$\lambda\in\rho(-L_+(t))$ subject to
\[
\|(\lambda+L_+(t))^{-1}\|_{{\mathcal L}(X_q(\mathbb R^3))}\leq 2\|(\lambda+A)^{-1}\|_{{\mathcal L}(X_q(\mathbb R^3))}
\leq \frac{C}{|\lambda|}
\]
for every $t\in\mathbb R$ by the Neumann series argument. 

We next verify the regularity of $L_+(t)$ in $t$ that allows us to apply 
the theory of parabolic evolution operators,
see \cite[Chapter 5]{Ta}.
In fact, by the same computations as above 
with use of \eqref{grad}, it follows from \eqref{ass-2}--\eqref{quan} that
\begin{equation}
\begin{split}
&\quad \|(L_+(t)-L_+(s))(\lambda+L_+(\tau))^{-1}F\|_{q,\mathbb R^3}  \\
&=\big\|(B(t)-B(s))(\lambda+A)^{-1}\big[1+B(\tau)(\lambda+A)^{-1}\big]^{-1}F\big\|_{q,\mathbb R^3}  \\
&\leq C(c_0^{-1/2}+c_0^{-1})(t-s)^\theta\,
[U_b]_\theta \|F\|_{q,\mathbb R^3}
\end{split}
\label{hoelder}
\end{equation}
for all $F\in X_q(\mathbb R^3)$, $\lambda\in\Sigma_\varepsilon$ with $|\lambda|\geq c_0$, and $t,\,s,\,\tau\in\mathbb R$ with $t>s$.
As a consequence, 
the family $\{L_+(t);\, t\in\mathbb R\}$ generates an evolution operator
$\{T(t,s);\, s,t\in I,\, s\leq t\}$ on $X_q(\mathbb R^3)$ for every compact interval $I\subset \mathbb R$, 
which provides the family
$\{T(t,s);\, -\infty<s\leq t<\infty\}$ with the properties \eqref{semi}--\eqref{evo-eq2} 
by uniqueness of evolution operators.
Notice that the locally H\"older continuity in $t$ of $u_b(t)$ (with values in $L^\infty(\Omega)$)
as well as $\eta_b(t)$ is enough just for generation of the evolution operator, however, the globally H\"older continuity \eqref{ass-2}
is needed for further studies.
This point is indeed the issue of the next subsection.

\subsection{Smoothing estimates of the evolution operator}
\label{smoothing-estimate}

This subsection is devoted to the $L^q$-$L^r$ smoothing estimates \eqref{decay-1}--\eqref{grad-new-form}
for all $(t,s)$ with $t-s\in (0,\tau_*]$, where $\tau_*\in (0,\infty)$
is fixed arbitrarily.
Those rates themselves are quite standard, nevertheless, the point is to show that the constants in \eqref{decay-1}--\eqref{grad-new-form} may be dependent on $\tau_*$, however,
can be taken uniformly with respect to such $t,\,s$ under the globally H\"older condition \eqref{ass-2} on $u_b(t)$ and $\eta_b(t)$.
The similar studies are found in \cite[Lemma 3.2]{Hi18}.
\begin{proposition}
Suppose \eqref{ass-1} and \eqref{ass-2}.
Let $1<q<\infty$,
$r\in [q,\infty]$ (except $r=\infty$ for \eqref{grad-new-form})
 and $\tau_*,\, \alpha_0,\, \beta_0\in (0,\infty)$.
Then there is a constant $C=C(q,r,\tau_*,\alpha_0,\beta_0,\theta)>0$ such that both \eqref{decay-1} and \eqref{grad-new-form} hold for 
all $(t,s)$ with $t-s\in (0,\tau_*]$ and $F\in X_q(\mathbb R^3)$ whenever
$\|U_b\|\leq \alpha_0$ and $[U_b]_\theta\leq\beta_0$, where $\|U_b\|$ and $[U_b]_\theta$ are given by \eqref{quan}.
Moreover, we have
\begin{equation}
\begin{split}
\lim_{t\to s}\; (t-s)^{(3/q-3/r)/2}\|T(t,s)F\|_{r,\mathbb R^3}&=0, \qquad r\in (q,\infty],  \\
\lim_{t\to s}\; (t-s)^{1/2+(3/q-3/r)/2}\|\nabla T(t,s)F\|_{r,\mathbb R^3}&=0, \qquad r\in [q,\infty), 
\end{split}
\label{sm-little}
\end{equation}
for every $F\in X_q(\mathbb R^3)$, where the convergence is uniform on each precompact set of $X_q(\mathbb R^3)$.
\label{prop-smooth}
\end{proposition}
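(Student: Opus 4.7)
The plan is to construct $T(t,s)$ via the Tanabe--Sobolevskii parametrix built on the frozen-time semigroup $e^{-\tau L_+(s)}$, and then transfer the smoothing estimates of the Stokes--structure semigroup $e^{-\tau A}$. The semigroup estimates
\[
\|e^{-\tau A}F\|_{r,\mathbb R^3}\leq C\tau^{-(3/q-3/r)/2}\|F\|_{q,\mathbb R^3}, \qquad
\|\nabla e^{-\tau A}F\|_{r,\mathbb R^3}\leq C\tau^{-1/2-(3/q-3/r)/2}\|F\|_{q,\mathbb R^3}
\]
for $\tau\in(0,\tau_*]$ are contained in \cite{EMT} through Proposition \ref{two-st-str}; the gradient version is recovered from the analyticity bound $\|A^{1/2}e^{-\tau A}\|_{\mathcal L(X_q(\mathbb R^3))}\leq C\tau^{-1/2}$ (a consequence of \eqref{para-resol}) together with \eqref{grad}.

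For each fixed $s$ I would upgrade these to $e^{-\tau L_+(s)}$ by iterating the Duhamel identity
\[
e^{-\tau L_+(s)}=e^{-\tau A}-\int_0^\tau e^{-(\tau-\sigma)A}B(s)\,e^{-\sigma L_+(s)}\,d\sigma.
\]
Thanks to the subordination \eqref{perturb-est}, the Neumann series converges on small time intervals, and since all constants depend only on $\|U_b\|\leq\alpha_0$ and $\tau_*$, the resulting estimates for $e^{-\tau L_+(s)}$ are uniform in $s\in\mathbb R$. When a single $L^q\to L^r$ step is too singular (for instance $q$ near $1$ and $r=\infty$, so that $a:=(3/q-3/r)/2\geq 1$), I would compose through intermediate $L^p$-spaces, which is standard.

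Next I would write the parametrix
\[
T(t,s)F=e^{-(t-s)L_+(s)}F+\int_s^t e^{-(t-\tau)L_+(\tau)}\Phi(\tau,s)F\,d\tau,
\]
with $\Phi$ solving the Volterra equation driven by the kernel $R_1(\tau,s):=(L_+(s)-L_+(\tau))e^{-(\tau-s)L_+(s)}$. Factoring $R_1=(L_+(s)-L_+(\tau))L_+(s)^{-1}\cdot L_+(s)e^{-(\tau-s)L_+(s)}$, the first factor is bounded by $C\beta_0(\tau-s)^\theta$ via \eqref{hoelder}, and the second by $C(\tau-s)^{-1}$ from analyticity, so $\|R_1(\tau,s)\|_{\mathcal L(X_q(\mathbb R^3))}\leq C\beta_0(\tau-s)^{\theta-1}$ uniformly in $s$. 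The Volterra series then converges and yields $\|\Phi(\tau,s)\|_{\mathcal L(X_q(\mathbb R^3))}\leq C(\tau-s)^{\theta-1}$, and the beta-integral
\[
\int_s^t(t-\tau)^{-a}(\tau-s)^{\theta-1}\,d\tau\leq C(t-s)^{\theta-a}
\]
shows that the corrective integral in the parametrix is $O((t-s)^\theta)$ less singular than the leading term $e^{-(t-s)L_+(s)}F$, giving \eqref{decay-1} and \eqref{grad-new-form} uniformly for $t-s\in(0,\tau_*]$.

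For \eqref{sm-little} I would use density: for $F\in D_q(A)$ the strong continuity \eqref{evo-conti} yields $\|T(t,s)F-F\|_{q,\mathbb R^3}\to 0$, and interpolating against the just-proved uniform bound kills the weight $(t-s)^{(3/q-3/r)/2}$; density of $D_q(A)$ in $X_q(\mathbb R^3)$ then extends the convergence to all $F$, uniformly on precompact subsets, and the gradient case is handled analogously via \eqref{ADN-1}. \textbf{The main obstacle} is propagating uniformity in the absolute initial time $s$ through every step: the classical Tanabe--Sobolevskii construction only delivers constants depending on the chosen time interval, so one must check that both the Duhamel and the Volterra iterations absorb only $\|U_b\|$, $[U_b]_\theta$, and $\tau_*$, which is precisely where the globally-in-$t$ bounds \eqref{ass-1}--\eqref{ass-2} rather than mere local continuity are essential.
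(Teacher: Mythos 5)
Your argument is correct in outline and rests on the same two pillars as the paper's proof — the Tanabe--Sobolevskii parametrix and the \emph{globally} uniform H\"older bound \eqref{hoelder}, which is exactly what makes all constants independent of the absolute initial time $s$ — but it extracts the $L^q$-$L^r$ rates by a genuinely different route. The paper does not push two-exponent estimates through the parametrix at all: it only quotes from Tanabe's construction the single-space operator bounds \eqref{tanabe-est}--\eqref{tanabe-0} (noting that their constants are uniform in $(t,s)$ with $t-s\le\tau_*$ thanks to \eqref{hoelder}), then obtains $\|\nabla T(t,s)F\|_{q,\Omega}\le C(t-s)^{-1/2}\|F\|_{q,\mathbb R^3}$ from the elliptic estimate \eqref{ADN-0} via \eqref{equi-LA} and the interpolation \eqref{grad}, upgrades to $L^q$-$L^r$ by Gagliardo--Nirenberg plus the semigroup property \eqref{semi}, and treats the rigid part separately through \eqref{rigid-est1}--\eqref{rigid-est2}. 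Your route instead transfers the $L^q$-$L^r$ estimates of $e^{-\tau A}$ through a Duhamel series and then through the Volterra correction; this is more self-contained but costs extra bookkeeping that you partly gloss over: the Duhamel step needs the $(t-s)^{-1/2}$ gradient smoothing of the frozen semigroup $e^{-\tau L_+(s)}$ as an input before \eqref{perturb-est} can be applied (obtainable, uniformly in $s$, from the resolvent perturbation of subsection 3.4 plus \eqref{grad}, so no circularity, but it should be said); the corrective integral $\int_s^t(t-\tau)^{-a}(\tau-s)^{\theta-1}\,d\tau$ only converges for $a<1$, so for the gradient estimate and for well-separated exponents you must first settle $r=q$ and then chain via \eqref{semi}, just as for the leading term; and the factor $L_+(s)^{-1}$ should be $(k+L_+(s))^{-1}$ with $k$ large, since $0$ need not lie in the resolvent set. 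Your treatment of \eqref{sm-little} by density of $D_q(A)$ (or ${\mathcal E}(\mathbb R^3)$) plus the uniform bound is the same as the paper's. On balance the paper's derivation is shorter and avoids the beta-function combinatorics; yours makes the dependence on the semigroup estimates of \cite{EMT} more transparent. Either way the decisive point, which you correctly identify, is that \eqref{ass-1}--\eqref{ass-2} give uniformity in $s$.
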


\begin{proof}
Let us fix $\tau_*>0$. 
In the proof of construction of the evolution operator,
an important step is to show that
\begin{equation}
\|L_+(t)T(t,s)\|_{{\mathcal L}(X_q(\mathbb R^3))}\leq C(t-s)^{-1}
\label{tanabe-est}
\end{equation}
as well as
\begin{equation}
\|T(t,s)\|_{{\mathcal L}(X_q(\mathbb R^3))}\leq C
\label{tanabe-0}
\end{equation}
for $t-s\leq\tau_*$.
If we look into details of deduction of \eqref{tanabe-est}--\eqref{tanabe-0},
see Tanabe \cite[Chapter 5]{Ta},
we find that both constants $C=C(q,\tau_*,\alpha_0,\beta_0,\theta)>0$ 
can be taken uniformly
in $(t,s)$ with $t-s\leq\tau_*$ 
on account of the globally H\"older continuity \eqref{hoelder}.
See also the discussions in \cite[Lemma 3.2]{Hi18}.
Based on \eqref{tanabe-est}--\eqref{tanabe-0},
we take $U(t)=T(t,s)F$ with $F\in X_q(\mathbb R^3)$ in $\mbox{\eqref{equi-LA}}_2$ and use
\eqref{grad} to infer
\begin{equation}
\|\nabla T(t,s)F\|_{q,\Omega}\leq C(t-s)^{-1/2}\|F\|_{q,\mathbb R^3}
\label{grad-smoothing}
\end{equation}
for $t-s\leq\tau_*$ with some constant $C=C(q,\tau_*,\alpha_0,\beta_0,\theta)>0$.
It follows from \eqref{tanabe-0}--\eqref{grad-smoothing} together with the Gagliardo-Nirenberg inequality that 
\begin{equation}
\|T(t,s)F\|_{r,\Omega}\leq C(t-s)^{-(3/q-3/r)/2}\|F\|_{q,\mathbb R^3}
\label{0th-smoothing}
\end{equation}
for $t-s\leq\tau_*$ provided $1/q-1/r< 1/3$
(actually, $1/q-1/r\leq 1/3$ if $q\neq 3$).
From the relation \eqref{X1} with $U(t)=T(t,s)F$, it is readily seen that
\begin{equation}
\|U(t)\|_{r,B}\leq C(|\eta(t)|+|\omega(t)|)\leq
C\int_B|U(y,t)|\,dy\leq C\|U(t)\|_{q,\mathbb R^3}\leq C\|F\|_{q,\mathbb R^3}
\label{rigid-est1}
\end{equation}
and, from \eqref{grad-form}, that
\begin{equation}
\|\nabla U(t)\|_{r,B}\leq C|\omega(t)|\leq C\|F\|_{q,\mathbb R^3}.
\label{rigid-est2}
\end{equation}
Estimates \eqref{0th-smoothing} and \eqref{rigid-est1} imply \eqref{decay-1}
for $t-s\leq\tau_*$ if $1<q\leq r\leq \infty \;(q\neq\infty)$ and $1/q-1/r< 1/3$,
but the latter restriction can be eventually removed by using the semigroup property \eqref{semi}. 
Then \eqref{grad-smoothing} together with \eqref{decay-1} for $t-s\leq\tau_*$ leads us to
\[
\|\nabla T(t,s)F\|_{r,\Omega}\leq C(t-s)^{-1/2-(3/q-3/r)/2}\|F\|_{q,\mathbb R^3}
\]
for $t-s\leq\tau_*$, which along with \eqref{rigid-est2} concludes
\eqref{grad-new-form} for such $t,\, s$.

It remains to show \eqref{sm-little}.
If $F\in {\mathcal E}(\mathbb R^3)\subset D_q(A),\, 1<q<\infty$,
then it follows from \eqref{grad}, \eqref{equi-LA} and the boundedness
\[
\|L_+(t)T(t,s)(k+L_+(s))^{-1}\|_{{\mathcal L}(X_q(\mathbb R^3))}\leq C
\]
near $t=s$ (\cite[Chapter 5, Theorem 2.1]{Ta}),
where $k>0$ is fixed large enough, that
\[
\|\nabla T(t,s)F\|_{q,\Omega}\leq C\big(\|AF\|_{q,\mathbb R^3}+\|F\|_{q,\mathbb R^3}\big)
\]
near $t=s$.
This combined with \eqref{rigid-est2} (with $r=q$)
implies that
\[
\lim_{t\to s}\; (t-s)^{1/2}\|\nabla T(t,s)F\|_{q,\mathbb R^3}=0
\]
for every $F\in {\mathcal E}(\mathbb R^3)$ and, therefore,
every $F\in X_q(\mathbb R^3)$ since ${\mathcal E}(\mathbb R^3)$ is dense in $X_q(\mathbb R^3)$,
see Proposition \ref{prop-decom}.
The other behaviors in \eqref{sm-little} are verified easier.
The proof is complete.
\end{proof}

We next derive the H\"older estimate of the evolution operator.
This plays a role to verify that a mild solution becomes a strong one to the nonlinear initial value problem in section \ref{stability}.
The argument is similar to
the one for the autonomous case based on the theory of analytic semigroups,
but not completely the same. 
It can be found, for instance, in the paper \cite[Lemma 2.8]{Te83} by Teramoto who used the fractional powers of generators, however,
in order to justify the argument there, one has to deduce estimate of 
$\|(k+L_+(t))^\alpha (k+L_+(s))^{-\alpha}\|_{{\mathcal L}(X_q(\mathbb R^3))}$
independent of $(t,s)$, where $k>0$ is fixed large enough, as pointed out by Farwig and Tsuda \cite[Lemma 3.6]{FT22}.
In the latter literature, the authors discuss the desired estimate of the fractional powers
by use of bounded ${\mathcal H}^\infty$-calculus of generators uniformly in $t$.
Instead, in this paper, we take easier way
to deduce the following result, 
which is enough for later use in section \ref{stability}.
\begin{proposition}
Suppose \eqref{ass-1} and \eqref{ass-2}.
Let $j\in\{0,\,1\}$, $1<q<\infty$, $r\in [q,\infty]$
(except $r=\infty$ for $j=1$) and $\tau_*,\alpha_0,\beta_0\in (0,\infty)$.
Assume that $q$ and $r$ satisfy
\begin{equation}
\frac{1}{q}-\frac{1}{r}<\frac{2-j}{3}\qquad (j=0,\,1).
\label{restr}
\end{equation}
Given $\mu$ satisfying
\begin{equation}
0<\mu<1-\frac{j}{2}-\frac{3}{2}\left(\frac{1}{q}-\frac{1}{r}\right),
\label{degree-hoel}
\end{equation}
set
\[
\kappa=\max \left\{\frac{j}{2}+\frac{3}{2}\left(\frac{1}{q}-\frac{1}{r}\right)+\mu,\;\frac{1}{2}\right\}.
\]
Then there is a constant $C=C(\mu,q,r,\tau_*,\alpha_0,\beta_0,\theta)>0$ such that
\begin{equation}
\|\nabla^j T(t+h,s)F-\nabla^j T(t,s)F\|_{r,\mathbb R^3}
\leq C(t-s)^{-\kappa}h^\mu\|F\|_{q,\mathbb R^3}
\label{hoel-est}
\end{equation}
for all $(t,s)$ with $t-s\in (0,\tau_*]$, $h\in (0,1)$ and $F\in X_q(\mathbb R^3)$, whenever $\|U_b\|\leq\alpha_0$ and
$[U_b]_\theta\leq\beta_0$, where $\|U_b\|$ and $[U_b]_\theta$ are given by \eqref{quan}.
\label{hoelder-evo}
\end{proposition}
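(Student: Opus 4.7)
The plan is a classical H\"older-estimate argument for a parabolic evolution family: use the integral form of \eqref{evo-eq1} to turn a time-difference into a single integral of $L_+(\tau)T(\tau,s)F$, estimate that integrand by composing \eqref{tanabe-est} with the smoothing bounds of Proposition \ref{prop-smooth}, and then interpolate between the resulting ``$h^1$'' bound and the trivial ``$h^0$'' bound.

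Concretely, for $F\in X_q(\mathbb{R}^3)$ and $t>s$, \eqref{evo-eq1} gives
$$T(t+h,s)F-T(t,s)F=-\int_t^{t+h}L_+(\tau)T(\tau,s)F\,d\tau,$$
and applying $\nabla^j$ (which one may pass under the integral via \eqref{ADN-1}) produces two bounds on the same difference. The first, call it Bound A, is the triangle-inequality bound: applying \eqref{decay-1} and \eqref{grad-new-form} separately to $T(t+h,s)F$ and $T(t,s)F$ yields
$$\|\nabla^j(T(t+h,s)F-T(t,s)F)\|_{r,\mathbb{R}^3}\leq C(t-s)^{-j/2-3(1/q-1/r)/2}\|F\|_{q,\mathbb{R}^3}.$$
Bound B comes from estimating the integrand. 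Splitting $T(\tau,s)=T(\tau,\sigma)T(\sigma,s)$ with $\sigma=(\tau+s)/2$ and combining \eqref{tanabe-est} in $X_r(\mathbb{R}^3)$ with Proposition \ref{prop-smooth}, I expect
$$\|\nabla^j L_+(\tau)T(\tau,s)F\|_{r,\mathbb{R}^3}\leq C(\tau-s)^{-1-j/2-3(1/q-1/r)/2}\|F\|_{q,\mathbb{R}^3},\quad \tau>s,$$
and since $\tau-s\geq t-s$ on $[t,t+h]$, integration gives
$$\|\nabla^j(T(t+h,s)F-T(t,s)F)\|_{r,\mathbb{R}^3}\leq C\,h\,(t-s)^{-1-j/2-3(1/q-1/r)/2}\|F\|_{q,\mathbb{R}^3}.$$

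The H\"older exponent $\mu$ is produced by the elementary inequality $\min\{X,Y\}\leq X^\mu Y^{1-\mu}$ applied to Bounds A and B; condition \eqref{degree-hoel} is precisely what keeps the resulting $(t-s)$-exponent $\mu+j/2+3(1/q-1/r)/2$ below $1$. This gives
$$\|\nabla^j(T(t+h,s)F-T(t,s)F)\|_{r,\mathbb{R}^3}\leq C\,h^\mu(t-s)^{-\mu-j/2-3(1/q-1/r)/2}\|F\|_{q,\mathbb{R}^3}.$$
The factor $\max\{\cdot,1/2\}$ in $\kappa$ only absorbs the regime $\mu+j/2+3(1/q-1/r)/2<1/2$, where the crude estimate $(t-s)^{-\mu-j/2-3(1/q-1/r)/2}\leq\tau_*^{1/2-\mu-j/2-3(1/q-1/r)/2}(t-s)^{-1/2}$ holds for $t-s\leq\tau_*$ and the excess can be absorbed into the constant.

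The main technical obstacle is the integrand estimate in Bound B for $j=1$: a single semigroup split produces the $L^r$-bound $\|L_+(\tau)T(\tau,\sigma)G\|_r\lesssim(\tau-\sigma)^{-1}\|G\|_r$ from \eqref{tanabe-est}, but not a gradient bound. The remedy I envisage is to decompose $T(\tau,s)$ into three (or more) factors with equally spaced intermediate times, use \eqref{tanabe-est} and Proposition \ref{prop-smooth} iteratively to promote the middle factor into $D_r(A)$ (with quantitative control), and then apply \eqref{grad} and \eqref{ADN-1} to the final short-time factor so as to extract the extra half power of $(\tau-s)$. This is the non-autonomous analogue of the autonomous estimate $\|A^{3/2}e^{-\tau A}\|_{\mathcal{L}(X_r)}\leq C\tau^{-3/2}$; uniformity in $(t,s)$ with $t-s\leq\tau_*$ comes from the globally H\"older condition \eqref{ass-2} through \eqref{hoelder}, which makes the constants in \eqref{tanabe-est} and \eqref{tanabe-0} depend only on $\tau_*,\alpha_0,\beta_0,\theta$.
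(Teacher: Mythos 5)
Your strategy---writing $T(t+h,s)F-T(t,s)F=-\int_t^{t+h}L_+(\tau)T(\tau,s)F\,d\tau$, bounding the integrand, and interpolating with the trivial bound---is genuinely different from the paper's, and it does go through for $j=0$: splitting $T(\tau,s)=T(\tau,\sigma)T(\sigma,s)$ and combining \eqref{tanabe-est} with Proposition \ref{prop-smooth} gives $\|L_+(\tau)T(\tau,s)F\|_{r}\leq C(\tau-s)^{-1-(3/q-3/r)/2}\|F\|_{q}$, and the rest is arithmetic (note the interpolation should read $\min\{X,Y\}\leq X^{1-\mu}Y^{\mu}$ to produce the exponents you state, and the case $j=0$, $r=\infty$ needs a separate word since \eqref{tanabe-est} lives in $X_r$ with $r<\infty$).

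For $j=1$, however, the integrand estimate you need, $\|\nabla L_+(\tau)T(\tau,s)F\|_{r}\leq C(\tau-s)^{-3/2-(3/q-3/r)/2}\|F\|_{q}$, is a genuine gap, and the iterated-splitting remedy you sketch does not close it. The autonomous identity $\nabla Ae^{-\tau A}=\nabla e^{-(\tau/2)A}Ae^{-(\tau/2)A}$ relies on $A$ commuting with its semigroup; here $L_+(\tau)$ does not commute with $T(\tau,\sigma)$, so $L_+(\tau)T(\tau,\sigma)G$ is not of the form $T(\cdot,\cdot)H$ and none of the gradient-smoothing estimates of Proposition \ref{prop-smooth} apply to it, no matter how many intermediate times you insert. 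To apply \eqref{grad} to $W=L_+(\tau)T(\tau,s)F$ you would instead need $\|AW\|_{r}\lesssim(\tau-s)^{-2}\|F\|_{r}$, i.e.\ a second-order time-derivative estimate for the evolution operator, which is not available under the mere H\"older continuity \eqref{ass-2} of $t\mapsto L_+(t)$; and attacking $\nabla L_+(\tau)T(\tau,s)F$ through the equation produces the term $(\nabla u_b)\cdot\nabla u$, which requires regularity of $\nabla u_b$ that is not assumed. The paper sidesteps all of this by writing $T(t,s)F=e^{-(t-s)A}F-\int_s^te^{-(t-\tau)A}B(\tau)T(\tau,s)F\,d\tau$ and applying the H\"older-in-time estimate \eqref{hoelder-stokes} of the \emph{analytic} semigroup $e^{-tA}$ (for which $\nabla^jAe^{-tA}$ is controlled) to both the free term and the Duhamel integrand; the only smoothing of the evolution operator itself that enters is the first-order bound $\|B(\tau)T(\tau,s)F\|_{q}\leq C(\tau-s)^{-1/2}\|F\|_{q}$. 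You should either adopt that reduction to the autonomous semigroup or supply an independent proof of the $A^{3/2}$-type estimate for $T(t,s)$; as written, the $j=1$ case is not proved.
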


\begin{proof}
Set $U(t)=T(t,s)F$, that satisfies the equation
\[
U(t)=e^{-(t-s)A}F-V(t), \qquad V(t)=\int_s^t e^{-(t-\tau)A}B(\tau)U(\tau)\,d\tau,
\]
in terms of the fluid-structure semigroup $e^{-tA}$.
By $L^q$-$L^r$ estimates of the semigroup $e^{-tA}$ due to \cite{EMT} we get
%
\begin{equation}
\|\nabla^j \big(e^{-(t+h-s)A}-e^{-(t-s)A}\big)F\|_{r,\mathbb R^3}
\leq C(t-s)^{-j/2-(3/q-3/r)/2-\mu}h^\mu\|F\|_{q,\mathbb R^3}
\label{hoelder-stokes}
\end{equation}
for every $\mu\in (0,1)$.
Proposition \ref{prop-smooth} 
implies that
\[
\|B(t)U(t)\|_{q,\mathbb R^3}\leq C\|U_b\|(t-s)^{-1/2}\|F\|_{q,\mathbb R^3}
\]
which together with \eqref{hoelder-stokes} leads to
\begin{equation*}
\begin{split}
&\quad\|\nabla^j V(t+h)-\nabla^j V(t)\|_{r,\mathbb R^3}  \\
&\leq C\|U_b\| 
\|F\|_{q,\mathbb R^3}
\Big[(t-s)^{1/2-j/2-(3/q-3/r)/2-\mu}\,h^\mu+(t-s)^{-1/2}\,h^{1-j/2-(3/q-3/r)/2}
\Big]
\end{split}
\end{equation*}
with $\mu$ satisfying \eqref{degree-hoel}.
In view of this and \eqref{hoelder-stokes}, we conclude \eqref{hoel-est}.
\end{proof}

\subsection{Estimate of the pressure}
\label{sm-pressure}

In this subsection we study a smoothing rate near the initial time
of the pressure associated with $T(t,s)F$.
As in \cite{Hi20}, this must be an important issue at the final stage of the proof of decay estimates of
$T(t,s)F$ because of the non-autonomous character.
Indeed, this circumstance differs from that for the autonomous case 
\cite{EMT} in which several advantages of analytic semigroups are used.
For our purpose, we need to discuss the domains of fractional powers of the Stokes-structure operator $A$ through
the behavior of the resolvent of the other operator $\mathbb A$.
We start with the following lemma.
\begin{lemma}
Suppose that $\mathbb A$ is the operator given by \eqref{other-A}.
Let $1<q<\infty$ and $\vartheta\in (0,1/2q)$.
Then there is a constant $C=C(q,\vartheta)>0$ such that
\begin{equation}
\|\mathbb A(\lambda+\mathbb A)^{-1}G\|_{Y_q}\leq C\lambda^{-\vartheta}\big(\|g\|_{W^{1,q}(\Omega)}+\|G\|_{Y_q}\big) 
\label{behavior-other}
\end{equation}
for all $\lambda\geq 1$ and $G=(g,\kappa,\mu)\in Y_q=L^q_\sigma(\Omega)\times\mathbb C^3\times\mathbb C^3$ with 
$g\in W^{1,q}(\Omega)$.
\label{beha-other}
\end{lemma}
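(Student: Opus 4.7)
The strategy is to exploit the splitting $\mathbb{A}=\mathbb{A}_0+\mathbb{A}_1$ from \eqref{A0}--\eqref{A1}. Setting $V:=(\lambda+\mathbb{A})^{-1}G$ and rewriting $(\lambda+\mathbb{A}_0)V=G-\mathbb{A}_1V$, I would apply $\mathbb{A}_0(\lambda+\mathbb{A}_0)^{-1}$ to obtain
\begin{equation*}
\mathbb{A}_0V=\mathbb{A}_0(\lambda+\mathbb{A}_0)^{-1}G-\mathbb{A}_0(\lambda+\mathbb{A}_0)^{-1}\mathbb{A}_1V.
\end{equation*}
The whole argument then reduces to the following ``regularized'' bound: for any $W=(w,a,b)\in Y_q$ with $w\in W^{1,q}(\Omega)$,
\begin{equation*}
\|\mathbb{A}_0(\lambda+\mathbb{A}_0)^{-1}W\|_{Y_q}\leq C\lambda^{-\vartheta}\bigl(\|w\|_{W^{1,q}(\Omega)}+\|W\|_{Y_q}\bigr),\qquad \lambda\geq 1.
\end{equation*}

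To prove this bound, I would use \eqref{A0-inv} together with the identity $A_\Omega(\lambda+A_\Omega)^{-1}-I=-\lambda(\lambda+A_\Omega)^{-1}$; a short computation then gives
\begin{equation*}
\mathbb{A}_0(\lambda+\mathbb{A}_0)^{-1}W=\bigl(A_\Omega(\lambda+A_\Omega)^{-1}w-(\lambda+A_\Omega)^{-1}P_\Omega\Delta\ell(a,b),\;0\bigr).
\end{equation*}
The second summand is $O(\lambda^{-1}|(a,b)|)$ by the parabolic resolvent estimate for $A_\Omega$ and so acceptable. For the first, the key input is that $D(A_\Omega^\vartheta)$ for $2\vartheta<1/q$ carries no Dirichlet trace (by complex interpolation between $L^q_\sigma(\Omega)$ and $D(A_\Omega)$); hence $w\in W^{1,q}(\Omega)\cap L^q_\sigma(\Omega)\hookrightarrow D(A_\Omega^\vartheta)$ with $\|A_\Omega^\vartheta w\|_{q,\Omega}\leq C\|w\|_{W^{1,q}(\Omega)}$, and the moment inequality yields $\|A_\Omega(\lambda+A_\Omega)^{-1}w\|_{q,\Omega}=\|A_\Omega^{1-\vartheta}(\lambda+A_\Omega)^{-1}A_\Omega^\vartheta w\|_{q,\Omega}\leq C\lambda^{-\vartheta}\|w\|_{W^{1,q}(\Omega)}$.

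With the regularized bound in hand, I would apply it to $W=\mathbb{A}_1V$, observing that the fluid part of $\mathbb{A}_1V$ is $-P_\Omega\ell(K^{-1}Q_1\tilde v+K^{-1}Q_2(\eta_v,\omega_v))$ with $V=(\tilde v,(\eta_v,\omega_v))$, which lies in $W^{1,q}(\Omega)$ with bound $\leq C(\|A_\Omega\tilde v\|_{q,\Omega}+\|\tilde v\|_{q,\Omega}+|(\eta_v,\omega_v)|)\leq C(\|\mathbb{A}_0V\|_{Y_q}+\|V\|_{Y_q})$ by \eqref{lift-est}, \eqref{Q1-est} and its analog for $Q_2$ (using that $A_\Omega\tilde v$ differs from the first component of $\mathbb{A}_0V$ only by $P_\Omega\Delta\ell(\eta_v,\omega_v)$). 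Combining with the displayed identity for $\mathbb{A}_0V$, absorbing the term $C\lambda^{-\vartheta}\|\mathbb{A}_0V\|_{Y_q}$ into the left-hand side once $\lambda$ exceeds some threshold, and using $\|V\|_{Y_q}\leq C\lambda^{-1}\|G\|_{Y_q}$ from \eqref{A-est}, I arrive at $\|\mathbb{A}_0V\|_{Y_q}\leq C\lambda^{-\vartheta}(\|g\|_{W^{1,q}(\Omega)}+\|G\|_{Y_q})$; the bound on $\|\mathbb{A}V\|_{Y_q}$ then follows by adding $\|\mathbb{A}_1V\|_{Y_q}$ via the $\mathbb{A}_0$-subordination of $\mathbb{A}_1$ already recorded in subsection \ref{stokes-structure}, while the range $1\leq\lambda\leq$ threshold is absorbed using the standing bound $\|\mathbb{A}(\lambda+\mathbb{A})^{-1}\|_{\mathcal{L}(Y_q)}\leq C$ since $\lambda^{-\vartheta}$ is bounded below there. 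I expect the main obstacle to be the fractional-power step, i.e.\ justifying $W^{1,q}(\Omega)\cap L^q_\sigma(\Omega)\hookrightarrow D(A_\Omega^\vartheta)$ uniformly for $\vartheta<1/(2q)$ and all $q\in(1,\infty)$; if the interpolation characterization of $D(A_\Omega^\vartheta)$ is felt to be too heavy an appeal, one can bypass it with Balakrishnan's formula $A_\Omega^{1-\vartheta}(\lambda+A_\Omega)^{-1}=\frac{\sin\vartheta\pi}{\pi}\int_0^\infty\mu^{-\vartheta}A_\Omega(\mu+A_\Omega)^{-1}(\lambda+A_\Omega)^{-1}\,d\mu$ and the standard $W^{1,q}$-resolvent estimate for the exterior Stokes operator.
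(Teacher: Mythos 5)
Your proposal is correct and follows essentially the same route as the paper: the splitting $\mathbb A=\mathbb A_0+\mathbb A_1$ with the explicit formula \eqref{A0-inv}, the embedding $W^{1,q}(\Omega)\cap L^q_\sigma(\Omega)\subset D_q(A_\Omega^\vartheta)$ for $\vartheta<1/2q$ combined with the moment inequality, and the observation that the fluid component of $\mathbb A_1V$ is automatically $W^{1,q}$-regular (being $P_\Omega$ applied to a lift) so the same $\lambda^{-\vartheta}$ gain applies to the perturbation term. The only cosmetic difference is that you close the estimate by a one-step absorption for large $\lambda$, whereas the paper sums the Neumann series \eqref{A-series} and estimates $\mathbb A(\lambda+\mathbb A_0)^{-1}G$ and $\mathbb A(\lambda+\mathbb A_0)^{-1}H(\lambda)$ separately; both treat the range $1\le\lambda\le\Lambda$ by boundedness of $\lambda^{-\vartheta}$ from below there.
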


\begin{proof}
Since we are going to discuss the behavior of the resolvent for $\lambda\to\infty$ only along the real half line,
we may fix, for instance, $\varepsilon =\pi/4$ and assume that $\lambda\geq \Lambda_{\pi/4}$.
Then we know the representation \eqref{A-series} of $(\lambda+\mathbb A)^{-1}$, which leads to
\[
\mathbb A(\lambda+\mathbb A)^{-1}G=\mathbb A(\lambda+\mathbb A_0)^{-1}(G+H(\lambda))
\]
with
\begin{equation}
\begin{split}
H(\lambda)=
\left(
\begin{array}{c}
h(\lambda) \\
\big(\alpha(\lambda),\beta(\lambda)\big)
\end{array}
\right)
&:=\sum_{j=1}^\infty \big[-\mathbb A_1(\lambda+\mathbb A_0)^{-1}\big]^j\,G  \\
&=-\mathbb A_1(\lambda+\mathbb A_0)^{-1}(G+H(\lambda))
\end{split}
\label{H-repre}
\end{equation}
and
\begin{equation}
\|G+H(\lambda)\|_{Y_q}\leq \sum_{j=0}^\infty\|\mathbb A_1(\lambda+\mathbb A_0)^{-1}\|_{{\mathcal L}(Y_q)}^j\|G\|_{Y_q}
\leq 2\|G\|_{Y_q}.
\label{GplusH}
\end{equation}
In view of \eqref{other-A} and \eqref{A0-inv} we find
\begin{equation*}
\begin{split}
\|\mathbb A(\lambda+\mathbb A_0)^{-1}G\|_{Y_q}
&\leq \|A_\Omega w(\lambda)\|_{q,\Omega}
+\lambda^{-1}\|P_\Omega\Delta \ell(\kappa,\mu)\|_{q,\Omega}  \\
&\quad +\|P_\Omega \ell\big(K^{-1}Q_1w(\lambda)+\lambda^{-1}K^{-1}Q_2(\kappa,\mu)\big)\|_{q,\Omega}  \\
&\quad +|K^{-1}Q_1w(\lambda)|_{\mathbb C^3\times\mathbb C^3}+\lambda^{-1}|K^{-1}Q_2(\kappa,\mu)|_{\mathbb C^3\times\mathbb C^3}
\end{split}
\end{equation*}
where
\[
w(\lambda):=(\lambda+A_\Omega)^{-1}g+\lambda^{-1}(\lambda+A_\Omega)^{-1}P_\Omega\Delta \ell(\kappa,\mu).
\]
From \eqref{lift-est}, \eqref{Q1-est} and the resolvent estimate of $A_\Omega$ it follows that
\begin{equation}
\|\mathbb A(\lambda+\mathbb A_0)^{-1}G\|_{Y_q}
\leq C\|A_\Omega(\lambda+A_\Omega)^{-1}g\|_{q,\Omega}+C\lambda^{-1}\big(\|g\|_{q,\Omega}+|(\kappa,\mu)|\big).
\label{G-pre-est}
\end{equation}
We here recall the fact that, except the vanishing normal trace, the space
$D_q(A_\Omega^\vartheta)$ does not involve any boundary condition
(Noll and Saal \cite[Subsection 2.3]{NS}, 
Giga and Sohr \cite{GiSo89})
and, thereby,
\begin{equation*}
g\in L^q_\sigma(\Omega)\cap W^{1,q}(\Omega)\subset 
L^q_\sigma(\Omega)\cap H_q^{2\vartheta}(\Omega)
=[L^q_\sigma(\Omega), D_q(A_\Omega)]_\vartheta
=D_q(A_\Omega^\vartheta) 
\end{equation*}
provided $\vartheta\in (0,1/2q)$, where $[\cdot,\cdot]_\vartheta$ stands for the complex interpolation functor and
$H^{2\vartheta}_q(\Omega):=[L^q(\Omega), W^{2,q}(\Omega)]_\vartheta$ is the Bessel potential space.
For such $\vartheta$, we infer
\begin{equation*}
\begin{split}
\|A_\Omega(\lambda+A_\Omega)^{-1}g\|_{q,\Omega}
&=\|A_\Omega^{1-\vartheta}(\lambda+A_\Omega)^{-1}A_\Omega^\vartheta g\|_{q,\Omega}  \\
&\leq C\|A_\Omega(\lambda+A_\Omega)^{-1}A_\Omega^\vartheta g\|_{q,\Omega}^{1-\vartheta}
\|(\lambda+A_\Omega)^{-1}A_\Omega^\vartheta g\|_{q,\Omega}^\vartheta  \\
&\leq C\lambda^{-\vartheta}\|A_\Omega^\vartheta g\|_{q,\Omega}  \\
&\leq C\lambda^{-\vartheta}\|g\|_{W^{1,q}(\Omega)}
\end{split}
\end{equation*}
which combined with \eqref{G-pre-est} implies that
\begin{equation}
\|\mathbb A(\lambda+\mathbb A_0)^{-1}G\|_{Y_q}
\leq C\lambda^{-\vartheta}\big(\|g\|_{W^{1,q}(\Omega)}+\|G\|_{Y_q}\big) 
\label{G-est}
\end{equation}
for all $\lambda\geq \Lambda_{\pi/4}$.

It remains to show the other estimate
\begin{equation}
\|\mathbb A(\lambda+\mathbb A_0)^{-1}H(\lambda)\|_{Y_q}\leq C\lambda^{-\vartheta}\|G\|_{Y_q}
\label{H-est}
\end{equation}
in which the additional condition $g\in W^{1,q}(\Omega)$ is not needed.
In fact, exactly by the same argument as above for deduction of \eqref{G-est},
we furnish
\begin{equation}
\begin{split}
\|\mathbb A(\lambda +\mathbb A_0)^{-1}H(\lambda)\|_{Y_q}
&\leq C\lambda^{-\vartheta}\big(\|h(\lambda)\|_{W^{1,q}(\Omega)}+\|H(\lambda)\|_{Y_q}\big)  \\
&\leq C\lambda^{-\vartheta}\big(\|h(\lambda)\|_{W^{1,q}(\Omega)}+\|G\|_{Y_q}\big).
\end{split}
\label{H-pre-est}
\end{equation}
By the representation \eqref{H-repre} of $H(\lambda)$ along with
\eqref{A1}--\eqref{A0-inv} we find
\[
\|h(\lambda)\|_{W^{1,q}(\Omega)}
=\|P_\Omega \ell\big(K^{-1}Q_1 v(\lambda)+\lambda^{-1}K^{-1}Q_2(\kappa+\alpha(\lambda),\mu+\beta(\lambda))\big)\|_{W^{1,q}(\Omega)}
\]
where
\[
v(\lambda):=(\lambda+A_\Omega)^{-1}(g+h(\lambda))+\lambda^{-1}(\lambda+A_\Omega)^{-1}P_\Omega\Delta \ell(\kappa+\alpha(\lambda),\mu+\beta(\lambda)).
\]
Since $P_\Omega$ is bounded on $W^{1,q}(\Omega)$, we
use \eqref{lift-est}, \eqref{Q1-est} and \eqref{GplusH} to observe
\[
\|h(\lambda)\|_{W^{1,q}(\Omega)}
\leq 
C\|G+H(\lambda)\|_{Y_q}
\leq C\|G\|_{Y_q}
\]
with some $C>0$ independent of $\lambda\geq\Lambda_{\pi/4}$.
In this way, we conclude \eqref{H-est}.
The proof is complete.
\end{proof}

The behavior of the resolvent with respect to $\lambda$ obtained in Lemma \ref{beha-other} is inherited 
by the Stokes-structure operator to conclude the following lemma.
\begin{lemma}
Suppose that $A$ is the operator given by \eqref{stokes-op}.
Let $1<q<\infty$ and $\vartheta\in (0,1/2q)$.
Then there is a constant $C=C(q,\vartheta)>0$ such that
\begin{equation}
\|A(\lambda+A)^{-1}F\|_{q,\mathbb R^3}\leq C\lambda^{-\vartheta}\big(\|f\|_{W^{1,q}(\Omega)}+\|F\|_{q,\mathbb R^3}\big) 
\label{behavior-A}
\end{equation}
for all $\lambda\geq 1$ and $F\in X_q(\mathbb R^3)$ with $f=F|_\Omega\in W^{1,q}(\Omega)$.
\label{beha-A}
\end{lemma}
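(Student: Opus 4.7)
The plan is to reduce the desired bound for $A$ on $X_q(\mathbb R^3)$ to the already established estimate \eqref{behavior-other} for the auxiliary operator $\mathbb A$ on $Y_q$, by exploiting the equivalence of the two resolvent problems from subsection \ref{stokes-structure}. Given $F\in X_q(\mathbb R^3)$ with $i(F)=(f,\kappa,\mu)$, I set $U=(\lambda+A)^{-1}F$ and $(u,\eta,\omega)=i(U)$. By Proposition \ref{prop-equi}, $(u,\eta,\omega)$ together with an associated pressure solves \eqref{st-str-resol}, which by the reduction in subsection \ref{stokes-structure} (culminating in \eqref{resol2}) is equivalent to
$$(\lambda+\mathbb A)V=G\quad\text{in }Y_q,$$
where $V:=(\widetilde u,(\eta,\omega))$ with $\widetilde u:=u-\ell(\eta,\omega)$ and $G:=(f-\ell(\kappa,\mu),(\kappa,\mu))$. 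The right-hand side $G$ indeed lies in $Y_q$, since $F\in X_q(\mathbb R^3)$ yields the compatibility $\nu\cdot(f-\kappa-\mu\times x)|_{\partial\Omega}=0$ and the lift $\ell(\kappa,\mu)$ cancels the boundary trace while remaining divergence-free.

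Next, from $(\lambda+A)U=F$ I have $AU=F-\lambda U$ in $X_q(\mathbb R^3)$, so the equivalence \eqref{X-norm} gives
$$\|A(\lambda+A)^{-1}F\|_{q,\mathbb R^3}\leq C\big(\|f-\lambda u\|_{q,\Omega}+|\kappa-\lambda\eta|+|\mu-\lambda\omega|\big).$$
Using linearity of $\ell$ I write $\lambda u=\lambda\widetilde u+\ell(\lambda\eta,\lambda\omega)$, whence
$$f-\lambda u=\big(f-\ell(\kappa,\mu)-\lambda\widetilde u\big)+\ell(\kappa-\lambda\eta,\mu-\lambda\omega),$$
and \eqref{lift-est} controls the lift term by $C(|\kappa-\lambda\eta|+|\mu-\lambda\omega|)$. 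Since $\mathbb A(\lambda+\mathbb A)^{-1}G=G-\lambda V$, each of the three terms on the right-hand side of the displayed inequality above is dominated by a component of $\mathbb A(\lambda+\mathbb A)^{-1}G$ in $Y_q$, so that
$$\|A(\lambda+A)^{-1}F\|_{q,\mathbb R^3}\leq C\|\mathbb A(\lambda+\mathbb A)^{-1}G\|_{Y_q}.$$

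Finally, I apply Lemma \ref{beha-other} to $G$. The hypothesis $f\in W^{1,q}(\Omega)$, combined with the smoothness and compact support of $\ell(\kappa,\mu)$, gives $g:=f-\ell(\kappa,\mu)\in W^{1,q}(\Omega)$ with $\|g\|_{W^{1,q}(\Omega)}\leq C(\|f\|_{W^{1,q}(\Omega)}+\|F\|_{q,\mathbb R^3})$ by \eqref{lift-est} and \eqref{X-norm}; the bound $\|G\|_{Y_q}\leq C\|F\|_{q,\mathbb R^3}$ is immediate. Then \eqref{behavior-other} delivers the advertised $\lambda^{-\vartheta}$ decay. The entire argument is essentially a bookkeeping translation through the reformulation \eqref{resol2}; the only care required is in accounting for the difference between $u$ and $\widetilde u$, which the smoothness of the lift absorbs cleanly into the $|(\kappa,\mu)|$ term already controlled by $\|F\|_{q,\mathbb R^3}$, so I do not anticipate any genuine obstacle.
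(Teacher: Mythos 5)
Your proposal is correct and follows essentially the same route as the paper: reduce to the reformulated resolvent problem \eqref{resol2} via the lift $\ell$, write $A(\lambda+A)^{-1}F=F-\lambda U$ and compare it to $\mathbb A(\lambda+\mathbb A)^{-1}G=G-\lambda V$ using \eqref{lift-est}, then invoke Lemma \ref{beha-other}. The paper's proof is the same bookkeeping translation, so there is nothing to add.
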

\begin{proof}
Set $(f,\kappa,\mu)=i(F)$, see \eqref{X1}, and 
\[
U=(\lambda+A)^{-1}F, \qquad (u,\eta,\omega)=i(U).
\]
Following subsection \ref{stokes-structure}, we take the lifting function $U_0=\ell(\eta,\omega)$ as in \eqref{lift0}.
Then we have
\[
A(\lambda +A)^{-1}F
=F-\lambda U=(f-\lambda u)\chi_\Omega+\big\{(\kappa-\lambda \eta)+(\mu-\lambda\omega)\times x\big\}\chi_B
\]
so that
\begin{equation}
\begin{split}
\quad\|A(\lambda+A)^{-1}F\|_{q,\mathbb R^3} 
&\leq C\|f-\lambda u\|_{q,\Omega}+C|(\kappa,\mu)-\lambda(\eta,\omega)|  \\
&\leq C\|f-\ell(\kappa,\mu)-\lambda \big(u-\ell(\eta,\omega)\big)\|_{q,\Omega}
+C|(\kappa,\mu)-\lambda (\eta,\omega)|
\end{split}
\label{pre-shift}
\end{equation}
on account of \eqref{lift-est}.
By virtue of \eqref{resol2} in the other formulation, we obtain
\[
\left(
\begin{array}{c}
\widetilde u \\
(\eta,\omega)
\end{array}
\right)
=(\lambda+\mathbb A)^{-1}
\left(
\begin{array}{c}
g \\
(\kappa,\mu)
\end{array}
\right)
\]
with
\[
\widetilde u=u-\ell(\eta,\omega), \qquad
g=f-\ell(\kappa,\mu).
\]
By the assumption $f\in W^{1,q}(\Omega)$ we have $g\in W^{1,q}(\Omega)$ with
\[
\|\nabla g\|_{q,\Omega}\leq \|\nabla f\|_{q,\Omega}+C|(\kappa,\mu)|
\]
and, therefore, we know \eqref{behavior-other} for
\[
\mathbb A(\lambda+\mathbb A)^{-1}G=G-\lambda
\left(
\begin{array}{c}
\widetilde u \\
(\eta,\omega)
\end{array}
\right), \qquad
G=\left(
\begin{array}{c}
g \\
(\kappa,\mu)
\end{array}
\right).
\]
Thus, in view of \eqref{pre-shift}, we conclude \eqref{behavior-A}.
The proof is complete.
\end{proof}

In the following lemma, we note that $\mathbb P\psi$ never fulfills the boundary condition at $\partial\Omega$ which is
involved in $D_r(A)$, see \eqref{stokes-op}, even if $\psi\in C_0^\infty(\Omega)\subset L^r_R(\mathbb R^3)$
(by setting zero outside $\Omega$);
thus, $\delta>0$ must be small in order that \eqref{frac-proj} holds true.
\begin{lemma}
Let $1<r<\infty$ and $\delta\in (0,1/2r)$.
Then there is a constant $C=C(r,\delta)>0$ such that $\mathbb P \psi\in D_r(A^\delta)$ subject to
\begin{equation}
\|A^\delta\mathbb P\psi\|_{r,\mathbb R^3}\leq C \big(\|\psi\|_{W^{1,r}(\Omega)}+\|\psi\|_{r,\mathbb R^3}\big)
\label{frac-proj}
\end{equation}
for all $\psi\in L^r_R(\mathbb R^3)$ with $\psi|_{\Omega}\in W^{1,r}(\Omega)$.
\label{lem-frac-proj}
\end{lemma}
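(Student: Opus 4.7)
The plan is to combine Lemma \ref{beha-A} with the Balakrishnan/Komatsu representation
\[
A^\delta U=\frac{\sin\pi\delta}{\pi}\int_0^\infty \lambda^{\delta-1}A(\lambda+A)^{-1}U\,d\lambda,\qquad 0<\delta<1,
\]
valid for the sectorial operator $A$, which characterizes $D_r(A^\delta)$ via absolute convergence of the integral in $X_r(\mathbb R^3)$. First I apply Proposition \ref{prop-decom}, specifically \eqref{proj-bdd}--\eqref{grad-proj-bdd}, to obtain $\mathbb P\psi\in X_r(\mathbb R^3)$ with $(\mathbb P\psi)|_\Omega\in W^{1,r}(\Omega)$ and
\[
\|(\mathbb P\psi)|_\Omega\|_{W^{1,r}(\Omega)}+\|\mathbb P\psi\|_{r,\mathbb R^3}\leq C\bigl(\|\psi\|_{W^{1,r}(\Omega)}+\|\psi\|_{r,\mathbb R^3}\bigr),
\]
so it suffices to bound the Balakrishnan integral for $U=\mathbb P\psi$ in terms of this right-hand side.

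Next I fix $\vartheta$ with $\delta<\vartheta<1/2r$, which exists by the hypothesis $\delta<1/2r$, and split the integral at $\lambda=1$. On $(0,1]$, the identity $A(\lambda+A)^{-1}=I-\lambda(\lambda+A)^{-1}$ together with the resolvent estimate \eqref{para-resol} gives $\|A(\lambda+A)^{-1}\mathbb P\psi\|_{r,\mathbb R^3}\leq C\|\mathbb P\psi\|_{r,\mathbb R^3}$, whose product with $\lambda^{\delta-1}$ is integrable on $(0,1]$ since $\delta>0$. On $[1,\infty)$, Lemma \ref{beha-A} applied to $F=\mathbb P\psi$ (which is licit by the preceding display) yields
\[
\|A(\lambda+A)^{-1}\mathbb P\psi\|_{r,\mathbb R^3}\leq C\lambda^{-\vartheta}\bigl(\|(\mathbb P\psi)|_\Omega\|_{W^{1,r}(\Omega)}+\|\mathbb P\psi\|_{r,\mathbb R^3}\bigr),
\]
whose product with $\lambda^{\delta-1}$ is integrable on $[1,\infty)$ since $\delta<\vartheta$. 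Adding the two contributions and chaining with the previous bound on $\mathbb P\psi$ produces \eqref{frac-proj}.

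The main point of care is the step of inferring $\mathbb P\psi\in D_r(A^\delta)$ from absolute convergence of the Balakrishnan integral, since $\mathbb P\psi$ need not belong to $D_r(A)$. This classical step is justified by first applying the identity to the Yosida regularization $V_\eta:=(1+\eta A)^{-1}\mathbb P\psi\in D_r(A)$ for $\eta>0$, for which the identity and the above absolute estimates hold uniformly in $\eta$ thanks to the uniform boundedness of $(1+\eta A)^{-1}$ on $X_r(\mathbb R^3)$ and its commutation with $(\lambda+A)^{-1}$, and then letting $\eta\downarrow 0$ and invoking closedness of $A^\delta$ together with $V_\eta\to\mathbb P\psi$ in $X_r(\mathbb R^3)$.
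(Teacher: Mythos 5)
Your proof is correct and follows essentially the same route as the paper: both hinge on Lemma \ref{beha-A} with an auxiliary $\vartheta\in(\delta,1/2r)$ to make $\lambda^{\delta-1}\|A(\lambda+A)^{-1}\mathbb P\psi\|_{r,\mathbb R^3}$ integrable at infinity (the part near $\lambda=0$ being handled by the resolvent bound alone), combined with \eqref{proj-bdd}--\eqref{grad-proj-bdd} to pass from $\psi$ to $\mathbb P\psi$. The only difference is in the functional-calculus wrapper: the paper concludes $F\in D_r(A^\delta)$ by showing $(1+A)^{\delta-1}F\in D_r(A)$ through the integral representation of the negative power of the shifted operator, whereas you invoke the Balakrishnan formula for $A^\delta$ together with a Yosida regularization and the closedness of $A^\delta$ --- an equally valid way of handling the fact that $\mathbb P\psi$ need not belong to $D_r(A)$.
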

\begin{proof}
Given $\delta\in (0,1/2r)$, we take $\vartheta \in (\delta, 1/2r)$.
Let $F\in X_r(\mathbb R^3)$ with $f=F|_\Omega\in W^{1,r}(\Omega)$, then we have \eqref{behavior-A} with such $\vartheta$,
which implies that $F\in D_r(A^\delta)$ subject to
\begin{equation}
\|A^\delta F\|_{r,\mathbb R^3}
\leq C\big(\|f\|_{W^{1,r}(\Omega)}+\|F\|_{r,\mathbb R^3}\big) 
\label{pre-frac-proj}
\end{equation}
by following the argument in \cite[Theorem 2.24]{Y}.
In this literature, $0\in \rho(A)$ is assumed, however, it is not needed by
considering $1+A$ instead of $A$ as follows:
In fact, by \eqref{behavior-A} with $\vartheta\in (\delta,1/2r)$ we obtain
\[
\int_1^\infty \lambda^{-1+\delta}\|(1+A)(\lambda+1+A)^{-1}F\|_{r,\mathbb R^3}\,d\lambda
\leq C\big(\|f\|_{W^{1,r}(\Omega)}+\|F\|_{r,\mathbb R^3}\big),
\]
while the integral over the interval $(0,1)$ of the same integrand always convergent.
As a consequence,
\begin{equation*}
\begin{split}
(1+A)^{-1+\delta}F
&=\frac{\sin \pi(1-\delta)}{\pi}\int_0^\infty
\lambda^{-1+\delta}(\lambda+1+A)^{-1}F\,d\lambda  \\
&=\frac{\sin \pi (1-\delta)}{\pi}(1+A)^{-1}\int_0^\infty
\lambda^{-1+\delta}(1+A)(\lambda+1+A)^{-1}F\,d\lambda
\end{split}
\end{equation*}
belongs to $D_r(A)$, yielding $F\in D_r(A^\delta)$ along with \eqref{pre-frac-proj}.

By \eqref{grad-proj-bdd} together with \eqref{proj-bdd} we know that
the projection $\mathbb P$ is bounded from
$L^r_R(\mathbb R^3)\cap W^{1,r}(\Omega)$ to $X_r(\mathbb R^3)\cap W^{1,r}(\Omega)$. 
Thus, \eqref{pre-frac-proj} implies \eqref{frac-proj}.
\end{proof}

We provide estimate of the pressure near the initial time, that would be of independent interest.
\begin{proposition}
Suppose \eqref{ass-1} and \eqref{ass-2}.
Let $1<q<\infty$ and $\tau_*,\, \alpha_0,\, \beta_0\in (0,\infty)$.
Then,
for every $\gamma\in\big((1+1/q)/2,\, 1\big)$,
there is a constant $C=C(\gamma,q,\tau_*,\alpha_0,\beta_0,\theta)>0$ such that
\begin{equation}
\|p(t)\|_{q,\Omega_3}+\|\partial_tT(t,s)F\|_{W^{-1,q}(\Omega_3)}\leq C(t-s)^{-\gamma}\|F\|_{q,\mathbb R^3}
\label{sm-est-pressure}
\end{equation}
for all $(t,s)$ with $t-s\in (0,\tau_*]$ and $F\in X_q(\mathbb R^3)$ whenever
$\|U_b\|\leq\alpha_0$ and $[U_b]_\theta\leq\beta_0$,
where $p(t)$ denotes the pressure associated with $T(t,s)F$ and it is singled out in such a way that
$\int_{\Omega_3}p(t)\,dx=0$ for each $t\in (s,\infty)$, while $\|U_b\|$ and $[U_b]_\theta$ are given by \eqref{quan}.
\label{prop-pressure}
\end{proposition}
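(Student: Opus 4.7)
The key idea is to exploit the symmetry $\mathbb{P}^{*}=\mathbb{P}$ established in Proposition \ref{prop-decom}, together with the fractional-power bound of Lemma \ref{lem-frac-proj}, in order to gain extra decay in a negative Sobolev norm beyond what the direct rate $(t-s)^{-1}$ of $\|L_{+}(t)T(t,s)F\|_{q,\mathbb{R}^3}$ would give. Given $\gamma\in\bigl((1+1/q)/2,\,1\bigr)$, I set $\delta:=1-\gamma$. Then $\delta\in(0,\,1/(2q'))$, so $\delta$ is an admissible exponent for Lemma \ref{lem-frac-proj} applied with $r=q'$.

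First I would set $U(t)=T(t,s)F$ and test $\partial_{t}U(t)$ against an arbitrary $\phi\in W^{1,q'}_{0}(\Omega_{3})^{3}$. Extending $\phi$ by zero to $\mathbb{R}^3$ places it in $L^{q'}_{R}(\mathbb{R}^3)$ (trivially, since $\phi|_{B}=0$) with $\phi|_{\Omega}\in W^{1,q'}(\Omega)$. Because $L_{+}(t)U\in X_{q}(\mathbb{R}^3)$ while $(I-\mathbb{P})\phi\in Z_{q'}(\mathbb{R}^3)$, the annihilator relation $X_{q}(\mathbb{R}^3)^{\perp}=Z_{q'}(\mathbb{R}^3)$ from \eqref{dual-an} yields
\begin{equation*}
\langle\partial_{t}U,\phi\rangle_{\Omega_{3}}
= -\langle L_{+}(t)U,\phi\rangle_{\mathbb{R}^3,\rho}
= -\langle L_{+}(t)U,\mathbb{P}\phi\rangle_{\mathbb{R}^3,\rho}
= -\langle AU,\mathbb{P}\phi\rangle_{\mathbb{R}^3,\rho}-\langle B(t)U,\mathbb{P}\phi\rangle_{\mathbb{R}^3,\rho}.
\end{equation*}
For the principal term I use the duality $A_{q}^{*}=A_{q'}$ in \eqref{op-sym} and the corresponding identity for fractional powers to rewrite it as $\langle A^{1-\delta}U,A^{\delta}\mathbb{P}\phi\rangle_{\mathbb{R}^3,\rho}$. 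The moment inequality for the sectorial operator $A$ (guaranteed by \eqref{para-resol}), combined with $\|U\|_{q,\mathbb{R}^3}\le C\|F\|_{q,\mathbb{R}^3}$ and $\|AU\|_{q,\mathbb{R}^3}\le C(t-s)^{-1}\|F\|_{q,\mathbb{R}^3}$ (both coming from \eqref{tanabe-est}--\eqref{tanabe-0} and \eqref{equi-LA}), delivers $\|A^{1-\delta}U\|_{q,\mathbb{R}^3}\le C(t-s)^{-\gamma}\|F\|_{q,\mathbb{R}^3}$, while Lemma \ref{lem-frac-proj} supplies $\|A^{\delta}\mathbb{P}\phi\|_{q',\mathbb{R}^3}\le C\|\phi\|_{W^{1,q'}_{0}(\Omega_{3})}$. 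The subordinate contribution $\langle B(t)U,\mathbb{P}\phi\rangle$ is bounded through \eqref{perturb-est}, Proposition \ref{prop-smooth}, and the boundedness of $\mathbb{P}$, producing only a rate $(t-s)^{-1/2}$, which is subsumed into the main rate on $0<t-s\le\tau_{*}$ because $\gamma>1/2$.

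Combining these bounds I obtain $\|\partial_{t}T(t,s)F\|_{W^{-1,q}(\Omega_{3})}\le C(t-s)^{-\gamma}\|F\|_{q,\mathbb{R}^3}$. For the pressure itself I would invoke Necas' inequality $\|p\|_{q,\Omega_{3}}\le C\|\nabla p\|_{W^{-1,q}(\Omega_{3})}$, which is valid on the smooth bounded domain $\Omega_{3}$ because $\int_{\Omega_{3}}p\,dx=0$ by the chosen normalization. Reading the identity $\nabla p=\Delta u-(u_{b}-\eta_{b})\cdot\nabla u-\partial_{t}u$ off from \eqref{eq-linear} and testing against any $\phi\in W^{1,q'}_{0}(\Omega_{3})$, integration by parts bounds the Laplacian contribution by $\|\nabla u\|_{q,\Omega_{3}}\le C(t-s)^{-1/2}\|F\|_{q,\mathbb{R}^3}$ (Proposition \ref{prop-smooth}), while the convective term put in divergence form $(u_{b}-\eta_{b})\cdot\nabla u=\mbox{div}[(u_{b}-\eta_{b})\otimes u]$ using $\mbox{div}\,u_{b}=0$ admits an estimate of the same order; together with the bound just obtained for $\partial_{t}u$ these give $\|p\|_{q,\Omega_{3}}\le C(t-s)^{-\gamma}\|F\|_{q,\mathbb{R}^3}$.

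The main obstacle is the justification of the identity $\langle AU,\mathbb{P}\phi\rangle=\langle A^{1-\delta}U,A^{\delta}\mathbb{P}\phi\rangle$ together with the required fractional-power control of $A^{\delta}\mathbb{P}\phi$: both rely on the fractional-power theory for the Stokes-structure operator $A$, ultimately resting on the resolvent analysis of subsection \ref{stokes-structure} via the reformulation through the operator $\mathbb{A}$ of \eqref{other-A}, and on Lemma \ref{lem-frac-proj}. This is precisely the step that replaces the analytic-semigroup arguments exploited in the autonomous setting of \cite{EMT}, which are unavailable here since $L_{+}(t)$ is non-autonomous and does not generate a semigroup.
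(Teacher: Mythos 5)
Your proposal is correct and follows essentially the same route as the paper: the crux in both arguments is the identity $\langle AU,\mathbb P\psi\rangle_{\mathbb R^3,\rho}=\langle A^\gamma U,A^{1-\gamma}\mathbb P\psi\rangle_{\mathbb R^3,\rho}$ combined with Lemma \ref{lem-frac-proj}, the moment inequality for fractional powers together with \eqref{tanabe-est}--\eqref{tanabe-0}, and the symmetry of $\mathbb P$ and $A$ with respect to $\langle\cdot,\cdot\rangle_{\mathbb R^3,\rho}$, while the remaining terms only contribute the harmless rate $(t-s)^{-1/2}$. The only (cosmetic) difference is the order: the paper first bounds $p(t)$ by testing against $\mathbb B[\phi-\overline\phi]$ and then reads off $\partial_tT(t,s)F$ from the equation, whereas you first bound $\partial_tT(t,s)F$ in $W^{-1,q}(\Omega_3)$ by testing against arbitrary elements of $W^{1,q'}_0(\Omega_3)$ and then recover the pressure via the Ne\v{c}as-type inequality under the zero-mean normalization --- an inequality whose proof is the very same Bogovski\u\i{} construction the paper uses.
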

\begin{proof}
Set $U(t)=T(t,s)F$ and $u(t)=U(t)|_\Omega$.
We follow the approach developed by Noll and Saal \cite[Lemma 13]{NS}.
Given $\phi\in C_0^\infty(\Omega_3)$, we take $\psi:=\mathbb B[\phi-\overline\phi]\in W^{1,q^\prime}_0(\Omega_3)$
that solves
\begin{equation}
\mbox{div $\psi$}=\phi-\overline\phi \quad\mbox{in $\Omega_3$}, \qquad \psi=0\quad \mbox{on $\partial\Omega_3$},
\label{bvp-div}
\end{equation}
where 
$\mathbb B$ denotes the Bogovskii operator for the domain $\Omega_3$ and 
$\overline\phi:=\frac{1}{|\Omega_3|}\int_{\Omega_3}\phi(x)\,dx$.
The boundary value problem \eqref{bvp-div} 
(in which $\phi-\overline{\phi}$ is replaced by $g$ with the compatibility condition)
admits many solutions, among which the Bogovskii operator
\begin{equation}
\mathbb B: W_0^{k,r}(\Omega_3)\to W_0^{k+1,r}(\Omega_3)^3, \qquad
r\in (1,\infty),\; k=0,1,2,\cdots 
\label{bog-op}
\end{equation}
specifies a particular solution discovered by Bogovskii \cite{Bog} with fine regularity properties
\begin{equation}
\|\mathbb \nabla^{k+1}\mathbb Bg\|_{r,\Omega_3}
\leq C\|\nabla^k g\|_{r,\Omega_3},  \qquad
\|\mathbb Bg\|_{r,\Omega_3}
\leq C\|g\|_{W^{1,r^\prime}(\Omega_3)^*}
\label{bog-est}
\end{equation}
with some $C>0$ (dependent on the same $k,\, r$ as above)
as well as
$\mbox{div $\mathbb Bg$}=g$ provided $\int_{\Omega_3}g\,dx=0$, see \cite{Bor-So, Ga-b, GHH} for the details.
Note that $\mathbb Bg\in C_0^\infty(\Omega_3)^3$ for every $g\in C_0^\infty(\Omega_3)$ and 
that the right-hand side of the latter estimate of \eqref{bog-est} cannot be replaced by
$\|g\|_{W^{-1,r}(\Omega_3)}$.

We may understand $\psi\in W^{1,q^\prime}(\mathbb R^3)\cap L^{q^\prime}_R(\mathbb R^3)$ 
by setting zero outside $\Omega_3$ and use \eqref{bog-est} to get
\begin{equation}
\|\psi\|_{W^{1,q^\prime}(\mathbb R^3)}=\|\psi\|_{W^{1,q^\prime}_0(\Omega_3)}
\leq C\|\phi-\overline\phi\|_{q^\prime,\Omega_3}
\leq C\|\phi\|_{q^\prime,\Omega_3}.
\label{bog-auxi}
\end{equation}
By virtue of $\int_{\Omega_3}p(t)\,dx=0$, 
we obtain
\begin{equation*}
\begin{split}
\langle p(t),\phi\rangle_{\Omega_3}
&=\langle p(t),\phi-\overline\phi\rangle_{\Omega_3}
=\langle p(t),\mbox{div $\psi$}\rangle_{\Omega_3}
=-\langle \nabla p(t),\psi\rangle_{\Omega_3}  \\
&=\langle\partial_tu-\Delta u+(u_b-\eta_b)\cdot\nabla u, \psi\rangle_{\Omega_3}  
\end{split}
\end{equation*}
for all $\phi\in C_0^\infty(\Omega_3)$.
Since $\psi=0$ outside $\Omega_3$, we have
\[
\langle \partial_tu,\psi\rangle_{\Omega_3}=\langle\partial_tU, \psi\rangle_{\mathbb R^3,\rho}
=-\langle AU+B(t)U, \psi\rangle_{\mathbb R^3,\rho}
\]
and, thereby,
\begin{equation}
\begin{split}
\langle p(t), \phi\rangle_{\Omega_3}
&=-\langle AU,\psi\rangle_{\mathbb R^3,\rho}-\langle\Delta u,\psi\rangle_\Omega
+\langle (1-\mathbb P)\big[\{(u_b-\eta_b)\cdot\nabla u\}\chi_\Omega\big], \psi\rangle_{\mathbb R^3,\rho}  \\
&=:I+II+III
\end{split}
\label{press-dual}
\end{equation}
for all $\phi\in C_0^\infty(\Omega_3)$.
Note that the pairing over $\mathbb R^3$ should involve the constant weight $\rho$, otherwise one can use neither \eqref{op-sym}
nor \eqref{proj-sym-0}.
It follows from Proposition \ref{prop-smooth}, \eqref{ass-1} and \eqref{bog-auxi} that
\begin{equation}
|III|\leq C\|U_b\|\|\nabla u\|_{q,\Omega}\|\psi\|_{q^\prime,\mathbb R^3}
\leq C(t-s)^{-1/2}\|U_b\|\|F\|_{q,\mathbb R^3}\|\phi\|_{q^\prime,\Omega_3}
\label{press-3}
\end{equation}
and that
\begin{equation}
|II|\leq \|\nabla u\|_{q,\Omega}\|\nabla \psi\|_{q^\prime,\Omega}
\leq C(t-s)^{-1/2}\|F\|_{q,\mathbb R^3}\|\phi\|_{q^\prime,\Omega_3}
\label{press-2}
\end{equation}
for all $(t,s)$ with $t-s\leq\tau_*$ and $F\in X_q(\mathbb R^3)$.

By virtue of the momentum inequality for fractional powers with $\gamma\in (0,1)$ together with
\eqref{equi-LA} and \eqref{tanabe-est}--\eqref{tanabe-0} we find
\begin{equation}
\|A^\gamma U\|_{q,\mathbb R^3}
\leq C\|AU\|_{q,\mathbb R^3}^\gamma \|U\|_{q,\mathbb R^3}^{1-\gamma}
\leq C(t-s)^{-\gamma}\|F\|_{q,\mathbb R^3}
\label{press-1-left}
\end{equation}
for all $(t,s)$ with $t-s\leq\tau_*$ and $F\in X_q(\mathbb R^3)$.
If in particular $\gamma\in\big((1+1/q)/2,\, 1\big)$, then
Lemma \ref{lem-frac-proj} with $r=q^\prime$, $\delta=1-\gamma\in (0,1/2q^\prime)$ and \eqref{bog-auxi}
for $\psi\in W^{1,q^\prime}(\mathbb R^3)\cap L^{q^\prime}_R(\mathbb R^3)$ imply
\[
\|A^{1-\gamma}\mathbb P\psi\|_{q^\prime,\mathbb R^3}
\leq C\big(\|\psi\|_{W^{1,q^\prime}(\Omega)}+\|\psi\|_{q^\prime,\mathbb R^3}\big)
\leq C\|\phi\|_{q^\prime,\Omega_3}
\]
for all $\phi\in C_0^\infty(\Omega_3)$,
which combined with \eqref{press-1-left} yields
\begin{equation}
|I|=|\langle A^\gamma U, A^{1-\gamma}\mathbb P \psi\rangle_{\mathbb R^3,\rho}|
\leq C(t-s)^{-\gamma}\|F\|_{q,\mathbb R^3}\|\phi\|_{q^\prime,\Omega_3}
\label{press-1}
\end{equation}
for all $(t,s)$ with $t-s\leq \tau_*$ and $F\in X_q(\mathbb R^3)$ by taking into account \eqref{op-sym}
and \eqref{proj-sym-0}.
We collect \eqref{press-dual}--\eqref{press-2} and \eqref{press-1} to conclude \eqref{sm-est-pressure} for the pressure.

It remains to show \eqref{sm-est-pressure} for $\partial_tU(t)|_{\Omega_3}=\partial_tu(t)|_{\Omega_3}$, that readily follows from
\begin{equation*}
\begin{split}
|\langle \partial_tu, \Phi\rangle_{\Omega_3}|
&=|\langle \Delta u+(\eta_b-u_b)\cdot\nabla u-\nabla p, \Phi\rangle_{\Omega_3}|   \\
&\leq C\big(\|\nabla u\|_{q,\Omega_3}+\|U_b\|\|u\|_{q,\Omega_3}+\|p\|_{q,\Omega_3}\big)\|\nabla\Phi\|_{q^\prime,\Omega_3}
\end{split}
\end{equation*}
for all $\Phi\in C_0^\infty(\Omega_3)^3$
together with estimate for the pressure obtained above and Proposition \ref{prop-smooth}.
The proof is complete.
\end{proof}

\begin{remark}
Tolksdorf \cite[Proposition 3.4]{Tol} has made it clear that the smoothing rate 
$(t-s)^{-3/4}$ of the pressure is sharp within the $L^2$ theory of the Stokes semigroup in bounded domains 
through the behavior of the resolvent for $|\lambda|\to\infty$.
This suggests that \eqref{sm-est-pressure} 
would be almost optimal.
The same rate as in \eqref{sm-est-pressure} was discovered first by Noll and Saal \cite{NS} 
and it was slightly improved as
$\gamma=(1+1/q)/2$ by \cite{HiShi09, Hi20} for the exterior problem with prescribed rigid motions. 
\label{rem-tolks}
\end{remark}

\subsection{Adjoint evolution operator and backward problem}
\label{adjoint-backward}

Let $t\in\mathbb R$ be a parameter as the final time of the problem below.
For better understanding of the initial value problem \eqref{eq-linear}
for the linearized system,
it is useful to study the backward problem for the adjoint system subject to the final conditions at $t$:
\begin{equation}
\begin{split}
&-\partial_sv=\Delta v-(\eta_b(s)-u_b(s))\cdot\nabla v+\nabla p_v, \qquad 
\mbox{div $v$}=0 \quad \mbox{in $\Omega\times (-\infty,t)$}, \\ 
&v|_{\partial\Omega}=\eta+\omega\times y, \qquad 
v\to 0 \quad \mbox{as $|y|\to\infty$}, \\
&-m\,\frac{d\eta}{ds}+\int_{\partial\Omega}\mathbb S(v,-p_v)\nu\,d\sigma =0, \\
&-J\,\frac{d\omega}{ds}+\int_{\partial\Omega}y\times \mathbb S(v,-p_v)\nu\,d\sigma=0, \\
&v(\cdot,t)=v_0, \quad \eta(t)=\eta_0, \quad \omega(t)=\omega_0,
\end{split}
\label{backward}
\end{equation}
where $v(y,s)$, $p_v(y,s)$, $\eta(s)$ and $\omega(s)$ are unknowns functions.
By using the Oseen-structure operator \eqref{oseen-op} together with the description \eqref{X1}--\eqref{X2}, the backward
problem \eqref{backward} is formulated as
\begin{equation}
-\frac{dV}{ds}+L_-(s)V=0, \quad s\in (-\infty,t);\qquad V(t)=V_0
\label{back-adj}
\end{equation}
for the monolithic velocity
$V=v\chi_\Omega+(\eta+\omega\times y)\chi_B$,
where $V_0=v_0\chi_\Omega+(\eta_0+\omega_0\times y)\chi_B$.
Conversely, once we have a solution to \eqref{back-adj},
there exists a pressure $p_v$ which together with $(v,\eta,\omega)$ solves \eqref{backward} as in subsection \ref{resol-equi}.

We begin with justification of the duality relation between the operators $L_\pm(t)$ within $X_q(\mathbb R^3)^*=X_{q^\prime}(\mathbb R^3)$,
see \eqref{dual-an}, and the dissipative structure of 
each of those operators.
From subsection \ref{generation} we know that $k+L_\pm(t)$ is bijective for large $k>0$, which combined with
\eqref{dual} below implies that
$L_\pm(t)^*=L_\mp(t)$. 
The latter property \eqref{dissi} also plays a crucial role in section \ref{est-evo}. 
\begin{lemma}
Suppose \eqref{ass-1} and \eqref{ass-2}.
Let $1<q<\infty$, 
then
\begin{equation}
\langle L_\pm(t)U,V\rangle_{\mathbb R^3,\rho}=\langle U,L_\mp(t)V\rangle_{\mathbb R^3,\rho}
\label{dual}
\end{equation}
for all $U\in D_q(A)$, $V\in D_{q^\prime}(A)$ and $t\in\mathbb R$, where $\langle\cdot,\cdot\rangle_{\mathbb R^3,\rho}$
is given by \eqref{pairing}.
Moreover, we have
\begin{equation}
\langle L_\pm(t)U, U\rangle_{\mathbb R^3,\rho}=2\|Du\|_{2,\Omega}^2
\label{dissi}
\end{equation}
for all $U\in D_2(A)$ and $t\in\mathbb R$, where
$u=U|_\Omega$.
\label{dual-dissi}
\end{lemma}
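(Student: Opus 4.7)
The plan is to split $L_\pm(t) = A \pm B(t)$ and establish two separate structural facts: the Stokes-structure part $A$ is symmetric with respect to the weighted pairing $\langle\cdot,\cdot\rangle_{\mathbb R^3,\rho}$, while the convective part $B(t)$ is skew-symmetric. Combining these gives \eqref{dual}, and \eqref{dissi} is then the diagonal case $V=U$ after noting that the skew piece contributes nothing.

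First I would treat $A$. The symmetry \eqref{proj-sym-0} of $\mathbb P$, together with $\mathbb PV = V$ for $V \in D_{q'}(A) \subset X_{q'}(\mathbb R^3)$, reduces the task to computing $\langle \mathcal AU, V\rangle_{\mathbb R^3,\rho}$ directly from \eqref{stokes-op}. Unfolding via \eqref{pairing} produces the fluid term $-\langle \Delta u, v\rangle_\Omega$ together with two boundary integrals $\eta_v \cdot \int_{\partial\Omega}(2Du)\nu\,d\sigma + \omega_v \cdot \int_{\partial\Omega}y\times(2Du)\nu\,d\sigma_y$ coming from the solid part (the factors $m$ and $J$ cancel cleanly against the $1/m$ and $J^{-1}$ appearing in the definition of $\mathcal A$). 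An integration by parts $-\langle \Delta u, v\rangle_\Omega = 2\langle Du, Dv\rangle_\Omega - \int_{\partial\Omega}(2Du)\nu \cdot v\,d\sigma$, combined with the hidden boundary condition \eqref{hidden-bc2} for $V \in D_{q'}(A)$, namely $v|_{\partial\Omega} = \eta_v + \omega_v\times x$, and the triple-product identity $(\omega_v\times x)\cdot c = \omega_v\cdot(x\times c)$, makes the boundary integral exactly cancel the two solid-part terms. What remains is the manifestly symmetric expression $\langle \mathcal AU, V\rangle_{\mathbb R^3,\rho} = 2\langle Du, Dv\rangle_\Omega$.

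Next I would treat $B(t)$. Again using $\mathbb P_q^* = \mathbb P_{q'}$, the pairing reduces to $\langle (u_b - \eta_b)\cdot\nabla u, v\rangle_\Omega$. Integrating by parts uses $\mathrm{div}(u_b - \eta_b) = 0$ in $\Omega$ and the vanishing normal flux $\nu\cdot(u_b - \eta_b)|_{\partial\Omega} = 0$: by \eqref{ass-1} one has $\nu\cdot(u_b - \eta_b) = \nu\cdot(\omega_b\times x)$ on $\partial\Omega$, and since $\nu = -x$ on the unit sphere this equals $-x\cdot(\omega_b\times x) = 0$. The result is $\langle (u_b - \eta_b)\cdot\nabla u, v\rangle_\Omega = -\langle u, (u_b - \eta_b)\cdot\nabla v\rangle_\Omega$, which is precisely $-\langle U, B(t)V\rangle_{\mathbb R^3,\rho}$. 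Choosing the $\pm$ sign then combines both pieces into \eqref{dual}.

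For \eqref{dissi} I would simply set $V = U \in D_2(A)$: the symmetric piece contributes $2\|Du\|_{2,\Omega}^2$, while the skew piece $\langle (u_b - \eta_b)\cdot\nabla u, u\rangle_\Omega = \tfrac{1}{2}\int_\Omega(u_b - \eta_b)\cdot\nabla(|u|^2)\,dx$ vanishes by the same divergence-plus-boundary computation. The only delicate point in the whole plan is justifying the integrations by parts: the surface integrals are legitimate because $u \in W^{2,q}(\Omega)$ and $v \in W^{2,q'}(\Omega)$ so the traces are well defined, and any contribution from infinity is killed by a standard cut-off argument with $\phi_R$ as in \eqref{para-cut}, since $U\in L^q(\mathbb R^3)$, $V\in L^{q'}(\mathbb R^3)$, $\nabla u \in L^q(\Omega)$ and $\nabla v \in L^{q'}(\Omega)$. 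I do not anticipate any real obstacle; the one thing requiring vigilance is the orientation convention $\nu = -x$ on $\partial\Omega$, so that all signs in integration by parts are consistent.
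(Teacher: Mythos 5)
Your proposal is correct and follows essentially the same route as the paper: decompose $L_\pm(t)=A\pm B(t)$, use the symmetry \eqref{stokes-sym} of $A$ (which the paper cites from Takahashi--Tucsnak rather than re-deriving, but your integration-by-parts computation is exactly what underlies it), and prove skew-symmetry of $B(t)$ via the divergence-free structure, the vanishing normal flux \eqref{crucial}, and the cut-off $\phi_R$. The diagonal case $V=U$ then gives \eqref{dissi} just as in the paper.
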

\begin{proof}
The following fine properties of the Stokes-structure operator $A$ is well-known, see Takahashi and Tucsnak \cite[Section 4]{TT04}:
\begin{equation}
\langle AU,V\rangle_{\mathbb R^3,\rho}=\langle U,AV\rangle_{\mathbb R^3,\rho} =2\langle Du, Dv\rangle_\Omega
\label{stokes-sym}
\end{equation}
for all $U\in D_q(A)$ and $V\in D_{q^\prime}(A)$,
where $u=U|_\Omega,\, v=V|_\Omega$; in particular,
\begin{equation}
\langle AU,U\rangle_{\mathbb R^3,\rho}=2\|Du\|_{2,\Omega}^2
\label{stokes-dissi}
\end{equation}
for all $U\in D_2(A)$.
Computation of \eqref{stokes-dissi} is already done in the latter half of 
the proof of Proposition \ref{two-st-str} of the present paper as well.
We stress that the constant weight $\rho >0$ is needed in order that
\eqref{stokes-sym} and \eqref{stokes-dissi} hold true.
It thus suffices to show that
\begin{equation}
\langle B(t)U,V\rangle_{\mathbb R^3,\rho}+\langle U,B(t)V\rangle_{\mathbb R^3,\rho}=0
\label{B-skew}
\end{equation}
for all $U$ and $V$ as above.
We see from \eqref{proj-sym-0} that
\begin{equation}
\langle B(t)U,V\rangle_{\mathbb R^3,\rho}
=\langle (u_b(t)-\eta_b(t))\cdot\nabla u, v\rangle_\Omega \qquad
\label{B-om}
\end{equation}
and the same relation for $\langle U,B(t)V\rangle_{\mathbb R^3,\rho}$.
Let us use the same cut-off function $\phi_R$ as in \eqref{para-cut}.
Since
\begin{equation}
\nu\cdot(u_b(t)-\eta_b(t))=x\cdot(\omega_b(t)\times x)=0, \qquad x\in \partial\Omega 
\label{crucial}
\end{equation}
by \eqref{ass-1}, we find
\[
\int_{\Omega}\mbox{div $\big[(u\cdot v)(u_b(t)-\eta_b(t))\phi_R\big]$}\,dx
=\int_{\partial\Omega}(u\cdot v)\nu\cdot (u_b(t)-\eta_b(t))\,d\sigma=0
\]
for all $u=U|_\Omega$ and $v=V|_\Omega$.
By \eqref{ass-1} again it is readily seen that
\[
\lim_{R\to \infty}\int_{2R<|x|<3R}|u||v||(u_b(t)-\eta_b(t))\cdot\nabla \phi_R|\,dx= 0,
\]
which implies
\[
\int_\Omega (u_b(t)-\eta_b(t))\cdot\nabla (u\cdot v)\,dx=0.
\]
This combined with \eqref{B-om}
concludes \eqref{B-skew}, which 
together with \eqref{stokes-sym} leads us to \eqref{dual}.
The relation \eqref{dissi} follows from \eqref{stokes-dissi} and \eqref{B-skew} with $V=U\in D_2(A)$.
The proof is complete.
\end{proof}

Several remarks are in order.
\begin{remark}
It should be emphasized that Lemma \ref{dual-dissi} is not accomplished if the drift term in 
\eqref{eq-linear}/\eqref{backward} is replaced by the purely Oseen term $\eta_b\cdot\nabla u$, see \eqref{crucial}.
This is why the other drift term $u_b\cdot\nabla u$ must be additionally involved into the right linearization.
If the shape of the body is arbitrary, we see from \eqref{shape-arbit} that the corresponding drift term is given by
$(u_b-\eta_b-\omega_b\times x)\cdot\nabla u$, so that the boundary integral from this term 
vanishes as in the proof of Lemma \ref{dual-dissi}.
\label{rem-drift}
\end{remark}
\begin{remark}
If we wish to involve the term $(u-\eta)\cdot\nabla u_b$ as well into the linearization, we employ
\[
|\langle (u-\eta)\cdot\nabla u_b(t), u\rangle_\Omega|
\leq C\left(\|u_b(t)\|_{L^{3,\infty}(\Omega)}+\|u_b(t)\|_{2,\Omega}\right)\|\nabla u\|_{2,\Omega}^2
\]
with the aid of
$|\eta|\leq C\|Du\|_{2,\Omega}$, see 
for instance Galdi \cite[Lemma 4.9]{Ga02},
where $L^{3,\infty}(\Omega)$ stands for the weak-$L^3$ space.
Hence the smallness of $u_b$ in $L^2(\Omega)$, 
that is more restrictive (from the viewpoint of summability at infinity) than \eqref{ass-1},
is needed to ensure the desired energy relation in subsection \ref{energy-relation}.
Indeed this is possible under the self-propelling condition, see subsection \ref{basic-mo}, but it does not follow 
solely from the wake structure.
Since we want to develop the theory under less assumption \eqref{ass-1} on the basic motion, 
it is better not to put the term $(u-\eta)\cdot\nabla u_b$ in the linearization but to
treat this term together with the nonlinear term. 
\label{rem-perturb}
\end{remark}
\begin{remark}
If the shape of the body is arbitrary, the corresponding term to the one mentioned in Remark \ref{rem-perturb}
is given by 
$(u-\eta-\omega\times x)\cdot\nabla u_b$ in view of \eqref{shape-arbit}.
In order that the desired energy relation is available even if this term is involved into the linearization,
we have to ask it to be $|x|u_b\in L^2(\Omega)$, which is too restrictive.
In fact, $(\omega\times x)\cdot\nabla u_b$ is the worst term among all linear terms and thus
we do need the specific shape already in linear analysis unlike \cite{EMT}.
\label{rem-arbit}
\end{remark}
%

Let us consider the auxiliary initial value problem
\begin{equation}
\frac{dW}{d\tau}+L_-(t-\tau)W=0, \quad \tau\in (s,\infty);\qquad W(s)=V_0,
\label{auxi-eq}
\end{equation}
where $t\in\mathbb R$ is a parameter involved in the coefficient operator.
By exactly the same argument as in subsections \ref{generation} and \ref{smoothing-estimate}, we see that
the operator family $\{L_-(t-\tau);\,\tau\in\mathbb R\}$ generates an evolution operator 
$\{\widetilde T(\tau,s;\,t);\, -\infty<s\leq \tau<\infty\}$ on $X_q(\mathbb R^3)$ for every $q\in (1,\infty)$
and that it 
satisfies the similar 
smoothing estimates to \eqref{decay-1}--\eqref{grad-new-form},
in which the constants $C=C(\tau_*)$ are taken uniformly in $(\tau,s)$ with
$\tau-s\in (0,\tau_*]$ for given $\tau_*>0$ and
do not depend on $t\in\mathbb R$.
This implies 
\eqref{adj-sm} below on the evolution operator
defined by
\begin{equation}
S(t,s):=\widetilde T(t-s,0;\,t),\qquad -\infty<s\leq t<\infty,
\label{adj-def}
\end{equation}
which coincides with the adjoint of $T(t,s)$, see \eqref{dual-evo} in the next lemma.

For every $V_0\in X_q(\mathbb R^3)$, the function $V(s)=S(t,s)V_0$ is a solution to the backward problem \eqref{back-adj}, that is,
\begin{equation}
-\partial_sS(t,s)+L_-(s)S(t,s)=0, \quad s\in (-\infty,t); \qquad S(t,t)={\mathcal I}
\label{adj-evo1}
\end{equation}
in ${\mathcal L}(X_q(\mathbb R^3))$.
As we would expect, the following duality relation \eqref{dual-evo} holds true.
Note that the latter assertion \eqref{back-semi} 
does not follow directly from \eqref{adj-def}.
\begin{lemma}
Let $1<q<\infty$, then
\begin{equation}
T(t,s)^*=S(t,s), \qquad S(t,s)^*=T(t,s) \qquad
\mbox{in ${\mathcal L}(X_q(\mathbb R^3))$}
\label{dual-evo}
\end{equation}
for all $(t,s)$ with $-\infty<s\leq t<\infty$ in the sense of 
\eqref{dual-evo-sense} below.
Moreover, we have the backward semigroup property
\begin{equation}
S(\tau,s)S(t,\tau)=S(t,s)\qquad (-\infty<s\leq\tau\leq t<\infty)
\label{back-semi}
\end{equation}
in ${\mathcal L}(X_q(\mathbb R^3))$.
\label{lem-dual-evo}
\end{lemma}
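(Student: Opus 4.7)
The natural strategy is the classical duality argument for non-autonomous parabolic problems. For $F\in X_q(\mathbb R^3)$ and $G\in X_{q^\prime}(\mathbb R^3)$, define
\[
\phi(\tau):=\langle T(\tau,s)F, S(t,\tau)G\rangle_{\mathbb R^3,\rho}, \qquad s\leq \tau\leq t,
\]
and show that $\phi$ is constant on $[s,t]$. Evaluating at the endpoints then yields
\[
\langle F, S(t,s)G\rangle_{\mathbb R^3,\rho}=\phi(s)=\phi(t)=\langle T(t,s)F, G\rangle_{\mathbb R^3,\rho},
\]
which is the desired duality \eqref{dual-evo}. The backward semigroup identity \eqref{back-semi} will be an immediate corollary: taking the adjoint of \eqref{semi} with respect to $\langle\cdot,\cdot\rangle_{\mathbb R^3,\rho}$ and applying the duality already obtained give
\[
S(\tau,s)S(t,\tau)=T(\tau,s)^*T(t,\tau)^*=[T(t,\tau)T(\tau,s)]^*=T(t,s)^*=S(t,s).
\]

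The first step is to verify the parabolic generation theory for the auxiliary family $\{L_-(t-\tau);\,\tau\in\mathbb R\}$ as asserted in the text. The uniform resolvent bound is immediate from the one for $L_\pm(\cdot)$ established in subsection \ref{generation}, while the global H\"older continuity \eqref{hoelder} is translation-invariant in time, so the constants do not depend on the parameter $t$. Consequently $\widetilde T(\tau,s;t)$ exists and enjoys the analogues of \eqref{semi}--\eqref{evo-eq2}; transferring these to $S(t,s)$ via \eqref{adj-def} gives in particular the backward evolution equation \eqref{adj-evo1} together with strong continuity of $s\mapsto S(t,s)G$ and the mapping property $S(t,\tau)G\in D_{q^\prime}(A)$ for $\tau<t$. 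Combined with \eqref{evo-eq1}, it follows that for $\tau\in(s,t)$ both $T(\tau,s)F\in D_q(A)=D_q(L_+(\tau))$ and $S(t,\tau)G\in D_{q^\prime}(A)=D_{q^\prime}(L_-(\tau))$, so $\phi$ is continuously differentiable on $(s,t)$ with
\[
\phi'(\tau)=\langle -L_+(\tau)T(\tau,s)F, S(t,\tau)G\rangle_{\mathbb R^3,\rho}+\langle T(\tau,s)F, L_-(\tau)S(t,\tau)G\rangle_{\mathbb R^3,\rho}=0
\]
by the duality relation \eqref{dual} of Lemma \ref{dual-dissi}, which is exactly the adjointness $L_+(\tau)^*=L_-(\tau)$ within $X_q^*=X_{q^\prime}$.

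Continuity of $\phi$ at the endpoints $s$ and $t$ follows from the strong continuity statements \eqref{evo-conti} for $T(\cdot,s)F$ and the corresponding one for $S(t,\cdot)G$ derived above, together with uniform boundedness of the respective evolution operators on the compact interval $[s,t]$ (guaranteed by \eqref{tanabe-0} applied to both families). Hence $\phi\in C([s,t])\cap C^1((s,t))$ with $\phi'\equiv 0$, so $\phi$ is constant, completing the proof of \eqref{dual-evo}. The backward semigroup property \eqref{back-semi} then drops out as indicated above.

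The only point requiring real care is the endpoint continuity of $\phi$ and the justification of differentiating the pairing term-by-term at interior $\tau$. Both reduce to the mapping and continuity properties already delivered by the parabolic evolution theory of subsection \ref{generation} and the smoothing estimates of subsection \ref{smoothing-estimate} (applied also to $\widetilde T$), so no genuine obstacle arises beyond bookkeeping. In particular the constant weight $\rho$ built into \eqref{pairing} is essential: it is exactly what makes \eqref{dual} hold with $L_+^*=L_-$, and without it the computation of $\phi'(\tau)$ would leave an uncontrolled boundary contribution at $\partial\Omega$.
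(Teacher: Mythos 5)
Your proposal is correct and follows essentially the same route as the paper: the author also fixes $(t,s)$, shows $\partial_\tau\langle T(\tau,s)F,\,S(t,\tau)G\rangle_{\mathbb R^3,\rho}=0$ on $(s,t)$ via \eqref{evo-eq1}, \eqref{dual} and \eqref{adj-evo1}, evaluates at the endpoints to get \eqref{dual-evo-sense}, and then deduces \eqref{back-semi} from the forward semigroup property \eqref{semi} by adjointness. Your additional remarks on endpoint continuity and the parameter-independence of the H\"older constants for $\{L_-(t-\tau)\}$ are correct bookkeeping that the paper leaves implicit.
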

\begin{proof}
We fix $(t,s)$ with $-\infty<s<t<\infty$.
For every $\tau\in (s,t)$, it follows from \eqref{evo-eq1}, \eqref{dual} and \eqref{adj-evo1} that
\begin{equation*}
\begin{split}
&\partial_\tau\langle T(\tau,s)F, S(t,\tau)G\rangle_{\mathbb R^3,\rho}  \\
&=\langle -L_+(\tau)T(\tau,s)F, S(t,\tau)G\rangle_{\mathbb R^3,\rho}
+\langle T(\tau,s)F, L_-(\tau)S(t,\tau)G\rangle_{\mathbb R^3,\rho}=0
\end{split}
\end{equation*}
for all $F\in X_q(\mathbb R^3)$ and $G\in X_{q^\prime}(\mathbb R^3)$, which implies
\begin{equation}
\langle T(t,s)F, G\rangle_{\mathbb R^3,\rho}
=\langle F, S(t,s)G\rangle_{\mathbb R^3,\rho}
\label{dual-evo-sense}
\end{equation}
yielding \eqref{dual-evo}.
Then the semigroup property \eqref{semi} of $T(t,s)$ leads to \eqref{back-semi},
which completes the proof.
\end{proof}

In the following proposition we provide the estimate of the associated pressure as well as 
\begin{equation}
\|\nabla^jT(t,s)^*G\|_{r,\mathbb R^3}\leq C(t-s)^{-j/2-(3/q-3/r)/2}
\|G\|_{q,\mathbb R^3}
\label{adj-sm}
\end{equation}
near $s=t$.
Indeed \eqref{adj-sm} with $j=0$ and $q,\,r\in (1,\infty)$ follows from the duality, but the other cases are obtained via
estimates of $\widetilde T(\tau,s;t)$, see \eqref{adj-def}.
The proof of the estimates of the pressure and $\partial_sT(t,s)^*$ are exactly the same as in Proposition \ref{prop-pressure}
and may be omitted.
\begin{proposition}
Suppose \eqref{ass-1} and \eqref{ass-2}.
Let
\[ 
\begin{array}{ll}
1<q<\infty, \;\; q\leq r\leq \infty \qquad&\mbox{for $j=0$}, \\ 
1<q\leq r<\infty &\mbox{for $j=1$}. 
\end{array}   
\]
Given $\tau_*,\, \alpha_0,\, \beta_0\in (0,\infty)$, estimate
\eqref{adj-sm} holds with some constant
$C=C(j,q,r,\tau_*,\alpha_0,\beta_0,\theta)>0$
for all $(t,s)$ with $t-s\in (0,\tau_*]$ and $G\in X_q(\mathbb R^3)$
whenever $\|U_b\|\leq\alpha_0$ and $[U_b]_\theta\leq\beta_0$, where $\|U_b\|$ 
and $[U_b]_\theta$ are given by \eqref{quan}.

Moreover, for every $\gamma\in\big((1+1/q)/2,\, 1\big)$, there is a constant 
$C=C(\gamma,q,\tau_*,\alpha_0,\beta_0,\theta)>0$ such that
\begin{equation}
\|p_v(s)\|_{q,\Omega_3}+\|\partial_sT(t,s)^*G\|_{W^{-1,q}(\Omega_3)}
\leq C(t-s)^{-\gamma}\|G\|_{q,\mathbb R^3}
\label{sm-adj-pressure}
\end{equation}
for all $(t,s)$ with $t-s\in (0,\tau_*]$ and $G\in L^q(\mathbb R^3)$ whenever $\|U_b\|\leq \alpha_0$ and
$[U_b]_\theta\leq\beta_0$, where $p_v(s)$ denotes the pressure associated with $v(s)=T(t,s)^*G$ and it is singled out
in such a way that $\int_{\Omega_3}p_v(s)\,dy=0$ for each $s\in (-\infty,t)$.
\label{adj-smooth}
\end{proposition}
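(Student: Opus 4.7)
The plan is to reduce everything to Proposition \ref{prop-smooth} and Proposition \ref{prop-pressure} already established for the forward problem, exploiting the symmetry $L_+ \leftrightarrow L_-$ together with the identities \eqref{adj-def} and \eqref{dual-evo}.

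First, I would use the definition $S(t,s) = \widetilde T(t-s,0;t) = T(t,s)^*$ from \eqref{adj-def} and Lemma \ref{lem-dual-evo}. The family $\{L_-(t-\tau);\,\tau\in\mathbb R\}$ differs from $\{L_+(\tau);\,\tau\in\mathbb R\}$ only by the sign of the drift operator $B$ in \eqref{oseen-term} and by the time-reflection $\tau\mapsto t-\tau$. Neither operation changes the quantities $\|U_b\|$ and $[U_b]_\theta$ in \eqref{quan}, since these are taken as suprema and Hölder seminorms over all of $\mathbb R$. In particular, the perturbation bound \eqref{neumann}, the Hölder estimate \eqref{hoelder}, and the key a priori estimates \eqref{tanabe-est}--\eqref{tanabe-0} all carry over verbatim, with constants that do not depend on the parameter $t$. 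Hence Proposition \ref{prop-smooth} applied to the family $\{L_-(t-\tau)\}_{\tau\in\mathbb R}$ yields, uniformly in $t\in\mathbb R$,
\[
\|\nabla^j \widetilde T(\tau,s_0;t)G\|_{r,\mathbb R^3}\leq C(\tau-s_0)^{-j/2-(3/q-3/r)/2}\|G\|_{q,\mathbb R^3}
\]
for $\tau-s_0\in(0,\tau_*]$, in the stated ranges of $q,r,j$.

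Setting $s_0=0$ and $\tau=t-s$ and invoking $S(t,s)=\widetilde T(t-s,0;t)=T(t,s)^*$ gives the desired estimate \eqref{adj-sm}. As a consistency check, for the subcase $j=0$ with $q,r\in(1,\infty)$ one could also use duality directly: by \eqref{dual-evo-sense},
\[
\|T(t,s)^*G\|_{r,(\mathbb R^3,\rho)}
=\sup_{\|F\|_{r^\prime,(\mathbb R^3,\rho)}\leq 1}|\langle G,T(t,s)F\rangle_{\mathbb R^3,\rho}|
\leq \|G\|_{q,(\mathbb R^3,\rho)}\,\|T(t,s)\|_{\mathcal{L}(X_{r^\prime},X_{q^\prime})},
\]
and \eqref{decay-1} together with $3/r^\prime-3/q^\prime=3/q-3/r$ gives the same bound (note $r^\prime\leq q^\prime$ since $q\leq r$). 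The endpoint $r=\infty$ and the gradient case $j=1$ are however outside the reach of a pure duality argument (they would require $r^\prime=1$ or commutation of $\nabla$ with the adjoint), so the direct route via $\widetilde T$ is the one I would adopt for all cases in order to keep the proof uniform.

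For \eqref{sm-adj-pressure}, the proof mimics Proposition \ref{prop-pressure} line by line. Given $\phi\in C_0^\infty(\Omega_3)$, I would use the Bogovskii lift $\psi=\mathbb B[\phi-\overline\phi]$ as in \eqref{bvp-div}--\eqref{bog-auxi}, pair it with the backward equation $-\partial_s v=\Delta v-(\eta_b-u_b)\cdot\nabla v+\nabla p_v$, and integrate by parts to isolate $\langle p_v(s),\phi\rangle_{\Omega_3}$ as a sum of three terms analogous to $I$, $II$, $III$ in \eqref{press-dual}. The fractional-power term is controlled through $\|A^\gamma S(t,s)G\|_{q,\mathbb R^3}\leq C(t-s)^{-\gamma}\|G\|_{q,\mathbb R^3}$ (coming from the analogue of \eqref{press-1-left} obtained from uniform-in-$t$ versions of \eqref{equi-LA} and \eqref{tanabe-est}--\eqref{tanabe-0} for $L_-$) paired with Lemma \ref{lem-frac-proj} applied to $\mathbb P\psi$; the Laplacian and drift contributions are handled by the gradient bound \eqref{adj-sm} with $j=1$ just proved. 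The bound on $\partial_sT(t,s)^*G$ in $W^{-1,q}(\Omega_3)$ is then extracted from the backward equation itself, exactly as at the end of the proof of Proposition \ref{prop-pressure}.

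The main obstacle — really the only subtlety — is confirming that the constants in Proposition \ref{prop-smooth} and Proposition \ref{prop-pressure} do not deteriorate when the underlying operator family is replaced by $\{L_-(t-\tau)\}$ with $t$ varying. This is settled by the observation that \eqref{ass-1}--\eqref{ass-2} are formulated globally in time, so $\|U_b\|$ and $[U_b]_\theta$ are genuinely $t$-independent; hence every intermediate estimate in subsections \ref{generation}--\ref{sm-pressure} transfers with identical bounds.
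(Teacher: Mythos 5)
Your proposal is correct and follows essentially the same route the paper takes: the paper obtains \eqref{adj-sm} exactly via the auxiliary evolution operator $\widetilde T(\tau,s;t)$ generated by $\{L_-(t-\tau)\}$ together with the identity $S(t,s)=\widetilde T(t-s,0;t)=T(t,s)^*$, noting (as you do) that the constants are uniform in the parameter $t$ because \eqref{ass-1}--\eqref{ass-2} are global in time, and it likewise remarks that pure duality only covers $j=0$ with $q,r\in(1,\infty)$. The pressure estimate \eqref{sm-adj-pressure} is declared in the paper to be ``exactly the same as in Proposition \ref{prop-pressure}'', which is precisely the line-by-line adaptation you describe.
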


\section{Decay estimates of the evolution operator}
\label{est-evo}

In this section we study the large time behavior of the Oseen-structure evolution operator.
Our argument is based on the following two ingredients:
One is the decay estimates of the related evolution operator without the rigid body, the other is a consequence from
the energy relation.
The former is discussed in subsection \ref{evo-whole}, whereas the latter is deduced in subsection \ref{energy-relation}.
Then the proof of \eqref{decay-1} (except for the $L^\infty$-estimate) is given in subsection \ref{decay-0th}.
Toward the gradient estimate (as well as the $L^\infty$-estimate), the local energy decay property is established in subsection \ref{led-est},
and successively the large time behavior near spatial infinity is studied in subsection \ref{near-infinity}.
The final subsection is devoted to completion of the linear theory.

\subsection{Evolution operator 
without rigid body}
\label{evo-whole}

In this subsection we consider the initial value problem for the fluid equation relating to
\eqref{eq-linear} in the whole space without the rigid body, that is,
\begin{equation}
\begin{split}
&\partial_tu=\Delta u+(\eta_b(t)-U_b(t))\cdot\nabla u-\nabla p, \quad 
\mbox{div $u$}=0, \quad (x,t)\in\mathbb R^3\times (s,\infty), \\
&u\to 0\quad\mbox{as $|x|\to\infty$}, \\
&u(\cdot,s)=\psi, 
\end{split}
\label{eq-whole}
\end{equation}
and the associated backward problem for the adjoint system
\begin{equation}
\begin{split}
&-\partial_sv=\Delta v-(\eta_b(s)-U_b(s))\cdot\nabla v+\nabla p_v, \quad 
\mbox{div $v$}=0, \quad (y,s)\in\mathbb R^3\times (-\infty,t), \\
&v\to 0\quad \mbox{as $|y|\to\infty$}, \\
&v(\cdot,t)=\phi,
\end{split}
\label{adj-whole}
\end{equation}
where the initial and final velocities are taken from the space
$L^q_\sigma(\mathbb R^3)$ 
with some $q\in (1,\infty)$, see \eqref{helm}, while
$U_b$ and $\eta_b$ are as in \eqref{background}--\eqref{ass-2}.
The family $\{L_{0,\pm}(t);\, t\in\mathbb R\}$ of the modified Oseen operators on $L^q_\sigma(\mathbb R^3)$ 
is given by
\begin{equation}
\begin{split}
&D_q(L_{0,\pm}(t))=L^q_\sigma(\mathbb R^3)\cap W^{2,q}(\mathbb R^3), \\ 
&L_{0,\pm}(t)u=-\mathbb P_0\big[\Delta u\pm\big(\eta_b(t)-U_b(t)\big)\cdot\nabla u\big].
\end{split}
\label{oseen-wh}
\end{equation}
where $\mathbb P_0$ denotes the classical Fujita-Kato projection, see \eqref{FK-proj}. 
Then the same arguments as in subsections \ref{generation} and \ref{adjoint-backward} show 
that the operator families $\{L_{0,+}(t);\, t\in\mathbb R\}$ and $\{L_{0,-}(t-\tau);\, \tau\in\mathbb R\}$
generate parabolic evolution operators $\{T_0(t,s);\, -\infty<s\leq t<\infty\}$
and $\{\widetilde T_0(\tau,s;\,t);\, -\infty<s\leq\tau<\infty\}$, respectively,
on $L^q_\sigma(\mathbb R^3)$
for every $q\in (1,\infty)$ and
that the duality relation between the operators $L_{0,\pm}(t)$ as in Lemma \ref{dual-dissi} implies that
$T_0(t,s)^*=\widetilde T_0(t-s,0;\,t)$ is the solution operator to the backward problem \eqref{adj-whole}.

Let $q\in (1,\infty)$ and $\tau_*,\, \alpha_0,\, \beta_0\in (0,\infty)$.
On account of the globally H\"older condition \eqref{ass-2}, 
there is a constant $C=C(q,\tau_*,\alpha_0,\beta_0,\theta)>0$ such that
\begin{equation}
\|\nabla T_0(t,s)\psi\|_{q,\mathbb R^3}\leq C(t-s)^{-1/2}\|\psi\|_{q,\mathbb R^3}
\label{grad-smooth-wh}
\end{equation}
\begin{equation}
\|\nabla T_0(t,s)^*\phi\|_{q,\mathbb R^3}\leq C(t-s)^{-1/2}\|\phi\|_{q,\mathbb R^3}
\label{grad-sm-adj-wh}
\end{equation}
\begin{equation}
\|T_0(t,s)^*\mathbb P_0\mbox{div $\Phi$}\|_{q,\mathbb R^3}\leq C(t-s)^{-1/2}
\|\mathbb P_0\|_{{\mathcal L}(L^q(\mathbb R^3))}\|\Phi\|_{q,\mathbb R^3}
\label{sm-dual-wh}
\end{equation}
for all $(t,s)$ with $t-s\in (0,\tau_*]$, $\psi,\,\phi\in L^q_\sigma(\mathbb R^3)$ and $\Phi\in L^q(\mathbb R^3)^{3\times 3}$ with
$\mbox{div $\Phi$}\in \cup_{1<\sigma<\infty}L^\sigma(\mathbb R^3)$
as long as $\|U_b\|\leq \alpha_0$ and $[U_b]_\theta\leq \beta_0$, see \eqref{quan}, 
by the same argument as in Proposition \ref{prop-smooth} and by duality as to \eqref{sm-dual-wh}. 
Our task here is to deduce the large time behavior of $T_0(t,s)$ and $T_0(t,s)^*$. 
This is the stage in which the smallness of $\|U_b\|$ as well as the condition $q_0<3$ in \eqref{ass-1} is needed.

The idea is to regard the problem \eqref{eq-whole} as perturbation 
from the Oseen evolution operator
\begin{equation}
\big(E(t,s)f\big)(x)
=\int_{\mathbb R^3}G\left(x-y+\int_s^t\eta_b(\sigma)\,d\sigma,\; t-s\right)f(y)\,dy
\label{os-repre0}
\end{equation}
which solves the non-autonomous Oseen initial problem
\begin{equation}  
\begin{split}  
&\partial_tu=\Delta u+\eta_b(t)\cdot\nabla u-\nabla p, \quad 
\mbox{div $u$}=0, \quad (x,t) \in\mathbb R^3\times (s,\infty), \\ 
&u \to 0 \quad\mbox{as $|x|\to\infty$}, \\
&u(\cdot,s)=f, 
\end{split}
\label{nonauto-oseen}
\end{equation}
provided that $f$ is a solenoidal vector field, where
\[
G(x,t)=(4\pi t)^{-3/2}e^{-|x|^2/4t}
\]
so that
\[
e^{t\Delta}f=G(t)*f
\]
with $*$ being the convolution denotes the heat semigroup.
Likewise, the problem \eqref{adj-whole} is viewed as perturbation from
\begin{equation}
\big(E(t,s)^*g\big)(y)
=\int_{\mathbb R^3}G\left(x-y+\int_s^t\eta_b(\sigma)\,d\sigma,\; t-s\right)g(x)\,dx.
\label{os-repre-adj}
\end{equation}

The heat semigroup 
enjoys the $L^q$-$L^r$ estimates ($q\leq r$) 
\begin{equation}
\|e^{t\Delta}f\|_{r,\mathbb R^3}
\leq (4\pi t)^{-(3/q-3/r)/2}\|f\|_{q,\mathbb R^3}
\label{heat}
\end{equation}
\begin{equation}
\|\nabla e^{t\Delta}f\|_{r,\mathbb R^3}
\leq 4(2\pi t)^{-1/2-(3/q-3/r)/2}\|f\|_{q,\mathbb R^3}
\label{heat-grad}
\end{equation}
\begin{equation}
\|e^{t\Delta}\mathbb P_0\mbox{div $F$}\|_{r,\mathbb R^3}
\leq 4(2\pi t)^{-1/2-(3/q-3/r)/2}\|\mathbb P_0\|_{{\mathcal L}(L^r(\mathbb R^3))}\|F\|_{q,\mathbb R^3}
\label{heat-dual}
\end{equation}
for all $t>0$, $f\in L^q(\mathbb R^3)$ and $F\in L^q(\mathbb R^3)^{3\times 3}$ with
$\mbox{div $F$}\in \cup_{1<\sigma<\infty}L^\sigma(\mathbb R^3)$.
In fact, \eqref{heat-grad} follows from 
$\|\nabla G(t)\|_{1,\mathbb R^3}=4(4\pi t)^{-1/2}$ and \eqref{heat}.
As described in the right-hand sides of \eqref{heat}--\eqref{heat-grad}, the constants
can be taken uniformly in $q$ and $r$ since those with the case $q=r$ give upper bounds (although the upper bound
$\sqrt{8/\pi}$ for \eqref{heat-grad} is not sharp).
This will be taken into account in the proof of the following proposition;
for instance, the specific constant $c_0$ in \eqref{c0-deter} below is independent of $q$.
Gradient estimate \eqref{heat-grad} implies \eqref{heat-dual} by duality and by using 
$\mathbb P_0e^{t\Delta}=e^{t\Delta}\mathbb P_0$, 
where the end-point case ($r=1,\,\infty$) is missing (although $q=1$ is allowed); 
indeed, this case can be actually covered and the additional condition
$\mbox{div $F$}\in\cup_{1<\sigma<\infty}L^\sigma(\mathbb R^3)$ is redundant
if one makes use of estimate of the kernel function of the composite operator $e^{t\Delta}\mathbb P_0\mbox{div}$
as in \cite{Mi00}, however, this is not useful here because we wish to replace $e^{t\Delta}$ by the Oseen
evolution operators \eqref{os-repre0} and \eqref{os-repre-adj}.
We also use the fact:
For every $r_1,\, r_2$ with $1<r_1<r_2<\infty$, there is a constant $C(r_1,r_2)>0$
independent of $q\in [r_1,r_2]$ such that the Riesz transform 
satisfies
\begin{equation}
\sup_{r_1\leq q\leq r_2}\|{\mathcal R}\|_{{\mathcal L}(L^q(\mathbb R^3))}
\leq C(r_1,r_2),
\label{Riesz-bdd}
\end{equation}
which follows simply from the Riesz-Thorin theorem, and thereby, the Fujita-Kato projection
$\mathbb P_0={\mathcal I}+{\mathcal R}\otimes {\mathcal R}$ possesses the same property.
It is obvious that both evolution operators $E(t,s)$ and $E(t,s)^*$ 
fulfill the same estimates as in
\eqref{heat}--\eqref{heat-dual} with the same constants, in the right-hand sides of which $t$ is of course replaced by $t-s$.

The following proposition provides us with 
\eqref{decay-wh} for $1<q\leq r\leq\infty\; (q\neq \infty)$ and
\eqref{decay-grad-wh} for $1<q\leq r<\infty$ when the basic motion $U_b$ is small enough, however,
the smallness is not uniform near the end-point $q=1$. 
The smallness of the basic motions together with the condition $q_0<3$ in \eqref{ass-1}
to develop the linear analysis is needed merely at the present stage.
In fact, the small constant $\alpha_2$ in Theorem \ref{evo-op} is determined by the following proposition. 
\begin{proposition}
Suppose \eqref{ass-1} and \eqref{ass-2}.
Given $\beta_0>0$, assume that $[U_b]_\theta\leq \beta_0$.
Given $r_1\in (1,4/3]$, there exist constants $\alpha_2(r_1,q_0)>0$
and $C=C(q,r,\alpha_2,\beta_0,\theta)>0$
such that if $\|U_b\|\leq\alpha_2$, then the evolution operator $T_0(t,s)$ and the pressure $p(t)$ associated with $u(t)=T_0(t,s)\psi$
enjoy 
\begin{equation}
\|T_0(t,s)\psi\|_{r,\mathbb R^3}\leq C(t-s)^{-(3/q-3/r)/2}\|\psi\|_{q,\mathbb R^3} 
\label{decay-wh}
\end{equation}
\begin{equation}
\|\nabla T_0(t,s)\psi\|_{r,\mathbb R^3}\leq C(t-s)^{-1/2-(3/q-3/r)/2}\|\psi\|_{q,\mathbb R^3} 
\label{decay-grad-wh}
\end{equation}
\begin{equation}
\|\nabla p(t)\|_{r,\mathbb R^3}\leq C(t-s)^{-1/2-(3/q-3/r)/2}\|\psi\|_{q,\mathbb R^3}
\label{pressure-wh}
\end{equation}
for all $(t,s)$ with $t>s$ and $\psi\in L^q_\sigma(\mathbb R^3)$ as long as
\[
\begin{array}{ll}
r_1\leq q<\infty,\quad
q\leq r\leq\infty \qquad &\mbox{for \eqref{decay-wh}}, \\
r_1\leq q\leq r<\infty &\mbox{for \eqref{decay-grad-wh}--\eqref{pressure-wh}},
\end{array}
\]
where $\|U_b\|$ is given by \eqref{quan}.

The same assertions hold true for the adjoint $T_0(t,s)^*$ and the associated pressure $p_v(s)$ to \eqref{adj-whole}
under the same smallness condition on $\|U_b\|$ 
as above.
\label{est-wh}
\end{proposition}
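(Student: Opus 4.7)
The plan is to regard \eqref{eq-whole} as a perturbation of the purely Oseen initial value problem \eqref{nonauto-oseen}, whose evolution operator $E(t,s)$ is the translated heat semigroup \eqref{os-repre0} and therefore inherits the kernel bounds \eqref{heat}--\eqref{heat-dual} verbatim. The perturbing drift $-U_b(t)\cdot\nabla u$ can be put in divergence form: since $u_b(t)$ is divergence-free in $\Omega$, the rigid motion $\eta_b(t)+\omega_b(t)\times x$ is divergence-free in $B$, and $\nu\cdot(u_b-\eta_b-\omega_b\times x)|_{\partial\Omega}=0$ by \eqref{ass-1}, the full-space extension $U_b$ from \eqref{background} is divergence-free in $\mathbb R^3$ as a distribution, so $U_b\cdot\nabla u=\mbox{div}(U_b\otimes u)$. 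The Duhamel identity I would work with is therefore
\begin{equation*}
T_0(t,s)\psi=E(t,s)\psi-\int_s^t E(t,\tau)\mathbb P_0\mbox{div}\bigl(U_b(\tau)\otimes T_0(\tau,s)\psi\bigr)\,d\tau.
\end{equation*}

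To prove \eqref{decay-wh} I would run a contraction/iteration argument in the scale-invariant norm $\sup_{t>s}(t-s)^{(3/q-3/r)/2}\|T_0(t,s)\psi\|_{r,\mathbb R^3}/\|\psi\|_{q,\mathbb R^3}$. Applying the composite estimate for $E(t,\tau)\mathbb P_0\mbox{div}$ together with H\"older $\|U_b\otimes v\|_p\le\|U_b\|_{q_1}\|v\|_\sigma$ (with $1/p=1/q_1+1/\sigma$) and the interpolation $L^{q_0}\cap L^\infty\subset L^{q_1}$ for $q_1\in[q_0,\infty]$ supplied by \eqref{ass-1}, the bound self-improves: the integrated exponent $1/2+3/(2q_1)+3/(2q)-3/(2r)$ should match $1+(3/q-3/r)/2$ after applying the Beta integral $\int_s^t(t-\tau)^{-\gamma_1}(\tau-s)^{-\gamma_2}d\tau$. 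The strict subcriticality $q_0<3$ yields a positive surplus which, after choosing $q_1\in(q_0,3)$ and splitting $\int_s^t$ at the midpoint to handle short and long times separately, allows the iteration to close once $C(r_1,q_0)\|U_b\|<1$. The dependence $\alpha_2=\alpha_2(r_1,q_0)$ arises because the Fujita--Kato projection constant in \eqref{Riesz-bdd} is uniform only when the Lebesgue index is bounded away from $1$, which is what $q\ge r_1$ ensures.

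The gradient estimate \eqref{decay-grad-wh} would then be obtained by differentiating the Duhamel identity: I would split $\int_s^t=\int_s^{(s+t)/2}+\int_{(s+t)/2}^t$, place the derivative on $E$ in the first piece (using $\|\nabla E(t,\tau)\mathbb P_0\mbox{div}(F)\|_r\le C(t-\tau)^{-1-(3/p-3/r)/2}\|F\|_p$) combined with \eqref{decay-wh} for $T_0(\tau,s)\psi$, and on $T_0$ in the second piece via the short-time smoothing \eqref{grad-smooth-wh} together with the composite estimate of $E\mathbb P_0\mbox{div}$ applied to $U_b\otimes\nabla T_0$. The pressure bound \eqref{pressure-wh} is then immediate from $\nabla p=-({\mathcal I}-\mathbb P_0)(U_b\cdot\nabla u)$ combined with $\|U_b(t)\|_\infty\le\|U_b\|$ and \eqref{decay-grad-wh}. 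For the adjoint statements I would reverse time in \eqref{adj-whole}: setting $\tilde v(\tau)=v(t-\tau)$ turns the backward problem into an initial value problem of the same structural form, with $(U_b-\eta_b)(t-\cdot)$ in place of $(U_b-\eta_b)$ and still satisfying \eqref{ass-1}--\eqref{ass-2} at the identical norm, and with pure-Oseen part $E(t,s)^*$ represented by \eqref{os-repre-adj} enjoying identical kernel bounds; the entire argument therefore transplants.

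The hard part will be closing the iteration in the first step. With the naive choice $q_1=q_0$ the composite-estimate exponent $1/2+3/(2q_0)$ exceeds $1$ and is not integrable at $\tau=t$, whereas $q_1=\infty$ produces no decay surplus at infinity. The resolution---interpolating $U_b$ to an intermediate $L^{q_1}$ with $q_1\in(q_0,3)$, choosing $\sigma$ so that the time integral converges at both endpoints, and splitting the time interval to apportion the singularities---is precisely where the hypothesis $q_0<3$ is exploited, and it is the mechanism by which the smallness threshold $\alpha_2(r_1,q_0)$ and the restriction $q\ge r_1$ enter the conclusion.
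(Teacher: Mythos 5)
Your overall strategy --- perturbation from the translated heat kernel $E(t,s)$, a Duhamel identity with the drift absorbed into the kernel estimates, splitting of the time integral with different Lebesgue exponents for $U_b$ near $\tau=t$ and away from it, the subcriticality $q_0<3$ supplying the surplus for long-time integrability, the Riesz-transform identity for the pressure, and time reversal for the adjoint --- is the same as the paper's. One organizational difference: the paper bootstraps the weighted gradient norm $\sup_\tau(\tau-s)^{1/2}\|\nabla T_0(\tau,s)\psi\|_{q,\mathbb R^3}$ using the non-divergence form $U_b\cdot\nabla u$ (so that only $\nabla E(t,\tau)$ acts, with singularity $(t-\tau)^{-1/2-3/2q_1}$), and only afterwards deduces \eqref{decay-wh} with $r=q$ from the integral equation, general $r$ by interpolation, and \eqref{decay-grad-wh} by the semigroup property; you iterate the $L^r$ decay norm directly through $E(t,\tau)\mathbb P_0\operatorname{div}$. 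Both orderings work where they apply.

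The genuine gap is the range of small $q$. Whichever quantity you iterate, the integrand involves $\mathbb P_0$ applied to a product such as $U_b(\tau)\otimes T_0(\tau,s)\psi$ or $U_b(\tau)\cdot\nabla T_0(\tau,s)\psi$ measured in $L^p$ with $1/p=1/q_1+1/\sigma$, and convergence of the long-time part of the integral forces $q_1<3$ there (you take $q_1\in(q_0,3)$). If $q$ is close to $1$ and the only norms of the solution at your disposal are those being bootstrapped (so $\sigma=q$, or at best a $\sigma$ constrained by $(3/q-3/\sigma)/2<1$), then $1/q_1+1/\sigma>1$: the product lies in no $L^p$ with $p>1$ and the estimate fails outright, not merely with a large constant. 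This is exactly why the paper runs the direct argument only for $q\in[2,\infty)$ (after normalizing $q_0\in[8/3,3)$ so that $1/q+1/q_0\le 7/8$) and covers $q\in[r_1,2]$ by duality: it first proves $\|T_0(t,s)^*\mathbb P_0\operatorname{div}\Phi\|_{q',\mathbb R^3}\le C(t-s)^{-1/2}\|\Phi\|_{q',\mathbb R^3}$ for $q'\in[2,r_1']$ by iterating the adjoint integral equation, and then dualizes to get the gradient bound \eqref{decay-grad-wh} with $r=q\in[r_1,2]$. Your attribution of the threshold $r_1$ to the Riesz-transform constant being uniform for indices bounded away from $1$ points at a symptom rather than the cause: the constant that degenerates as $r_1\to1$ is attached to the dual exponent $r_1'\to\infty$ inside this duality step, and without that step the iteration simply does not reach $q<3/2$. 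You need to add the adjoint/duality argument (or an equivalent device) to cover the full stated range $q\in[r_1,\infty)$; the rest of your outline is then sound.
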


\begin{proof}
The solution $u(t)=T_0(t,s)\psi$ of \eqref{eq-whole} with $\psi\in L^q_\sigma(\mathbb R^3)$ obeys 
\begin{equation}
u(t)=E(t,s)\psi-\int_s^tE(t,\tau)\mathbb P_0(U_b\cdot\nabla u)(\tau)\,d\tau.
\label{int-wh}
\end{equation}
Let $-\infty<s<t<\infty $ and set
\[
M(t,s):=\sup_{\tau\in (s,t)}(\tau-s)^{1/2}\|\nabla u(\tau)\|_{q,\mathbb R^3}.
\]
From \eqref{grad-smooth-wh} we know
$M(t,s)<\infty$ and
\[
M(t,s)\leq C\|\psi\|_{q,\mathbb R^3} \qquad\mbox{for $t-s\leq 2$}.
\]
We are going to show that
\begin{equation}
M(t,s)\leq C\|\psi\|_{q,\mathbb R^3} \qquad\mbox{for $t-s>2$}
\label{large-M}
\end{equation}
with some constant $C>0$ independent of such $(t,s)$ when $\|U_b\|$ is small enough.
Let $t-s>2$.  
Since $u_b(t)\in L^\infty(\Omega)$, one may assume that $q_0\in [8/3,3)$ in \eqref{ass-1}.
Then we have $U_b(t)\in X_{q_0}(\mathbb R^3)\cap X_{6}(\mathbb R^3)$ with
\[
\|U_b(t)\|_{q_0,\mathbb R^3}+\|U_b(t)\|_{6,\mathbb R^3}\leq C\|U_b\|
\]
by \eqref{quan}.
The following argument is nowadays standard and it was traced back to Chen \cite{Ch93} in the nonlinear context
(where the case $q=3$ is important), but it works merely for $q\in (3/2,\infty)$.
We thus consider the case $q\in [2,\infty)$ 
so that
\begin{equation}
\frac{1}{q}+\frac{1}{q_0}\leq \frac{7}{8}
\label{auxi-rela}
\end{equation}
on account of $q_0\in [8/3,3)$
and, given $r_1\in (1,4/3]$, the other case $q\in [r_1,2]$ will be discussed later by duality.
We use \eqref{int-wh} and apply \eqref{heat-grad} in which $e^{t\Delta}$ is
replaced by $E(t,s)$ to find
\begin{equation}
\begin{split}
\|\nabla u(t)\|_{q,\mathbb R^3}
&\leq C(t-s)^{-1/2}\|\psi\|_{q,\mathbb R^3}  \\
&\quad +C\int_s^{t-1}(t-\tau)^{-1/2-3/2q_0}\|U_b(\tau)\|_{q_0,\mathbb R^3}\|\nabla u(\tau)\|_{q,\mathbb R^3}\,d\tau  \\
&\quad +C\int_{t-1}^t(t-\tau)^{-3/4}\|U_b(\tau)\|_{6,\mathbb R^3}\|\nabla u(\tau)\|_{q,\mathbb R^3}\,d\tau  \\
&\leq C(t-s)^{-1/2}\|\psi\|_{q,\mathbb R^3}
+c_0\|U_b\| (t-s)^{-1/2}M(t,s)
\end{split}
\label{c0-deter}
\end{equation}
with some constant $c_0=c_0(q_0)>0$, which involves 
$\sup_{8/7\leq r\leq 6}\|\mathbb P_0\|_{{\mathcal L}(L^r(\mathbb R^3))}$, see \eqref{Riesz-bdd} and \eqref{auxi-rela}, 
and 
is independent of $q\in [2,\infty]$.
This readily follows by splitting the former integral into $\int_s^{(t+s)/2}+\int_{(t+s)/2}^{t-1}$ 
and by using $q_0<3$, see \eqref{ass-1}; in fact,
\begin{equation*}
\begin{split}
&\int_s^{(t+s)/2}\leq \|U_b\| M(t,s)\left(\frac{t-s}{2}\right)^{-1/2-3/2q_0}\int_s^{(t+s)/2}(\tau-s)^{-1/2}\,d\tau,  \\
&\int_{(t+s)/2}^{t-1}\leq \|U_b\| M(t,s)\left(\frac{t-s}{2}\right)^{-1/2}\int_1^\infty \tau^{-1/2-3/2q_0}\,d\tau.
\end{split}
\end{equation*}
As a consequence, 
we obtain
\[
M(t,s)\leq C\|\psi\|_{q,\mathbb R^3}+c_0\|U_b\| M(t,s)
\]
for all $(t,s)$ with $t-s>2$, which implies \eqref{large-M} and, therefore, 
\begin{equation}
\|\nabla T_0(t,s)\psi\|_{q,\mathbb R^3}\leq C(t-s)^{-1/2}\|\psi\|_{q,\mathbb R^3}
\label{grad-kihon}
\end{equation}
for all $t>s$, $\psi\in L^q_\sigma(\mathbb R^3)$ and $q\in [2,\infty)$
when $\|U_b\|\leq 1/2c_0$.

Let $1<r_1\leq 4/3$.
We turn to the case $q\in [r_1,2]$, so that $q^\prime\in [2,r_1^\prime]$.
To this end, by use of $\mbox{div $U_b$}=0$, we consider the solution $v(s)=T_0(t,s)^*\phi$ to
\eqref{adj-whole} in the form
\[
v(s)=E(t,s)^*\phi+\int_s^t E(\tau,s)^* \mathbb P_0\,\mbox{div $(v\otimes U_b)(\tau)$}\,d\tau
\]
with the final velocity field of the specific form
$\phi=\mathbb P_0\mbox{div $\Phi$}$, where $\Phi\in C_0^\infty(\mathbb R^3)^{3\times 3}$.
Set
\[
\widetilde M(t,s):=\sup_{\tau\in (s,t)}(t-\tau)^{1/2}\|v(\tau)\|_{q^\prime,\mathbb R^3}
\]
which is finite and
\[
\widetilde M(t,s)\leq C\|\Phi\|_{q^\prime,\mathbb R^3}\qquad\mbox{for $t-s\leq 2$}
\]
on account of \eqref{sm-dual-wh}.
Let $t-s>2$.
Applying \eqref{heat-dual} in which $e^{t\Delta}$ is replaced by $E(t,s)^*$, 
we find
\[
\|v(s)\|_{q^\prime,\mathbb R^3}\leq C(t-s)^{-1/2}\|\Phi\|_{q^\prime,\mathbb R^3}
+c_0^\prime\|U_b\|(t-s)^{-1/2}\widetilde M(t,s)
\]
with some constant $c_0^\prime=c_0^\prime(r_1,q_0)>0$, which involves
$\sup_{2\leq r\leq r_1^\prime}\|\mathbb P_0\|_{{\mathcal L}(L^r(\mathbb R^3))}$ and does not depend on $q^\prime\in [2,r_1^\prime]$,
by the same splitting of the integral 
as in \eqref{c0-deter}.
Note that $c_0^\prime$ is increasing to $\infty$ when $r_1\to 1$.
We thus obtain
\begin{equation}
\|T_0(t,s)^*\mathbb P_0\mbox{div $\Phi$}\|_{q^\prime,\mathbb R^3}
\leq C(t-s)^{-1/2}\|\Phi\|_{q^\prime,\mathbb R^3}
\label{auxi-dual}
\end{equation}
for all $t>s$, 
$\Phi\in C_0^\infty(\mathbb R^3)^{3\times 3}$ and $q^\prime\in [2,r_1^\prime]$ when $\|U_b\|\leq 1/2c_0^\prime$.
By continuity the composite operator
$T_0(t,s)^*\mathbb P_0\mbox{div}$
extends to a bounded operator on $L^{q^\prime}(\mathbb R^3)^{3\times 3}$ with \eqref{auxi-dual}.
By duality, we are led to \eqref{grad-kihon} for $q\in [r_1,2]$ as well under the same smallness condition.
Set
\begin{equation}
\alpha_2(r_1,q_0):=\min\left\{\frac{1}{2c_0(q_0)},\; \frac{1}{2c_0^\prime(r_1,q_0)}\right\},
\label{small-zuerst}
\end{equation}
then we furnish \eqref{grad-kihon} for all $t>s$ and $q\in [r_1,\infty)$ when $\|U_b\|\leq \alpha_2$.
By the aforementioned dependence of $c_0^\prime$ on $r_1$,
we see that $\alpha_2$ is decreasing to zero when $r_1\to 1$.

Suppose still this smallness of $\|U_b\|$.
We then use \eqref{int-wh} and \eqref{grad-kihon} to see immediately that
\begin{equation}
\begin{split}
\|u(t)\|_{q,\mathbb R^3}
&\leq C\|\psi\|_{q,\mathbb R^3}+C\int_s^t(t-\tau)^{-1/2}\|U_b(\tau)\|_{3,\mathbb R^3}
\|\nabla u(\tau)\|_{q,\mathbb R^3}\,d\tau   \\
&\leq C(1+\|U_b\|)\|\psi\|_{q,\mathbb R^3}
\end{split}
\label{kihon-0th}
\end{equation}
yielding \eqref{decay-wh} with $r=q\in [r_1,\infty)$ for all $t>s$.
By the interpolation inequality together with 
\eqref{kihon-0th} as well as \eqref{grad-kihon} for $q\in [r_1,\infty)$,
we conclude \eqref{decay-wh} for $r\in [q,\infty]$. 
Then, 
\eqref{decay-grad-wh} for $r_1\leq q\leq r<\infty$ 
follows from \eqref{decay-wh} 
and \eqref{grad-kihon} for $q\in [r_1,\infty)$ with the aid of the semigroup property.

By \eqref{eq-whole} we have
\[
-\Delta p=\mbox{div $(U_b\cdot\nabla u)$}
\]
in $\mathbb R^3$, which leads us to
\[
\nabla p=({\mathcal R}\otimes {\mathcal R})(U_b\cdot\nabla u),
\]
where ${\mathcal R}$ denotes the Riesz transform.
From 
\eqref{quan} and \eqref{Riesz-bdd} it follows that
\[
\|\nabla p(t)\|_{r,\mathbb R^3}\leq C\|U_b\|\|\nabla T_0(t,s)\psi\|_{r,\mathbb R^3}
\]
which combined with \eqref{decay-grad-wh} concludes \eqref{pressure-wh}.

Finally, one can deduce the same estimates of $T_0(t,s)^*$ and
$T_0(t,s)\mathbb P_0\mbox{div}$ as in \eqref{grad-kihon}--\eqref{auxi-dual} to conclude the same result for the adjoint
$T_0(t,s)^*$.
The proof is complete.
\end{proof}

\subsection{Useful estimates from energy relations}
\label{energy-relation}

By the dissipative structure \eqref{dissi} along with \eqref{evo-eq1} we find
\begin{equation}
\frac{1}{2}\,\partial_t\|T(t,s)F\|_{X_2(\mathbb R^3)}^2+2\|DT(t,s)F\|_{2,\Omega}^2=0
\label{energy}
\end{equation}
for every $F\in X_2(\mathbb R^3)$.
Recall that the energy \eqref{energy-rep} is written as
\begin{equation*}
\|T(t,s)F\|_{X_2(\mathbb R^3)}^2
=\|u(t)\|_{2,\Omega}^2
+m|\eta(t)|^2+\frac{2m}{5}|\omega(t)|^2
\end{equation*}
where $(u,\eta,\omega)=i(U)$ with $U=T(t,s)F$, see \eqref{X1}.  
The $L^2$-norm we should adopt is not the usual one
but the norm above to describe exactly the energy relation \eqref{energy}.

Likewise, we use
\eqref{adj-evo1} to observe
\begin{equation}
\frac{1}{2}\,\partial_s\|T(t,s)^*G\|_{X_2(\mathbb R^3)}^2=2\|DT(t,s)^*G\|_{2,\Omega}^2
\label{adj-energy}
\end{equation}
for every $G\in X_2(\mathbb R^3)$, where
\[
\|T(t,s)^*G\|_{X_2(\mathbb R^3)}^2=\|v(s)\|_{2,\Omega}^2 
+m|\eta_v(s)|^2+\frac{2m}{5}|\omega_v(s)|^2
\]
where $(v,\eta_v,\omega_v)=i(V)$ with $V=T(t,s)^*G$.
This subsection shows that 
the energy relations \eqref{energy}--\eqref{adj-energy} imply key inequalities \eqref{key-0}--\eqref{key-1} below,
which are very useful in the next subsection, see \eqref{J-est2} and \eqref{split-1}.
\begin{proposition}
Suppose \eqref{ass-1} and \eqref{ass-2}. 
Then there is a constant $C>0$ such that
\begin{equation}
\int_\sigma^t\|T(\tau,s)F\|_{2,\Omega_3}^2\,d\tau\leq C\|T(\sigma,s)F\|_{2,\mathbb R^3}^2
\label{key-0}
\end{equation}
\begin{equation}
\int_s^\sigma \|T(t,\tau)^*G\|_{2,\Omega_3}^2\,d\tau
\leq C\|T(t,\sigma)^*G\|_{2,\mathbb R^3}^2
\label{key-1}
\end{equation}
for all $F,\, G\in X_2(\mathbb R^3)$ and $s<\sigma<t$.
\label{prop-key-1}
\end{proposition}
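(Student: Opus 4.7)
The plan is to combine the energy identities \eqref{energy}--\eqref{adj-energy} with a pointwise-in-time Sobolev--Korn estimate. The key observation is that every element $U\in D_2(A)$ lies in the homogeneous $\dot H^1(\mathbb R^3)$ and is divergence-free, with $U|_B$ a rigid motion, so that $DU=0$ on $B$ (the symmetric gradient of a rigid motion vanishes, see \eqref{grad-form}). Hence the whole-space Korn identity for divergence-free fields gives $\|\nabla U\|_{2,\mathbb R^3}^2=2\|DU\|_{2,\mathbb R^3}^2=2\|Du\|_{2,\Omega}^2$, and the Sobolev embedding $\dot H^1(\mathbb R^3)\hookrightarrow L^6(\mathbb R^3)$ combined with H\"older on the bounded set $\Omega_3$ yields
\[
\|U\|_{2,\Omega_3}\leq |\Omega_3|^{1/3}\|U\|_{6,\mathbb R^3}\leq C\|\nabla U\|_{2,\mathbb R^3}\leq C\|Du\|_{2,\Omega},
\]
with $C$ independent of $U$ and of time.

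For \eqref{key-0} I would integrate \eqref{energy} on $[\sigma,t]$ to obtain
\[
2\int_\sigma^t\|DT(\tau,s)F\|_{2,\Omega}^2\,d\tau\leq \tfrac{1}{2}\|T(\sigma,s)F\|_{X_2(\mathbb R^3)}^2,
\]
use the norm equivalence \eqref{X-norm} to replace $\|\cdot\|_{X_2(\mathbb R^3)}$ by $\|\cdot\|_{2,\mathbb R^3}$ up to a constant, and then apply the above Sobolev--Korn bound pointwise in $\tau$ under the integral. Since the smoothing estimates of subsection \ref{smoothing-estimate} place $T(\tau,s)F$ in $D_2(A)$ for $\tau>s$, the Korn and Sobolev steps are justified for a.e.\ $\tau$, and the desired bound follows.

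The adjoint case \eqref{key-1} is structurally symmetric. From \eqref{adj-energy} the map $s\mapsto \|T(t,s)^*G\|_{X_2(\mathbb R^3)}^2$ is non-decreasing, so integrating on $[s,\sigma]$ gives
\[
2\int_s^\sigma\|DT(t,\tau)^*G\|_{2,\Omega}^2\,d\tau
=\tfrac{1}{2}\|T(t,\sigma)^*G\|_{X_2(\mathbb R^3)}^2-\tfrac{1}{2}\|T(t,s)^*G\|_{X_2(\mathbb R^3)}^2
\leq \tfrac{1}{2}\|T(t,\sigma)^*G\|_{X_2(\mathbb R^3)}^2.
\]
The Sobolev--Korn step applies verbatim to $V(\tau)=T(t,\tau)^*G\in D_2(A)$, whose trace on $\partial\Omega$ is again a rigid motion, which closes the argument.

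The (mild) obstacle is to ensure that the constant on the right of \eqref{key-0}--\eqref{key-1} does not pick up a factor of $t-\sigma$ or $\sigma-s$. A naive bound $\|T(\tau,s)F\|_{2,\Omega_3}\leq \|T(\tau,s)F\|_{2,\mathbb R^3}\leq\|T(\sigma,s)F\|_{2,\mathbb R^3}$ would produce exactly such a factor after integration; it is the pointwise-in-$\tau$ Sobolev--Korn inequality, applied \emph{before} integration, that trades the local $L^2$-norm for the dissipation norm and lets the energy identity absorb the time integral without any length penalty.
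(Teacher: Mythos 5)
Your proposal is correct and follows essentially the same route as the paper: the pointwise bound $\|U\|_{2,\Omega_3}\leq C\|U\|_{6,\mathbb{R}^3}\leq C\|\nabla U\|_{2,\mathbb{R}^3}=\sqrt{2}\,C\|Du\|_{2,\Omega}$ (the last identity being exactly the paper's \eqref{grad-D}, obtained there from $\Delta V=\mathrm{div}(2DV)$ and $V|_B\in\mathrm{RM}$), followed by integrating the energy relations \eqref{energy}--\eqref{adj-energy} and invoking the norm equivalence \eqref{X-norm}. Your closing remark about applying the Sobolev--Korn step before integrating, rather than using monotonicity of the $L^2$-norm, correctly identifies the point of the argument.
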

\begin{proof}
The proof is easy, but 
we briefly show \eqref{key-1}.   
Fix $(t,s)$ with $s<t$ and set 
$V(\tau)=T(t,\tau)^*G$ for $\tau\in (s,t)$, 
then we observe
\begin{equation}
\|V(\tau)\|_{2,\Omega_3}\leq C\|V(\tau)\|_{6,\mathbb R^3}
\leq C\|\nabla V(\tau)\|_{2,\mathbb R^3}.
\label{poin}
\end{equation}
Since $V$ is solenoidal, we have $\Delta V=\mbox{div $(2DV)$}$ in
$\mathbb R^3$, which together with $V|_B\in {\rm RM}$ yields
\begin{equation}
\|\nabla V(\tau)\|_{2,\mathbb R^3}^2=2\|Dv(\tau)\|_{2,\Omega}^2,
\label{grad-D}
\end{equation}
where $v=V|_{\Omega}$.
By \eqref{poin} and \eqref{grad-D} we are led to
\begin{equation}
C\|V(\tau)\|^2_{2,\Omega_3}\leq 2\|Dv(\tau)\|^2_{2,\Omega}.
\label{V-D}
\end{equation}
We now employ the energy relation \eqref{adj-energy} and \eqref{X-norm}
to conclude \eqref{key-1} for every $\sigma\in (s,t)$, where $C>0$ is dependent only on $\rho$.
The other estimate \eqref{key-0} is proved similarly.
The proof is complete.
\end{proof}

\subsection{Proof of \eqref{decay-1} except for the case $r=\infty$}
\label{decay-0th}

This subsection is devoted to the proof of \eqref{decay-1} for all $t>s$ when $1<q\leq r<\infty$.
This can be proved simultaneously with \eqref{adj-sm} for all $t>s$.
$L^\infty$-estimate will be studied in subsections \ref{led-est} and \ref{near-infinity}.
The essential step is to show
the uniformly boundedness of both $T(t,s)$ and $T(t,s)^*$
for every $r\in (2,\infty)$ and all $(t,s)$ with $t>s$: 
\begin{equation}
\|T(t,s)F\|_{r,\mathbb R^3}\leq C\|F\|_{r,\mathbb R^3},
\label{unif}
\end{equation}
\begin{equation}
\|T(t,s)^*G\|_{r,\mathbb R^3}\leq C\|G\|_{r,\mathbb R^3}.
\label{unif-adj}
\end{equation}
Since we know \eqref{unif}--\eqref{unif-adj} for $t-s\leq 3$ by Propositions \ref{prop-smooth} and \ref{adj-smooth},
it suffices to derive them for $t-s>3$.
In fact, we have the following lemma.
\begin{lemma}
Assume \eqref{ass-1} and \eqref{ass-2}. 

\begin{enumerate}

\item
Suppose that, for some $r_0\in (2,\infty)$, estimate \eqref{unif} with  
$r=r_0$ holds for all $(t,s)$ with $t>s$ and $F\in {\mathcal E}(\mathbb R^3)$, see \eqref{dense-sub}.

\begin{enumerate}

\item
Let $2\leq q\leq r\leq r_0$.
Then we have
\eqref{decay-1} for all $(t,s)$ with $t>s$ and $F\in X_q(\mathbb R^3)$.

\item
Let $r_0^\prime\leq q\leq r\leq 2$.
Then we have
\eqref{adj-sm} with $j=0$ for all $(t,s)$ with $t>s$ and $G\in X_q(\mathbb R^3)$.
\end{enumerate}

\item
Suppose that, for some $r_0\in (2,\infty)$, estimate \eqref{unif-adj} with 
$r=r_0$ holds for all $(t,s)$ with $t>s$ and $G\in {\mathcal E}(\mathbb R^3)$.
\begin{enumerate}

\item
Let $2\leq q\leq r\leq r_0$.
Then we have \eqref{adj-sm} with $j=0$ for all $(t,s)$ with $t>s$ and 
$G\in X_q(\mathbb R^3)$.

\item
Let $r_0^\prime\leq q\leq r\leq 2$.
Then we have \eqref{decay-1} for all $(t,s)$ with $t>s$ and $F\in X_q(\mathbb R^3)$.
\end{enumerate}

\end{enumerate}
\label{diff-ineq}
\end{lemma}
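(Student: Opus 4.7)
My first move is to recognize that the four sub-items pair up by duality, so only two genuine arguments are needed. Using $T(t,s)^* = S(t,s)$ from Lemma \ref{lem-dual-evo} together with the pairing \eqref{dual-evo-sense}, an $L^q\to L^r$ estimate for $T(t,s)$ on the pair $(q,r)$ corresponds to an $L^{r'}\to L^{q'}$ estimate for $T(t,s)^*$ on the pair $(r',q')$; since $3/q-3/r = 3/r'-3/q'$ and the range $2\le q\le r\le r_0$ dualizes to $r_0'\le r'\le q'\le 2$, items (1b) and (2b) are precisely the duals of items (2a) and (1a). Thus it suffices to prove item (1a); item (2a) will follow by the same argument applied to the backward family $\widetilde T(\tau,s;\,t)$ in \eqref{auxi-eq}, using Lemma \ref{dual-dissi} and Proposition \ref{adj-smooth} in place of their forward counterparts.

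For item (1a), I would proceed in three stages. First, extending the hypothesis from $\mathcal{E}(\mathbb R^3)$ to all of $X_{r_0}(\mathbb R^3)$ by density (Proposition \ref{prop-decom}) and combining it with the energy bound $\|T(t,s)F\|_{2,\mathbb R^3}\le \|F\|_{2,\mathbb R^3}$ coming from \eqref{energy}, the Riesz--Thorin interpolation theorem produces the uniform estimate $\|T(t,s)F\|_{q,\mathbb R^3}\le C\|F\|_{q,\mathbb R^3}$ for every $q\in[2,r_0]$ and all $t>s$. Second, Proposition \ref{prop-smooth} already yields the full smoothing \eqref{decay-1} on any bounded interval $t-s\in(0,\tau_*]$ with a constant depending on $\tau_*$. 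Third, for $t-s>\tau_*$ I would split $T(t,s) = T(t,s+\tau_*)\,T(s+\tau_*,s)$: the second factor realizes the $L^q\to L^r$ smoothing with the correct exponent on a unit time step, while the first factor has to provide genuine decay in $(t-s)-\tau_*$ on $L^r$, reducing the problem to the diagonal case $q=r$.

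The main obstacle is precisely this long-time decay, since Stages 1 and 2 only produce uniform boundedness with no gain as $t-s\to\infty$. My plan is to combine the key inequality of Proposition \ref{prop-key-1}, namely $\int_\sigma^t\|T(\tau,s)F\|_{2,\Omega_3}^2\,d\tau\le C\|T(\sigma,s)F\|_{2,\mathbb R^3}^2$, with monotonicity of the energy to force $\|T(t,s)F\|_{2,\Omega_3}$ to be small on a large portion of any long time interval. The uniform $L^{r_0}$-bound then gives control of $\|T(\tau,s)F\|_{r_0,\mathbb R^3}$, and interpolation between this global $L^{r_0}$-norm and the small local $L^2(\Omega_3)$-norm, together with the identity \eqref{grad-D} to convert $\|DU\|_{2,\Omega}^2$ into $\tfrac12\|\nabla U\|_{2,\mathbb R^3}^2$ and a Sobolev/Gagliardo--Nirenberg inequality on $\mathbb R^3$, should lead to a Nash-type differential inequality $\phi'(t)\le -C\phi(t)^{1+\gamma}$ for $\phi(t):=\|T(t,s)F\|_{2,\mathbb R^3}^2$, with exponent $\gamma=\gamma(r_0)>0$. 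This yields algebraic decay $\phi(t)\le C(t-s)^{-1/\gamma}$.

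A final interpolation between this $L^2$-decay and the uniform $L^{r_0}$-bound, composed with the smoothing of Stage 2 via the semigroup property, then upgrades the $L^2$-decay to the full $L^q$-$L^r$ decay \eqref{decay-1} for $2\le q\le r\le r_0$, which finishes item (1a). The hardest step, as emphasized above, is extracting a pointwise decay rate in time from the integrability information in Proposition \ref{prop-key-1}; the subtlety is that \eqref{key-0} only controls the local $L^2(\Omega_3)$-norm, so turning this into control of $\|T(t,s)F\|_{2,\mathbb R^3}$ genuinely requires coupling with the $L^{r_0}$-hypothesis and the Sobolev-type ingredient above.
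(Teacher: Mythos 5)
Your overall architecture (prove one sub-item per hypothesis, get the other by duality; uniform boundedness on $[2,r_0]$ by interpolating the hypothesis with the energy bound; then a Nash-type differential inequality from the energy identity and Gagliardo--Nirenberg) is the right family of ideas, and the last ingredient is indeed what the paper uses. But you run the decisive step on the wrong operator, and there it fails. The energy relation gives $\frac{d}{dt}\|T(t,s)F\|_{X_2}^2=-4\|Du(t)\|_{2,\Omega}^2=-2\|\nabla U(t)\|_{2,\mathbb R^3}^2$, and to convert this into $\phi'\le -C\phi^{1+\gamma}$ you must bound $\|U\|_{2,\mathbb R^3}\le C\|\nabla U\|_{2,\mathbb R^3}^{\mu}\|U\|_{q,\mathbb R^3}^{1-\mu}$ with $1/2=\mu/6+(1-\mu)/q$ and a \emph{uniform-in-time} control of $\|U(t)\|_{q}$ for some $q<2$; no admissible $\mu\in(0,1)$ exists if only norms with $q\ge 2$ are controlled ($L^2$ does not interpolate between $L^6$ and $L^{r_0}$ when $r_0>2$). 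Under hypothesis (1) you control $T(t,s)$ only on $L^{r_0}$, $r_0>2$, hence on $[2,r_0]$ after your Stage 1 --- never below $2$. Your proposed substitute, interpolating the global $L^{r_0}$ bound against the small local $L^2(\Omega_3)$ norm coming from Proposition \ref{prop-key-1}, cannot recover global $L^2$ decay: \eqref{key-0} says nothing about the mass of $U$ outside $B_3$, and H\"older on a set of infinite measure goes the wrong way. (In fact Proposition \ref{prop-key-1} plays no role in this lemma; it is used earlier to establish the hypothesis \eqref{unif} itself.) The paper's route is forced by exactly this obstruction: under hypothesis (1) one first proves (1b), because dualizing \eqref{unif} yields uniform boundedness of $T(t,s)^*$ on $L^{r_0'}$ with $r_0'<2$; the Nash argument then applies verbatim to $v(s)=T(t,s)^*G$ via \eqref{adj-energy} and \eqref{grad-D}, giving $\|T(t,s)^*G\|_{2,\mathbb R^3}\le C(t-s)^{-(3/q-3/2)/2}\|G\|_{q,\mathbb R^3}$ for $q\in[r_0',2)$, hence \eqref{adj-sm} on $r_0'\le q\le r\le 2$ after interpolating with the uniform $L^q$ bound; item (1a) is then recovered from (1b) by dualizing back.

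Two smaller points. First, your duality bookkeeping is mismatched: (1b) is the dual of (1a) (both under hypothesis \eqref{unif}) and (2b) is the dual of (2a) (both under \eqref{unif-adj}); pairing (1b) with (2a) would smuggle in the wrong hypothesis. Second, the Stage-3 factorization $T(t,s)=T(t,s+\tau_*)T(s+\tau_*,s)$ with a uniformly bounded first factor on $L^r$ can only reproduce uniform boundedness, since the diagonal exponent $(3/r-3/r)/2$ vanishes; the temporal decay must come from an off-diagonal estimate over the long time interval, which is precisely what the Nash argument on the adjoint supplies.
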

\begin{proof}
We show (b) of the first item, from which (a) follows by duality and by \eqref{X-norm}.
The other item is proved in the same way.

The case $q=r=2$ is obvious by taking into account \eqref{X-norm}--\eqref{energy-rep} since we have 
the energy relation \eqref{adj-energy}.
Fix $q\in [r_0^\prime,2)$, then we obtain from the assumption together with \eqref{adj-energy} that
\begin{equation}
\|T(t,s)^*G\|_{q,\mathbb R^3}\leq C\|G\|_{q,\mathbb R^3}
\label{unif-adj0}
\end{equation}
for all $t>s$ and $G\in X_q(\mathbb R^3)$, in which ${\mathcal E}(\mathbb R^3)$ is dense, see Proposition \ref{prop-decom}.
From \eqref{unif-adj0} and \eqref{grad-D} with the aid of the interpolation inequality it follows that
\begin{equation*}
\|T(t,s)^*G\|_{X_2(\mathbb R^3)}
\leq C\|\nabla T(t,s)^*G\|_{2,\mathbb R^3}^\mu\|T(t,s)^*G\|_{q,\mathbb R^3}^{1-\mu} 
\leq C\|DT(t,s)^*G\|_{2,\Omega}^\mu\|G\|_{q,\mathbb R^3}^{1-\mu}
\end{equation*}
for all $G\in {\mathcal E}(\mathbb R^3)\setminus\{0\}$,
where $1/2=\mu/6+(1-\mu)/q$. 
We fix $t\in\mathbb R$ and combine the inequality above with \eqref{adj-energy} to find that $v(s)=T(t,s)^*G$ enjoys
\[
\frac{d}{ds}\|v(s)\|_{X_2(\mathbb R^3)}^2\geq \frac{C\|v(s)\|_{X_2(\mathbb R^3)}^{2/\mu}}{\|G\|_{q,\mathbb R^3}^{2(1/\mu-1)}}
\]
for $s\in (-\infty, t)$,
which implies that 
\[
\|v(s)\|_{2,\mathbb R^3}\leq C\|v(s)\|_{X_2(\mathbb R^3)}
\leq C(t-s)^{-\frac{\mu}{2(1-\mu)}}\|G\|_{q,\mathbb R^3} 
\]
with
\[
\frac{\mu}{2(1-\mu)}=\frac{3}{2}\left(\frac{1}{q}-\frac{1}{2}\right).
\]
This together with \eqref{unif-adj0} leads us to \eqref{decay-1} for $r_0^\prime\leq q\leq r\leq 2$.
The proof is complete.
\end{proof}
\begin{proposition}
Suppose \eqref{ass-1} and \eqref{ass-2}.
Given $\beta_0>0$, assume that $[U_b]_\theta\leq \beta_0$.
There is a constant 
$\alpha_1=\alpha_1(q_0)$ 
such that if $\|U_b\|\leq\alpha_1$, then 
\eqref{adj-sm} with $j=0$ as well as \eqref{decay-1} holds for all $(t,s)$ with $t>s$ and $F,\, G\in X_q(\mathbb R^3)$
provided that $1< q\leq r<\infty$,
where the constants $C$ in those estimates depend on 
$q,r,\alpha_1,\beta_0,\theta$. 
\label{prop-0th}
\end{proposition}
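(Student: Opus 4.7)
By Lemma \ref{diff-ineq}, it suffices to establish, for some $r_0\in(2,\infty)$, the uniform bounds \eqref{unif} and \eqref{unif-adj} for all $(t,s)$ with $t-s>3$ and $F,G\in\mathcal{E}(\mathbb R^3)$, since the case $t-s\le 3$ is already covered by Propositions \ref{prop-smooth} and \ref{adj-smooth}. I treat \eqref{unif}; the estimate for the adjoint is parallel. Given $F\in\mathcal{E}(\mathbb R^3)$, put $U(\tau)=T(\tau,s)F$, $u=U|_\Omega$, with associated pressure $p$. Fix $\chi\in C^\infty(\mathbb R^3)$ with $\chi\equiv 0$ on $B_2$ and $\chi\equiv 1$ outside $B_3$, and set
\[
w:=\chi u-\mathbb B[u\cdot\nabla\chi],
\]
extended by $0$ inside $B_2$, where $\mathbb B$ is Bogovskii's operator on the annulus $\{2<|x|<3\}$. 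Then $w$ is solenoidal on $\mathbb R^3$, equals $u$ outside $B_3$, and a direct computation (using the first line of \eqref{eq-linear} and the identity $\mathrm{div}\,w=0$) shows that
\[
\partial_\tau w+L_{0,+}(\tau)w=\mathbb P_0 h(\tau)\qquad\text{in }\mathbb R^3\times(s,\infty),
\]
where $h(\tau)$ is supported in $\{2<|x|<3\}$ and depends linearly on $u$, $\nabla u$, $p$ and $\partial_\tau u$ restricted to that annulus (through commutators of $\chi$ with $-\Delta+(u_b-\eta_b)\cdot\nabla$ and through time- and space-derivatives of $\mathbb B[u\cdot\nabla\chi]$).

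By Duhamel's formula one has $w(t)=T_0(t,s)w(s)+\int_s^t T_0(t,\sigma)\mathbb P_0 h(\sigma)\,d\sigma$. Choose $r_0>2$ only slightly above $2$ and an auxiliary $q_1\in(1,4/3]$ with $(3/q_1-3/r_0)/2>1/2$. Proposition \ref{est-wh} gives $\|T_0(t,s)w(s)\|_{r_0,\mathbb R^3}\le C\|F\|_{r_0,\mathbb R^3}$. Splitting the Duhamel integral at $\sigma=(t+s)/2$ and applying Cauchy-Schwarz on $[s,(t+s)/2]$ together with the decay of $T_0(t,\sigma)\mathbb P_0$ from Proposition \ref{est-wh} reduces the task to estimating $\int_s^t\|h(\sigma)\|_{q_1,\mathbb R^3}^2\,d\sigma$. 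Since $h$ has support in a bounded annulus, interior regularity for the Oseen system bounds $\|h(\sigma)\|_{q_1,\mathbb R^3}$ by a local norm of $u$, $\nabla u$, $p$ there. The singular contribution near $\sigma=s$ is absorbed by Proposition \ref{prop-pressure}, while the remainder is dominated by $\|U(\sigma)\|_{2,\Omega_R}$ for some $R>3$, and the energy identity \eqref{energy} together with \eqref{key-0} of Proposition \ref{prop-key-1} yields $\int_s^t\|U(\sigma)\|_{2,\Omega_R}^2\,d\sigma\le C\|F\|_{2,\mathbb R^3}^2\le C\|F\|_{r_0,\mathbb R^3}^2$. On the short interval $[(t+s)/2,t]$ the smoothing bounds of Propositions \ref{prop-smooth} and \ref{prop-pressure} furnish a direct integrable majorant. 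Since $u=w$ outside $B_3$ and $\|u\|_{r_0,\Omega_3}$ is controlled by an interior interpolation against the $L^2$-bound already obtained, one deduces $\|U(t)\|_{r_0,\mathbb R^3}\le C\|F\|_{r_0,\mathbb R^3}$ uniformly for $t-s>3$, provided $\|U_b\|$ is small enough that Proposition \ref{est-wh} applies at the chosen exponents.

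The main obstacle is the treatment of the pressure inside $h(\sigma)$: it is nonlocal in space, and its available bound $(\sigma-s)^{-\gamma}$ from Proposition \ref{prop-pressure} requires $\gamma>(1+1/r_0)/2$, which constrains $r_0$ to lie only slightly above $2$. Accordingly one must select $r_0=r_0(q_0)$ (so that $q_1\ge r_1$ with $r_1$ the exponent admissible in Proposition \ref{est-wh}) and verify that the smallness threshold $\alpha_1$ arising from the Neumann-series closure of Proposition \ref{est-wh} at both $(q_1,r_0)$ and $(r_0^\prime,r_0^\prime)$ depends only on $q_0$. A secondary technical point is that the $L^{r_0}$-bound of $u$ on the bounded region $\Omega_3$ (needed to pass from $w$ to $u$) has to be recovered via interior regularity from the $L^2$-energy bound; after this, the full range $1<q\le r<\infty$ of \eqref{decay-1} and \eqref{adj-sm} follows at once from Lemma \ref{diff-ineq}.
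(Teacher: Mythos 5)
Your overall strategy (cut-off, whole-space evolution operator $T_0$, Duhamel, energy/key inequalities, then Lemma \ref{diff-ineq}) is in the right family, but you have reversed the direction of the paper's decomposition, and this reversal creates two gaps that I do not see how to close. First, your forcing $h(\sigma)$ is built from the \emph{unknown} exterior solution: it contains $\nabla u$, $p$ and (through $\partial_\sigma\mathbb B[u\cdot\nabla\chi]$) $\partial_\sigma u$ on the annulus. The velocity part can be handled by the energy identity, but there is no $t$-independent bound on $\int_s^t\|p(\sigma)\|_{2,\Omega_3}^2\,d\sigma$ available at this stage: Proposition \ref{prop-pressure} only gives $\|p(\sigma)\|_{q,\Omega_3}\le C(\tau_*)(\sigma-s)^{-\gamma}\|F\|_q$ on finite windows $\sigma-s\le\tau_*$ (and with $\gamma>3/4$ the square is not even integrable near $\sigma=s$), while the decaying global bound for the pressure is Corollary \ref{led-cor}, which is proved \emph{after} and \emph{using} Proposition \ref{prop-0th}. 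Your appeal to "interior regularity" cannot substitute for this, because the pressure is nonlocal. Second, even for the velocity part your chain $\int_s^t\|U(\sigma)\|_{2,\Omega_R}^2\,d\sigma\le C\|F\|_{2,\mathbb R^3}^2\le C\|F\|_{r_0,\mathbb R^3}^2$ fails at the last step: there is no constant with $\|F\|_{2,\mathbb R^3}\le C\|F\|_{r_0,\mathbb R^3}$ for $r_0>2$ on the whole space (and starting the energy estimate at $\sigma=s+1$ does not help, since the smoothing estimate \eqref{decay-1} maps $L^{r_0}\to L^{2}$ only when $r_0\le 2$). This is precisely why a purely forward energy argument cannot reach exponents above $2$.

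The paper resolves both points by (a) taking the approximant to be the cut-off of the \emph{whole-space} solution, $U_0(t)=(1-\phi)T_0(t,s)F+\mathbb B[T_0(t,s)F\cdot\nabla\phi]$, so that the resulting forcing $H(t)$ in \eqref{cutoff-remain} involves only $u_0$, $\nabla u_0$ and the whole-space pressure $p_0$, all controlled by Proposition \ref{est-wh}; the remainder $V=U-U_0$ then solves \eqref{ivp-v} with compactly supported data $\widetilde F$ and compactly supported forcing; and (b) estimating $V$ by \emph{duality}: pairing with $\psi\in\mathcal E(\mathbb R^3)$, one only needs $\int_{s+1}^{t-1}\|T(t,\tau)^*\psi\|_{2,\Omega_3}^2\,d\tau\le C\|T(t,t-1)^*\psi\|_{2,\mathbb R^3}^2\le C\|\psi\|_{r'}^2$ via \eqref{key-1} and the short-time smoothing $L^{r'}\to L^2$ with $r'<2$ — the inequality that is unavailable in your forward setting. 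A bootstrap over $r\in(2,3)$, then $(3,6)$ with the dyadic splitting \eqref{split-2}, then $(6,\infty)$ completes the range. If you want to keep a forward argument you must dualize somewhere; as written, the proposal does not prove the claim.
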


\begin{proof}
We follow the argument developed by \cite{Hi18, Hi21}. 
Set 
\begin{equation}
\alpha_1=\alpha_1(q_0):=\alpha_2(4/3,q_0),
\label{small-0th}
\end{equation}
where $\alpha_2$ is the constant given in Proposition \ref{est-wh}, see also \eqref{small-zuerst}.
In what follows we assume $\|U_b\|\leq \alpha_1$.
Let $2<r<\infty$. 
Given $F\in {\mathcal E}(\mathbb R^3)$, see \eqref{dense-sub},
we set $(f,\eta_f,\omega_f)=i(F)$, see \eqref{X1}.
For the proof of \eqref{unif}, let us take $u_0(t)=T_0(t,s)F$ as the approximation 
near spatial infinity of $U(t)=T(t,s)F$.
We fix a cut-off function $\phi\in C_0^\infty(B_3)$ as in \eqref{cut0}.
Let $\mathbb B$ be the Bogovskii operator for the domain $\Omega_3$, 
see \eqref{bog-op}. 
By \eqref{decay-wh} together with \eqref{bog-est} 
we see that
\begin{equation}
U_0(t):=(1-\phi)u_0(t)+\mathbb B\left[u_0(t)\cdot\nabla\phi\right] \quad
\mbox{with $u_0(t)=T_0(t,s)F$}
\label{appro-funct}
\end{equation}
belongs to $X_r(\mathbb R^3)$ and satisfies 
\begin{equation}
\|U_0(t)\|_{r,\mathbb R^3}\leq C\|F\|_{r,\mathbb R^3}
\label{appro}
\end{equation}
for all $(t,s)$ with $t>s$ since
$\|U_b\|\leq \alpha_1$.
We fix the pressure $p(t)$ 
associated with $U(t)=T(t,s)F$ and also choose
the pressure $p_0(t)$ associated with $u_0(t)=T_0(t,s)F$
such that $\int_{\Omega_3}p_0(t)\,dx=0$, yielding
\begin{equation}
\|p_0(t)\|_{r,\Omega_3}\leq C\|\nabla p_0(t)\|_{r,\Omega_3}.
\label{pre-poincare}
\end{equation}

Let us define $V$ and $p_v$ by
\begin{equation}
U(t)=U_0(t)+V(t), \qquad 
p(t)=(1-\phi)p_0(t)+p_v(t)
\label{appro-perturb}
\end{equation}
and set $(v,\eta,\omega)=i(V)$, see \eqref{X1}.
Then the rigid motion $\eta+\omega\times x$ associated with $V(t)$ is exactly the same as the one
determined by $U(t)=T(t,s)F$ through \eqref{X1} since
$U(t)|_B=V(t)|_B$.
We see that $v$, $p_v$, $\eta$ and $\omega$ obey
\begin{equation}
\begin{split}
&\partial_tv=\Delta v+(\eta_b(t)-u_b(t))\cdot\nabla v-\nabla p_v+H(t), \qquad 
\mbox{div $v$}=0 \quad \mbox{in $\Omega\times (s,\infty)$},   \\
&v|_{\partial\Omega}=\eta+\omega\times x, \qquad
v\to 0 \quad \mbox{as $|x|\to\infty$}, \\
&m\frac{d\eta}{dt}+\int_{\partial\Omega}\mathbb S(v,p_v)\nu\,d\sigma=0,  \\
&J\frac{d\omega}{dt}+\int_{\partial\Omega} x\times \mathbb S(v,p_v)\nu\,d\sigma=0,  \\
&v(\cdot,s)=\phi f-\mathbb B[f\cdot\nabla\phi], \quad \eta(s)=\eta_f, \quad \omega(s)=\omega_f,
\end{split}
\label{ivp-v}
\end{equation}
where
\begin{equation}
\begin{split}
H(t)
&=-2\nabla\phi\cdot\nabla u_0(t)-\big[\Delta \phi+(\eta_b(t)-u_b(t))\cdot\nabla\phi\big]\,u_0(t)  \\ 
&\quad +(\nabla\phi)p_0(t)
+\big\{-\partial_t+\Delta+(\eta_b(t)-u_b(t))\cdot\nabla\big\}\mathbb B\left[u_0(t)\cdot\nabla\phi\right].
\end{split}
\label{cutoff-remain}
\end{equation}
By the same symbol $H(t)$ we denote its extension 
on $\mathbb R^3$ by setting zero outside $\Omega_3$, then $H(t)\in L^r_R(\mathbb R^3)$ for every $r\in (1,\infty)$.
Furthermore, we find
\begin{equation}
\|H(t)\|_{r,\Omega_3}
\leq C(t-s)^{-1/2}(1+t-s)^{-3/2r+1/2}\|F\|_{r,\mathbb R^3}
\label{remainder}
\end{equation}
for every $r\in [4/3,\infty)$ 
(we are considering the case $r>2$)
and all $(t,s)$ with $t>s$ 
owing to Proposition \ref{est-wh} together with \eqref{bog-est} since $\|U_b\|\leq\alpha_1$. 
In fact, it follows from \eqref{pressure-wh} and \eqref{pre-poincare} that
\begin{equation}
\|p_0(t)\|_{r,\Omega_3}\leq\left\{
\begin{array}{ll}
\|\nabla p_0(t)\|_{r,\mathbb R^3}\leq C(t-s)^{-1/2}\|F\|_{r,\mathbb R^3}\quad
& (t-s\leq 1),  \\
\|\nabla p_0(t)\|_{\max\{r,3\},\mathbb R^3}\leq C(t-s)^{-3/2r}\|F\|_{r,\mathbb R^3} & (t-s>1),
\end{array}
\right.
\label{remain-pres}
\end{equation}
and that
\begin{equation*}
\begin{split}
&\quad \|\partial_t\mathbb B\left[u_0(t)\cdot\nabla\phi\right]\|_{r,\Omega_3} \\
&=\|\mathbb B\left[\{\Delta u_0+(\eta_b-U_b)\cdot\nabla u_0-\nabla p_0\}\cdot\nabla\phi\right]\|_{r,\Omega_3}  \\
&\leq C\|\{\Delta u_0+(\eta_b-U_b)\cdot\nabla u_0-\nabla p_0\}\cdot\nabla\phi\|_{W^{1,r^\prime}(\Omega_3)^*}  \\
&\leq C\|\nabla u_0(t)\|_{r,\Omega_3}+C\|p_0(t)\|_{r,\Omega_3}
\end{split}
\end{equation*}
to which one can apply \eqref{decay-grad-wh} 
and \eqref{remain-pres}.
The other terms are harmless.
In this way, we obtain \eqref{remainder}.
Note that the smallness $\|U_b\|\leq\alpha_1$ is needed only for large $(t-s)$, see Proposition \ref{est-wh}; indeed, we have used
\eqref{decay-wh} with $r=\infty$ and \eqref{decay-grad-wh}--\eqref{pressure-wh} with $r$ replaced by $\max\{r,3\}$.

We formulate the problem \eqref{ivp-v} as
\[
\frac{dV}{dt}+L_+(t)V=\mathbb PH(t), \quad t\in (s,\infty); \qquad V(s)=\widetilde F
\]
where
\begin{equation}
\widetilde F=\big(\phi f-\mathbb B[f\cdot\nabla\phi]\big)\chi_\Omega+(\eta_f+\omega_f\times x)\chi_B
\label{til-IC}
\end{equation}
whose support is contained in $B_3$ 
and which belongs to $X_q(\mathbb R^3)$ for every $q\in (1,\infty)$.
By use of the evolution operator $T(t,s)$, we convert the problem above into
\begin{equation}
V(t)=T(t,s)\widetilde F+\int_s^tT(t,\tau)\mathbb PH(\tau)\,d\tau.
\label{V-int}
\end{equation}
To make full use of the advantage that $H(\tau)$ is 
compactly supported,
it is better to deal with the weak form
\[
\langle V(t),\psi\rangle_{\mathbb R^3,\rho}=\langle\widetilde F, T(t,s)^*\psi\rangle_{\mathbb R^3,\rho}
+\int_s^t\langle H(\tau), T(t,\tau)^*\psi\rangle_{\Omega_3}\,d\tau
\]
for $\psi\in {\mathcal E}(\mathbb R^3)$, where we have used
\[
\langle T(t,\tau)\mathbb P H(\tau), \psi\rangle_{\mathbb R^3,\rho}
=\langle H(\tau), T(t,\tau)^*\psi\rangle_{\mathbb R^3,\rho}
=\langle H(\tau), T(t,\tau)^*\psi\rangle_{\Omega_3}
\]
on account of \eqref{proj-sym-0} and \eqref{dual-evo-sense},
and employ the duality argument.
Let $t-s>3$ and recall that $2<r<\infty$.
It is readily seen from Proposition \ref{adj-smooth}, \eqref{adj-energy} and \eqref{remainder} that
\begin{equation}
\begin{split}
&\quad |\langle\widetilde F, T(t,s)^*\psi\rangle_{\mathbb R^3,\rho}|+
\left|\left(\int_s^{s+1}+\int_{t-1}^t\right)\langle H(\tau), T(t,\tau)^*\psi\rangle_{\Omega_3}\,d\tau\right|  \\
&\leq \|\widetilde F\|_{X_2(\mathbb R^3)}\|T(t,s)^*\psi\|_{X_2(\mathbb R^3)}
+\int_s^{s+1}\|H(\tau)\|_{2,\Omega_3}\|T(t,\tau)^*\psi\|_{2,\Omega_3}\,d\tau  \\
&\quad +\int_{t-1}^t\|H(\tau)\|_{r,\Omega_3}\|T(t,\tau)^*\psi\|_{r^\prime,\Omega_3}\,d\tau  \\
&\leq C\left(\|\widetilde F\|_{r,\Omega_3}
+\int_s^{s+1}\|H(\tau)\|_{r,\Omega_3}\,d\tau\right) \|T(t,t-1)^*\psi\|_{X_2(\mathbb R^3)}  \\
&\quad +C\|F\|_{r,\mathbb R^3}\|\psi\|_{r^\prime,\mathbb R^3}\int_{t-1}^t (\tau-s)^{-3/2r}\,d\tau  \\
&\leq C\|F\|_{r,\mathbb R^3}\|\psi\|_{r^\prime,\mathbb R^3}  \\
&\leq C\|F\|_{r,\mathbb R^3}\|\psi\|_{X_{r^\prime}(\mathbb R^3)}.
\end{split}
\label{easy-part}
\end{equation}

Our main task is thus to discuss the term
\[
J:=\int_{s+1}^{t-1}\langle H(\tau), T(t,\tau)^*\psi\rangle_{\Omega_3}\,d\tau,
\]
for which we have
\begin{equation}
|J|\leq
\int_{s+1}^{t-1}\|H(\tau)\|_{2,\Omega_3}\|T(t,\tau)^*\psi\|_{2,\Omega_3}\,d\tau.
\label{J-est}
\end{equation}
We make use of \eqref{key-1} with $\sigma=t-1$ 
along with \eqref{remainder} for $r>2$ to find
\begin{equation}
|J|\leq C\left(\int_{s+1}^{t-1}(\tau-s)^{-3/r}\,d\tau\right)^{1/2}\|F\|_{r,\mathbb R^3}\|T(t,t-1)^*\psi\|_{2,\mathbb R^3}
\label{J-est2}
\end{equation}
which combined with \eqref{adj-sm} with $\tau_*=1$ yields 
\[
|J|\leq C\|F\|_{r,\mathbb R^3}\|\psi\|_{r^\prime,\mathbb R^3}
\leq C\|F\|_{r,\mathbb R^3}\|\psi\|_{X_{r^\prime}(\mathbb R^3)}
\]
for $t-s>3$ and $\psi\in {\mathcal E}(\mathbb R^3)$ provided $2<r<3$.
This together with \eqref{easy-part} and \eqref{appro} 
imply \eqref{unif} for $t-s>3$ and, therefore, for all $(t,s)$ with $t>s$ since we already know \eqref{unif}
for $t-s\leq 3$ from Proposition \ref{prop-smooth}.
Hence, by virtue of Lemma \ref{diff-ineq} we obtain \eqref{adj-sm} with $j=0$ for all $(t,s)$ with $t>s$ and $G\in X_q(\mathbb R^3)$
provided $3/2< q\leq r\leq 2$. 

With this at hand, we proceed to the next step in which the case $r\in (3,6)$ 
is discussed.
Given such $r$, we set
\begin{equation}
\mu=1-\frac{3}{r}, \qquad
\frac{1}{q}-\frac{1}{2}=\frac{\mu}{3},
\label{auxi-rate}
\end{equation}
where $\mu$ comes from the growth rate
$(t-s-1)^\mu$ of the integral of \eqref{J-est2}, which we intend to
overcome by use of the decay property of the adjoint evolution operator.
Then we have $\mu\in (0,1/2)$ and $q\in (3/2,2)$.
We use \eqref{key-1} with $\sigma=(s+t)/2$, apply the result obtained in the previous step 
and recall \eqref{adj-sm} with $\tau_*=1$ to furnish
\begin{equation}
\begin{split}
\int_{s+1}^{(s+t)/2}\|T(t,\tau)^*\psi\|_{2,\Omega_3}^2\,d\tau
&\leq C\|T(t,(s+t)/2)^*\psi\|_{2,\mathbb R^3}^2  \\
&\leq C(t-s-2)^{-\mu}\|T(t,t-1)^*\psi\|_{q,\mathbb R^3}^2  \\
&\leq C(t-s-2)^{-\mu}\|\psi\|_{r^\prime,\mathbb R^3}^2
\end{split}
\label{split-1}
\end{equation}
for $t-s>2$.
Following \eqref{adj-def}, we set $W(t-\tau):=T(t,\tau)^*\psi$ 
and $\sigma:=(t-s-2)/2$.
We then rewrite \eqref{split-1} as
\[
\int_{1+\sigma}^{1+2\sigma}\|W(\tau)\|_{2,\Omega_3}^2\,d\tau
\leq c_1\,\sigma^{-\mu}\|\psi\|_{r^\prime,\mathbb R^3}^2
\]
with some $c_1>0$ for all $\sigma>0$, from which
one can deduce the following optimal growth estimate by dyadic splitting method developed in \cite[Lemma 3.4]{Hi18}:
\begin{equation}
\begin{split}
\int_{(s+t)/2}^{t-1}\|T(t,\tau)^*\psi\|_{2,\Omega_3}\,d\tau
&=\int_1^{1+\sigma}\|W(\tau)\|_{2,\Omega_3}\,d\tau  \\
&=\sum_{j=0}^\infty \int_{1+\sigma/2^{j+1}}^{1+\sigma/2^j}\|W(\tau)\|_{2,\Omega_3}\,d\tau  \\
&\leq \sqrt{c_1}\,\sigma^{(1-\mu)/2}\|\psi\|_{r^\prime,\mathbb R^3}\sum_{j=0}^\infty 2^{-(1-\mu)(j+1)/2}  \\
&=C(t-s-2)^{(1-\mu)/2}\|\psi\|_{r^\prime,\mathbb R^3}
\end{split}
\label{split-2}
\end{equation}
for $t-s>2$.
We now split \eqref{J-est} into two parts as below
which should be comparable with each other
and then use \eqref{split-1}--\eqref{split-2} 
together with \eqref{remainder} to find
\begin{equation*}
\begin{split}
|J|&\leq \int_{s+1}^{(s+t)/2}+\int_{(s+t)/2}^{t-1}  \\
&\leq C(t-s)^{(1-3/r)/2}\|F\|_{r,\mathbb R^3}\left(\int_{s+1}^{(s+t)/2}\|T(t,\tau)^*\psi\|_{2,\Omega_3}^2\,d\tau\right)^{1/2}  \\
&\quad +C(t-s)^{-3/2r}\|F\|_{r,\mathbb R^3}\int_{(s+t)/2}^{t-1}\|T(t,\tau)^*\psi\|_{2,\Omega_3}\,d\tau  \\
&\leq C\|F\|_{r,\mathbb R^3}\|\psi\|_{r^\prime,\mathbb R^3}
\end{split}
\end{equation*}
for $t-s>3$, yielding \eqref{unif} for all $t>s$ provided $3<r<6$.
Then Lemma \ref{diff-ineq} concludes \eqref{adj-sm} with $j=0$ 
for all $(t,s)$ with $t>s$ provided $6/5< q\leq r\leq 2$.

At the final stage, given $r\in (6,\infty)$, we take the same $\mu$ and $q$ as in \eqref{auxi-rate}, then we observe 
$\mu\in (1/2,1)$ and $q\in (6/5,3/2)$.
The same argument as above with better decay property of $T(t,s)^*$ obtained in the previous step
implies \eqref{unif} for all $(t,s)$ with $t>s$ and, thereby,
\eqref{decay-1} for such $(t,s)$ provided $2\leq q\leq r<\infty$ as well as
\eqref{adj-sm} with $j=0$ for the same $(t,s)$ 
provided $1< q\leq r\leq 2$.

The opposite case, that is, \eqref{decay-1} for $1< q\leq r\leq 2$ and \eqref{adj-sm} with $j=0$ for 
$2\leq q\leq r<\infty$
can be discussed with the aid of \eqref{key-0}
under the same condition $\|U_b\|\leq \alpha_1$ 
in the similar fashion, see also the last part of \cite[section 4]{Hi18}.
Finally, the remaining case $1< q<2<r<\infty$ for both estimates is obvious because of the semigroup properties
\eqref{semi} and \eqref{back-semi}.
The proof is complete.
\end{proof}
\begin{remark}
It would be remarkable that Proposition \ref{prop-0th} is established
under the smallness of $\|U_b\|$ uniformly in $(q,r)$
regardless of the circumstances in Proposition \ref{est-wh}.
This is because one needs \eqref{decay-wh}--\eqref{pressure-wh}
only with the case $q>2$.
The other remark concerns the constants in
\eqref{decay-1} and \eqref{adj-sm} with $j=0$.
Let $\|U_b\|\leq \alpha_0$, then,
according to Proposition \ref{prop-smooth}, the constant $C$ in \eqref{decay-1} near $t=s$ depends on $\alpha_0$.
This is why the constant $C$ in Proposition \ref{prop-0th} is also dependent on $\alpha_1$. It is also the case in Propsitions \ref{led-prop} and \ref{prop-near-inf} below.
\label{const-dep}
\end{remark}

\subsection{Local energy decay estimates}
\label{led-est}

For the proof of \eqref{grad-new-form}, it suffices to show that
\begin{equation}
\|\nabla T(t,s)F\|_{q,\mathbb R^3}\leq C(t-s)^{-\min\{1/2,\, 3/2q\}}\|F\|_{q,\mathbb R^3}
\label{grad-decay}
\end{equation}
for all $(t,s)$ with $t-s>2$ and $F\in X_q(\mathbb R^3)$ on account of the semigroup property, \eqref{decay-1} with $r<\infty$ and
Proposition \ref{prop-smooth}.
As in \cite{I,KShi,EShi05,HiShi09,Hi20,Hi21,EMT},
let us split \eqref{grad-decay} into estimates of $\|\nabla T(t,s)F\|_{q,B_3}$ and $\|\nabla T(t,s)F\|_{q,\mathbb R^3\setminus B_3}$.
The former is given by the following proposition and it is called the local energy decay property,
whereas the latter is studied in the next subsection.
To discuss the latter, the local energy decay of $\partial_tT(t,s)F$ is also needed, see \eqref{led-deri} below.
The following proposition gives us \eqref{led}--\eqref{led-deri} for $1<q<\infty$ when $\|U_b\|$ is small enough,
however, the smallness is not uniform near $q=1$; in fact, this circumstance arises from Proposition \ref{est-wh}.
\begin{proposition}
Suppose \eqref{ass-1} and \eqref{ass-2}.
Given $r_1\in (1,4/3]$ and
$\beta_0\in (0,\infty)$, assume that $\|U_b\|\leq \alpha_2\in (0,\alpha_1]$ and $[U_b]_\theta\leq\beta_0$,
where $\alpha_2=\alpha_2(r_1,q_0)$ and $\alpha_1=\alpha_1(q_0)=\alpha_2(4/3,q_0)$, see \eqref{small-0th},
 are respectively the constants given in Propositions \ref{est-wh}
and \ref{prop-0th}, while
$\|U_b\|$ and $[U_b]_\theta$ are given by \eqref{quan}.
Then there is a constant 
$C=C(q,\alpha_2,\beta_0,\theta)>0$ such that
\begin{equation}
\|T(t,s)F\|_{W^{1,q}(B_3)}\leq C(t-s)^{-3/2q}\|F\|_{q,\mathbb R^3}
\label{led}
\end{equation}
\begin{equation}
\|\partial_tT(t,s)F\|_{W^{-1,q}(\Omega_3)}+\|p(t)\|_{q,\Omega_3}
\leq C(t-s)^{-3/2q}\|F\|_{q,\mathbb R^3}
\label{led-deri}
\end{equation}
for all $(t,s)$ with $t-s>2$ and $F\in X_q(\mathbb R^3)$ 
as long as $r_1\leq q<\infty$,
where $p(t)$ denotes the pressure associated with $T(t,s)F$ and it is singled out in such a way that 
$\int_{\Omega_3}p(t)\,dx=0$ for each $t\in (s,\infty)$.

Under less smallness condition $\|U_b\|\leq \alpha_1$ than described above, that is, the same one as in Proposition \ref{prop-0th},
there is a constant $C=C(q,\alpha_1,\beta_0,\theta)>0$ such that
\begin{equation}
\|T(t,s)F\|_{\infty,B_3}\leq C(t-s)^{-3/2q}\|F\|_{q,\mathbb R^3}
\label{led-sup}
\end{equation}
for all $(t,s)$ with $t-s>2$ and $F\in X_q(\mathbb R^3)$ as long as $1<q<\infty$.

The same assertions hold true for the adjoint $T(t,s)^*$ and the associated pressure $p_v(s)$ to \eqref{backward}
under the same smallness conditions 
on $\|U_b\|$ as above.
\label{led-prop}
\end{proposition}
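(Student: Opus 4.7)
The approach is to follow the cut-off decomposition from subsection \ref{decay-0th} and exploit the basic $L^q$-$L^r$ decay already established in Proposition \ref{prop-0th}, combined with duality and a Bogovskii-based pressure argument in the spirit of subsection \ref{sm-pressure}, now adapted for large time. I write $U(t)=T(t,s)F=U_0(t)+V(t)$ as in \eqref{appro-funct}--\eqref{appro-perturb}, where $U_0$ is constructed from the whole-space solution $T_0(t,s)F$ via the Bogovskii correction so that $U_0(t)$ vanishes on $B_2$; by Proposition \ref{est-wh} (whence the constraint $\|U_b\|\leq\alpha_2$), $U_0$ enjoys the sharp whole-space rates and thus $\|U_0(t)\|_{W^{1,q}(B_3)}\leq C(t-s)^{-3/2q}\|F\|_{q,\mathbb R^3}$ after applying Hölder's inequality on the bounded region. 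The remainder $V(t)$ solves the Oseen-structure system with compactly supported initial datum $\widetilde F$ from \eqref{til-IC} and compactly supported forcing $H(\tau)$ obeying \eqref{remainder}, so that $V(t)=T(t,s)\widetilde F+\int_s^t T(t,\tau)\mathbb PH(\tau)\,d\tau$.

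For the $L^q(B_3)$ bound on $V(t)$, I exploit the compact support of both $\widetilde F$ and $H(\tau)$: one has $\|\widetilde F\|_{q_0,\mathbb R^3}+\|H(\tau)\|_{q_0,\Omega_3}\leq C(\|F\|_{q,\mathbb R^3}+\|H(\tau)\|_{q,\Omega_3})$ for every $q_0\in[r_1,q]$. Choosing $q_0=q/2$ when $q\geq 2r_1$, Proposition \ref{prop-0th} gives the rate $(t-s)^{-3(1/q_0-1/q)/2}=(t-s)^{-3/2q}$. For the remaining range $r_1\leq q<2r_1$ I instead pass to the adjoint: by \eqref{dual-evo} one estimates $\|V(t)\|_{q,B_3}$ as a supremum of $\langle \widetilde F, T(t,s)^*\psi\rangle+\int_s^t\langle H(\tau), T(t,\tau)^*\psi\rangle$ over $\psi$ with $\|\psi\|_{q',\mathbb R^3}\leq 1$ and compact support, then split the time integral as $(s,s+1)\cup(s+1,(s+t)/2)\cup((s+t)/2,t-1)\cup(t-1,t)$; the endpoint pieces use Propositions \ref{prop-smooth} and \ref{adj-smooth}, while the interior pieces combine \eqref{remainder} with \eqref{decay-1} applied to the adjoint, exactly parallel to the dyadic argument used for \eqref{unif} in subsection \ref{decay-0th}. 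This yields the bound in \eqref{led} without derivatives.

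The gradient part of \eqref{led} and the pressure estimate \eqref{led-deri} are then obtained in a single stroke via the duality scheme of Proposition \ref{prop-pressure}. Testing the equation against $\mathbb B[\phi-\overline\phi]$ for $\phi\in C_0^\infty(\Omega_3)$ and decomposing the resulting pairing as in \eqref{press-dual}, each term is controlled by the fractional power estimate of Lemma \ref{lem-frac-proj} together with the already-derived $L^q$-bound on $V(t)$, giving $\|p(t)\|_{q,\Omega_3}\leq C(t-s)^{-3/2q}\|F\|_{q,\mathbb R^3}$; the $\partial_tT(t,s)F$ estimate then follows directly from the equation. Interior regularity for the stationary Stokes system (applied on an annular neighborhood of $\partial\Omega$ with boundary datum $\eta+\omega\times x$, bounded by the just-established decay through \eqref{X1}) then upgrades the $L^q$-bound to a $W^{1,q}$-bound, while the rigid-body part is trivially controlled by $|\omega(t)|\leq C\|U(t)\|_{q,\mathbb R^3}$ via \eqref{grad-form} and \eqref{decay-1}. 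The $L^\infty$-bound \eqref{led-sup} follows from the semigroup splitting $T(t,s)=T(t,(s+t)/2)T((s+t)/2,s)$: apply \eqref{decay-1} to the second factor with output exponent $r>3$, then use \eqref{led} at that $r$ together with $W^{1,r}(B_3)\hookrightarrow C(\overline{B_3})$. The adjoint statements follow by running the entire argument for $T(t,s)^*$ using Lemma \ref{lem-dual-evo} and the adjoint analogues already developed.

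The main obstacle is getting the sharp rate $(t-s)^{-3/2q}$ uniformly for $q$ close to the endpoint $r_1$: the direct $L^{q_0}$-$L^q$ inequality from Proposition \ref{prop-0th} cannot yield more than $(t-s)^{-3(1/r_1-1/q)/2}$, which is strictly weaker than $(t-s)^{-3/2q}$ whenever $q<2r_1$. This forces the dual representation combined with the dyadic splitting and the energy-based key inequalities \eqref{key-0}--\eqref{key-1}; as in the proof of Proposition \ref{prop-0th}, the technical heart is to absorb the growing factor $(t-s)^{(1-3/r)/2}$ arising from \eqref{V-D} by using the decay of the adjoint on the other half of the time interval. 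A secondary but more technical point is the pressure estimate \eqref{led-deri}: one must verify that the $W^{1,q'}$-bound of the Bogovskii corrector extends uniformly for large times to the Oseen-structure operator after projection $\mathbb P$, which is precisely the content of Lemma \ref{lem-frac-proj}.
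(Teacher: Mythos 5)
Your overall skeleton (the cut-off decomposition $U=U_0+V$ with the Bogovskii corrector, Proposition \ref{est-wh} for the far field, and the Duhamel representation \eqref{V-int} for the compactly supported remainder) matches the paper's. However, there is a genuine gap in your treatment of the near-field part for small $q$. You correctly observe that lowering only the source exponent cannot produce the rate $(t-s)^{-3/2q}$ when $q<2$ (indeed $L^{q_0}$--$L^{q}$ decay is capped at $3(1-1/q)/2=3/2q'$, which is far below $3/2q$ as $q\to1$), but the substitute you propose — duality against compactly supported $\psi$ plus the dyadic splitting with \eqref{key-0}--\eqref{key-1} — was designed in subsection \ref{decay-0th} to prove \emph{boundedness}, not decay. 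Tracing your scheme, the factor coming from the adjoint, $\|T(t,(s+t)/2)^*\psi\|_{2,\mathbb R^3}$, decays at best like $(t-s)^{-(3/p-3/2)/2}$ with $p>1$, i.e.\ strictly slower than $(t-s)^{-3/4}$, whereas $3/2q>3/4$ precisely in the range $q<2$ you are trying to cover; the argument therefore cannot close. The missing idea is that for data $H$ supported in $B_3$ \emph{and} evaluation on the compact set $B_3$, one may simultaneously lower the source exponent to $r_0'$ and raise the target exponent to $r_0$ (Hölder on the bounded set at both ends): choosing $r_0>\max\{2q/(q-1),q,6\}$, estimate \eqref{decay-1} composed with the unit-time smoothing gives $\|T(t,\tau)\mathbb PH\|_{W^{1,r_0}(\mathbb R^3)}\leq C(t-\tau)^{-\kappa}\|H\|_{q,\mathbb R^3}$ with $\kappa=\tfrac32(1-2/r_0)>\max\{3/2q,1\}$. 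This single estimate, valid uniformly for all $q\in(1,\infty)$, makes your case distinction unnecessary and lets the Duhamel convolution with \eqref{remainder} converge to the rate $(t-s)^{-3/2q}$.

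A secondary problem is the order of the pressure and time-derivative estimates. You propose to get $\|p(t)\|_{q,\Omega_3}$ first, via the fractional-power duality of Proposition \ref{prop-pressure}, and then read off $\partial_tT(t,s)F$ from the equation. But the term $\langle A^\gamma U(t),A^{1-\gamma}\mathbb P\psi\rangle$ only yields $\|A^\gamma U(t)\|_{q}\leq C\|AU(t)\|_q^\gamma\|U(t)\|_q^{1-\gamma}\leq C\|F\|_{q}$ for $F\in X_q(\mathbb R^3)$ — bounded, not decaying — so this route does not produce the rate $(t-s)^{-3/2q}$ at large times. The workable order is the reverse: first establish $\|\partial_tT(t,\tau)\mathbb PH\|_{W^{-1,q}(\Omega_3)}\leq C(t-\tau)^{-\gamma}(1+t-\tau)^{-\kappa+\gamma}\|H\|_{q}$ by composing the short-time rate of Proposition \ref{prop-pressure} with the long-time decay $(t-\tau)^{-\kappa}$ above, sum the Duhamel integral to get \eqref{led-deri} for $\partial_tT(t,s)F$, and only then recover the pressure from $\|p(t)\|_{q,\Omega_3}\leq C\|\nabla p(t)\|_{W^{-1,q}(\Omega_3)}$ and the equation. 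With these two corrections your argument aligns with the paper's; the remaining steps you sketch (the far-field bound for $U_0$, the derivation of \eqref{led-sup} from \eqref{led} with $q>3$ plus the semigroup property, and the adjoint statements) are sound.
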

\begin{proof}
Given $q\in (1,\infty)$, let us take $r_0$ so large that
\[
\max\left\{\frac{2q}{q-1},\; q,\, 6\right\}<r_0<\infty,
\]
which yields
\begin{equation}
q>r_0^\prime, \qquad
\kappa:=\frac{3}{2}\left(1-\frac{2}{r_0}\right)>\max\left\{\frac{3}{2q},\; 1\right\}.
\label{parti-expo}
\end{equation}
Let $\|U_b\|\leq \alpha_1(q_0)$, then we have \eqref{decay-1} except for the case $r=\infty$ by
Proposition \ref{prop-0th}. 
If 
$H\in L^q_R(\mathbb R^3)$ satisfies $H(x)=0$ a.e. $\mathbb R^3\setminus B_3$,
then it follows from 
Proposition \ref{prop-smooth} 
and \eqref{tanabe-est} with $\tau_*=1$ as well as Proposition \ref{prop-0th} that
\begin{equation*}
\begin{split}
\quad \|T(t,s)\mathbb P H\|_{W^{1,r_0}(\mathbb R^3)} +\|L_+(t)T(t,s)\mathbb P H\|_{r_0,\mathbb R^3} 
&\leq C\|T(t-1,s)\mathbb P H\|_{r_0,\mathbb R^3}  \\
&\leq C(t-s-1)^{-(3/r_0^\prime-3/r_0)/2}\|\mathbb P H\|_{r_0^\prime,\mathbb R^3}  \\
&\leq C(t-s)^{-\kappa}\|H\|_{q,\mathbb R^3}
\end{split}
\end{equation*}
for $t-s>2$. 
This along with Propositions \ref{prop-smooth} and \ref{prop-pressure} implies that
\begin{equation}
\|T(t,s)\mathbb P H\|_{W^{1,q}(B_3)}\leq C(t-s)^{-1/2}(1+t-s)^{-\kappa+1/2}\|H\|_{q,\mathbb R^3}
\label{1st-led}
\end{equation}
and that
\begin{equation}
\|\partial_tT(t,s)\mathbb P H\|_{W^{-1,q}(\Omega_3)}
\leq C(t-s)^{-\gamma}(1+t-s)^{-\kappa+\gamma}\|H\|_{q,\mathbb R^3}
\label{1st-led-deri}
\end{equation}
for all $(t,s)$ with $t>s$, where $\gamma\in \big((1+1/q)/2,\,1\big)$ is fixed arbitrarily.

We fix $r_1\in (1,4/3]$ arbitrarily, and let $q\in [r_1,\infty)$.
Given $F\in {\mathcal E}(\mathbb R^3)$, which is dense in $X_q(\mathbb R^3)$ (see Proposition \ref{prop-decom}),
the function \eqref{appro-funct} and its temporal derivative $\partial_t U_0(t)$
enjoy the desired estimates as in \eqref{led}--\eqref{led-deri} 
due to \eqref{decay-wh}--\eqref{pressure-wh} and \eqref{bog-est}
(together with the equation \eqref{eq-whole} for $\partial_tT_0(t,s)F$) under the condition
\begin{equation}
\|U_b\|\leq \alpha_2(r_1,q_0)\leq \alpha_2(4/3,q_0)=\alpha_1(q_0),
\label{small-led}
\end{equation}
see \eqref{small-0th} as well as \eqref{small-zuerst}.
Our task is thus to estimate $V(t)$ defined by \eqref{appro-perturb}.
We use the integral equation \eqref{V-int} and its temporal derivative
\begin{equation}
\partial_tV(t)=\partial_tT(t,s)\widetilde F+\mathbb P H(t)+\int_s^t \partial_tT(t,\tau)\mathbb P H(\tau)\,d\tau,
\label{V-int-deri}
\end{equation}
in $W^{-1,q}(\Omega_3)$, where
we can apply \eqref{1st-led}--\eqref{1st-led-deri} to $\widetilde F$ given by \eqref{til-IC}
and also to $H(\tau)$ given by \eqref{cutoff-remain}
since they vanish outside $B_3$.
In view of the relation \eqref{parti-expo}, we see at once that $T(t,s)\widetilde F$ and $\partial_tT(t,s)\widetilde F$ fulfill the desired estimates;
moreover, so does the second term $\mathbb P H(t)$ of \eqref{V-int-deri} already by \eqref{remainder},
which holds for $r\in [r_1,\infty)$ under \eqref{small-led}.
From \eqref{1st-led}--\eqref{1st-led-deri} together with
\eqref{remainder} it follows that
\begin{equation*}
\begin{split}
&\quad \int_s^t\|T(t,\tau)\mathbb P H(\tau)\|_{W^{1,q}(B_3)}\,d\tau  \\
&\leq C\|F\|_{q,\mathbb R^3}\left(\int_s^{(s+t)/2}+\int_{(s+t)/2}^t\right)  \\
&\qquad (t-\tau)^{-1/2}(1+t-\tau)^{-\kappa+1/2}
(\tau-s)^{-1/2}(1+\tau-s)^{-3/2q+1/2}\,d\tau  \\
&\leq C(t-s)^{-3/2q}\|F\|_{q,\mathbb R^3}
\end{split}
\end{equation*}
and, similarly, that
\begin{equation*}
\begin{split}
&\quad \int_s^t \|\partial_t T(t,\tau)\mathbb P H(\tau)\|_{W^{-1,q}(\Omega_3)}\,d\tau \\
&\leq C\|F\|_{q,\mathbb R^3}\left(\int_s^{(s+t)/2}+\int_{(s+t)/2}^t\right)  \\
&\qquad (t-\tau)^{-\gamma}(1+t-\tau)^{-\kappa+\gamma} (\tau-s)^{-1/2}(1+\tau-s)^{-3/2q+1/2}\,d\tau  \\
&\leq C(t-s)^{-3/2q}\|F\|_{q,\mathbb R^3}
\end{split}
\end{equation*}
for all $(t,s)$ with $t-s>2$, which conclude \eqref{led-deri} for $\partial_tT(t,s)F$ as well as \eqref{led}
provided \eqref{small-led} is satisfied.

Since $\int_{\Omega_3}p(t)\,dx=0$, we see that
\[
\|p(t)\|_{q,\Omega_3}\leq C\|\nabla p(t)\|_{W^{-1,q}(\Omega_3)}
\]
from which together with the first equation of \eqref{eq-linear}, \eqref{led}, \eqref{led-deri} for $\partial_tT(t,s)F$
and \eqref{ass-1}, we obtain \eqref{led-deri} for the pressure under the same smallness \eqref{small-led}.

Let $\|U_b\|\leq \alpha_1(q_0)=\alpha_2(4/3,q_0)$, see \eqref{small-0th}, then
\eqref{led} with $q>3$ implies \eqref{led-sup} for the same $q$, which combined with Proposition \ref{prop-0th} gives
\eqref{led-sup} for the other case $q\in (1,3]$ by the semigroup property \eqref{semi}. 
Finally, the argument for the adjoint $T(t,s)^*$ works essentially in the same manner.
The proof is complete.
\end{proof}
Propositions \ref{prop-smooth}, \ref{prop-pressure} and \ref{led-prop} immediately lead us to the following corollary, 
that plays an important role in the next subsection.
\begin{corollary}
Assume the same conditions as in the first half of Proposition \ref{led-prop}. 
Then, for every $\gamma\in \big((1+1/q)/2,\,1\big)$, there are constants $C_1=C_1(\gamma,q,\alpha_2,\beta_0,\theta)>0$ 
and $C_2=C_2(q,\alpha_2,\beta_0,\theta)>0$ such that
\begin{equation}
\|\partial_tT(t,s)F\|_{W^{-1,q}(\Omega_3)}+\|p(t)\|_{q,\Omega_3}\leq C_1(t-s)^{-\gamma}(1+t-s)^{-3/2q+\gamma}\|F\|_{q,\mathbb R^3}
\label{pressure-glo}
\end{equation}
\begin{equation}
\|T(t,s)F\|_{W^{1,q}(B_3)}\leq C_2(t-s)^{-1/2}(1+t-s)^{-3/2q+1/2}\|F\|_{q,\mathbb R^3}
\label{led-glo}
\end{equation}
for all $(t,s)$ with $t>s$ and $F\in X_q(\mathbb R^3)$ as long as $r_1\leq q<\infty$, 
where the associated pressure $p(t)$ is chosen as in Proposition \ref{led-prop}.

The same assertions hold true for the adjoint $T(t,s)^*$ and the associated pressure $p_v(s)$ to \eqref{backward} under the same
condition $\|U_b\|\leq\alpha_2(r_1,q_0)$.
\label{led-cor}
\end{corollary}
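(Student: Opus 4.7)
The plan is to combine the near-initial-time smoothing estimates with the large-time local energy decay, splitting the interval $\{t>s\}$ at $t-s=2$. The target bounds have the form $(t-s)^{-a}(1+t-s)^{-b+a}$ with $a=\frac{1}{2}$ (resp.\ $\gamma$) and $b=\frac{3}{2q}$, which is designed to interpolate between a singular factor $(t-s)^{-a}$ near $s$ and a decay factor $(t-s)^{-b}$ at infinity. So there is really nothing to prove beyond checking both endpoints.

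First, for $t-s\leq 2$, I would invoke Proposition \ref{prop-smooth} (with $\tau_*=2$, $r=q$) to obtain
\[
\|T(t,s)F\|_{W^{1,q}(B_3)}\leq \|T(t,s)F\|_{W^{1,q}(\mathbb R^3)}\leq C(t-s)^{-1/2}\|F\|_{q,\mathbb R^3},
\]
and Proposition \ref{prop-pressure} (with the same $\tau_*=2$) to obtain
\[
\|\partial_tT(t,s)F\|_{W^{-1,q}(\Omega_3)}+\|p(t)\|_{q,\Omega_3}\leq C(t-s)^{-\gamma}\|F\|_{q,\mathbb R^3}
\]
for every $\gamma\in\big((1+1/q)/2,1\big)$. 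In this range $1+t-s\sim 1$, so these estimates agree with the claimed bounds \eqref{led-glo} and \eqref{pressure-glo}.

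Second, for $t-s>2$, I would apply Proposition \ref{led-prop} directly to obtain
\[
\|T(t,s)F\|_{W^{1,q}(B_3)}\leq C(t-s)^{-3/2q}\|F\|_{q,\mathbb R^3},
\]
\[
\|\partial_tT(t,s)F\|_{W^{-1,q}(\Omega_3)}+\|p(t)\|_{q,\Omega_3}\leq C(t-s)^{-3/2q}\|F\|_{q,\mathbb R^3}.
\]
In this range $1+t-s\sim t-s$, and the claimed right-hand sides reduce to $C(t-s)^{-3/2q}\|F\|_{q,\mathbb R^3}$, matching exactly what Proposition \ref{led-prop} provides. The singleton choice of the pressure is consistent between the two regimes, because in both propositions $p(t)$ is normalized by $\int_{\Omega_3}p(t)\,dx=0$.

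Taking the maximum of the two regimes and absorbing constants gives the stated inequalities \eqref{pressure-glo} and \eqref{led-glo} uniformly in $t>s$. The argument for the adjoint evolution operator $T(t,s)^*$ and the associated pressure $p_v(s)$ is identical, using the corresponding statements for the backward problem in Propositions \ref{prop-smooth}, \ref{adj-smooth} and \ref{led-prop}. There is no real obstacle here; the point of the corollary is merely to package the near-time and far-time estimates into a single joint bound in a form that will be convenient for applications in the next subsection.
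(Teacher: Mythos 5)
Your proposal is correct and is exactly the argument the paper intends: the text states that Propositions \ref{prop-smooth}, \ref{prop-pressure} and \ref{led-prop} "immediately lead" to the corollary, and your splitting at $t-s=2$ with the matching of the two regimes (singular factor near $s$, decay factor at infinity) is precisely that verification, including the consistent normalization $\int_{\Omega_3}p(t)\,dx=0$ and the parallel treatment of the adjoint.
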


\subsection{Large time behavior near spatial infinity}
\label{near-infinity}

In this subsection we deduce the decay property of 
$\nabla T(t,s)$ near spatial infinity under the same conditions as in Proposition \ref{led-prop}
to complete the proof of \eqref{grad-decay}.
\begin{proposition}
Suppose \eqref{ass-1} and \eqref{ass-2}.
Given $r_1\in (1,4/3]$ and $\beta_0\in (0,\infty)$,
assume that $\|U_b\|\leq \alpha_2\in (0,\alpha_1]$ and $[U_b]_\theta\leq\beta_0$, where $\alpha_2=\alpha_2(r_1,q_0)$ and
$\alpha_1=\alpha_1(q_0)$ are the constants given in Propositions \ref{est-wh} and \ref{prop-0th},
while $\|U_b\|$ and $[U_b]_\theta$ are given by \eqref{quan}.
Then there is a constant 
$C=C(q,\alpha_2,\beta_0,\theta)>0$
such that 
\begin{equation}
\|\nabla T(t,s)F\|_{q,\mathbb R^3\setminus B_3}\leq C(t-s)^{-\min\{1/2,\, 3/2q\}}\|F\|_{q,\mathbb R^3}
\label{final-grad}
\end{equation}
for all $(t,s)$ with $t-s>2$ and $F\in X_q(\mathbb R^3)$ as long as $r_1\leq q<\infty$.

Under the same condition $\|U_b\|\leq \alpha_1$ 
as for \eqref{led-sup}, 
there is a constant $C=C(q,\alpha_1,\beta_0,\theta)>0$ such that
%
\begin{equation}
\|T(t,s)F\|_{\infty,\mathbb R^3\setminus B_3}\leq C(t-s)^{-3/2q}\|F\|_{q,\mathbb R^3}
\label{final-sup}
\end{equation}
for all $(t,s)$ with $t-s>2$ and $F\in X_q(\mathbb R^3)$ as long as $1<q<\infty$.

The same assertions hold true for the adjoint $T(t,s)^*$ under the same smallness conditions on $\|U_b\|$ as above.
\label{prop-near-inf}
\end{proposition}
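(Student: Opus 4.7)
The plan is to separate the behavior near the body from behavior at infinity by introducing a cutoff together with a Bogovskii correction that preserves the solenoidal condition, which reduces the problem to the non-autonomous whole-space equation \eqref{eq-whole} governed by $T_0(t,s)$, for which Proposition \ref{est-wh} provides the necessary decay. Concretely, fix the cutoff $\phi$ from \eqref{cut0}, set $\psi:=1-\phi$, and given $F\in X_q(\mathbb R^3)$ write $U(t)=T(t,s)F$ with $u=U|_\Omega$ and let $p(t)$ be the associated pressure normalized by $\int_{\Omega_3}p(t)\,dx=0$. I define
\[
v(t):=\psi u(t)-\mathbb B\big[\nabla\psi\cdot u(t)\big],\qquad \pi(t):=\psi p(t),
\]
extended by zero across $B$. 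The compatibility condition $\nu\cdot(u-\eta-\omega\times x)|_{\partial\Omega}=0$ together with $\mathrm{div}\,u=0$ and the symmetry of the unit sphere gives $\int_\Omega\nabla\psi\cdot u\,dx=0$, so the Bogovskii operator on $\Omega_3$ applies and $v(t)\in L^q_\sigma(\mathbb R^3)\cap W^{1,q}(\mathbb R^3)$. Since $v=u$ on $\mathbb R^3\setminus B_3$, it will suffice to estimate $\nabla v(t)$ in $L^q(\mathbb R^3)$ and $v(t)$ in $L^\infty(\mathbb R^3)$.

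A direct computation using \eqref{eq-linear} reveals that $v$ solves the whole-space non-autonomous Oseen problem
\[
\partial_t v-\Delta v+(u_b-\eta_b)\cdot\nabla v+\nabla\pi=H(t),\quad \mathrm{div}\,v=0\;\;\mbox{in }\mathbb R^3\times(s,\infty),
\]
where $H(t)$ is supported in $\overline{B_3\setminus B_2}$ and assembles the commutator pieces $-2\nabla\psi\cdot\nabla u-u\Delta\psi+p\nabla\psi-((u_b-\eta_b)\cdot\nabla\psi)u$ along with the correction terms produced by $\mathbb B[\nabla\psi\cdot u]$, the time derivative of which I substitute via the equation for $u$ by $\mathbb B[\nabla\psi\cdot(\Delta u-(u_b-\eta_b)\cdot\nabla u-\nabla p)]$. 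Corollary \ref{led-cor} together with the Sobolev estimates \eqref{bog-est} for $\mathbb B$ then yields, for any $\gamma\in((1+1/q)/2,1)$ and any $r\in[r_1,q]$,
\[
\|H(\tau)\|_{r,\mathbb R^3}\le C(\tau-s)^{-\gamma}(1+\tau-s)^{-3/2q+\gamma}\|F\|_{q,\mathbb R^3}.
\]

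With the Duhamel representation
\[
v(t)=T_0(t,s)v(s)+\int_s^t T_0(t,\tau)\mathbb P_0 H(\tau)\,d\tau
\]
at hand, the initial-data piece is handled by the gradient estimate of Proposition \ref{est-wh}, producing $\|\nabla T_0(t,s)v(s)\|_q\le C(t-s)^{-1/2}\|F\|_q$, which matches the target rate when $q\le 3$ and is even stronger than needed when $q>3$. The source integral will be split at $\tau=(s+t)/2$; on the first half, the choice $r=r_1\le 4/3$ in the $L^r$-$L^q$ estimate supplies the gain $(t-s)^{-1/2-(3/r_1-3/q)/2}$ that absorbs the growth $(t-s)^{1-3/2q}$ coming from the $(\tau-s)^{-3/2q}$ tail of $\|H(\tau)\|_r$ together with the integrable singularity $(\tau-s)^{-\gamma}$ near $\tau=s$.

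The key technical obstacle is the second half $\tau\in[(s+t)/2,t]$ in the regime $q>3$: because $H(\tau)$ has fixed compact support, its $L^r$-norm does not improve with decreasing $r$, and a direct $L^q$-$L^q$ application of Proposition \ref{est-wh} alone produces only the growing bound $(t-s)^{1/2-3/2q}$. To overcome this I plan to exploit the fact that, apart from the lower-order pieces $u\Delta\psi$ and $((u_b-\eta_b)\cdot\nabla\psi)u$, every ingredient of $H(\tau)$ can be rewritten as $\mathrm{div}\,G$ with $G$ supported in $\overline{B_3\setminus B_2}$: the contribution $p\nabla\psi$ differs from $-\psi\nabla p$ by a pure gradient annihilated by $\mathbb P_0$ and so effectively enters through the fluid equation, the identity $-2\nabla\psi\cdot\nabla u=-2\,\mathrm{div}(u\otimes\nabla\psi)+2u\Delta\psi$ puts the $\nabla u$ term in divergence form, and the solenoidal character of $u_b-\eta_b$ places the $\mathbb B$-correction in divergence form as well. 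The composite operator $\nabla T_0(t,\tau)\mathbb P_0\mathrm{div}$ enjoys an extra gain of $(t-\tau)^{-1/2}$ compared with $\nabla T_0(t,\tau)\mathbb P_0$ by duality from \eqref{sm-dual-wh}, and after optimizing the Lebesgue exponent against the exponent in $\|G(\tau)\|_r$ this restored integrability delivers the required rate on the second half. The lower-order remainder is controlled directly via \eqref{decay-1} applied to $u$ on $B_3$. Combining both halves with the interior bound \eqref{led-glo} over $B_3$ yields \eqref{final-grad}. The $L^\infty$ estimate \eqref{final-sup} is simpler and follows from the very same decomposition, replacing the gradient estimate by the zero-order $L^q$-$L^\infty$ estimate of Proposition \ref{est-wh}; here no divergence rewriting is needed because the time kernel $(t-\tau)^{-3/2q}$ is less singular. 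Finally, the adjoint statement is proved by the symmetric argument applied to the backward equation \eqref{adj-evo1}, using the backward versions of Proposition \ref{est-wh}, Corollary \ref{led-cor}, and the pressure bound \eqref{sm-adj-pressure}.
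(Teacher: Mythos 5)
Your overall architecture coincides with the paper's: the cutoff-plus-Bogovskii truncation $v=(1-\phi)u+\mathbb B[u\cdot\nabla\phi]$, the reduction to the whole-space Duhamel formula for $T_0$, the source bound $\|H(\tau)\|_{r,\mathbb R^3}\leq C(\tau-s)^{-\gamma}(1+\tau-s)^{-3/2q+\gamma}\|F\|_{q,\mathbb R^3}$ for $r\in(1,q]$ via Corollary \ref{led-cor}, and the splitting of the source integral at $\tau=(s+t)/2$ are exactly what the paper does; your treatment of the initial-data term and of the first half of the integral is correct.

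The issue is the second half for $q>3$. The obstacle you describe there is not real: although $\|H(\tau)\|_{r}$ does not improve as $r$ decreases, the \emph{kernel} does. Taking $r=4/3$ in \eqref{decay-grad-wh} gives
$\|\nabla T_0(t,\tau)\mathbb P_0H(\tau)\|_{q}\leq C(t-\tau)^{-1/2}(1+t-\tau)^{-(9/4-3/q)/2}\|H(\tau)\|_{L^{4/3}\cap L^{q}}$,
and the large-time exponent $1/2+9/8-3/(2q)=13/8-3/(2q)$ exceeds $1$ for every $q>12/5$, so the time integral over $[(s+t)/2,t]$ converges and the factor $(\tau-s)^{-3/2q}\sim(t-s)^{-3/2q}$ yields the claimed rate. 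This is precisely how the paper closes the argument; no divergence-form structure of $H$ is needed. Your proposed workaround, besides being unnecessary, has a genuine hole: the pressure contribution $p\nabla\phi$ cannot be recast as $\mathbb P_0\,\mbox{div}\,G$ with $G\in L^r(\mathbb R^3)$ controlled by the available estimates. After discarding the pure gradient $\nabla(\psi p)$ you are left with $-\psi\nabla p=-(1-\phi)\nabla p$, which is supported on all of $\Omega$, whereas \eqref{pressure-glo} controls only $\|p\|_{q,\Omega_3}$; no global decay estimate of $\nabla p$ for the exterior problem is at your disposal. In addition, $\nabla T_0(t,\tau)\mathbb P_0\,\mbox{div}$ costs $(t-\tau)^{-1}$ as $\tau\to t$, which is not integrable, so near $\tau=t$ you would in any case have to revert to the non-divergence form. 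Replacing the divergence-form detour by the direct $L^{4/3}$--$L^{q}$ estimate of $\nabla T_0$ closes the proof.
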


\begin{proof}
Given $F\in {\mathcal E}(\mathbb R^3)$, see \eqref{dense-sub},
we set $U(t)=T(t,s)F$, $u(t)=U(t)|_\Omega$ and take the associated pressure $p(t)$
such that $\int_{\Omega_3}p(t)\,dx=0$.
Using the same cut-off function $\phi$ and the Bogovskii operator $\mathbb B$, see \eqref{cut0} and \eqref{bog-op},
as in the proof of Propositions
\ref{prop-0th} and \ref{led-prop}, we consider
\[
v(t)=(1-\phi)u(t)+\mathbb B[u(t)\cdot\nabla\phi], \qquad
p_v(t)=(1-\phi)p(t).
\]
Then $v(t)$ obeys
\begin{equation}
v(t)=T_0(t,s)\widetilde F+\int_s^tT_0(t,\tau)\mathbb P_0K(\tau)\,d\tau
\label{int-final}
\end{equation}
in terms of the evolution operator $T_0(t,s)$ without the rigid body studied in subsection \ref{evo-whole},
where
\[
\widetilde F=(1-\phi)F+\mathbb B[F\cdot\nabla \phi]\in C^\infty_{0,\sigma}(\mathbb R^3)
\]
and
\begin{equation*}
\begin{split}
K(t)
&=2\nabla\phi\cdot\nabla u(t)+\left[\Delta\phi+\big(\eta_b(t)-u_b(t)\big)\cdot\nabla\phi\right]u(t)  \\
&\quad -(\nabla\phi)p(t)+\left\{ \partial_t-\Delta-\big(\eta_b(t)-u_b(t)\big)\cdot\nabla \right\} \mathbb B[u(t)\cdot\nabla\phi].
\end{split}
\end{equation*}
By the same symbol $K(t)$ we denote its extension on $\mathbb R^3$ by setting zero outside $\Omega_3$.
Given $r_1\in (1,4/3]$, we assume the smallness \eqref{small-led} on $\|U_b\|$
as in Proposition \ref{led-prop} and Corollary \ref{led-cor},
then it follows from \eqref{pressure-glo}--\eqref{led-glo} and \eqref{ass-1} together with \eqref{bog-est}
that
\begin{equation}
\|K(t)\|_{r,\mathbb R^3}\leq C(t-s)^{-\gamma}(1+t-s)^{-3/2q+\gamma}\|F\|_{q,\mathbb R^3}
\label{remainder2}
\end{equation}
for all $(t,s)$ with $t>s$ and $r\in (1,q]$ as long as $q\in [r_1,\infty)$, where $\gamma\in \big((1+1/q)/2,\,1\big)$ is fixed arbitrarily.

Let $t-s>2$.
For the proof of \eqref{final-grad}--\eqref{final-sup}, our task is to deduce the desired eatimates of
$\|\nabla v(t)\|_{q,\mathbb R^3}$ and $\|v(t)\|_{\infty,\mathbb R^3}$ by using \eqref{int-final}.
It is obvious that the term $T_0(t,s)\widetilde F$ fulfills those thanks to \eqref{decay-wh}--\eqref{decay-grad-wh}
provided $q\geq r_1$.
By \eqref{remainder2} along with \eqref{decay-grad-wh} (for $q\geq r_1$) and by choosing, for instance,
\[
\left\{
\begin{array}{ll}
r=q \qquad & \mbox{if $q\in [r_1,3/2)$}, \\
r=4/3 & \mbox{if $q\in [3/2,\infty)$},
\end{array}
\right.
\]
(note that $r\in [r_1,q]$ is required to apply Proposition \ref{est-wh}),
we find
\begin{equation}
\begin{split}
&\quad \int_s^t\|\nabla T_0(t,\tau)\mathbb P_0K(\tau)\|_{q,\mathbb R^3}\,d\tau  \\
&\leq C\|F\|_{q,\mathbb R^3}\left(\int_s^{(s+t)/2}+\int_{(s+t)/2}^t\right) \\
&\qquad (t-\tau)^{-1/2}(1+t-\tau)^{-(3/r-3/q)/2}(\tau-s)^{-\gamma}(1+\tau-s)^{-3/2q+\gamma}\,d\tau  \\
&\leq C(t-s)^{-\min\{1/2,\, 3/2q\}}\|F\|_{q,\mathbb R^3}
\end{split}
\label{final-duha}
\end{equation}
which concludes \eqref{final-grad} under the condition $\|U_b\|\leq \alpha_2(r_1,q_0)$. 

For the $L^\infty$-estimate, the integrand of \eqref{final-duha} is replaced by
\[
(t-\tau)^{-3/2q}(1+t-\tau)^{-(3/r-3/q)/2}(\tau-s)^{-\gamma}(1+\tau-s)^{-3/2q+\gamma}.
\]
Let $3/2<q<\infty$.
We then take $r=4/3$ 
and compute the integral in the 
same way as above 
to deduce \eqref{final-sup} provided
\begin{equation*}
\|U_b\|\leq \alpha_1(q_0)=\alpha_2(4/3,q_0), 
\end{equation*}
which ensures \eqref{remainder2} with $q\in (3/2,\infty)$ and allows us to apply \eqref{decay-wh} with $r=\infty$
and $q=4/3$.
In order to derive this for the other case $q\in (1,3/2]$ as well,
we have only to combine \eqref{final-sup} for $q=2$ (say) obtained above with \eqref{decay-1} for $q\in (1,3/2]$
under the condition 
$\|U_b\|\leq \alpha_1(q_0)$
by taking into account the semigroup property \eqref{semi}.
The adjoint $T(t,s)^*$ is discussed similarly.
The proof is complete.
\end{proof}

\subsection{Proof of Theorem \ref{evo-op}}
\label{proof-1}

We collect Proposition \ref{prop-smooth} (case $r=\infty$), Proposition \ref{prop-0th}, \eqref{led-sup} and 
\eqref{final-sup} 
to furnish \eqref{decay-1} provided that $\|U_b\|\leq\alpha_1(q_0)$. 

Let $r_1\in (1,4/3]$ and suppose 
$\|U_b\|\leq \alpha_2(r_1,q_0)\leq\alpha_1(q_0)$.
Then it follows from \eqref{led} and \eqref{final-grad} along with Proposition \ref{prop-smooth} that
\begin{equation}
\|\nabla T(t,s)F\|_{q,\mathbb R^3}\leq C(t-s)^{-1/2}(1+t-s)^{\max\{(1-3/q)/2,\,0\}}\|F\|_{q,\mathbb R^3}
\label{grad-qq}
\end{equation}
for all $(t,s)$ with $t>s$ and $F\in X_q(\mathbb R^3)$ as long as $r_1\leq q<\infty$.
One may combine \eqref{grad-qq} with \eqref{decay-1} to conclude
\eqref{grad-new-form} for $r\in [r_1,\infty)$ and $q\in (1,r]$.

With the same estimates for the adjoint $T(t,s)^*$ under the same smallness of $\|U_b\|$ at hand, let us show \eqref{compo}.
Let $1< q<\infty$ and $\phi\in {\mathcal E}(\mathbb R^3)$. 
Then, in view of \eqref{dual-evo-sense} together with \eqref{proj-sym-0}, see also \eqref{other-pair}, we have
%
\begin{equation*}
\begin{split}
&\quad\big|\langle T(t,s)\mathbb P\mbox{div $F$},\; \phi\rangle_{\mathbb R^3,\rho}\big|  \\
&=\left|-\langle F,\; \nabla T(t,s)^*\phi\rangle_{\mathbb R^3,\rho}+(1-\rho)\int_{\partial\Omega}(F\nu)\cdot\big(T(t,s)^*\phi\big)\,d\sigma\right|  \\
&\leq \|F\|_{q,(\mathbb R^3,\rho)}\|\nabla T(t,s)^*\phi\|_{q^\prime,(\mathbb R^3,\rho)}  \\
&\leq C\|F\|_{q,\mathbb R^3}\|\nabla T(t,s)^*\phi\|_{q^\prime,\mathbb R^3}
\end{split}
\end{equation*}
for all $F\in L^q(\mathbb R^3)^{3\times 3}$ 
with $F\nu=0$ at $\partial\Omega$ 
as well as $\mbox{div $F$}$ 
belonging to 
$L^p(\mathbb R^3)$ for some $p\in (1,\infty)$
(so that $(F\nu)|_{\partial\Omega}$
from both directions coincide with each other) and fulfilling $(\mbox{div $F$})|_B\in {\rm RM}$.
Given $r_0\in [4,\infty)$, suppose that 
\begin{equation}
\|U_b\|\leq\alpha_3(r_0,q_0):=\alpha_2(r_0^\prime,q_0)\leq \alpha_1(q_0).
\label{small-compo1}
\end{equation}
Then we apply \eqref{grad-new-form} for $T(t,s)^*$ 
to conclude \eqref{compo} by duality provided that $q\in (1,r_0]$ and $r\in [q,\infty)$.
This
together with \eqref{decay-1} with $r=\infty$ 
leads us to 
\eqref{compo} with $r=\infty$ as well. 

Finally, we deduce \eqref{compo-grad}.
Given $r_0\in [4,\infty)$ and $r_1\in (1,4/3]$, assume that
\begin{equation}
\|U_b\|\leq \alpha_4(r_0,r_1,q_0):=\alpha_2\big(\min\{r_0^\prime,r_1\}, q_0\big).
\label{small-compo2}
\end{equation}
Then we use \eqref{grad-new-form} and \eqref{compo} to obtain \eqref{compo-grad} for
$1<q\leq r<\infty$ with $q\in (1,r_0]$ as well as $r\in [r_1,\infty)$.
The proof is complete.

\section{Stability of the basic motion}
\label{stability}

The initial value problem \eqref{perturbed} is transformed into 
\begin{equation}
U(t)=T(t,s)U_0+\int_s^t T(t,\tau)H(U(\tau))\,d\tau
=:\overline{U}(t)+(\Lambda U)(t).
\label{int-NS}
\end{equation}
Look at \eqref{rhs}; since $(\eta-u)\cdot\nu=0$ at $\partial \Omega$, we observe
\begin{equation}
H(U)
=\mathbb P\left[\big\{\mbox{div $\big((u_b+u)\otimes (\eta-u)\big)$}\big\}\chi_\Omega\right]  \\
=\mathbb P\, 
\mbox{div $\big\{(u_b+u)\otimes (\eta-u)\chi_\Omega\big\}$}. 
\label{HU}
\end{equation}
We do need this divergence form especially for the nonlinear term $\eta\cdot\nabla u$ in the first integral of \eqref{nonlinear-est}
below (as in \cite{EHL14, EMT}),
while the other terms can be discussed anyway if we impose more assumptions on $\nabla u_b$ than \eqref{ass-3}.
For finding a solution to \eqref{int-NS} we adopt the function space
\begin{equation*}
\begin{split}
E:=\big\{U\in & C\big((s,\infty);\,W^{1,3}(\mathbb R^3)\cap 
L^\infty(\mathbb R^3)\big);\;  \\
&U(t)\in X_3(\mathbb R^3)\;\forall t\in (s,\infty),\;\lim_{t\to s}\|U\|_{E(t)}=0,\;\|U\|_E<\infty\big\}
\end{split}
\end{equation*}
with
\begin{equation*}
\begin{split}
\|U\|_{E(t)}
&:=\sup_{\tau\in (s,t)}
(\tau-s)^{1/2}\big(\|\nabla U(\tau)\|_{3,\mathbb R^3}+\|U(\tau)\|_{\infty,\mathbb R^3}\big)\quad \mbox{for $t\in (s,\infty)$}, \\
\|U\|_E
&:=\sup_{t\in (s,\infty)}\left(\|U\|_{E(t)}+\|U(t)\|_{3,\mathbb R^3}\right).
\end{split}
\end{equation*}
Then $E$ is a Banach space endowed with norm $\|\cdot\|_E$.
Let us remark that $U\in E$ already involves the boundary condition \eqref{hidden-bc2}
with $(u(t),\eta(t),\omega(t))=i(U(t))$ for each $t>s$, see \eqref{X1}, since $U(t)\in W^{1,3}(\mathbb R^3)$.
In view of \eqref{X1}, $U\in E$ implies that
\begin{equation}
(t-s)^{1/2}\big(\|\nabla u(t)\|_{3,\Omega}+\|u(t)\|_{\infty,\Omega}+|\eta(t)|+|\omega(t)|\big)\leq C\|U\|_{E(t)}\to 0
\label{equi-E0}
\end{equation}
as $t\to s$ and that
\begin{equation}
(t-s)^{1/2}\big(\|\nabla u(t)\|_{3,\Omega}+\|u(t)\|_{\infty,\Omega}\big)+(1+t-s)^{1/2}\big(|\eta(t)|+|\omega(t)|\big)
+\|u(t)\|_{3,\Omega}
\leq C\|U\|_E
\label{equi-E}
\end{equation}
for all $(t,s)$ with $t>s$. 

By $\Lambda U$ we denote the Duhamel term in \eqref{int-NS}:
\[
(\Lambda U)(t):=\int_s^t T(t,\tau)H(U(\tau))\,d\tau.
\]
Then we see the following lemma.
\begin{lemma}
Suppose \eqref{ass-1}--\eqref{ass-2} and \eqref{ass-3}.
If $\|U_b\|\leq \alpha_1$ with $\alpha_1=\alpha_1(q_0)$ being the constant given in Proposition \ref{prop-0th}, 
see \eqref{small-0th}, then
we have 
$\Lambda U\in E$ as well as
\begin{equation}
\lim_{t\to s}\|(\Lambda U)(t)\|_{3,\mathbb R^3}=0
\label{initial}
\end{equation}
for every $U\in E$ and 
\begin{equation}
\|\Lambda U\|_E\leq c_1\|U_b\|^\prime\|U\|_E+c_2\|U\|_E^2
\label{est-duha}
\end{equation}
\begin{equation}
\|\Lambda U-\Lambda V\|_E
\leq \big(c_1\|U_b\|^\prime+c_2\|U\|_E+c_2\|V\|_E\big)\|U-V\|_E
\label{difference}
\end{equation}
for all $U,\, V\in E$
with some constants $c_1=c_1(q_0,\alpha_1,\beta_0,\theta)>0$ and $c_2=c_2(\alpha_1,\beta_0,\theta)$, 
where $\|U_b\|$ and $\|U_b\|^\prime$ are the constants given by \eqref{quan} and \eqref{quan2}, respectively.
Furthermore, under the condition above, 
the following additional properties hold for every $U\in E$:
(i)
Let $r\in (3,\infty)$, then
\begin{equation}
\|\nabla (\Lambda U)(t)\|_{r,\mathbb R^3}=O\big((t-s)^{-1/2}\big)
\label{add-decay}
\end{equation}
as $(t-s)\to\infty$.
(ii)
$\Lambda U$ is locally H\"older continuous on $(s,\infty)$ with values in $W^{1,3}(\mathbb R^3)\cap L^\infty(\mathbb R^3)$, to be precise,
\begin{equation}
\begin{split}
&\Lambda U\in C^{\theta_0}_{\rm loc}\big((s,\infty);\, X_3(\mathbb R^3)\big)\cap C^{\theta_1}_{\rm loc}\big((s,\infty);\, L^\infty(\mathbb R^3)\big), \\
&\nabla \Lambda U\in C^{\theta_1}_{\rm loc}\big((s,\infty);\, L^3(\mathbb R^3)\big),
\end{split}
\label{duha-hoel}
\end{equation}
for every $\theta_0\in (0,3/4)$ and $\theta_1\in (0,1/4)$. 
\label{lem-duha}
\end{lemma}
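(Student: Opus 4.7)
The central observation is that $H(U)=\mathbb{P}\,\mathrm{div}\,F$ with $F=(u_b+u)\otimes(\eta-u)\chi_\Omega$, and that $F\nu|_{\partial\Omega}=0$ because $(\eta-u)\cdot\nu=-(\omega\times x)\cdot\nu=0$ at the sphere by \eqref{hidden-bc1}; moreover $\mathrm{div}\,F=-(\eta-u)\cdot\nabla(u_b+u)\chi_\Omega$ belongs to some $L^p_R(\mathbb{R}^3)$ under the assumption \eqref{ass-3} together with $U\in E$. Thus the composite estimates \eqref{compo} and \eqref{compo-grad} of Theorem \ref{evo-op} apply to $\Lambda U$. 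Expanding $F=u_b\otimes\eta-u_b\otimes u+u\otimes\eta-u\otimes u$, I will treat each bilinear piece separately, choosing the integrability index $q$ in \eqref{compo}/\eqref{compo-grad} to match the information available: $q=q_0\in(q_0,3)$ for the pieces carrying $u_b$ (exploiting \eqref{ass-1}/\eqref{ass-3}) and $q=3$ for the purely perturbative pieces, using the $E$-norm bounds $\|u(\tau)\|_3\leq C\|U\|_E$, $\|u(\tau)\|_\infty+\|\nabla u(\tau)\|_3\leq C\|U\|_E(\tau-s)^{-1/2}$ and $|\eta(\tau)|+|\omega(\tau)|\leq C\|U\|_E\min\{1,(\tau-s)^{-1/2}\}$ extracted from \eqref{equi-E0}--\eqref{equi-E}. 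Fixing $\alpha_1$ as in Proposition \ref{prop-0th} small enough that the relevant instances of \eqref{compo}/\eqref{compo-grad} are all available determines $c_1,c_2$.

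Each estimate of $\|(\Lambda U)(t)\|_{3,\mathbb{R}^3}$, $\|(\Lambda U)(t)\|_{\infty,\mathbb{R}^3}$ and $\|\nabla(\Lambda U)(t)\|_{3,\mathbb{R}^3}$ then reduces to a time integral of the form $\int_s^t(t-\tau)^{-a}(1+t-\tau)^{b}(\tau-s)^{-c}\,d\tau$ with $a<1$ and $c<1$; splitting at $(s+t)/2$ and using Beta-function bounds produces a constant times $(t-s)^{-1/2}$ for the gradient and $L^\infty$ terms and a bounded function of $t-s$ for the $L^3$ term. Collecting these gives \eqref{est-duha} with $c_1$ absorbing the $u_b$-linear terms and $c_2$ the quadratic ones; the difference estimate \eqref{difference} follows by writing $F(U)-F(V)$ as a sum of bilinear pieces linear in $U-V$ and running the same machine. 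The additional decay \eqref{add-decay} for $r>3$ is obtained identically by using \eqref{compo-grad} with the larger $r$. The initial vanishing assertion $\lim_{t\to s}\|\Lambda U\|_{E(t)}=0$ follows from the monotonicity $\|U\|_{E(\tau)}\leq \|U\|_{E(t)}$ together with $\|U\|_{E(\tau)}\to 0$ for the nonlinear terms, and from the fact that the Beta-function computation for the $u_b$-linear terms yields an explicit positive power $(t-s)^{1-3/(2q_0)}$ (guaranteed by $q_0<3$).

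For the H\"older regularity \eqref{duha-hoel} I will split
\begin{equation*}
(\Lambda U)(t+h)-(\Lambda U)(t)=\int_t^{t+h}T(t+h,\tau)H(U(\tau))\,d\tau+\int_s^t\bigl[T(t+h,\tau)-T(t,\tau)\bigr]H(U(\tau))\,d\tau,
\end{equation*}
estimating the first integral directly via \eqref{compo}/\eqref{compo-grad} (which loses a factor $h^\mu$ through the interval length) and the second via Proposition \ref{hoelder-evo} applied to $T(\cdot,\tau)\mathbb{P}\,\mathrm{div}\,F$, whose H\"older estimate in the forward time variable is obtained by exactly the same route as \eqref{hoel-est} but starting from \eqref{compo} in place of \eqref{decay-1}. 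The H\"older condition on $\nabla u_b$ in \eqref{ass-3} ensures that $H(U)(\tau)$ is locally H\"older in $\tau$ with values in $L^p_R(\mathbb{R}^3)$ for some $p\in(1,\infty)$, so the splitting above is legitimate.

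\textbf{Main obstacle.} The delicate part is the bookkeeping of the time integrals arising from the mixed term $u_b\otimes u$, where one factor is non-decaying while the other has only $E$-norm information: neither $q=q_0$ nor $q=3$ alone gives a uniform-in-$t$ bound that also vanishes at $\tau=s$, so one must alternate between these two exponents on the two halves of the split $(s,(s+t)/2)\cup((s+t)/2,t)$, and alternate likewise between the uniform and decaying bounds on $|\eta(\tau)|$. A secondary nuisance is verifying that the constants in \eqref{compo}/\eqref{compo-grad}, and the smallness thresholds $\alpha_3,\alpha_4$ of Theorem \ref{evo-op}, can all be taken uniformly once the final $\alpha=\alpha(q_0,\beta_0,\theta)$ is selected; this is a matter of inspection.
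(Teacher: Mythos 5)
Your overall framework (divergence form $H(U)=\mathbb P\,\mathrm{div}\,F$ with $F\nu|_{\partial\Omega}=0$, the composite estimates \eqref{compo}--\eqref{compo-grad}, the split at $(s+t)/2$, and the $E$-norm bookkeeping) is the right one and matches the paper's, but there is a genuine gap at the central computation. You claim that every term reduces to an integral $\int_s^t(t-\tau)^{-a}(1+t-\tau)^{b}(\tau-s)^{-c}\,d\tau$ with $a<1$. That is false if you use only \eqref{compo}/\eqref{compo-grad}: for $\nabla T(t,\tau)\mathbb P\,\mathrm{div}$ the singularity at $\tau=t$ is $(t-\tau)^{-1-(3/q-3/r)/2}$, which is at least $(t-\tau)^{-1}$ for every admissible pair $q\le r$, and likewise $T(t,\tau)\mathbb P\,\mathrm{div}:L^3\to L^\infty$ costs $(t-\tau)^{-1}$. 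No choice of $q\in(1,r]$ compatible with the information carried by the $E$-norm (which controls $\|u\|_3$, $\|u\|_\infty$, $\|\nabla u\|_3$, $|\eta|$, but not, say, $\|u\otimes(\eta-u)\|_q$ for $q>3$ with a usable weight, and in any case \eqref{compo-grad} requires $q\le r$) makes this singularity integrable. Alternating between $q=q_0$ and $q=3$, as you propose for the mixed term, changes only the behaviour in $\tau-s$ and the large-time factor, never the $(t-\tau)^{-1}$ blow-up.

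The missing idea is that one must keep \emph{both} forms of $H(U)$ and use the non-divergence form $\mathbb P[(\eta-u)\cdot\nabla(u_b+u)\chi_\Omega]$ on the portion of the Duhamel integral near $\tau=t$ (the paper uses $\int_{t-1}^t$ for the $u_b$-part and $\int_{(s+t)/2}^t$ for the quadratic part), where \eqref{decay-1} and \eqref{grad-new-form} give only a $(t-\tau)^{-1/2}$ singularity at the price of one derivative on the second factor: this is exactly where the hypothesis $\nabla u_b\in L^\infty(\mathbb R;L^3)$ of \eqref{ass-3} and the weighted bound $(\tau-s)^{1/2}\|\nabla u(\tau)\|_3\le C\|U\|_{E}$ enter, and it is the mechanism behind the sharp rate $(t-s)^{-1/2}$ for $\|\nabla u(t)\|_3$ emphasized in Remark \ref{rem-non1}. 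The same remark applies to your treatment of the H\"older continuity \eqref{duha-hoel}: the paper avoids proving a H\"older estimate for the composite operator $T(t,\tau)\mathbb P\,\mathrm{div}$ altogether by using the non-divergence form there (the issue being local in time), whereas your route would require an additional, unproved variant of Proposition \ref{hoelder-evo}. Once the two-form splitting is inserted, the rest of your plan (difference estimate, \eqref{add-decay}, the vanishing as $t\to s$) goes through as in the paper.
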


\begin{proof}
We are concerned only with \eqref{initial}--\eqref{est-duha} since
the other estimate \eqref{difference} is shown similarly.
Set
\begin{equation}
\begin{split}
(\Lambda_1U)(t)
&=\int_s^tT(t,\tau)\mathbb P[\{(\eta-u)\cdot\nabla u_b\}\chi_\Omega](\tau)\,d\tau  \\
&=\int_s^tT(t,\tau)\mathbb P\mbox{div $\{u_b\otimes (\eta-u)\chi_\Omega\}$}(\tau)\,d\tau, 
\end{split}
\label{duha-linear}
\end{equation}
\begin{equation}
\begin{split}
(\Lambda_2U)(t)
&=\int_s^tT(t,\tau)\mathbb P[\{(\eta-u)\cdot\nabla u\}\chi_\Omega](\tau)\,d\tau  \\
&=\int_s^tT(t,\tau)\mathbb P\mbox{div $\{u\otimes (\eta-u)\chi_\Omega\}$}(\tau)\,d\tau,
\end{split}
\label{duha-nonlinear}
\end{equation}
where \eqref{HU} is taken into account.
Let us note that both $u_b\otimes (\eta-u)\chi_\Omega$ and $u\otimes (\eta-u)\chi_\Omega$
satisfy the conditions imposed on $F$ for \eqref{compo}--\eqref{compo-grad} since
$(\eta-u)\cdot\nu=0$ at $\partial \Omega$.
In what follows we assume that
\[
\|U_b\|\leq \alpha_1(q_0)=\alpha_2(4/3,q_0)=\alpha_3(4,q_0)=\alpha_4(4,4/3,q_0),
\]
see \eqref{small-0th} and \eqref{small-compo1}--\eqref{small-compo2}.
This condition allows us to employ all the estimates obtained in Theorem \ref{evo-op} with the exponents needed below.
Let $U\in E$ and let us begin with estimate of \eqref{duha-linear}. 
Since $u_b(t)\in L^\infty(\Omega)$, one may assume that $q_0\in (3/2,3)$ in \eqref{ass-1}.
We make use of \eqref{decay-1}--\eqref{compo-grad} along with \eqref{equi-E0}--\eqref{equi-E} to find that
\begin{equation}
\begin{split}
&\quad \|\nabla (\Lambda_1U)(t)\|_{3,\mathbb R^3}+\|(\Lambda_1U)(t)\|_{\infty,\mathbb R^3}  \\
&\leq C\int_s^{t-1}(t-\tau)^{-3/2q_0-1/2}\|u_b(\tau)\|_{q_0,\Omega}\big(|\eta(\tau)|+\|u(\tau)\|_{\infty,\Omega}\big)\,d\tau  \\
&\quad +C\int_{t-1}^t(t-\tau)^{-1/2}\|\nabla u_b(\tau)\|_{3,\Omega}\big(|\eta(\tau)|+\|u(\tau)\|_{\infty,\Omega}\big)\,d\tau  \\
&\leq C(t-s)^{-1/2}\|U_b\|^\prime\|U\|_{E(t)}
\end{split}
\label{ub-est}
\end{equation}
for $t-s>2$
by splitting the first integral further into $\int_s^{(s+t)/2}+\int_{(s+t)/2}^{t-1}$, 
where $q_0\in (3/2,3)$ allows us to obtain
sharp decay rate in \eqref{compo-grad}, and that
\begin{equation*}
\begin{split}
&\quad \|\nabla (\Lambda_1U)(t)\|_{3,\mathbb R^3}+\|(\Lambda_1U)(t)\|_{\infty,\mathbb R^3}  \\
&\leq C\int_s^t(t-\tau)^{-1/2}\|\nabla u_b(\tau)\|_{3,\Omega}\big(|\eta(\tau)|+\|u(\tau)\|_{\infty,\Omega}\big)\,d\tau  \\
&\leq C
\|U_b\|^\prime\|U\|_{E(t)}
\end{split}
\end{equation*}
for $t-s\leq 2$.
Also, it is readily seen that
\begin{equation}
\begin{split}
\|(\Lambda_1U)(t)\|_{3,\mathbb R^3}
&\leq C\int_s^t(t-\tau)^{-1/2}\|u_b(\tau)\|_{3,\Omega}\big(|\eta(\tau)|+\|u(\tau)\|_{\infty,\Omega}\big)\,d\tau  \\
&\leq C\|U_b\|\|U\|_{E(t)}
\end{split}
\label{duha-1-L3}
\end{equation}
for all $t>s$. 
Collecting estimates above, we infer
\begin{equation}
\begin{split}
&\|\Lambda_1U\|_E\leq c_1\|U_b\|^\prime\|U\|_E, \\
&\lim_{t\to s}\big(\|\Lambda_1U\|_{E(t)}+\|(\Lambda_1U)(t)\|_{3,\mathbb R^3}\big)=0,
\end{split}
\label{est-duha1}
\end{equation}
for all $U\in E$ with some constant $c_1=c_1(q_0,\alpha_1,\beta_0,\theta)$.

We turn to the other integral $\Lambda_2U$ given by \eqref{duha-nonlinear}.
The computation with splitting below was not done by \cite{EMT} and this was why the decay rate of $\|\nabla u(t)\|_{3,\Omega}$
was not sharp in that literature.
From \eqref{decay-1}--\eqref{compo-grad} and \eqref{equi-E0}--\eqref{equi-E} it follows that
\begin{equation}
\begin{split}
&\quad \|\nabla (\Lambda_2U)(t)\|_{3,\mathbb R^3}+\|(\Lambda_2U)(t)\|_{\infty,\mathbb R^3}  \\
&\leq C\int_s^{(s+t)/2}(t-\tau)^{-1}\|u(\tau)\|_{3,\Omega}\big(|\eta(\tau)|+\|u(\tau)\|_{\infty,\Omega}\big)\,d\tau  \\
&\quad +C\int_{(s+t)/2}^t(t-\tau)^{-1/2}\|\nabla u(\tau)\|_{3,\Omega}\big(|\eta(\tau)|+\|u(\tau)\|_{\infty,\Omega}\big)\,d\tau  \\
&\leq C(t-s)^{-1/2}\big(\|U\|_E\|U\|_{E(t)}+\|U\|_{E(t)}^2\big)
\end{split}
\label{nonlinear-est}
\end{equation}
for all $t>s$ 
and that
\begin{equation}
\begin{split}
\|(\Lambda_2U)(t)\|_{3,\mathbb R^3}
&\leq C\int_s^t(t-\tau)^{-1/2}\|u(\tau)\|_{3,\Omega}\big(|\eta(\tau)|+\|u(\tau)\|_{\infty,\Omega}\big)\,d\tau   \\
&\leq c_0B(1/2,1/2)\|U\|_E\|U\|_{E(t)}
\end{split}
\label{nonlinear-beta}
\end{equation}
for all $t>s$ with some constant $c_0>0$, where $B(\cdot,\cdot)$ denotes the beta function and $B(\frac{1}{2},\frac{1}{2})=\pi$.
Those estimates lead us to
\begin{equation}
\begin{split}
&\|\Lambda_2U\|_E\leq c_2\|U\|_E^2, \\
&\lim_{t\to s}\big(\|\Lambda_2U\|_{E(t)}+\|(\Lambda_2U)(t)\|_{3,\mathbb R^3}\big)=0,
\end{split}
\label{est-duha2}
\end{equation}
for all $U\in E$ with some constant $c_2=c_2(\alpha_1,\beta_0,\theta)$, 
which together with \eqref{est-duha1} completes the proof of \eqref{initial}--\eqref{est-duha}.

It remains to show the additional properties:

(i)
Look at \eqref{ub-est} and \eqref{nonlinear-est} in which $\|\nabla (\cdot)\|_{3,\mathbb R^3}$ is replaced by 
$\|\nabla (\cdot)\|_{r,\mathbb R^3}$
with $r\in (3,\infty)$, then the computations still work,
where further splitting 
$\int_{(s+t)/2}^{t-1}+\int_{t-1}^t$ is needed in \eqref{nonlinear-est}
and, in the latter integral, $(t-\tau)^{-1/2}$ must be replaced by $(t-\tau)^{-1+3/2r}$.

(ii)
Let us recall the H\"older estimate of the evolution operator obtained in Proposition \ref{hoelder-evo}.
Since the issue is merely local in time, we do not have to take care of the large time behavior and thus we have only to use
the first form of each of \eqref{duha-linear}--\eqref{duha-nonlinear}, respectively.
We employ \eqref{hoel-est} with
$(j,q,r)=(0,3,3),(0,3,\infty)$ and $(1,3,3)$ for $\Lambda_1 U$.
Concerning $\Lambda_2U$, we use \eqref{hoel-est} with
$(j,q,r)=(0,2,3),(0,2,\infty)$ and $(1,2,3)$ for $u\cdot\nabla u$, while
$(j,q,r)=(0,3,3),(0,3,\infty)$ and $(1,3,3)$ for $\eta\cdot\nabla u$. 
Note that \eqref{restr} is fulfilled for all of those $(j,q,r)$.
Then the computations are the same as in the autonomous case with analytic semigroups.
The proof is complete.
\end{proof}

Set $\overline{U}(t):=T(t,s)U_0$ with $U_0\in X_3(\mathbb R^3)$,
then \eqref{sm-little} implies that 
$\|\overline{U}\|_{E(t)}\to 0$ as $t\to s$.
By taking into account \eqref{hoel-est} as well,
it is clear to see that $\overline{U}\in E$ together with
\[
\|\overline{U}\|_E\leq c_*\|U_0\|_{3,\mathbb R^3},
\]
with some constant $c_*>0$,
which follows from \eqref{decay-1}--\eqref{grad-new-form} provided $\|U_b\|\leq\alpha_1$.
With Lemma \ref{lem-duha} at hand, we easily find that 
the map
\[
U\mapsto \overline{U}+\Lambda U
\]
is contractive from the closed ball $E_R=\{U\in E;\; \|U\|_E\leq R\}$
with radius 
\begin{equation}
R=\frac{1}{2c_2}\left(\frac{1}{2}-\sqrt{\frac{1}{4}-4c_2c_*\|U_0\|_{3,\mathbb R^3}}\right)
<4c_*\|U_0\|_{3,\mathbb R^3} 
\label{sol-mag}
\end{equation}
into itself
provided that
\begin{equation}
\|U_b\|^\prime\leq \frac{1}{2c_1}, \qquad
\|U_0\|_{3,\mathbb R^3} <\delta:=\frac{1}{16c_2c_*}.
\label{IC-small}
\end{equation}
The smallness of the basic motion thus reads
\[
\|U_b\|^\prime\leq \alpha=\alpha(q_0,\beta_0,\theta)
:=\min\left\{\alpha_1,\,\frac{1}{2c_1}\right\},
\]
where $\alpha_1=\alpha_1(q_0)$ is the constant given in Proposition \ref{prop-0th}, see \eqref{small-0th}.
Then the fixed point $U\in E_R$ provides a solution to \eqref{int-NS} and also the initial condition
\[
\lim_{t\to s}\|U(t)-U_0\|_{3,\mathbb R^3}=0
\]
holds on account of \eqref{initial}.

Let us remark that uniqueness of solutions to \eqref{int-NS} still holds within the class $E$
rather than the ball $E_R$ with small radius 
\eqref{sol-mag}
by means of standard argument as in Fujita and Kato \cite{FK}, where
the behavior
$\|U\|_{E(t)}\to 0$ for $t\to s$ plays a role.
Actually, even this behavior near the initial time is redundant for uniqueness of the solution constructed above, as pointed out by
Brezis \cite{Bre}, where the last assertion on the uniformly convergence \eqref{sm-little}
in Proposition \ref{prop-smooth} is employed.
Note, however, that uniqueness within the class $E$ is independent of the existence of solutions,
while it is not the case for the latter.

All the desired properties of the solution obtained above (except for the sharp large time behavior that will be seen below)
follow from Lemma \ref{lem-duha}
as well as several properties of the evolution operator.
By (ii) of Lemma \ref{lem-duha} together with \eqref{ass-3}, 
the term $H(U)$ given by \eqref{rhs} is locally 
H\"older continuous with values in $X_3(\mathbb R^3)$,
so that the solution $U(t)$ is a strong one 
(\cite[Chapter 5, Theorem 2.3]{Ta}, \cite[Theorem 3.9]{Y})
as described in Theorem \ref{nonlinear}.

Finally, let us close the paper with verification of the sharp large time behavior, such as
$\|U(t)\|_{\infty, \mathbb R^3}=o\big((t-s)^{-1/2}\big)$,
although this is common as long as we work with underlying space in which the class of 
nice functions is dense.
Suppose $U_0\in X_3(\mathbb R^3)\cap X_p(\mathbb R^3)$ with some $p\in (1,3)$, then both $\overline{U}(t)$
and $(\Lambda_1U)(t)$ dacays to zero in $X_3(\mathbb R^3)$ with definite rate, say, $(t-s)^{-\gamma}$.
In fact, we have only to replace $\|u_b\|_{3,\Omega}$ by $\|u_b\|_{q_0,\Omega}$ with $q_0\in (3/2,3)$ 
in \eqref{duha-1-L3} as for $\Lambda_1U$. 
One the other hand, it is readily seen that
\[
\|(\Lambda_2U)(t)\|_{3,\mathbb R^3}
\leq c_0B(1/2,1/2-\gamma)(t-s)^{-\gamma}\|U\|_E\sup_{\tau\in (s,t)}(\tau-s)^\gamma\|U(\tau)\|_{3,\mathbb R^3}
\]
for all $t>s$, where $c_0$ is the same constant as in \eqref{nonlinear-beta}.
Note that the constant $c_2$ in \eqref{est-duha2} should satisfy $c_2\geq \pi c_0=c_0B(\frac{1}{2},\frac{1}{2})$.
By the continuity of the beta function, one can take $\gamma>0$ so small that
$c_0B(\frac{1}{2},\frac{1}{2}-\gamma)\leq  2c_2$,
which together with \eqref{sol-mag} leads to
\[
c_0B(1/2,1/2-\gamma)\|U\|_E\leq 2c_2R<8c_2c_*\|U_0\|_{3,\mathbb R^3}<\frac{1}{2}
\]
under the condition \eqref{IC-small}.
We thus find
\[
\|U(t)\|_{3,\mathbb R^3}\leq C(t-s)^{-\gamma}\big(\|U_0\|_{3,\mathbb R^3}+\|U_0\|_{p,\mathbb R^3}\big),
\]
from which combined with the continuity of the solution map $U(s)\mapsto U$ in the sense that
\[
\sup_{t\in [s,\infty)}\|U(t)-V(t)\|_{3,\mathbb R^3}\leq C\|U(s)-V(s)\|_{3,\mathbb R^3}
\]
as well as denseness of $X_3(\mathbb R^3)\cap X_p(\mathbb R^3)$ in $X_3(\mathbb R^3)$,
we conclude that 
\begin{equation}
\lim_{t-s\to\infty}\|U(t)\|_{3,\mathbb R^3}=0.
\label{little-L3}
\end{equation}
Several papers (including mine) on the Navier-Stokes claim that \eqref{little-L3} is accomplished provided initial data are still smaller,
however, if we look carefully into estimates as above, then we see that further smallness than \eqref{IC-small} is not
needed.
This observation is due to Tomoki Takahashi, who is the author of \cite{ToT21}.

Once we have \eqref{little-L3}, we can deduce the other decay properties by following the argument as in 
\cite{EShi05}, see also \cite{ToT21}.
Let $\tau_*>s$. Using the equation
\begin{equation}
U(t)=T(t,\tau_*)U(\tau_*)+\int_{\tau_*}^t T(t,\tau)H(U(\tau))\,d\tau
\label{further-IE}
\end{equation}
and performing exactly the same computations as in the proof of Lemma \ref{lem-duha}, we infer
\begin{equation*}
\begin{split}
&\quad (t-\tau_*)^{1/2}\big(\|U(t)\|_{\infty,\mathbb R^3}+\|\nabla U(t)\|_{3,\mathbb R^3}\big)   \\
&\leq C\|U(\tau_*)\|_{3,\mathbb R^3}
+\big(c_1\|U_b\|^\prime+c_2\|U\|_E\big)\sup_{\tau\in (\tau_*,t)}(\tau-\tau_*)^{1/2}\|U(\tau)\|_{\infty,\mathbb R^3}  \\
&\leq C\|U(\tau_*)\|_{3,\mathbb R^3}
+\frac{3}{4}\sup_{\tau\in (\tau_*,t)}(\tau-\tau_*)^{1/2}\|U(\tau)\|_{\infty,\mathbb R^3} 
\end{split}
\end{equation*}
for all $t>\tau_*$ in view of \eqref{sol-mag}--\eqref{IC-small}, where 
the constants $c_1$ and $c_2$ are the same as those in \eqref{est-duha}--\eqref{difference}.
Given $\varepsilon>0$ arbitrarily, we take $\tau_*-s$ so large that the first term of the right-hand side above is less than
$\varepsilon$, which is indeed possible because of \eqref{little-L3}.
We then have
$(t-\tau_*)^{1/2}\big(\|U(t)\|_{\infty,\mathbb R^3}+\|\nabla U(t)\|_{3,\mathbb R^3}\big)\leq 4\varepsilon$
for all $t>\tau_*$.
If $t-s>2(\tau_*-s)$, then we get
\[
\left(\frac{t-s}{2}\right)^{1/2}
\big(\|U(t)\|_{\infty,\mathbb R^3}+\|\nabla U(t)\|_{3,\mathbb R^3}\big)\leq 4\varepsilon,
\]
which concludes \eqref{main-decay} except for $\|\nabla u(t)\|_{r,\Omega}$ with $r\in (3,\infty)$.
For such $r$, finally,
we use \eqref{further-IE} with $\tau_*$ replaced by $\frac{t+s}{2}$ and compute it
as in deduction of \eqref{add-decay} to find
\begin{equation*}
\begin{split}
\|\nabla U(t)\|_{r,\mathbb R^3}
&\leq C(t-s)^{-1/2}\|U((t+s)/2)\|_{3,\mathbb R^3}  \\
&\quad +C(t-s)^{-1/2}\big(\|U_b\|^\prime+\|U\|_E\big)\sup_{\tau>(t+s)/2}(\tau -s)^{1/2}\|U(\tau)\|_{\infty,\mathbb R^3}
\end{split}
\end{equation*}
for $t-s>2$.
Hence, \eqref{main-decay} with $q=\infty$ and \eqref{little-L3} yield \eqref{main-decay} with $r\in (3,\infty)$.
The proof of Theorem \ref{nonlinear} is complete.
\bigskip

\noindent
{\bf Acknowledgments}.
I am grateful to Professor Giovanni P. Galdi for stimulating discussion about the issue of the paper.
This work is partially supported by the Grant-in-aid for Scientific Research 22K03372 from JSPS.
\bigskip

\noindent
{\bf Declarations}

\noindent
{\bf Conflict of interest.}
The author states that there is no conflict of interest.


\end{document}